%





\synctex=1
\documentclass[10pt,reqno,final]{amsart}

\usepackage[utf8]{inputenc}
\usepackage{tgtermes}

\usepackage[margin=30mm]{geometry}
\geometry{top=20mm,bottom=20mm,left=20mm,right=20mm}
\geometry{letterpaper,top=1in,bottom=1in,left=1in,right=1in}

\usepackage{mathtools,amsmath,mathabx,tikz-cd}
\usepackage{amssymb,mathrsfs,stmaryrd}

\usepackage[backend=biber,bibencoding=utf8,
style=numeric,sortcites,maxbibnames=50,
url=true,doi=false,eprint=true,isbn=false,
hyperref=auto,backref=false]{biblatex}
\addbibresource{arXiv.bib}
\addbibresource{MR.bib}
\addbibresource{other.bib}

\usepackage{setspace} 
\usepackage{indentfirst}
\setlength{\parskip}{\medskipamount}
\setlength{\parindent}{0pt}
\frenchspacing
\usepackage{pdfpages}

\usepackage[notcite,notref,color,draft]{showkeys}
\definecolor{refkey}{gray}{.50}
\definecolor{labelkey}{rgb}{1,0,0}

\usepackage[right,mathlines]{lineno}

\usepackage[hidelinks]{hyperref}

\newtheorem{theorem}{Theorem}[section]
\newtheorem{lemma}[theorem]{Lemma}
\newtheorem{proposition}[theorem]{Proposition}
\newtheorem{corollary}[theorem]{Corollary}
\newtheorem{definition}[theorem]{Definition}
\newtheorem{remark}[theorem]{Remark}
\newtheorem{example}[theorem]{Example}

\newtheoremstyle{intro}
  {12pt}
  {12pt}
  {\itshape}
  {}
  {\bfseries}
  {.}
  {.5em}
  {}

\theoremstyle{intro}
\newtheorem{introthm}{Theorem}

\usepackage{macros}


\title[Formal geometry and Tamarkin--Tsygan calculi of dg manifolds]
{Formal geometry and Tamarkin--Tsygan calculi of dg manifolds}

\thanks{Research partially supported by
the National Science Foundation (award DMS-2302447),
the Simons Foundation (award MP-TSM-00002272)
and Taiwan's Ministry of Science and Technology (grants 110-2115-M-007-001-MY2 and 112-2115-M-007-016-MY3).
This material is based upon work supported by
the Swedish Research Council under grant no.\ 2021-06594
while Sti\'enon and Xu were in residence
at Institut Mittag-Leffler in Djursholm, Sweden
during the spring trimester of 2025.
%
%
}

\author{Hsuan-Yi Liao}
\address{Department of Mathematics, National Tsing Hua University}
\email{hyliao@math.nthu.edu.tw}

\author{Mathieu Stiénon}
\address{Department of Mathematics, Pennsylvania State University}
\email{stienon@psu.edu}

\author{Ping Xu}
\address{Department of Mathematics, Pennsylvania State University}
\email{ping@math.psu.edu}

\dedicatory{Dedicated to the memory of Murray Gerstenhaber}

\begin{document}

\begin{abstract}
The main goal of this paper is to study the formal geometry of dg manifolds
\`a la Fedosov.
For any dg manifold $(\cM, Q)$, we construct a Fedosov dg foliation
(or dg Lie algebroid) $\cFQ \to \cNQ$. We establish homotopy contractions
between their respective spaces of polyvector fields, differential forms,
polydifferential operators, and polyjets. As a consequence,
we prove that their respective Cartan calculi and noncommutative calculi,
in the sense of Tamarkin--Tsygan, are isomorphic.
\end{abstract}

\maketitle

\tableofcontents


\section*{Introduction}

Formal differential geometry has a long history and has been extended
to various contexts \cite{MR0287566,MR2119140}. Recall that,
in the formal differential geometry of Gel'fand--Kazhdan \cite{MR0287566},
for any smooth manifold $M$ of dimension $n$, one considers
an infinite-dimensional manifold $\tilde{M}$ consisting of formal parametrizations of $M$,
i.e.\ the $\infty$-jets at $0$ of local diffeomorphisms $\RR^n\to M$.
The Lie algebra \( W_n \) of formal vector fields acts freely and transitively on \( \tilde{M} \)
via infinitesimal reparametrizations. The Lie subalgebra of linear vector fields
can be identified with \( \gl_n \), which integrates to an action of \( \GL_n \),
such that the induced quotient space \( \tilde{M}/\GL_n \) is homotopy 
equivalent to \( M \). 
In fact, there is a section of the canonical projection \( \tilde{M}/\GL_n \to M \)
which can be constructed explicitly using an affine connection.
Specifically, an affine connection \( \nabla \) determines, for any \( x \in M \),
a local diffeomorphism \( \RR^n \to M \) mapping \( 0 \) to \( x \)
via the exponential map \( T_x M \to M \). The \( \infty \)-jets at \( 0 \)
of this exponential map yield the desired section.
Thus, instead of the infinite-dimensional manifold \( \tilde{M} \),
one can equivalently consider a finite-dimensional differential graded (dg) manifold
of the form \( (\cN, \fedosov) \), where \( \cN := T_M[1] \oplus T_M \),
and the homological vector field \( \fedosov \) can be constructed
using the formal exponent map associated with \( \nabla \).

This dg manifold $(\cN,\fedosov)$ encodes the formal neighborhood of each point of \( M \)
in the sense that the dg algebra of its functions provides a resolution of \( C^\infty(M) \).
This perspective was first developed by Fedosov in his construction of deformation quantization
for symplectic manifolds, known as the Fedosov method \cite{MR1293654,MR1376365}.
It was later adapted by Emmrich--Weinstein \cite{MR1327535} to the above setting
and further developed by Dolgushev \cite{MR2102846}, where it played a key role
in the proof of Kontsevich's formality theorem for ordinary smooth manifolds \cite{MR2062626}. 
Dolgushev’s method \cite{MR2102846, MR2199629}, formulated in terms of Fedosov dg manifolds,
has proven to be a highly effective procedure
for assembling global objects from local building blocks.
He employed it to prove both Kontsevich's formality theorem
and Tsygan's formality theorem for smooth manifolds \cite{MR2102846,MR2199629}.

Dg manifolds have recently become increasingly important.
A dg manifold is defined as a $\ZZ$-graded manifold
equipped with a homological vector field, i.e.\ a vector field $Q$ of degree $+1$
satisfying $\schouten{Q}{Q}=0$. Originally appearing in physics as BRST operators
for describing gauge symmetries, dg manifolds (also known as $Q$-manifolds)
have since become prominent in mathematical physics, notably in the AKSZ formalism
\cite{MR1432574,MR1230027}, and also arise naturally
in geometry, Lie theory, and mathematical physics. 
Standard examples include $L_\infty$-algebras, foliations,
complex manifolds, and derived intersections.
(1).\ \textsl{$L_\infty$ algebras}
Given a finite-dimensional Lie algebra $\frakg$,
we write $\frakg[1]$ to denote the dg manifold
having $C^\infty(\frakg[1]):=\Lambda^\bullet\frakg^\vee$
as its algebra of functions and the Chevalley--Eilenberg differential
$Q:=d_{\CE}$ as its homological vector field.
This construction admits an `up to homotopy' version:
Given a $\ZZ$-graded finite-dimensional vector space
$\frakg=\bigoplus_{i\in\ZZ}\frakg_i$,
the graded manifold $\frakg[1]$ is a dg manifold,
i.e.\ admits a homological vector field,
if and only if the graded vector space $\frakg$
admits a structure of curved $L_\infty$ algebra.
(2).\ \textsl{Foliations and complex manifolds}
Given an integrable distribution $F\subseteq T_M^{\KK}:=T_M\otimes\KK$
($\KK$ being either $\RR$ or $\CC$),
we write $F[1]$ to denote the dg manifold having
$C^\infty(F[1])=\Omega_F^\bullet:=\sections{\Lambda F^\vee}$
as its algebra of functions and the leafwise de Rham differential
$Q:=d_{\DR}^F$ as its homological vector field.
In particular, a complex manifold $X$ determines an integrable distribution
$T^{0,1}_X\subset T_X^\CC$. Then $(T^{0,1}_X[1],\bar{\partial})$
is a dg manifold: the algebra of functions is
$C^\infty(T^{0,1}_X[1])=\Omega^{0,\bullet}(X)$
and the homological vector field is the Dolbeault operator $\bar{\partial}$.
%
%
(3) \textsl{Derived intersections}
Given a smooth section $s$ of a vector bundle $E\to M$,
we write $E[-1]$ to denote the dg manifold having
$C^\infty (E[-1])=\sections{\Lambda^{-\bullet}E\dual}$
as algebra of functions and $Q=\iota_s$, the interior product with $s$, as
homological vector field. This dg manifold can be thought of
as a smooth model for the (possibly singular) intersection of $s$
with the zero section of the vector bundle $E$,
and is often called a `derived intersection', or
a \emph{quasi-smooth derived manifold} \cite{MR4735657}.

The notion of dg manifolds provides a highly effective framework
for studying the differential geometry of manifolds with singularities.
Dg manifolds of amplitude $[-n,-1]$ are equivalent to \emph{Lie $n$-algebroids}
\cite{MR2768006,MR4016029,MR2223155,MR3893501,MR3090103,MR4007376,arXiv:2001.01101}
and~\cite[Letters~7 and~8]{arXiv:1707.00265},
which can be regarded as the infinitesimal counterparts of higher groupoids
(see~\cite{MR2521116,MR4172669,MR2441255,arXiv:1705.02880,
MR2970717,arXiv:math/0612349,MR3774337}).
Furthermore, dg manifolds of amplitude $[1,n]$ correspond
to derived manifolds \cite{MR4735657,arXiv:2307.08179}.
As a result, general dg manifolds with amplitude $[-m,n]$ have the capacity to encode
both stacky and derived singularities in differential geometry. 
Notably, dg manifolds also serve as a powerful tool
for analyzing singular foliations \cite{MR4164730}.

However, despite its significance, the formal geometry of dg manifolds remains underexplored.
This paper aims to address this gap in terms of 
Fedosov dg manifolds.
Our main motivation is to establish
a Duflo--Kontsevich-type theorem for
the calculi of dg manifolds \cite{paper-two}.
It is well-known that formal geometry
\`a la Fedosov plays a crucial role
in the globalization of local Kontsevich
and Tsygan formality results \cite{MR2102846,MR2199629,MR2304327,MR1914788}.
Following Dolgushev \cite{MR2102846}, we construct a Fedosov dg manifold
$\cN_Q=(\cN,\fedosov+\tauTpolyF(Q))$ for any dg manifold $(\cM,Q)$ such that $\cN_Q$
is quasi-isomorphic to $(\cM,Q)$ as a dg manifold. Our construction builds
upon the Fedosov dg manifolds of graded manifolds introduced by two of the authors
in~\cite{MR3910470}, with an additional twisting induced by $Q$.
The underlying $\ZZ$-graded manifold of $\cN_Q$ is $\cN:=T_\cM[1]\oplus T_\cM$,
and the homological vector field is $\fedosov+\tauTpolyF(Q)$,
where $\fedosov$ is the Fedosov differential corresponding to an affine connection $\nabla$ on $\cM$,
constructed via the formal exponential map, and $\tau:\XX(\cM)\to\XX(\cN)$ is a Lie algebra morphism.
Furthermore, we construct a dg foliation on $\cN_Q$,
referred to as the \emph{Fedosov dg Lie algebroid}
$\cFQ\to\cNQ$. As a vector bundle, denoted $\cF\to\cN$, it is the pullback of the vector bundle
$T_\cM\to\cM$ via the surjective submersion $\cN\onto\cM$.
We frequently represent the dg Lie algebroid $\cFQ\to\cNQ$
as $\big(\cF\to\cN,\iL_{\fedosov+\tauTpolyF(Q)}\big)$.

We expect that the Fedosov dg Lie algebroid $\cFQ\to\cNQ$ is `homotopy equivalent'
to the tangent dg Lie algebroid $T_\cM\to\cM$ in the sense that
their `associated algebraic structures' are homotopy equivalent. 
More precisely, by `associated algebraic structures,' we mean Tamarkin--Tsygan calculi.
A calculus, as defined by Tamarkin and Tsygan \cite{MR1783778, MR2986860},
refers to a pair of $\ZZ$-graded vector spaces $(\aTheta^{\bullet},\Xi_{\bullet})$
such that: $\aTheta^{\bullet}$ is a Gerstenhaber algebra,
$\Xi_{\bullet}$ is simultaneously an associative algebra module (with action $\iI$)
and a Lie algebra module (with action $\iL$) over $\aTheta^{\bullet}$,
and is equipped with a degree~$-1$ operator $d$ satisfying $d^2=0$
and a set of identities similar to those of the Cartan calculus.

For a given dg manifold $(\cM,Q)$, there are two naturally associated calculi:
the Cartan calculus $\calculus_C(\cM,Q)$ and the noncommutative calculus $\calculus_H(\cM,Q)$.
Extending Cartan calculus to `noncommutative manifolds'
has been a central problem in noncommutative geometry,
explored by Connes \cite{MR1303779,MR823176} and Gelfand--Daletski--Tsygan \cite{MR1039918}.
More precisely, $\calculus_C(\cM,Q)$ consists of the pair: \[ \big(\HTT,\HOM\big) .\]
Here, $\totTpolyM{\bullet}$ and $\totApolyM{\bullet}$ denote the spaces of polyvector fields
and differential forms on $\cM$, respectively, which are naturally cochain complexes
with the differentials $\cQ$ induced by the homological
vector field $Q$ on $\cM$. The space of polyvector fields
$\totTpolyM{\bullet}$ is naturally a Gerstenhaber algebra, with the wedge product $\wedge$
and the Schouten bracket $\schouten{\argument}{\argument}$.
The space of differential forms $\totApolyM{\bullet}$ admits a differential
and module structures over $\totTpolyM{\bullet}$, where the operators $d$, $\iI$, and $\iL$
correspond to the de Rham differential $d_{\DR}$ \eqref{DR0}, the interior product \eqref{eq:IX0},
and the Lie derivative \eqref{eq:LX10}, respectively.

On the other hand, the noncommutative calculus $\calculus_H(\cM,Q)$ consists of the pair:
\[ \big(\HDD, \quad \HCC\big) .\]
Here, $\totDpolyM{\bullet}$ and $\totCpolyM{\bullet}$ 
denote the spaces of polydifferential
operators and polyjets on $\cM$, respectively. These spaces have induced differentials $\cQ$,
and the Hochschild (co)homology differentials $\hochschild$ and $\hochschildb$.
The space $\totDpolyM{\bullet}$ admits a Gerstenhaber bracket
$\gerstenhaber{\argument}{\argument}$ and a cup product $\cup$, which, up to homotopy,
form a Gerstenhaber algebra \cite{MR161898}.
The differential $d$ on the space of polyjets $\totCpolyM{\bullet}$
is the \RinehartConnes operator $B$ \eqref{eq:Connes1}. The action operators $\iI$ and $\iL$
of $\totDpolyM{\bullet}$ on $\totCpolyM{\bullet}$ are explicitly defined
in Equations~\eqref{eq:iI} and~\eqref{eq:iL}. 
Indeed, $\calculus_H(\cM,Q)$ is the noncommutative calculus
of smooth Hochschild (co)homologies \cite{MR1783778,MR2986860}:
\[ \big(H_{\mathrm{smooth}}^{\bullet}(A, A), H_{-\bullet}^{\mathrm{smooth}}(A, A)\big) ,\]
where $A$ is the dg algebra $(C^\infty(\cM),Q)$.

The calculi \(\calculus_C(\cM, Q)\) and \(\calculus_H(\cM, Q)\) can be explicitly expressed
in terms of the dg Lie algebroid \(T_\cM \to \cM\). In fact, for any dg Lie algebroid,
one can associate calculi of these two types. 
In particular, for a Fedosov dg Lie algebroid \(\cFQ \to \cNQ\)
associated with a dg manifold \((\cM, Q)\), we have two calculi:
\(\calculus_C(\cF,\fedosov+\tauTpolyF(Q))\) and \(\calculus_H(\cF,\fedosov +\tauTpolyF(Q))\),
where \(\calculus_C(\cF, \fedosov + \tauTpolyF(Q))\) denotes the pair:
\[ \Big(\cohomology{} \big(\totTpolyF{\bullet}, \schouten{\fedosov + \tauTpolyF(Q)}{\argument} \big), 
\quad \cohomology{} \big(\totcApolyF{\bullet}, \iL_{\fedosov + \tauTpolyF(Q)}\big)\Big) ,\]
and \(\calculus_H(\cF, \fedosov + \tauTpolyF(Q))\) denotes the pair:

\[ \Big(\cohomology{} \big(\totDpolyF{\bullet},
\gerstenhaber{\fedosov + \tauTpolyF(Q)}{\argument} + \hochschild \big),
\quad\cohomology{} \big(\totcCpolyF{\bullet},
\iL_{\fedosov + \tauTpolyF(Q)} + \hochschildb \big)\Big) .\]
Here, \(\totTpolyF{\bullet}\), \(\totcApolyF{\bullet}\), \(\totDpolyF{\bullet}\),
and \(\totcCpolyF{\bullet}\) denote the spaces of polyvector fields, differential forms,
polydifferential operators, and polyjets of the Fedosov dg Lie algebroid
\(\cFQ\to\cNQ\), respectively.

The main result of this paper is as follows:

\begin{introthm}\label{introthm:mainT}
Given a dg manifold \((\cM, Q)\) and a torsion-free affine connection \(\nabla\) on \(\cM\),
let \(\cFQ \to \cNQ := \big(\cF \to \cN, \iL_{\fedosov + \tauTpolyF(Q)}\big)\)
be its corresponding Fedosov dg Lie algebroid. Then: 
\begin{itemize}
\item There exists a pair of contractions
\[ \begin{tikzcd}[cramped]
\Big(\totTpolyM{\bullet}, \schouten{Q}{\argument}\Big)
\arrow[r, "\tauTpolyQ", shift left] &
\Big(\totTpolyF{\bullet}, \schouten{\fedosov + \tauTpolyF(Q)}{\argument} \Big)
\arrow[l, "\sigmaTpolyQ", shift left]
\arrow[loop, "\hTpolyQ", out=5, in=-5, looseness=3]
\end{tikzcd} \]
and
\[ \begin{tikzcd}[cramped]
\Big(\totApolyM{\bullet}, \iL_{Q}\Big)
\arrow[r, "\tauTpolyQ", shift left] &
\Big(\totcApolyF{\bullet}, \iL_{\fedosov + \tauTpolyF(Q)}\Big)
\arrow[l, "\sigmaTpolyQ", shift left]
\arrow[loop, "\hTpolyQ", out=5, in=-5, looseness=3]
\end{tikzcd} \]
such that the pair of injections \(\tauTpolyQ\) respects the operations
\(\wedge\), \(\schouten{\argument}{\argument}\), \(\iI\), \(\iL\), and \(d\).
\item There exists a pair of contractions
\[ \begin{tikzcd}[cramped]
\Big(\totDpolyM{\bullet}, \gerstenhaber{Q}{\argument} + \hochschild\Big)
\arrow[r, "\tauDpolyQ", shift left] &
\Big(\totDpolyF{\bullet}, \gerstenhaber{\fedosov + \tauTpolyF(Q)}{\argument} + \hochschild \Big)
\arrow[l, "\sigmaDpolyQ", shift left]
\arrow[loop, "\hDpolyQ", out=5, in=-5, looseness=3]
\end{tikzcd} \]
and
\[ \begin{tikzcd}[cramped]
\Big(\totCpolyM{\bullet}, \schoutenc{Q} + \hochschildb\Big)
\arrow[r, "\tauDpolyQ", shift left] &
\Big(\totcCpolyF{\bullet}, \schoutenc{\fedosov + \tauTpolyF(Q)} + \hochschildb\Big)
\arrow[l, "\sigmaDpolyQ", shift left]
\arrow[loop, "\hDpolyQ", out=5, in=-5, looseness=3]
\end{tikzcd} \]
such that the pair of injections \(\tauDpolyQ\) respects the operations
\(\cup\), \(\gerstenhaber{\argument}{\argument}\), \(\iI\), \(\iL\), and \(B\).
\end{itemize}
\end{introthm}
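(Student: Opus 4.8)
The plan is to obtain all four contractions from a single geometric input --- that the Fedosov bundle over $\cM$ is an acyclic resolution --- by two successive applications of the Homological Perturbation Lemma, and then to deduce multiplicativity of the injections from the fact that their images are the Fedosov-flat sections, which form a subalgebra. I run the argument uniformly on the four spaces. \emph{Step 1 (Koszul contraction, then Fedosov perturbation).} The underlying graded modules of $\totTpolyF{\bullet}$, $\totcApolyF{\bullet}$, $\totDpolyF{\bullet}$, $\totcCpolyF{\bullet}$ are all $\Omega^\bullet(\cM)$-sections of a fiberwise completion of, respectively, $\Lambda T_\cM$, $\Lambda T_\cM^\vee$, and the polydifferential and polyjet bundles of the formal fiber $\widehat{S}(T_\cM^\vee)$, tensored over $C^\infty(\cM)$. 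On each of them the Koszul operator $\delta$, a partial inverse $\delta^{-1}$ of $\delta$ (which strictly raises the fiberwise polynomial degree), and evaluation $\sigma$ at the zero section of $T_\cM$ assemble into a contraction onto the corresponding space over $\cM$ carrying the zero differential (for $\totDpolyM{\bullet}$ and $\totCpolyM{\bullet}$: the internal $\hochschild$, resp.\ $\hochschildb$). Perturbing $-\delta$ into the full Fedosov differential $\fedosov$ amounts to adding $\fedosov+\delta$, which is a connection term plus curvature terms and hence does not lower fiberwise polynomial degree; therefore $\delta^{-1}\circ(\fedosov+\delta)$ is topologically nilpotent in the fiberwise-adic completion and the Homological Perturbation Lemma applies, reproducing the Fedosov-type contractions over $\cM$ studied in \cite{MR3910470}. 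The operators $\hochschild$, $\hochschildb$, $B$ act on the coefficient tensor factor only, hence commute with $\delta$, $\delta^{-1}$, $\sigma$ and are carried along unchanged.

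\emph{Step 2 ($Q$-perturbation).} Now perturb $\fedosov$ into $\fedosov+\tauTpolyF(Q)$, i.e.\ add the degree $+1$ operator induced by the vector field $\tau(Q)$ on $\cN$. Since $\tau$ is built from the formal exponential map of $\nabla$, the operator $\tauTpolyF(Q)$ does not lower fiberwise polynomial degree, so its composite with $\delta^{-1}$ and with the filtration-preserving factor $\bigl(1-\delta^{-1}(\fedosov+\delta)\bigr)^{-1}$ is again topologically nilpotent, and a second application of the Homological Perturbation Lemma yields, for each of the four spaces, a contraction of $\totTpolyF{\bullet}$ onto $\totTpolyM{\bullet}$ (and analogues), with homotopy $\hTpolyQ$ (resp.\ $\hDpolyQ$) the transferred Fedosov homotopy, projection $\sigmaTpolyQ$ (resp.\ $\sigmaDpolyQ$) equal to evaluation at the zero section in form degree $0$, and injection $\tauTpolyQ$ (resp.\ $\tauDpolyQ$) the transferred flat lift. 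It remains to identify the transferred base differential: on $\totTpolyM{\bullet}$ it is the telescoping sum $\sigma\circ\tauTpolyF(Q)\circ\bigl(1-\delta^{-1}(\fedosov+\delta)\bigr)^{-1}\circ(\text{flat lift})$, which collapses to $\schouten{Q}{\argument}$ by the identities defining $\tau$ (notably $\sigma\circ\tau(X)=X$ and the flatness of $\tau(Q)$); the same computation gives $\iL_Q$ on $\totApolyM{\bullet}$, $\gerstenhaber{Q}{\argument}+\hochschild$ on $\totDpolyM{\bullet}$, and $\schoutenc{Q}+\hochschildb$ on $\totCpolyM{\bullet}$.

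\emph{Step 3 (multiplicativity) and the main obstacle.} By construction the image of $\tauTpolyQ$ (resp.\ $\tauDpolyQ$) is the space of $\fedosov$-flat sections. Since $\iL_{\fedosov+\tauTpolyF(Q)}$, $\schouten{\fedosov+\tauTpolyF(Q)}{\argument}$, $\gerstenhaber{\fedosov+\tauTpolyF(Q)}{\argument}$ and $\schoutenc{\fedosov+\tauTpolyF(Q)}$ are derivations of $\wedge$ and $\schouten{\argument}{\argument}$, of $\iI$, $\iL$ and $d$, of $\cup$ and $\gerstenhaber{\argument}{\argument}$, and of $\iI$, $\iL$ and $B$, respectively, the $\fedosov$-flat sections form a sub-Gerstenhaber-algebra and a sub-calculus; and because all these operations are defined fiberwise by the same universal formulas, $\sigma$ restricts to an isomorphism of Gerstenhaber algebras / calculi from the flat sections onto $\totTpolyM{\bullet}$ (resp.\ $\totDpolyM{\bullet}$), with inverse $\tauTpolyQ$ (resp.\ $\tauDpolyQ$); this yields compatibility of $\tauTpolyQ$ with $\wedge,\schouten{\argument}{\argument},\iI,\iL,d$ and of $\tauDpolyQ$ with $\cup,\gerstenhaber{\argument}{\argument},\iI,\iL,B$. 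The hardest part is the second perturbation in Step 2: verifying that the $Q$-twist is genuinely small for all four spaces simultaneously and, most delicately, that the transferred base differential has no higher corrections beyond the naive $Q$-induced one --- precisely where the exact shape of $\tau$ and the torsion-freeness of $\nabla$ are used. A secondary difficulty is that on $\totDpolyF{\bullet}$ the cup product and the Gerstenhaber bracket are associative / Gerstenhaber only up to Hochschild homotopy, so the ``derivation'' statements in Step 3, hence the multiplicativity of $\tauDpolyQ$, must be read at the level of cohomology or phrased via the standard homotopies.
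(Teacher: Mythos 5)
Your overall two-stage perturbation strategy (Koszul contraction, Fedosov perturbation, then $Q$-perturbation, with multiplicativity transferred from the graded case) is the same as the paper's, but the justification of the second perturbation --- which you yourself single out as the hardest step --- contains a genuine error. You claim that $\tauTpolyF(Q)$ ``does not lower fiberwise polynomial degree,'' so that its composite with $\delta^{-1}$ is topologically nilpotent in the fiberwise-adic topology. This is false: since $\sigma(\tauTpolyF(Q))=Q$, the flat lift $\tauTpolyF(Q)$ is a vertical vector field whose leading term is $\sum_k Q^k\,\partial/\partial y_k$, and that term lowers the polynomial degree in $\hat{S}(T^\vee_\cM)$ by one; composing with the homotopy $h$ (which raises that degree by one) yields an operator that merely \emph{preserves} the adic filtration, so the geometric series does not converge adically. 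The correct smallness argument --- and the reason the paper defines the total spaces as \emph{direct sums} over the de Rham form degree $r\geq 0$ --- uses the filtration by $r$: the perturbation $\iL_{\tauTpolyF(Q)}$ (and $\hochschild$, $\hochschildb$) preserves $r$, $h$ strictly lowers it, and $r$ is bounded below by $0$, so $(h\varrho)^{m+1}$ vanishes on form degree $\leq m$. This bidegree bookkeeping also gives for free the two facts you flag as delicate: since $\tau$ lands in form degree $0$ and $h$ annihilates form degree $0$, one has $h\varrho\tau=0$, hence the perturbed injection \emph{equals} the unperturbed one and the transferred base differential is exactly $\sigma\varrho\tau$ with no higher corrections; torsion-freeness of $\nabla$ plays no role at this stage (it enters earlier, in identifying $\fedosov$ and in showing $\sigma\circ d_\cF\circ\tau=d_{\DR}$).

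Two further points. Your Step 3 asserts that the image of the injection is ``the space of $\fedosov$-flat sections''; this is not correct --- flat sections of positive form degree (e.g.\ anything in the image of $\iL_{\fedosov}\circ h$) are flat but not in the image, and $\sigma$ is not injective on all flat sections. The paper's characterization is the initial value problem $h(y)=0$, $\iL_{\fedosov}(y)=0$, $\sigma(y)=x$, and multiplicativity is proved by checking that products, brackets and contractions of elements satisfying these three conditions satisfy them again. Moreover, for the noncommutative side the operations $\cup$, the Gerstenhaber bracket, $\iI$, $\iL$ and $B$ are not given by ``fiberwise universal formulas'' on sections; the paper instead proves that $\tauDpolyF$ is a morphism of dg Hopf algebroids on $\cD(\cM)\to\enveloping{\cF}$ and of dg formal groupoids on jets, compatibly with the pairing, and deduces preservation of all calculus operations at the chain level from that structural statement --- which also disposes of your closing worry about the cup product and bracket being Gerstenhaber only up to Hochschild homotopy, without retreating to cohomology.
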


As an immediate consequence, we prove the following:
\begin{introthm}
\label{introthm:mainT1}
Under the same hypothesis as in Theorem \ref{thm:mainT}, we have:
\begin{itemize}
\item
The pair of maps \(\tauTpolyQ\) (and hence \(\sigmaTpolyQ\)) below:
\[ \begin{tikzcd}[cramped]
\cohomology{} \Big(\totTpolyM{\bullet}, \schouten{Q}{\argument}\Big)
\arrow[r, "\tauTpolyQ", shift left] &
\cohomology{} \Big(\totTpolyF{\bullet}, \schouten{\fedosov + \tauTpolyF(Q)}{\argument} \Big)
\arrow[l, "\sigmaTpolyQ", shift left]
\end{tikzcd} \]
and
\[ \begin{tikzcd}[cramped]
\cohomology{} \Big(\totApolyM{\bullet}, \iL_{Q}\Big)
\arrow[r, "\tauTpolyQ", shift left] &
\cohomology{} \Big(\totcApolyF{\bullet}, \iL_{\fedosov + \tauTpolyF(Q)}\Big)
\arrow[l, "\sigmaTpolyQ", shift left]
\end{tikzcd} \]
is an isomorphism of Tamarkin--Tsygan calculi between \(\calculus_C(\cM, Q)\)
and \(\calculus_C(\cF, \fedosov + \tauTpolyF(Q))\).
\item
The pair of maps \(\tauDpolyQ\) (and hence \(\sigmaDpolyQ\)) below:
\[ \begin{tikzcd}[cramped]
\cohomology{} \Big(\totDpolyM{\bullet},
\gerstenhaber{Q}{\argument} + \hochschild \Big)
\arrow[r, "\tauDpolyQ", shift left] &
\cohomology{} \Big(\totDpolyF{\bullet},
\gerstenhaber{\fedosov + \tauTpolyF(Q)}{\argument} + \hochschild \Big)
\arrow[l, "\sigmaDpolyQ", shift left]
\end{tikzcd} \]
and
\[ \begin{tikzcd}[cramped]
\cohomology{} \Big(\totCpolyM{\bullet}, \iL_{Q} + \hochschildb \Big)
\arrow[r, "\tauDpolyQ", shift left] &
\cohomology{} \Big(\totcCpolyF{\bullet}, \iL_{\fedosov + \tauTpolyF(Q)} + \hochschildb \Big)
\arrow[l, "\sigmaDpolyQ", shift left]
\end{tikzcd} \]
is an isomorphism of Tamarkin--Tsygan calculi between \(\calculus_H(\cM, Q)\)
and \(\calculus_H(\cF, \fedosov + \tauTpolyF(Q))\).
\end{itemize}
\end{introthm}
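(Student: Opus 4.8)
The plan is to derive Theorem~\ref{introthm:mainT1} formally from Theorem~\ref{introthm:mainT} by invoking the standard homological algebra of contractions, so that no new geometric input is required. First I would record that each of the four contractions produced by Theorem~\ref{introthm:mainT} consists, in particular, of a pair of cochain maps — the injection $\tauTpolyQ$ (respectively $\tauDpolyQ$), which is a section, and the projection $\sigmaTpolyQ$ (respectively $\sigmaDpolyQ$), which is a retraction — together with a homotopy operator, subject to the usual side relations; writing $D$ for the relevant total differential, one has $\sigmaTpolyQ \circ \tauTpolyQ = \mathrm{id}$ on the smaller complex and $\mathrm{id} - \tauTpolyQ \circ \sigmaTpolyQ = D \circ \hTpolyQ + \hTpolyQ \circ D$ on the larger one (and likewise in the polydifferential/polyjet case). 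Hence $\tauTpolyQ$ and $\sigmaTpolyQ$ are mutually quasi-inverse quasi-isomorphisms, and upon passing to cohomology they induce mutually inverse isomorphisms of $\ZZ$-graded vector spaces between each of the four pairs of cohomology spaces appearing in Theorem~\ref{introthm:mainT1}; this already supplies the underlying linear isomorphisms of the two calculi in each item.

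Next I would upgrade these to isomorphisms of Tamarkin--Tsygan calculi. By Theorem~\ref{introthm:mainT}, the pair $\tauTpolyQ$ is compatible at the cochain level with $\wedge$, $\schouten{\argument}{\argument}$, $\iI$, $\iL$ and $d$, which are precisely the operations entering the definition of a calculus; therefore the isomorphism induced on cohomology intertwines the Gerstenhaber product and bracket, the two module actions and the differential of $\calculus_C(\cM,Q)$ with those of $\calculus_C(\cF,\fedosov+\tauTpolyF(Q))$, and so is a morphism of calculi. A bijective morphism of calculi has a morphism of calculi as its inverse — this inverse being the map induced by $\sigmaTpolyQ$, by the first step — so both directions are isomorphisms of calculi, which proves the first item. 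The second item is obtained by the identical argument, now using the cochain-level compatibility of $\tauDpolyQ$ with $\cup$, $\gerstenhaber{\argument}{\argument}$, $\iI$, $\iL$ and $B$; here one only observes that, although $\cup$ and $\gerstenhaber{\argument}{\argument}$ satisfy the Gerstenhaber axioms on $\totDpolyM{\bullet}$ merely up to homotopy, they are genuine cochain-level operations intertwined by $\tauDpolyQ$, so their induced — and now strict — operations on Hochschild cohomology are intertwined by the induced isomorphism, and the same remark applies to the $\RinehartConnes$-type operator $B$ on $\totCpolyM{\bullet}$.

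I do not anticipate a substantive obstacle, since all of the analytic and combinatorial content is absorbed into Theorem~\ref{introthm:mainT}: what remains is the formal principle that a quasi-isomorphism respecting a prescribed list of algebraic operations descends to an isomorphism respecting those operations, together with the trivial fact that the inverse of such an isomorphism again respects them. The only point deserving a line of care is the bookkeeping of which operations are strict at the cochain level and which (the Gerstenhaber compatibility on polydifferential operators and polyjets) hold only up to homotopy; in every case, however, the calculus structure being compared is the one realized on cohomology, where the induced maps are honest isomorphisms intertwining honest operations, so the distinction is immaterial for the conclusion.
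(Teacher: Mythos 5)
Your proposal is correct and follows essentially the same route as the paper, which derives this statement as an immediate corollary of Theorem~\ref{introthm:mainT}: the contraction identities make $\tauTpolyQ$ and $\sigmaTpolyQ$ (resp.\ $\tauDpolyQ$ and $\sigmaDpolyQ$) mutually quasi-inverse, hence mutually inverse on cohomology, and the cochain-level compatibility of the injections with the calculus operations forces the induced maps to be isomorphisms of Tamarkin--Tsygan calculi. Your remark that the Gerstenhaber structure on polydifferential operators holds only up to homotopy at the cochain level but becomes strict on cohomology is the right bookkeeping point and matches the paper's treatment.
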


One of the main applications of Theorem~\ref{introthm:mainT}
is the study of the relationship between the calculi $\calculus_C(\cM,Q)$
and $\calculus_H(\cM,Q)$, which leads to the establishment of a Duflo--Kontsevich-type theorem --- see~\cite{paper-two}. Furthermore, we believe that Theorem~\ref{introthm:mainT}
is of independent interest, for instance in the study of QFT
along the line of \cite{MR3586504,MR2827826},
where the formal geometry of the target space often plays a crucial role in constructing the action.

Our approach is first to establish Fedosov contractions for graded manifolds, i.e. to prove Theorem \ref{introthm:mainT} for the case that $Q=0$. Then we use homotopy perturbation to treat the general case of dg manifolds. Note that in the ordinary approach to the globalization of Kontsevich's formality theorem and Tsygan's formality theorem for $\RR^d$ to arbitrary manifolds, Dolgushev obtained Fedosov-type quasi-isomorphisms in the setting of ordinary smooth manifolds \cite{MR2102846, MR2199629}. In the graded manifolds case, Cattaneo-Felder obtained Fedosov-type quasi-isomorphisms for polyvector fields and polydifferential operators \cite{MR2304327}. Here, rather than quasi-isomorphisms, we construct contractions, which are not only stronger as results, but also crucial to upgrading the Fedosov quasi-isomorphisms to dg manifolds, i.e., graded manifolds \emph{endowed with a homological vector field}.


\subsection*{Notations and conventions}

In this paper, ``graded'' means $\ZZ$-graded unless otherwise stated. 

Given a bigraded $R$-module $C=\bigoplus_{p,q\in\ZZ}C^{p,q}$, we set 
\[ \tot_\oplus^n(C) := \bigoplus_{\substack{p+q=n \\ p,q \in \ZZ}} C^{p,q}
\qquad \text{and} \qquad \tot_\Pi^n(C) := \prod_{\substack{p+q=n \\ p,q \in \ZZ}} C^{p,q} ,\]
which are called the \emph{direct-sum total space}
and the \emph{direct-product total space}, respectively. 
The notations $\tot_\oplus$ and $\tot_\Pi$ are inspired
by the two types of total complexes of a double complex.
Explicitly, in this paper, a double complex $(C,d_1,d_2)$
is a $\ZZ$-bigraded $R$-module $C=C^{\bullet,\bullet}$
together with operators $d_1:C^{p,q}\to C^{p+1,q}$
and $d_2:C^{p,q}\to C^{p,q+1}$ such that both of the operators
square to zero and they anti-commute. Given a double complex $(C,d_1,d_2)$,
one has two types of total complexes:
the direct-sum total complex $(\tot_\oplus^\bullet(C),d_1+d_2)$
and the direct-product total complex $(\tot_\Pi^\bullet(C),d_1+d_2)$.

Let $V$ be a graded $R$-module and $i,j\in\ZZ$.
Let $V[i]$ be the graded $R$-module whose homogeneous components are 
$ (V[i])^j = V^{i+j} .$
We denote by $\nshift: V \to V[1] $ the degree-shifting map of degree $-1$, and denote by $ \pshift: V \to V[-1] $ the degree-shifting map of degree $+1$. Thus, we have $V[1] = \nshift V$ and $V[-1] = \pshift V$. 
The $R$-action on $V[i]$ is given by $r \cdot \nshift^i(v) = (-1)^{i |r|} \nshift^i (r \cdot v)$ for $r \in R$ and $v \in V$, where $\nshift^i:V \to V[i]$ is the degree-shifting map.

Throughout the paper, we also use the following notations: 
$\NN=\{1,2,3,\cdots\}$ and $\NO=\{0\}\cup\NN$.

\section{Tamarkin--Tsygan calculi of dg manifolds}

\subsection{Tamarkin--Tsygan calculi}
\label{set:1.1}

Recall that a Gerstenhaber algebra is a graded vector space
$\aTheta^\bullet=\oplus_{k\in\ZZ}\aTheta^k$ together with
\begin{enumerate}
\item a graded commutative associative algebra structure on $\aTheta^\bullet$
(with multiplication $\argument\cdot\argument:\aTheta^p\otimes\aTheta^q\to\aTheta^{p+q}$);
\item a graded Lie algebra structure on $(\aTheta[1])^\bullet$
(with bracket $\lie{\argument}{\argument}:\aTheta^p\otimes\aTheta^q\to\aTheta^{p+q-1}$)
\end{enumerate}
such that
the multiplication and bracket are compatible in the sense that,
for all homogeneous $X\in\aTheta$, $Y\in\aTheta$, and $Z\in\aTheta$, we have
\[ \lie{X}{Y\cdot Z}=\lie{X}{Y}\cdot Z+(-1)^{(\degree{X}-1)\degree{Y}} Y\cdot\lie{X}{Z} .\]
%
The following definition is due to Tamarkin--Tsygan~\cite{MR1783778,MR2986860}.

\begin{definition}[{\cite[Definitions~3.6.3 and~3.6.4]{MR2986860}}]
\label{def:calc}
A \emph{calculus} is a pair of graded spaces $\aTheta^{\bullet}$ and $\Xi_{\bullet}$ such that
\begin{itemize}
\item[(i)] $\aTheta^{\bullet}$ carries a \emph{structure of Gerstenhaber algebra};
\item[(ii)] $\Xi_{\bullet}$ carries a \emph{structure of graded module
over the graded commutative algebra} $\aTheta^{\bullet}$
(with corresponding action denoted by
$\iI_X:\Xi_{\bullet} \to \Xi_{\bullet+|X|}, \forall X\in\aTheta^{\bullet}$);
\item[(iii)] $\Xi_{\bullet}$ carries a \emph{ structure of graded module
over the graded Lie algebra} $(\aTheta[1])^\bullet$
(with corresponding action denoted by
$\iL_X:\Xi_{\bullet} \to \Xi_{\bullet+|X|-1} , \forall X\in\aTheta^{\bullet}$);
\item[(iv)] there exists an operator $d$ of degree~$-1$ on $\Xi_{\bullet}$
satisfying $d^2=0$;
\item[(v)] the following identities hold
\begin{gather}
\iL_X=\commutator{\iI_X}{d} = (-1)^{\degree{X}-1} \commutator{d}{\iI_X}, \label{brick} \\
\iI_{\lie{X}{Y}}=\commutator{\iL_X}{ \iI_Y }. \label{straw}
\end{gather}
\end{itemize}
\end{definition}

Equation~\eqref{brick} is called the Cartan formula.
It is simple to see that (iii) is in fact redundant,
which follows automatically from (iv) and (v).		
Furthermore, the following identity holds automatically
\begin{equation}
\iL_{X\cdot Y}= (-1)^\degree{Y}
\iL_X\circ \iI_Y+ \iI_X\circ\iL_Y. \label{wood}
\end{equation}

Alternatively, Equation~\eqref{straw} can be written as
\begin{equation}
\iI_{\lie{X}{Y}}=\commutator{\commutator{\iI_X}{d}}{\iI_Y},
\end{equation}
which means that the Lie bracket on $(\aTheta[1])^{\bullet}$ can be
realized as a derived bracket \cite{MR2163405, MR1427124, MR2104437}.


The following are two basic examples, which are of fundamental importance in this paper.

\begin{example}\label{ex:1}
Let $M$ be a smooth manifold.
Let $\aTheta^{\bullet}:=T_{\poly}^{\bullet}(M)=\Gamma(\Lambda^\bullet T_M)$
be its space of polyvector
fields, and $\Xi_{\bullet}:=\Omega^{-\bullet}(M)$
its space of differential forms.
The pair $\big(\aTheta^{\bullet}, \Xi_{\bullet})$ is a
Tamarkin--Tsygan calculus, which will be denoted by $\calculus_C(M)$:
the operators $\iI_X$ and $\iL_X$ are simply the usual interior product
and Lie derivative with respect to the polyvector field $X$
while the operator $d$ is the de~Rham differential. 

Note that a vector field $X$ on $M$ is an element of degree $(+1)$
in $\aTheta^{\bullet}=T_{\poly}^{\bullet}(M)$.
Thus, when $X$ is a vector field rather than a polyvector field,
Equation~\eqref{brick} reduces to the classical Cartan formula:
\[ \iL_X= \commutator{\iI_X}{d}= \commutator{d}{\iI_X}=\iI_X \circ d+d\circ \iI_X .\] 
Assigning degree $(+1)$ to vector fields has the advantage of making the associative
multiplication in $T_{\poly}^{\bullet}(M)$ a degree preserving map. 
\end{example}

\begin{example}\label{ex:2}
For any associative algebra $A$, we denote by $H^{\bullet}(A,A)$
and $H_{\bullet}(A,A)$ its Hochschild cohomology
and Hochschild homology, respectively. 
It is a classical result 
that there exists a calculus structure --- expressible by explicit formulas --- on the pair $\big(H^{\bullet}(A,A),H_{-\bullet}(A,A)\big)$ \cite{MR161898,
MR154906, MR1783778, MR1039918, MR823176, MR2986860}.

\end{example}

When $A$ is the algebra $C^\infty(M)$ of smooth functions
on a manifold $M$, instead of purely algebraic (co)\-ho\-mol\-o\-gies,
one often needs to consider certain smooth Hochschild (co)homologies.
The calculus obtained in this way will be denoted by $\calculus_H(M)$.
See \cite{MR2986860} for details.

\begin{definition}
\label{def:calc-mor}
Let $\big(\aTheta^{\bullet}_i, \Xi_{\bullet}^i\big), \quad i=1, 2$, be calculi. 
A \emph{morphism} from $\big(\aTheta^{\bullet}_1, \Xi_{\bullet}^1\big)$ to
$\big(\aTheta^{\bullet}_2, \Xi_{\bullet}^2\big)$
consists of a pair of degree $0$ morphisms, denoted by
the same symbol by abuse of notation,
$$
\tau: \aTheta^{\bullet}_1 \to \aTheta^{\bullet}_2 \qquad \text{and} \qquad \tau: \Xi_{\bullet}^1 \to \Xi_{\bullet}^2
$$
such that
\begin{itemize}
\item[(i)] the first map $\tau: \aTheta^{\bullet}_1 \to \aTheta^{\bullet}_2$ is a morphism
of Gerstenhaber algebras;
\item[(ii)] the second map $ \tau: \Xi_{\bullet}^1 \to \Xi_{\bullet}^2$ satisfies
the condition $d\circ\tau=\tau\circ d$;
\item[(iii)] the maps preserves the module structures: for any $ X\in\aTheta_1^{\bullet} $ and $\xi\in\Xi_{\bullet}^1$, we have $\tau(\iI_X \xi) = \iI_{\tau(X)} \tau (\xi)$.
\end{itemize}
Here we used the same notations for the structure maps of both
calculi $\big(\aTheta^{\bullet}_i, \Xi_{\bullet}^i\big), \quad i=1, 2$.
\end{definition}

Although it is not required by Definition~\ref{def:calc-mor}, the Cartan formula implies that a morphism of calculi also preserves the Lie module structures: $\tau(\iL_X \xi) = \iL_{\tau(X)} \tau (\xi)$.


\subsection{Cartan calculi of dg manifolds}\label{corniche}

There are several definitions of graded manifolds in the literature. 
For us, a \emph{($\ZZ$-)graded manifold} $\cM$ over $\KK$ consists of a smooth manifold $M$
together with a sheaf $\fA_{\cM}$ of $\ZZ$-graded commutative $\cO_M$-algebras over $M$
such that there exists a covering $\{U_i\}_{i\in I}$ of $M$
by open submanifolds $U_i\subset M$ and a family of isomorphisms
$\fA_{\cM}(U_i)\cong \prod_{k=0}^\infty \cO(U_i) \otimes_\KK S^k(V\dual)$,
where $V$ is a fixed finite-dimensional $\ZZ$-graded vector space over the field $\KK$,
$V\dual$ denotes the $\KK$-dual of $V$.
Essentially $\fA_{\cM}(U_i)$ is the $\KK$-algebra of formal series
on $V$ with coefficients in $\cO(U_i)$.
The manifold $M$ is called the support of the graded manifold $\cM$.
In the sequel, we denote the graded $\KK$-algebra $\fA_{\cM}(M)$
of global sections of the sheaf $\fA_{\cM}$ either by $C^\infty(\cM)$ or by $\cR$.

In particular, any graded vector bundle $E = \bigoplus_{i \in \ZZ} E^i$
over a smooth manifold $M$ determines a graded manifold $\cM$:
for any open subset $U$ of $M$, 
\[ \fA_{\cM}(U) =\prod_{k=0}^\infty\Gamma(U, S^k E\dual)
=\varprojlim_p \frac{\Gamma(U, S E\dual)}{I^p} \] 
with $I=\Gamma(U,S^{\geq 1}E\dual)$. 
Here the algebra $\Gamma(U,\hat{S}E\dual)$ is equipped with the \emph{$I$-adic topology}. 
%
See \cite{MR2709144,MR2275685,MR2819233,MR4514383}. 

Let $\cM$ be a graded manifold with support $M$.
Let $(U;x_1,\cdots,x_m)$ be a local chat of $M$. 
We can think of the coordinate functions $x_1,\cdots,x_m$
as elements of the $\KK$-algebra $\cO_M(U)$. 
Choose a homogeneous local frame of $E$ which induces
homogeneous coordinate functions $(x_{m+1},\cdots,x_{m+r})$ a fiber of $E$.
Together we obtain a local chat $(x_1,\cdots,x_m; x_{m+1},\cdots,x_{m+r})$ of $\cM$.
The $\KK$-algebra $\fA(U)$ is generated by smooth functions
of $(x_1,\cdots,x_m)$ and formal power series on $(x_{m+1},\cdots,x_{m+r})$.


\begin{definition}
\label{def:dg}
A \emph{dg manifold} is a graded manifold $\cM$ endowed
with a homological vector field, i.e.\ a derivation $Q$ of degree~$+1$
on $C^\infty(\cM)$ satisfying $\commutator{Q}{Q}=0$.
\end{definition}

Let $(\cM,Q)$ be a dg manifold over $\KK$. 
For all $p\geq 0$, let $\Tpolym{p}$ denote the space
$\sections{S^p(T_\cM[-1])}$, which is isomorphic to $\sections{\Lambda^{p}T_{\cM}}[-p]$,
of $p$-vector fields on $\cM$. In particular, we have
$\Tpolym{0}\cong C^\infty(\cM)=\cR$
and $\Tpolym{1}\cong \pshift \sections{T_{\cM}}=\pshift \Der(\cR) $.
We use the symbol $\TpolyM{0}{n}$ to denote the
space of smooth functions of degree~$n$ on $\cM$,
and the symbol $\TpolyM{p}{n}$ to denote the degree~$n$ subspace
of $\Tpolym{p}$.\footnote{Note that the notations $\TpolyM{p}{n}$
and $\DpolyM{p}{n}$ in this paper have slightly different meaning
from those in~\cite{MR3754617}. Essentially, there is a degree shift
between the conventions used in these two papers.}
The space $\Tpolym{}:= \bigoplus_{p =0}^\infty \Tpolym{p}$ 
is called the \emph{space of polyvector fields on $\cM$},
which is equipped with the bigrading: 
$ \big(\Tpolym{}\big)^{p,q} = \TpolyM{p}{p+q} .$ 
With this bigrading, we have the direct-sum total space 
\begin{equation}\label{eq:GaredeNord}
\totTpolyM{n} = \bigoplus_{p+q =n} \big(\Tpolym{}\big)^{p,q}
=\bigoplus_{p=0}^\infty \TpolyM{p}{n}
.\end{equation}

When endowed with the graded commutator $\schouten{\argument}{\argument}$,
the space $\TpolyM{1}{\bullet} = \big(\pshift \Der(\cR) \big)^\bullet$
of graded derivations of $\cR$ is a graded Lie algebra.
This Lie bracket can be extended to the space $\totTpolyM{n}$
of graded polyvector fields on $\cM$ in such a way that the triple $\big(\totTpolyM{\bullet},\schouten{\argument}{\argument},\wedge\big)$ 
becomes a Gerstenhaber algebra.
This extended bracket is called Schouten bracket.
The homological vector field $Q$ induces a degree~$+1$ differential 
$ \tQ: \TpolyM{p}{n} \to \TpolyM{p}{n+1} ,$
namely the Lie derivative $\tQ:
=\schouten{Q}{\argument}$ w.r.t.\ the homological vector field $Q$.
Thus we obtain a dg Gerstenhaber algebra
$ \big(\totTpolyM{\bullet}, \schouten{Q}{\argument} ,\schouten{\argument}{\argument},\wedge\big) . $ 
Its cohomology $\cohomology{\bullet}\big(\totTpolyM{},\tQ\big)$ is
a Gerstenhaber algebra.

Similarly, let $\Apolym{-p}=\sections{S^{p}(T^\vee_\cM [1])}
= \Omega^p (\cM)[p]$ be the space of differential $p$-forms on $\cM$ with negative degree-shifting. 
For example, $dx^i$ is of degree $-1 + |x^i|$ in $\Apolym{-1}$. 
We denote the degree~$n$ subspace of $\Apolym{-p}$ by $\ApolyM{-p}{n}$.
The space $\Apolym{}:= \bigoplus_{p =0}^\infty \Apolym{-p}$ is referred to as the \emph{space of differential forms on $\cM$},
which is equipped with the bigrading: $\big(\Apolym{}\big)^{-p,q}=\ApolyM{-p}{-p+q} .$ 
With this bigrading, we have the direct-product total space
\begin{equation}\label{eq:GaredeEst} 
\totApolyM{n}=\prod_{-p+q=n}\big(\Apolym{}\big)^{-p,q}
=\prod_{p = 0}^\infty \ApolyM{-p}{n} .
\end{equation}
Here we consider the direct-sum total space of $\Tpolym{}$
and the direct-product total space of $\Apolym{}$,
which are relevant to the Duflo--Kontsevich type theorems for dg manifolds \cite{paper-two}.
We will use an analogous setting for $\Dpolym{}$ and $\Cpolym{}$ in the next section. 

The homological vector field $Q$ induces a degree~$+1$ differential
$\aQ$ via the Lie derivative $\aQ=\liederivative{Q}: \ApolyM{-p}{n}\to \ApolyM{-p}{n+1} .$ 
In the sequel, for the simplicity of notations,
we use $\aQ$ to denote the Lie derivative $\liederivative{Q}$ 
w.r.t.\ the homological vector field $Q$,
whose meaning should be adapted to the context by the objects it refers to.

By $d_{\DR}$, we denote the de Rham differential
\begin{equation}
\label{DR0}
d_{\DR}: \Apolym{-p} \to \Apolym{-p-1},
\end{equation}
For any $X\in\Tpolym{d}$, denote by
\begin{equation}
\label{eq:IX0}
\iI_X: \Apolym{-p} \to \Apolym{-p+d}
\end{equation}
the natural contraction operator;
and
\begin{equation}
\label{eq:LX10}
\iL_{X}: \Apolym{-p} \to \Apolym{-p+d-1}, \qquad \iL_{X}=(-1)^{|X|-1} \commutator{d_{\DR}}{\iI_X},
\end{equation}
the Lie derivative defined by the Cartan formula. 
It is simple to see that all the operations $d_{\DR}$, $\iI$ and $\iL$ are compatible with the
dg structures on $\totTpolyM{\bullet}$
and $\totApolyM{\bullet}$, and therefore descend to
well-defined operations, denoted by the same
symbols, to their respective ${\aQ}$-(co)homologies
$\HTT$ and $\HOM$. 
Thus we have

\begin{proposition}
Let $(\cM,Q)$ be a dg manifold.
The pair $(\aTheta^{\bullet}, \Xi_{\bullet})$, where
\[ \aTheta^{\bullet}:=\HTT,
\quad\quad\text{and}\quad\quad
\Xi_{\bullet}=\HOM ,\]
admit a Tamarkin--Tsygan calculus,
whose operators $d$, $\iI_X$, and $\iL_X$ are respectively
the de Rham differential $d_{DR}$ \eqref{DR0},
the interior product w.r.t.\
the polyvector field $X$ \eqref{eq:IX0}, and
the Lie derivative w.r.t.\ $X$ \eqref{eq:LX10}.
\end{proposition}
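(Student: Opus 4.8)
The plan is to prove the statement at the level of cochain complexes first, and then transport it to $\tQ$- and $\aQ$-cohomology. \emph{Step 1 (Cartan calculus at the cochain level).} I would first observe that the pair $\big(\totTpolyM{\bullet},\totApolyM{\bullet}\big)$, equipped with the wedge product $\wedge$ and the Schouten bracket $\schouten{\argument}{\argument}$ on $\totTpolyM{\bullet}$, with $\iI$ and $\iL$ from \eqref{eq:IX0}–\eqref{eq:LX10} as the associative and Lie module actions and the de Rham differential $d_{\DR}$ of \eqref{DR0} as the operator $d$, already satisfies all the identities (i)–(v) of Definition~\ref{def:calc} before any cohomology is taken. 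Axiom (i) is the Gerstenhaber algebra structure recalled in the excerpt; the associativity of the $\wedge$-action (ii) and the Lie-module property (iii) are standard — in fact, as noted after Definition~\ref{def:calc}, (iii) follows formally from (iv) and the Cartan formula \eqref{brick}, which holds here by the very definition of $\iL$ in \eqref{eq:LX10}; and (iv), $d_{\DR}^2=0$, is classical. The one identity that requires an argument is the derived-bracket identity \eqref{straw}, $\iI_{\schouten{X}{Y}}=\commutator{\iL_X}{\iI_Y}$, for arbitrary polyvector fields $X,Y$; I would prove it by induction on the polyvector degree, reducing to the case of vector fields (where it is the classical identity $\iI_{\schouten{X}{Y}}=\commutator{\iL_X}{\iI_Y}$) by means of the multiplicativity $\iI_{X\wedge Y}=\iI_X\circ\iI_Y$, the Leibniz rule \eqref{wood} for $\iL_{X\wedge Y}$, and the Leibniz rule for $\schouten{\argument}{\argument}$ with respect to $\wedge$, keeping track of the Koszul signs coming from the $\ZZ$-grading. (Equivalently, this is just the Cartan calculus of the tangent dg Lie algebroid $T_\cM\to\cM$.)

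\emph{Step 2 (compatibility with $\tQ$ and $\aQ$).} Since $Q$ is a vector field of degree $+1$ with $\schouten{Q}{Q}=0$, I would check: (a) $\tQ=\schouten{Q}{\argument}$ is a degree $+1$ derivation of $\wedge$ (the sign in the Gerstenhaber Leibniz rule is trivial because $Q$ has polyvector degree $1$) and of $\schouten{\argument}{\argument}$ (graded Jacobi), squaring to zero because $\schouten{Q}{Q}=0$; (b) $\aQ=\liederivative{Q}$ squares to zero (apply $\iL_{\schouten{X}{Y}}=\commutator{\iL_X}{\iL_Y}$ to $X=Y=Q$) and commutes with $d_{\DR}$, since $\aQ=\commutator{\iI_Q}{d_{\DR}}$ by \eqref{brick} and hence $\commutator{\aQ}{d_{\DR}}=\commutator{\commutator{\iI_Q}{d_{\DR}}}{d_{\DR}}=0$ by graded Jacobi and $d_{\DR}^2=0$; (c) for every polyvector field $X$ one has $\commutator{\aQ}{\iI_X}=\iI_{\tQ X}$ by \eqref{straw}, and therefore also $\commutator{\aQ}{\iL_X}=\iL_{\tQ X}$ by applying \eqref{brick} once more. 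Thus $\wedge$, $\schouten{\argument}{\argument}$, $d_{\DR}$, $\iI$, and $\iL$ are all compatible with the pair of differentials $(\tQ,\aQ)$.

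\emph{Step 3 (passage to cohomology and conclusion).} Because $\tQ$ preserves each $\TpolyM{p}{\bullet}$ and $\aQ$ preserves each $\ApolyM{-p}{\bullet}$, the $\tQ$- and $\aQ$-cohomologies are computed factorwise, so there is no interaction with the direct-sum and direct-product total-space constructions, and Step 2 shows that the operations descend to well-defined operations on $\HTT$ and $\HOM$; the induced differential on $\HOM$ is $d_{\DR}$, and the module actions are $\iI_X$ and $\iL_X$. Finally, since every identity in Definition~\ref{def:calc} already holds at the cochain level by Step 1, it continues to hold on cohomology, so $\big(\HTT,\HOM\big)$ is a Tamarkin–Tsygan calculus with the asserted operators. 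The main obstacle is the cochain-level derived-bracket identity \eqref{straw} for arbitrary polyvector fields on a $\ZZ$-graded manifold: the inductive reduction to vector fields is conceptually straightforward, but the Koszul signs attached to the bidegrees must be bookkept with care, and one must make sure the sign conventions fixed in \eqref{eq:IX0}–\eqref{eq:LX10} are exactly those for which \eqref{brick} and \eqref{straw} hold on the nose. Everything else is either classical or an immediate consequence of $\schouten{Q}{Q}=0$ through the graded Jacobi identity.
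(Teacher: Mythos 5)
Your proposal is correct and follows the same route as the paper, which simply asserts that the Cartan calculus identities hold at the cochain level and that $d_{\DR}$, $\iI$, and $\iL$ are compatible with $\tQ$ and $\aQ$, hence descend to cohomology. Your Steps 1--3 merely spell out the standard details (derived-bracket identity by induction on polyvector degree, compatibility via $\schouten{Q}{Q}=0$ and graded Jacobi) that the paper leaves implicit.
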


Such a calculus is denoted $\calculus_C(\cM,Q)$, which is a dg extension
of $\calculus_C(M)$ in Section~\ref{set:1.1}.

\subsection{Noncommutative calculi of dg manifolds}\label{sec:HtpCalDGMfd}

Next, we discuss the dg analogue of $\calculus_H(M)$ in Section~\ref{set:1.1}.
For $\aTheta^{\bullet}$, we should take ``smooth
Hochschild cohomology'' of the dg manifold $(\cM,Q)$. 
More precisely, for Hochschild cochain complex, we
take polydifferential operators on $(C^\infty(\cM),Q)$,
which is a subcomplex of the full Hochschild cochain complex of the dg algebra $(C^\infty(\cM),Q)$. 
Also, since the dg algebra $(C^\infty(\cM),Q)$ are $\ZZ$-graded,
there are two choices of Hochschild cochain complexes with
respect to the grading: direct sum and direct product.
Throughout the paper, we will choose ``direct sum'' following
\cite{MR2986860,MR4584414}, since it is more relevant to the formality theorem
\cite{MR2062626,MR2699544}, on which we rely in order to prove 
our Duflo-Kontsevich type theorems \cite{MR3754617,paper-two}. 
Note that the direct sum Hochschild cohomology of
a differential graded algebra behaves significantly
differently from the ordinary Hochschild
cohomology, i.e. the direct product Hochschild cohomology \cite{MR2202177,keller2003derived},
and are not well behaved under the homotopy equivalence.

A linear differential operator on $\cM$ is a $\KK$-linear endomorphism
of $\cR=C^\infty(\cM)$ that can be written as a finite sum of compositions
of derivations (i.e.\ vector fields) and multiplications
by specified functions (i.e.\ differential operators of order zero).
We use the symbol $\sD(\cM)$ to denote the space of linear differential
operators on $\cM$, and the symbol $\big(\sD(\cM)\big)^n$ to denote
the subspace consisting of linear differential operators of degree~$n$.
We also consider polydifferential operators on $\cM$.

Let $\pshift\sD(\cM):=\sD(\cM)[-1]$
be the suspended space of linear differential operators on $\cM$,
which is naturally an $\cR$-module.
It can be naturally embedded into $\Hom_\KK(\cR[1],\cR)$ as a graded vector subspace.
The space $\sD^p_{\poly}(\cM)$ of $p$-differential operators on $\cM$ 
is defined as follows:
$\sD^p_{\poly}(\cM)=\bigotimes_\cR^p\big(\pshift\sD(\cM)\big)$ 
for $p\geq 1$ and $\sD^0_{\poly}(\cM)=\cR$. 
By $\DpolyM{p}{n}$, we denote the space
of $p$-differential operators on $\cM$ of total degree~$n$.
Then $\DpolyM{0}{n}$ is
the space of smooth functions of degree~$n$ on $\cM$.

The left $\cR$-module $\Dpolym{}:= \bigoplus_{p =0}^\infty \Dpolym{p}$ is referred to as the \emph{space of polydifferential operators on $\cM$}, which is equipped with the bigrading: $\big(\Dpolym{}\big)^{p,q}=\DpolyM{p}{p+q}.$ 
With this bigrading, we have the direct-sum total space 
\begin{equation}\label{eq:totDm}
\totDpolyM{n}=\bigoplus_{p+q=n}\big(\Dpolym{}\big)^{p,q}
=\bigoplus_{p=0}^\infty\DpolyM{p}{n}
.\end{equation}

Endowing the space of polydifferential operators
with the standard Gerstenhaber bracket
\begin{equation}\label{eq:Gbracketdg}
\gerstenhaber{\argument}{\argument}:
\DpolyM{p}{n} \times\DpolyM{p'}{n'}\to\DpolyM{p+p'-1}{n+n'-1} 
\end{equation}
and the Hochschild differential
\[ \hochschild:=\gerstenhaber{m}{\argument}:\DpolyM{p}{n}\to\DpolyM{p+1}{n+1} , \]
makes $\totDpolyM{\bullet}[1]$ into a dgla \cite{MR161898},
where $m\in\DpolyM{2}{2}$ is the shifted multiplication defined by 
\begin{equation}\label{eq:ShiftedMultiplication}
m(\nshift f,\nshift g)=(-1)^\degree{f} fg, \qquad\forall f,g\in\cR.
\end{equation}
Here $\nshift:\cR\to\cR[1]$ is the degree $(-1)$-shifting map,
and $\Dpolym{2} = \pshift \sD(\cM) \otimes_\cR \pshift \sD(\cM)$ is identified
with a subspace of $\Hom(\cR[1] \otimes_\KK \cR[1], \cR)$.
See, for example, \cite[Appendix~B]{MR4584414}.

The tensor product of left $\cR$-modules determines a cup product
\begin{equation}\label{eq:cupdg}
\cup:\DpolyM{p}{n} \times\DpolyM{p'}{n'}\to\DpolyM{p+p'}{n+n'}
.\end{equation}
The homological vector field $Q$ induces a degree~$(+1)$ differential
\[ \dQ=\gerstenhaber{Q}{\argument}:\quad \totDpolyM{\bullet}\to\totDpolyM{\bullet+1} .\]

Since $(\cM,Q)$ is a dg manifold,
it follows that $\gerstenhaber{Q+m}{Q+m}=0$.
Hence $\big(\dQ+\hochschild\big)^2=0$.
The Hochschild cohomology of the dg manifold $(\cM,Q)$,
denoted $\HDD$, is the cohomology of the direct-sum total cochain complex $\CDD$ of the double complex $(\Dpolym{},\hochschild, \dQ)$.

It is standard that the Gerstenhaber bracket \eqref{eq:Gbracketdg}
$\gerstenhaber{\argument}{\argument}$ and the cup product \eqref{eq:cupdg}
$\cup$ descend to the Hochschild cohomology. 
Endowed with the cup product and the Gerstenhaber bracket,
$\HDD$ becomes a Gerstenhaber algebra.
See \cite[Appendix]{MR2304327} and \cite[Section~2.2]{MR2986860}.
For more details, the reader might wish to consult
\cite{MR3754617,MR2275685,MR2986860, PolishSurvey}.

Denote by $\jm:=\Hom_\cR(\sD(\cM),\cR)$ the space of $\infty$-jets of $\cM$, which is naturally an $\cR$-module.
For every $p\geq 1$, let $\Cpolym{-p}:=\big(\nshift \jm \big)^{\cotimes_\cR \, p}
=\big(\jm [1] \big)^{\cotimes_\cR \, p}$,
called the space of \emph{$p$-polyjets} of $\cM$.
For $p=0$, let $\Cpolym{0}:=\cR$. Then, for every $p\geq 0$,
$\Cpolym{p}$ can be identified with $\cJ^\infty_\Delta\big( \cM^{\times (p+1)} \big)[p]$,
the space of $\infty$-jets along the diagonal $\Delta$ of the product
$\cM \times \cdots \times \cM$ of $(p+1)$-copies of the graded manifold $\cM$.
The isomorphism is induced by the map $\Phi: \big(C^\infty(\cM)
\otimes_\KK (C^\infty(\cM))^ {\otimes_\KK p}\big)[p] 
\to \Cpolym{-p}$ 
which is defined by
\begin{equation}
\label{eq:Phi1}
\duality{\Phi(a_0\otimes a_1\otimes\cdots\otimes a_p)}
{u_1\otimes\cdots\otimes u_p}
= \pm\, a_0 \cdot u_1(a_1) \cdots u_p(a_p),
\end{equation}
for $a_0, \cdots, a_p \in C^\infty(\cM)$.
See~\cite{MR2986860} for more details. 
By $\CpolyM{-p}{n}$, we denote the subspace
of $\Cpolym{-p}$ consisting of elements of total degree~$n$. 
Then $\CpolyM{0}{n}$ is the space of smooth functions of degree~$n$ on $\cM$.

The space $\Cpolym{}:= \bigoplus_{p =0}^\infty \Cpolym{-p}$ is called the \emph{space of polyjets on $\cM$} and is equipped with the bigrading: 
$\big(\Cpolym{}\big)^{-p,q}=\CpolyM{-p}{-p+q}.$ 
With this bigrading, we have the direct-product total space 
\begin{equation}\label{eq:totCm}
\totCpolyM{n}=\prod_{-p+q=n}\big(\Cpolym{}\big)^{-p,q}
=\prod_{p=0}^\infty\CpolyM{-p}{n}
.\end{equation}
Following \cite{MR2986860}, we introduce the Hochschild boundary differential $\hochschildb$,
\RinehartConnes operator $B$, and the pairings $\iI$ and $\iL$ as follows.
Here we implicitly use the identification induced by \eqref{eq:Phi1} and consider polyjets on $\cM$ as elements in
$\prod_{p\geq 0}\cJ^\infty_\Delta [p]$. 
The advantage of such considerations is that we can use the exact same
formulas for dg algebras as in \cite{MR2986860} without any modifications. 

%
Let $a_0, \cdots, a_p \in C^\infty(\cM)$. Set $B:\CpolyM{-p}{n}\to\CpolyM{-p-1}{n-1}$ 
to be the \RinehartConnes operator, which is induced by the formula 
\begin{multline}\label{eq:Connes1}
B(a_0\otimes a_1\otimes\cdots\otimes a_p)
=\sum_{i=0}^p \pm\,
\big(1\otimes a_i\otimes a_{i+1}\otimes\cdots\otimes a_p\otimes
a_0\otimes\cdots\otimes a_{i-1} \\
\pm\, a_i\otimes 1\otimes a_{i+1}\otimes\cdots\otimes a_p\otimes
a_0\otimes\cdots\otimes a_{i-1}\big).
\end{multline}
%
For any $\iD\in\Dpolym{d}$, let $\iI_\iD:\Cpolym{-p}\to\Cpolym{-p+d}$ be the map 
\begin{equation}\label{eq:iI}
\iI_\iD(a_0 \otimes a_1 \cdots \otimes a_p) = (-1)^{|\iD||a_0|} a_0
\iD ( a_1 , \cdots , a_d) \otimes a_{d+1} \cdots \otimes a_p, 
\end{equation}
and $\iL_\iD:\Cpolym{-p}\to\Cpolym{-p+d-1}$ the map
\begin{multline}\label{eq:iL}
    \iL_\iD (a_0 \otimes \ldots \otimes a_p) = \sum _{k=1}^{p-d} \epsilon_k \cdot  a_0 \otimes \ldots \otimes \iD(a_{k+1}, \ldots, a_{k+d}) \otimes \ldots \otimes a_p  \\
    + \sum _{k=p+1 -d}^{n} \eta_k \cdot  \iD (a_{k+1}, \ldots, a_p, a_0, \ldots ) \otimes \ldots \otimes a_k,  
\end{multline}
where the second sum in \eqref{eq:iL} is taken over all cyclic permutations
such that $a_0$ is inside $\iD$.
Here the signs are $\epsilon_k = (-1)^{(|\iD|+1) \sum _{i=0}^{k} (|a_i|+1)}$ and $\eta _k = (-1)^{|\iD|+ 1 + \sum_{i \leq k}(|a_i|+1)\sum_{i \geq k}(|a_i|+1)} .$

The Hochschild boundary differential is defined by $\hochschildb = \iL_m :\CpolyM{-p}{n}\to\CpolyM{-p+1}{n+1},$ 
where $m$ is the shifted multiplication defined in Equation~\eqref{eq:ShiftedMultiplication}. 
The homological vector field $Q$ induces a square zero differential $\ccQ$ by Lie derivative $\ccQ = \liederivative{Q}:\CpolyM{-p}{n}\to\CpolyM{-p}{n+1} .$
Since $Q$ is a homological vector field, it follows that $[\hochschildb, \ccQ ]=0$. 
Hence $\big(\ccQ+\hochschildb\big)^2=0$. The Hochschild 
homology of the dg manifold $(\cM,Q)$,
denoted $\HCC$, is the cohomology of the direct-product total
cochain complex $\CCC$ of the double complex $(\Cpolym{},\hochschildb, \ccQ)$.


It is standard that all the operations $B$, $\iI_\iD$ and $\iL_\iD$ descend to well defined operators on cohomology groups, and the following proposition holds \cite{MR2986860}.

\begin{proposition}
Let $(\cM,Q)$ be a dg manifold.
The pair \[ \Theta^{\bullet}=\HDD \qquad\text{and}\qquad \Xi_{\bullet}=\HCC \]
admits a Tamarkin--Tsygan calculus structure,
whose differential $d$ is the \RinehartConnes operator $B$ \eqref{eq:Connes1}
and whose action operators $\iI_\iD$ and $\iL_\iD$ are defined
explicitly as in Equations~\eqref{eq:iI} and~\eqref{eq:iL}.
\end{proposition}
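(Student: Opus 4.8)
The plan is to exhibit, at the level of cochains and chains, a \emph{calculus up to homotopy} in the sense of Tamarkin--Tsygan, and then pass to (co)homology, where the homotopy corrections disappear and the defining identities of Definition~\ref{def:calc} become strict. The Gerstenhaber algebra structure on $\HDD$, with cup product $\cup$ and Gerstenhaber bracket $\gerstenhaber{\argument}{\argument}$, has already been recorded above following \cite{MR161898,MR2304327,MR2986860}; so what remains is to produce the module structure on $\HCC$ together with the degree~$-1$ differential $B$, and to verify identities \eqref{brick} and \eqref{straw}. Recall that item~(iii) of Definition~\ref{def:calc} is then automatic from (iv)--(v), and that \eqref{wood} follows formally. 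The upshot is that this proposition is the differential-graded, ``smooth'' incarnation of Example~\ref{ex:2}.

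First I would work with the explicit chain-level operators on $\totDpolyM{\bullet}$ and $\totCpolyM{\bullet}$: the \RinehartConnes operator $B$ of \eqref{eq:Connes1}, and, for $\iD\in\Dpolym{}$, the contraction $\iI_\iD$ of \eqref{eq:iI} and the noncommutative Lie derivative $\iL_\iD$ of \eqref{eq:iL}. A direct manipulation of these formulas --- carried out for ordinary associative algebras in \cite{MR161898,MR1783778,MR2986860} and, in cyclic-homology language, in \cite{MR1039918}, in particular \cite[Section~3.6]{MR2986860} --- gives $B^2=0$, shows that $\iI$ makes the polyjets a module over the associative algebra $\big(\totDpolyM{\bullet},\cup\big)$, and establishes the Cartan identity $\iL_\iD=\commutator{\iI_\iD}{B}$ and the relation $\iI_{\gerstenhaber{\iD_1}{\iD_2}}=\commutator{\iL_{\iD_1}}{\iI_{\iD_2}}$ \emph{modulo $\hochschild$- and $\hochschildb$-homotopies given by explicit cyclic-type sums}. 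I would also check that all of these operators and homotopies preserve the ``smooth'' subspaces: the composition of linear differential operators is again a linear differential operator, and, under the identification \eqref{eq:Phi1}, contracting a polyjet against a polydifferential operator again yields a polyjet.

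Next I would twist by the homological vector field. Since $Q$ is a degree~$+1$ derivation of $\cR$ with $\commutator{Q}{Q}=0$, the induced operators $\dQ=\gerstenhaber{Q}{\argument}$ and $\ccQ=\liederivative{Q}$ act as derivations of all the structures above: $\dQ$ is a derivation of $\cup$ and of $\gerstenhaber{\argument}{\argument}$, $\ccQ$ commutes with $B$, and $\ccQ\circ\iI_\iD=\iI_{\dQ\iD}\pm\iI_\iD\circ\ccQ$ together with $\ccQ\circ\iL_\iD=\iL_{\dQ\iD}\pm\iL_\iD\circ\ccQ$, with the analogous compatibilities for the homotopies (up to further homotopies). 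Using that $\big(\dQ+\hochschild\big)^2=0$ and $\big(\ccQ+\hochschildb\big)^2=0$, as already noted, the total complexes $\CDD$ and $\CCC$ acquire the twisted differentials, every operation and every homotopy above descends to $\HDD$ and $\HCC$, and on these (co)homology groups the homotopy terms vanish, so $\iL_\iD=\commutator{\iI_\iD}{B}$ and $\iI_{\gerstenhaber{\iD_1}{\iD_2}}=\commutator{\iL_{\iD_1}}{\iI_{\iD_2}}$ hold on the nose. Combined with the Gerstenhaber algebra structure on $\HDD$, this is exactly a Tamarkin--Tsygan calculus with $d=B$ and action operators $\iI_\iD$, $\iL_\iD$ as claimed.

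The hard part is not conceptual but is the bookkeeping underlying the two bracketed claims above: (a) verifying that the smooth subcomplexes --- polydifferential operators inside all Hochschild cochains, polyjets inside all Hochschild chains --- are genuinely closed under every operation and every homotopy, which is where one uses the differential-operator and $\infty$-jet structure rather than mere $\KK$-linearity; and (b) tracking Koszul signs together with the two total-space conventions fixed in the Notations, since each of the infinite sums occurring in $B$, in $\iL_\iD$, and in the Cartan homotopies contributes only finitely many terms to a fixed homogeneous component of a polyjet, so that the direct-\emph{product} total space $\totCpolyM{\bullet}$ is the correct receptacle on the homology side while the direct-\emph{sum} total space $\totDpolyM{\bullet}$ is correct on the cohomology side. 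Granting this, the statement reduces to the classical calculus on Hochschild (co)homology \cite{MR1783778,MR2986860,MR1039918}.
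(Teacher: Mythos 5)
Your proposal is correct and follows essentially the same route as the paper, which simply observes that the operations $B$, $\iI_\iD$, $\iL_\iD$ are the standard ones of noncommutative calculus (satisfying the Cartan-type identities up to explicit Hochschild homotopies at the chain level) and descend to the twisted (co)homologies, citing Tamarkin--Tsygan. Your additional attention to the stability of the smooth subcomplexes and to the direct-sum/direct-product conventions is exactly the bookkeeping the paper leaves implicit.
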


Such a calculus is denoted $\calculus_H(\cM,Q)$, which is a dg extension
of $\calculus_H(M)$ in Section~\ref{set:1.1}.

\section{Tamarkin--Tsygan calculi of dg Lie algebroids}
\label{sec:CalculiDGLieAbd}

In this section, we will extend Tamarkin--Tsygan calculi to any dg Lie algebroids.

\subsection{Differential graded Lie algebroids}

In this subsection, we recall some basic notations concerning 
dg vector bundles and dg Lie algebroids following Mehta \cite{MR2534186, MR2709144}.
Let $\cM$ be a graded manifold, and $\cE\xto{\pi}\cM$ a graded vector bundle.
Its space of sections $\sections{\cE}$ is defined to be the direct sum
$\bigoplus_{j\in\ZZ}\Gamma (\cE)^j$,
where $\Gamma (\cE)^j$ denotes the space of degree preserving maps
$s\in\Hom(\cM,\cE[-j])$ such that $(\pi[-j])\circ s=\id_\cM$.
Here $\pi[-j]:\cE[-j]\to\cM$ is the natural map induced by $\pi:\cE\to\cM$
--- see~\cite{MR2534186} for more details.
When $\cE\to\cM$ is a dg vector bundle, the homological vector fields on $\cE$ and $\cM$
naturally induce an operator $\cQ$ of degree $(+1)$ on $\sections{\cE}$,
making $\sections{\cE}$ into a dg module over $(C^\infty(\cM) , Q)$.
Since $C^\infty(\cM)$ and the space $\sections{\cE^\vee}$ of linear functions on $\cE$
together generate the algebra $C^\infty(\cE)$, the converse is also true:
the homological vector field on $\cM$ and the operator $\cQ$ on $\sections{\cE}$
determine a dg structure on $\cE$.

Define a degree $(+1)$ operator
$\cQ:\sections{\cE^\vee}\to\sections{\cE^\vee}$ by
\[ \duality{\cQ(\xi)}{l}=
Q\duality{\xi}{l}-(-1)^{\degree{\xi}}\duality{\xi}{\cQ(l)} \]
for all homogeneous $\xi\in\sections{\cE^\vee}$
and $l\in \sections{\cE}$.
It is simple to see that this operator makes $\sections{\cE^\vee}$
into a dg module over $\big(C^\infty(\cM),Q\big)$.
Thus $\cE^\vee \to \cM$ is also a dg vector bundle.

Set, $\forall k\geq 0$, the kth-exterior product of $\cE\to \cM$ as $\Lambda^{k}\cE:=S^k (\cE[-1])[k].$  
It is simple to see that $\Lambda^{k}\cE\to \cM$ is also a dg vector bundle, where the dg module structure $\cQ:\sections{\Lambda^{k}\cE}
\to\sections{\Lambda^{k}\cE}$ over $\big(C^\infty(\cM),Q\big)$
is given by 
\begin{equation}\label{eq:wedge1}
\cQ(l_1\wedge\cdots\wedge l_k)
=\sum_{i=1}^k (-1)^{\degree{l_1}+\cdots+\degree{l_{i-1}}}
l_1\wedge\cdots\wedge\cQ(l_i)\wedge\cdots\wedge l_k
\end{equation}
for all homogeneous $l_1,\dots,l_k\in\sections{\cE}$.

We summarize the discussion above in the following

\begin{lemma}
\label{lem:dgvector}
Let $\cE$ be a dg vector bundle over $(\cM,Q)$. Then
\begin{enumerate}
\item the dual bundle $\cE^\vee$ is a dg vector bundle over
$(\cM,Q)$; and
\item for all $k\geq 1$,
both the exterior tensor power vector bundle $\Lambda^k\cE$ and
 $\Lambda^k\cE^\vee$ are dg vector bundles over $(\cM,Q)$.
\end{enumerate}
\end{lemma}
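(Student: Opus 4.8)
The plan is to prove the two assertions of Lemma~\ref{lem:dgvector} in turn, in each case producing the claimed operator $\cQ$ and verifying it is a well-defined degree~$(+1)$ derivation over $(C^\infty(\cM),Q)$ squaring to zero, so that by the correspondence recorded just above the lemma it endows the bundle with a dg vector bundle structure. For part~(1), the dual bundle, the operator $\cQ$ on $\sections{\cE^\vee}$ is already written out in the excerpt via the defining formula $\duality{\cQ\xi}{l}=Q\duality{\xi}{l}-(-1)^{\degree\xi}\duality{\xi}{\cQ l}$. First I would check this is well-defined, i.e.\ that the right-hand side is $C^\infty(\cM)$-linear in $l$ up to the usual twist: for $f\in C^\infty(\cM)$ one expands $Q\duality{\xi}{fl}$ using that $Q$ is a derivation and that $\cQ$ on $\sections{\cE}$ is a derivation over $Q$, and the two extra terms involving $Qf$ cancel, leaving the Leibniz rule for $\cQ\xi$ against $f$. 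Then I would verify $\cQ^2=0$ on $\sections{\cE^\vee}$: pair $\cQ^2\xi$ with an arbitrary $l$, push everything through the defining formula, and collect terms; the vanishing is forced by $Q^2=0$ on $C^\infty(\cM)$ and $\cQ^2=0$ on $\sections{\cE}$, with the cross terms cancelling because of the sign $(-1)^{\degree\xi}$. This is a routine sign-bookkeeping computation.

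For part~(2), I would first treat $\Lambda^k\cE$. Here the candidate operator is \eqref{eq:wedge1}, the graded Leibniz extension of $\cQ\colon\sections{\cE}\to\sections{\cE}$ to the exterior powers. Since $\Lambda^k\cE=S^k(\cE[-1])[k]$ by definition, the cleanest route is to observe that a degree~$(+1)$ derivation over $Q$ on $\sections{\cE}$ induces one on $\sections{\cE[-1]}$ after the shift, hence on the symmetric algebra $\sections{S^\bullet(\cE[-1])}$ by the universal property of free graded-commutative algebras (it is determined by its values on generators), and then on each graded piece $\sections{S^k(\cE[-1])}$; transporting back through the shift by $k$ reproduces formula \eqref{eq:wedge1} with the displayed Koszul signs. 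Well-definedness — i.e.\ that $\cQ$ respects the defining relations of the exterior/symmetric product and is $C^\infty(\cM)$-multilinear up to twist — is automatic from this construction, and $\cQ^2=0$ follows because the square of a derivation is a derivation and it vanishes on generators (where it equals $\cQ^2$ on $\sections{\cE}$, already zero). For $\Lambda^k\cE^\vee$ one simply combines the two halves: by part~(1), $\cE^\vee$ is a dg vector bundle, and then applying the $\Lambda^k$ construction to $\cE^\vee$ in place of $\cE$ gives the result. Alternatively one notes $\Lambda^k\cE^\vee\cong(\Lambda^k\cE)^\vee$ as graded vector bundles and invokes part~(1) again; I would remark on this identification but proceed via the first route to avoid checking compatibility of the isomorphism with the two a priori different $\cQ$'s.

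The main obstacle is not conceptual but bookkeeping: getting the Koszul signs right and consistent across the shifts $[-1]$, $[1]$, $[-j]$ that intervene in the definitions of $\sections{\cE}$, $\cE[-1]$, $S^k(\cE[-1])$, and the final reshift by $k$, and in the duality pairing $\duality{\argument}{\argument}$ which itself carries a sign from the conventions set up in the ``Notations and conventions'' section. I would therefore fix once and for all that all shifts and pairings obey the stated rule $r\cdot\nshift^i v=(-1)^{i|r|}\nshift^i(r\cdot v)$, do the derivation-extension argument abstractly on the symmetric algebra where no signs appear beyond the graded Leibniz rule, and only at the very end translate into the concrete formula \eqref{eq:wedge1}, checking it agrees with what the abstract construction produces. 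The $\cQ^2=0$ verifications in both parts are then short: a derivation-squared-is-a-derivation argument reducing everything to generators, where it is $Q^2=0$ or $\cQ^2=0$ on $\sections{\cE}$. Since the lemma is explicitly stated as a summary of the preceding discussion, the proof can legitimately be brief, citing \cite{MR2534186,MR2709144} for the dg vector bundle formalism and leaving the sign verifications to the reader.
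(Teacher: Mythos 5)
Your proposal is correct and follows essentially the same route as the paper, which simply records the operator $\cQ$ on $\sections{\cE^\vee}$ via the duality formula and on $\sections{\Lambda^k\cE}$ via the Leibniz extension \eqref{eq:wedge1}, and states the lemma as a summary of that discussion with the verifications left implicit. Your additional care with the sign bookkeeping and the derivation-on-generators argument for $\cQ^2=0$ is exactly the content the paper declares ``simple to see.''
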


Recall that a ($\ZZ$-)graded Lie algebroid \cite{MR2709144}
consists of a $\ZZ$-graded vector bundle $\cL\to\cM$
together with a bundle map $\rho:\cL\to T_{\cM}$ of degree $0$, called anchor,
and a structure of graded Lie algebra on $\sections{\cL}$
with Lie bracket satisfying the following condition:
\[ \lie{X}{fY} = \rho_X (f) Y + (-1)^{\degree{X}\degree{f}} f \lie{X}{Y} \]
for all homogeneous $X,Y\in\sections{\cL}$ and $f\in C^\infty(\cM)$.

The Chevalley--Eilenberg differential $d_\cL:\sections{\Lambda^k\cL\dual}\to\sections{\Lambda^{k+1}\cL\dual}$ 
is defined by
\begin{multline}
\label{eq:CE2}
\big(d_\cL\omega\big)(X_0,X_1,\cdots,X_k)
=\sum_{i=0}^{n} \pm
\rho_{X_i}\big(\omega(X_0,\cdots,\widehat{X_i},\cdots,X_k)\big) \\
+\sum_{i<j} \pm \omega(\lie{X_i}{X_j},X_0,\cdots,\widehat{X_i},\cdots,\widehat{X_j},\cdots,X_k) .
\end{multline}

A dg Lie algebroid is a Lie algebroid object in the category of dg manifolds.
Equivalently, a dg Lie algebroid is a dg vector bundle $\cL\to\cM$
endowed with a graded Lie algebroid structure
satisfying the compatibility condition
\begin{equation}\label{eq:compatibility}
\schouten{d_{\cL}}{\cQ}=0,
\end{equation}
where $d_{\cL}$ is the Chevalley--Eilenberg differential \eqref{eq:CE2}
of the Lie algebroid $\cL$, while $\cQ$ is the differential (of internal degree $(+1)$)
\begin{equation}\label{eq:intQ}
\cQ:\sections{\Lambda^k\cL^\vee} \to\sections{\Lambda^k \cL^\vee}
\quad \quad \quad \forall k\geq 0
\end{equation}
induced by the dg vector bundle structure on $\cL\to\cM$
(see Lemma~\ref{lem:dgvector}).
For more details, we refer the reader to~\cite{MR2534186,MR2709144},
where dg Lie algebroids are called $Q$-algebroids.

\begin{example}\label{exp:dgtangent2}
Let $(\cM, Q)$ be a dg manifold.
The space $\XX(\cM)$ of vector fields
on $\cM$ (i.e.\ graded derivations of $C^\infty(\cM)$),
which can be regarded as the space of sections $\Gamma(T_\cM)$,
is naturally a dg module over the dg algebra $(C^\infty(\cM),Q)$
with the Lie derivative $\liederivative{Q}:\XX(\cM)\to\XX(\cM)$
playing the role of the operator $\cQ$.
As a consequence, $T_\cM $ is a dg manifold
--- the corresponding homological vector field on $T_\cM$ is called the
\emph{complete lift} of $Q$ as well as tangent lift
in~\cite{MR3319134} --- and $T_\cM\to\cM$ is a dg vector bundle.

In addition to being a dg vector bundle,
$T_{\cM}$ is also a Lie algebroid.
In this case, the Chevalley--Eilenberg differential \eqref{eq:CE2}
is the de Rham differential $d_{\DR}: \Omega^\bullet (\cM) \to \Omega^{\bullet+1} (\cM),$ while the internal differential \eqref{eq:intQ} is the Lie derivative $\iL_Q : \Omega^\bullet (\cM) \to \Omega^\bullet (\cM).$ Since $\schouten{d_{\DR}}{L_Q}=0$, it follows that $T_{\cM}$ is indeed a dg Lie algebroid.
\end{example}


\subsection{Tamarkin--Tsygan calculus structure $\calculus_C(\cL,\cQ)$}\label{sec:CalCdgAbd}

Given a dg Lie algebroid $\cL$, denote by $\cQ:\sections{\Lambda^{k}\cL}\to\sections{\Lambda^{k}\cL}, 
\,  \forall k\geq 0$, the degree $(+1)$ differential as in \eqref{eq:intQ}.

The Lie algebroid structure on $\cL$ yields a Schouten bracket $\schouten{\argument}{\argument}: \sections{\Lambda^{p}\cL}\otimes
\sections{\Lambda^{q}\cL}\to \sections{\Lambda^{p+q-1}\cL}.$ 
Denote by $\wedge: \sections{\Lambda^{p}\cL}\otimes\sections{\Lambda^{q}\cL}\to \sections{\Lambda^{p+q}\cL}$ the wedge product. 

For any $X\in \sections{\Lambda^d \cL}$, denote, respectively, by
\begin{align}
    & \iI_X:\sections{\Lambda^n\cL^\vee}\to\sections{\Lambda^{n-d}\cL^\vee}, \label{eq:IX1} \\
    & \iL_X:\sections{\Lambda^n\cL^\vee}\to\sections{\Lambda^{n-d+1}\cL^\vee}, \label{eq:LX1}
\end{align}
the contraction operator and the Lie derivative operator, where the second is defined by the Cartan formula:
\begin{equation}\label{eq:LX2}
\iL_X=(-1)^{|X|-1} \schouten{d_{\cL}}{\iI_X}.
\end{equation}


For any $p\geq 0$, let $\Tpolyl{p}$ denote the space
$\sections{\cM;S^p(\cL[-1])}=\sections{\Lambda^{p}\cL}[-p]$.
We use
the symbol $\TpolyL{p}{n}$ to denote the degree $n$ subspace of $\Tpolyl{p}$.
Set $\Tpolyl{} = \bigoplus_{p \geq 0} \Tpolyl{p},$ which is equipped with the bigrading: $\big(\Tpolyl{}\big)^{p,q} = \TpolyL{p}{p+q} .$ 
We have the associated direct-sum total space: 
\begin{equation}\label{eq:GaredeNord1}
\totTpolyL{n}=\bigoplus_{p+q=n}\big(\Tpolyl{}\big)^{p,q}
=\bigoplus_{p=0}^\infty\TpolyL{p}{n}
.\end{equation}

Similarly, for any $p\geq 0$, 
let $\Apolyl{-p}:=\sections{\cM;S^p(\cL^\vee[1])}
=\sections{\Lambda^{p}\cL^\vee}[p]$.
We use 
the symbol $\ApolyL{-p}{n}$ to denote the degree $n$
subspace of $\Apolyl{-p}$.
Set $\Apolyl{} = \bigoplus_{p\geq 0} \Apolyl{-p},$ which is equipped with the bigrading: $\big(\Apolyl{}\big)^{-p,q} = \ApolyL{-p}{-p+q} .$ 
We have the direct-product total complex:
\begin{equation}\label{eq:GaredeEst1}
\totApolyL{n}=\prod_{-p+q=n}\big(\Apolyl{}\big)^{-p,q}
=\prod_{p=0}^\infty\ApolyL{-p}{n}.
\end{equation}
Similarly to the dg manifold case, to fit the Duflo--Kontsevich type theorems,
we consider the direct-sum total space for $\Tpolyl{}$
and the direct-product total space for $\Apolyl{}$.
We will use a parallel setting for $\Dpolyl{}$ and $\Cpolyl{}$ in the next section.

The operations $\iI$ and $\iL$ in~\eqref{eq:IX1}
and ~\eqref{eq:LX1} naturally induces operations on $\totTpolyL{\bullet}$ and $\totApolyL{\bullet}$. 
Furthermore, the compatibility condition \eqref{eq:compatibility} implies that the dg structure $\cQ$, i.e.\ the differentials
as in \eqref{eq:intQ}, is compatible with the operations $\wedge$, $\schouten{\argument}{\argument}$, $\iI$ and $\iL$ in a natural sense. Thus, we have the following

\begin{proposition}\label{prop:CalC_DGLieAbd}
Let $\cL$ be a dg Lie algebroid.
The pair 
\[ \Theta^{\bullet}=\cohomology{}\big(\totTpolyL{\bullet},\cQ\big)
\quad\quad\text{and}\quad\quad
\Xi_{\bullet}=\cohomology{}\big(\totApolyL{\bullet},\cQ\big) \]
admits a Tamarkin--Tsygan calculus structure, 
whose operators $d$, $\iI_X$, and $\iL_X$ are respectively
the Chevalley--Eilenberg Lie algebroid differential $d_{\cL}$ \eqref{eq:CE2},
the interior product w.r.t.\ the polyvector field $X$ \eqref{eq:IX1},
and the Lie derivative w.r.t.\ $X$ \eqref{eq:LX1}.
\end{proposition}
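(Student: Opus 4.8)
The plan is to verify the five axioms of Definition~\ref{def:calc} for the pair $\big(\cohomology{}(\totTpolyL{\bullet},\cQ),\cohomology{}(\totApolyL{\bullet},\cQ)\big)$, working first at the cochain level on $\big(\totTpolyL{\bullet},\totApolyL{\bullet}\big)$ and then descending to $\cQ$-cohomology. First I would recall that, for the underlying graded Lie algebroid $\cL\to\cM$ (forgetting $\cQ$), the triple $\big(\totTpolyL{\bullet},\schouten{\argument}{\argument},\wedge\big)$ is a Gerstenhaber algebra and $\big(\totApolyL{\bullet},d_\cL,\iI,\iL\big)$ carries the Cartan calculus over it; this is the standard ``Lie--Rinehart'' or Cartan calculus attached to any Lie algebroid, and the graded/signs bookkeeping is identical to Example~\ref{ex:1} with $T_M$ replaced by $\cL$. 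In particular axioms (i)--(iv) hold at the cochain level, the Cartan formula \eqref{brick} is exactly the definition \eqref{eq:LX2} of $\iL_X$, and \eqref{straw} together with \eqref{wood} are the usual Cartan identities for a Lie algebroid.

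The genuinely new content is that the internal differential $\cQ$ is compatible with all of these operations, so that everything passes to cohomology. Here I would invoke the compatibility condition \eqref{eq:compatibility}, $\schouten{d_\cL}{\cQ}=0$, which is part of the definition of a dg Lie algebroid. Concretely: $\cQ$ is a derivation of $\wedge$ (formula \eqref{eq:wedge1}) and of $\schouten{\argument}{\argument}$ on $\totTpolyL{\bullet}$, it commutes with $\iI_X$ up to the $\cQ$-differential of $X$ in the precise sense $\schouten{\cQ}{\iI_X}=\iI_{\cQ(X)}$, and because $\cQ$ commutes with $d_\cL$ it also satisfies $\schouten{\cQ}{\iL_X}=\iL_{\cQ(X)}$ by the Cartan formula. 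These identities — which are what the sentence ``the dg structure $\cQ$ is compatible with the operations $\wedge$, $\schouten{\argument}{\argument}$, $\iI$, $\iL$ in a natural sense'' refers to — say exactly that $(\cQ,d_\cL,\iI,\iL)$ assemble into a curved/dg version of a calculus, i.e.\ $\cQ$ is a derivation/morphism-like operator for the whole structure. Consequently $\wedge$ and $\schouten{\argument}{\argument}$ descend to $\cohomology{}(\totTpolyL{\bullet},\cQ)$ making it a Gerstenhaber algebra, and $\iI_X$, $\iL_X$, $d_\cL$ descend to well-defined operators on $\cohomology{}(\totApolyL{\bullet},\cQ)$ depending only on the class of $X$; the identities \eqref{brick} and \eqref{straw}, holding already at the cochain level, hold a fortiori on cohomology. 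That gives the asserted calculus structure, with $d=d_\cL$, $\iI_X$ as in \eqref{eq:IX1}, $\iL_X$ as in \eqref{eq:LX1}.

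The main technical point — and the only place real care is needed — is the sign bookkeeping in the total-complex conventions, since $\Tpolyl{}$ is given the direct-sum total grading and $\Apolyl{}$ the direct-product total grading, with the degree shifts $\Tpolyl{p}=\sections{\Lambda^p\cL}[-p]$ and $\Apolyl{-p}=\sections{\Lambda^p\cL^\vee}[p]$ built in. One must check that with these shifts the bidegrees of $\wedge$, $\schouten{\argument}{\argument}$, $\iI$, $\iL$, $d_\cL$ are as claimed in \eqref{eq:Gbracketdg}-type bookkeeping and that the Koszul signs make $(\Apolyl{},d_\cL,\cQ)$ an honest double complex (the two differentials anti-commute, per the Notations section), so that $d_\cL^2=0$, $\cQ^2=0$, and $\schouten{d_\cL}{\cQ}=0$ really do produce $(d_\cL+\cQ)^2=0$ on the total complex; a convenient way to organize this is to note that $d_\cL$ on $\Apolyl{}$ is precisely the Chevalley--Eilenberg differential of $\cL$ under the identification $\Apolyl{-p}=\sections{\Lambda^p\cL^\vee}[p]$, and then the Cartan calculus identities are literally those of Example~\ref{ex:1}. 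I do not expect any conceptual obstacle beyond this; once the signs are pinned down, the proposition is the dg upgrade of the classical Lie algebroid Cartan calculus, and the proof is a matter of checking that $\cQ$ is a derivation compatible with each structure map and then passing to cohomology.
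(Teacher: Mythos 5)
Your proposal is correct and follows essentially the same route as the paper: establish the standard Cartan calculus of the underlying graded Lie algebroid at the cochain level, then use the compatibility condition \eqref{eq:compatibility} to check that $\cQ$ is a derivation of (or commutes appropriately with) $\wedge$, $\schouten{\argument}{\argument}$, $\iI$, $\iL$, and $d_\cL$, so that all structures descend to $\cQ$-cohomology. The paper in fact gives no more detail than this, so your added remarks on the identities $\schouten{\cQ}{\iI_X}=\iI_{\cQ(X)}$ and on the degree-shift sign bookkeeping are a welcome elaboration rather than a deviation.
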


Such a calculus is denoted by $\calculus_C(\cL,\cQ)$.
In the case $\cL = T_\cM$, the calculus $\calculus_C(\cL,\cQ)$ reduces to the Cartan calculus
$\calculus_C(\cM,\cQ)$ of the dg manifold $(\cM,Q)$ in Section~\ref{corniche}.

\subsection{Universal enveloping algebras of graded Lie algebroids}

In this section, we recall some basic material regarding universal enveloping
algebras of graded Lie algebroids----see~\cite{MR1815717} for the
general theory of the universal enveloping algebra of a Lie algebroid
and~\cite[Appendix]{MR3964152} for an instance of the extension
of the concept to \emph{dg} Lie algebroids.

For the moment, we consider a graded Lie algebroid $\cL $ over $\cM$
with anchor $\rho:\cL\to T_\cM$.
Recall that the universal enveloping algebra $\enveloping{\cL}$
of $\cL$ is the quotient of the (reduced) tensor algebra
\begin{equation}\label{eq:tensor}
\bigoplus_{n=1}^{\infty}\big(\cR\oplus\sections{\cL}\big)^{\otimes n}
\end{equation}
of the $\KK$-module $\cR\oplus\sections{\cL}$ by the two-sided ideal
generated by the elements of the following four types:
\begin{align}
& X\otimes Y-(-1)^{\degree{X}\degree{Y}}Y\otimes X-\lie{X}{Y} 
&& f\otimes X-fX \nonumber \\
& X\otimes g-(-1)^{\degree{g}\degree{X}}g\otimes X-\rho_X(g) 
&& f\otimes g-fg \label{eq:four}
\end{align}
for all homogeneous $X,Y\in\sections{\cL}$ and $f,g\in\cR$.
As earlier, the symbol $\cR$ denotes $C^\infty(\cM)$.



It is clear that the universal enveloping algebra $\enveloping{\cL}$
is a graded associative algebra under the multiplication
$m: \enveloping{\cL}\times \enveloping{\cL}\to \enveloping{\cL}$
induced by the tensor product over $\KK$. Let $\alpha, \beta: \cR\to \enveloping{\cL}$
be the inclusion maps, called the source and target maps. Their
induced $\cR$-module structures on $\enveloping{\cL}$
coincide with the canonical left $\cR$-module structure. 
In the mean time,
$\enveloping{\cL}$ is also a graded cocommutative
coalgebra over $\cR$ \cite{MR1815717}.
Its graded cocommutative comultiplication is
the $\cR$-module map
\begin{equation}\label{eq:SCE}
\Delta:\enveloping{\cL}\to\enveloping{\cL}\otimes_\cR\enveloping{\cL} 
\end{equation}
completely determined by the identities below:
\begin{gather*}
\Delta(1)=1\otimes 1; \\
\Delta(b)=1\otimes b+b\otimes 1, \quad\forall b\in\sections{\cL}; \\
\Delta(u\cdot v)=\Delta(u)\cdot\Delta(v), \quad\forall u,v\in\enveloping{\cL} ,
\end{gather*}
where the symbol $\cdot$ denotes the multiplication in $\enveloping{\cL}$.
See~\cite{MR1815717} for the precise meaning of
(the r.h.s.\ of) the last equation above.
More explicitly, we have 
\begin{multline}
\label{eq:PAR}
\Delta(b_1 b_2\cdots b_n) =1\otimes(b_1 b_2\cdots b_n)
+(b_1 b_2\cdots b_n)\otimes 1 \\ 
+\sum_{\substack{p+q=n \\ p,q\in\NN}}\sum_{\sigma\in\shuffle{p}{q}}
\epsilon(\sigma;b_1,\cdots,b_n)\
(b_{\sigma(1)}\cdots b_{\sigma(p)})\otimes
(b_{\sigma(p+1)}\cdots b_{\sigma(n)}) 
,\end{multline}
where $\shuffle{p}{q}$ denotes the set of $(p,q)$-shuffles
and $\epsilon(\sigma;b_1,\cdots,b_n)$ denotes the Koszul sign of the permutation
$\sigma$ of the homogeneous elements $b_1,\cdots, b_n\in\Gamma(\cL)$.

There is a natural left $\enveloping{\cL}$-action on $\cR$ given by
\[ u(f):=\rho_{u}(f), \qquad \forall u\in \enveloping{\cL}, f\in\cR \]
where $\rho_{u}$ is a differential operator on $\cM$ induced
by the anchor map $\rho$.
By $\epsilon: \enveloping{\cL}\to \cR$, we denote the counit map defined by 
\[ \epsilon(u)=u(1)=\rho_{u}(1), \qquad
\forall u\in\enveloping{\cL} .\]
Equivantly, $\epsilon$ is the projection map to the component of $\cR$.
In fact, $\enveloping{\cL}$ together with the structure maps $(m,\alpha,\beta,\Delta,\epsilon)$
becomes a (graded) Hopf algebroid over $\cR$~\cite{MR1815717}.\footnote{This structure
is often called a bialgebroid structure, instead.}
 
\subsection{Tamarkin--Tsygan calculus structure $\calculus_H(\cL,\cQ)$}\label{sec:CalHdgAbd}

Now we will describe the noncommutative
calculus of a dg Lie algebroid $\cL$ over $\cM$.
Note that for Hochschild (co)homology of graded Lie algebroids $\cL$,
the formulas in Section~\ref{sec:HtpCalDGMfd}
do not have an obvious extension.
Such structures and explicit formulas depend on the ``formal groupoid"
structure of the jet space $\jet{\cL}$ \cite{MR3456700,paper-zero}.

Let $\enveloping{\cL}$ be the universal enveloping algebra of $\cL$.
The differential $\cQ:\sections{\cL}\to\sections{\cL}$
and the homological vector field $Q:C^\infty(\cM)\to C^\infty(\cM)$
induce a differential of degree $(+1)$ on the (reduced) tensor algebra
\eqref{eq:tensor} by Leibniz rule. From the compatibility
condition \eqref{eq:compatibility}, it is simple to see
that the two-sided ideal generated by the elements
\eqref{eq:four} is stable under this induced differential. 
Hence we obtain a degree $(+1)$ differential on the
universal enveloping algebra, denoted by the same symbol
$\cQ$ by abuse of notation:
\begin{equation}\label{eq:UL}
\cQ:\enveloping{\cL}\to\enveloping{\cL}
,\end{equation}
which makes $\enveloping{\cL}$ into a differential graded algebra.

By $\pshift\, \enveloping{\cL}$, we denote the suspended
universal enveloping algebra
$\enveloping{\cL}[-1]$, which is naturally
an $\cR$-module as well.
For any $p\geq 1$, let $\Dpolyl{p}$ denote
$(\pshift\,\enveloping{\cL})^{\otimes p}$,
the tensor product 
of $p$-copies of the left $\cR$-module $\pshift\,\enveloping{\cL}$.
For $p=0$, let $\Dpolyl{0}:=\cR$. 

The tensor product of left $\cR$-modules determines a cup product
\begin{equation}\label{eq:cup}
\cup:\DpolyL{p}{n}\times\DpolyL{p'}{n'}\to\DpolyL{p+p'}{n+n'}
.\end{equation}

The differential \eqref{eq:UL} induces a degree $(+1)$ differential $\cQ:\pshift\,\enveloping{\cL}\to \pshift\,\enveloping{\cL},$
which makes $\pshift\,\enveloping{\cL}$ into a dg module over the 
\dga $(\cR,Q)$.
As a consequence, we obtain a degree $(+1)$ differential
for each $p\geq 0$
\begin{equation}\label{eq:QUL}
\cQ: \DpolyL{p}{n} \to \DpolyL{p}{n+1}
.\end{equation}

It is known that $\Dpolyl{}$ admits a Gerstenhaber bracket, 
$$\gerstenhaber{\argument}{\argument}: \Dpolyl{p+1} [1]
\otimes \Dpolyl{q+1} [1] \to \Dpolyl{p+q+1}[1],$$
which can be explicitly expressed in terms
of the Hopf algebroid structure on $\enveloping{\cL}$, 
\begin{equation}\label{eq:Gbraket}
    \gerstenhaber{\phi}{\psi} = \phi\star\psi
- (-1)^{|\phi||\psi|} \psi\star\phi.
\end{equation}

To express $ \phi\star\psi$ explicitly, we identify $\Dpolyl{p+1}[1]$
with $\big((\pshift)^{\otimes p+1}\enveloping{\cL}^{\otimes p+1}\big)[1]$
by the natural isomorphism
\begin{equation}\label{eq:Shift_2_Identification}
\begin{split}
\big( (\pshift)^{\otimes p+1} \enveloping{\cL}^{\otimes p+1}\big)[1]
&\cong \big(\pshift\,\enveloping{\cL}\big)^{\otimes p+1}[1] = \Dpolyl{p+1} [1] \\
\nshift \big((\pshift)^{\otimes p+1}(u_0 \otimes\cdots\otimes u_p)\big)
& \mapsto (-1)^{\sum_{i=0}^p |u_i|(p-i)} \nshift( \pshift u_0 \otimes \cdots \otimes \pshift u_p)
.\end{split}
\end{equation}
We define $\phi\star\psi\in\big(\shift^{-1} \, \enveloping{\cL}\big)^{\otimes p+q+1}[1]$
by the formulas
\begin{multline*}
\nshift \big((\pshift)^{\otimes p+1}(u_0 \otimes \cdots\otimes u_p)\big) \star
\nshift \big((\pshift)^{\otimes q+1}(v_0 \otimes \cdots \otimes v_q)\big) \\
=\sum_{k=0}^{p} \epsilon_{k} \, \nshift \big((\pshift)^{\otimes p+q+1}
\big((u_0\otimes\cdots u_{k-1}\otimes\Delta^q(u_k)\otimes u_{k+1}\cdots\otimes u_p)
\cdot(1^{\otimes k}\otimes v_0\otimes\cdots\otimes v_q\otimes 1^{\otimes p-k})\big)\big)
\end{multline*}
and
\[ \nshift \big((\pshift)^{\otimes p+1}(u_0 \otimes \cdots\otimes u_p)\big) \star \nshift f
= \sum_{k=0}^p \epsilon_{k}' \, \nshift \big((\pshift)^{\otimes p} \big(u_k(f) \cdot u_0
\otimes \cdots \widehat{u_k} \cdots \otimes u_p\big) \big) \]
for $f\in\cR$ and $u_0, \dots,u_p, v_0, \cdots, v_q\in \enveloping{\cL}$.
The signs $\epsilon_{k}$ and $\epsilon_{k}'$ are determined by the Koszul rule: 
\begin{align*}
    \epsilon_{k}& =(-1)^{q(p-k)+q(|u_0|+\cdots +|u_p|)}, \\
    \epsilon_{k}'&=(-1)^{(|f|-1)(|u_0|+\cdots\widehat{|u_k|}\cdots+|u_p|)
+p-k+|u_k|(1+|u_0|+\cdots+|u_{k-1}|)}.
\end{align*}


By $\hochschild$, we denote the Hochschild differential
\begin{equation}\label{eq:hochschild}
\hochschild:=\gerstenhaber{m}{\argument}:\quad
\DpolyL{p}{n}\to\DpolyL{p+1}{n+1}
,\end{equation}
where
\begin{equation}\label{eq:m}
m=-\nshift(\pshift 1 \otimes_\cR \pshift 1)\in\DpolyL{2}{2}
.\end{equation} This is compatible with Equation~\eqref{eq:ShiftedMultiplication} in the following way: An element $\pshift u \in \pshift \, \enveloping{\cL}$ induces a map $\nshift \cR \to \cR$ by sending $\nshift f$ to $(-1)^{|u|} u(f)$. Accordingly, a tensor $\pshift u \otimes \pshift v$, with $u,v \in \enveloping{\cL}$, induces the map $\nshift \cR \otimes \nshift \cR \to \cR$ given by $ \nshift f \otimes \nshift g \mapsto (-1)^{|\pshift v||\nshift f|+|u| + |v|} u(f) v(g)$. In particular, the map induced by $\pshift 1 \otimes \pshift 1$ sends $ \nshift f \otimes \nshift g$ to $-(-1)^{|f|} f g = -m(\nshift f,\nshift g)$.

With the previous identification, we can express $\hochschild$ explicitly by $\hochschild=\sum_{i=0}^{n+1}\partial^i$, where 
\[ \partial^i\big(\nshift\big((\pshift)^{\otimes n}
(u_1\otimes\cdots\otimes u_n)\big)\big)
= (-1)^{i+1}\, \nshift \big((\pshift)^{\otimes n+1}
(u_1\otimes\cdots\otimes(\Delta u_i)\otimes\cdots\otimes u_n)\big) \]
for $1 \leq i\leq n$, and 
\begin{align*}
\partial^0 \big( \nshift \big((\pshift)^{\otimes n}( u_1 \otimes \cdots \otimes u_n )\big)\big)
& = - \nshift \big((\pshift)^{\otimes n+1}( 1 \otimes u_1 \otimes \cdots \otimes u_n)\big) ,\\
\partial^{n+1} \big( \nshift \big((\pshift)^{\otimes n}( u_1 \otimes \cdots \otimes u_n )\big)\big)
& = (-1)^{n}\, \nshift \big((\pshift)^{\otimes n+1}(u_1 \otimes \cdots \otimes u_n \otimes 1)\big)
.\end{align*}

%

The differential \eqref{eq:UL}
is a derivation with respect to the product and a 
coderivation with respect to the coproduct so that it
is compatible with both the algebra and coalgebra structures
on $\enveloping{\cL}$. In other words, $\enveloping{\cL}$ is a dg
Hopf algebroid over the \dga $(\cR,Q)$. See \cite{MR1815717,paper-zero} 
for the Hopf algebroid structure.
As a consequence, we have $[\cQ,\hochschild]=0$, 
and therefore $\big(\dQ+\hochschild\big)^2=0$.


Set $\Dpolyl{} = \bigoplus_{p \geq 0} \Dpolyl{p}$ which is equipped with the bigrading: $\big(\Dpolyl{}\big)^{p,q} = \DpolyL{p}{p+q}.$ 
We have the associated direct-sum total space
\begin{equation}\label{eq:GaredeSud}
\totDpolyL{n}=\bigoplus_{p +q=n}\big(\Dpolyl{}\big)^{p,q}
=\bigoplus_{p=0}^\infty\DpolyL{p}{n} 
\end{equation}
The Hochschild cohomology of the dg Lie algebroid $\cL$,
denoted $\cohomology{\bullet}(\totDpolyL{}, \dQ+\hochschild)$,
is the cohomology of the direct-sum total cochain complex $\big(\totDpolyL{\bullet}, \dQ+\hochschild\big)$ of the double complex $(\Dpolyl{},\hochschild,\dQ)$. 

It is simple to see that the Gerstenhaber bracket
$\gerstenhaber{\argument}{\argument}$ \eqref{eq:Gbraket}
and the cup product $\cup$ \eqref{eq:cup}
descend to the Hochschild cohomology.
When endowed with the Gerstenhaber bracket and the cup product,
$\cohomology{\bullet}(\totDpolyL{},\hochschild+\dQ)$
becomes a Gerstenhaber algebra.

Following~\cite[Definition~2.6]{MR1913813}, by $\jet{\cL}$, we denote the space of $\cL$-jets on $\cM$. 
That is, \[ \jet{\cL}:=\Hom_\cR(\enveloping{\cL},\cR) .\]
It is well known that $\jet{\cL}$
is equipped with a formal groupoid structure
$(m,\alpha,\beta,\Delta,\varepsilon,S)$.
See Section~\ref{sec:FormalGpdJet}.

Since $\enveloping{\cL}$, being equipped with the
differential \eqref{eq:UL},
is a dg module over the \dga $(\cR,Q)$, it induces 
a degree $(+1)$-differential
\begin{equation}\label{eq:jetL}
\cQ:\jet{\cL}\to\jet{\cL}.
\end{equation}
Indeed this differential is compatible with all the structure maps $(m,\alpha,\beta,\Delta,\varepsilon,S,\cQ)$ so that $\jet{\cL}$ is a dg formal groupoid.

There is a natural filtration \cite{MR154906} on the universal enveloping algebra $\enveloping{\cL}$ by the order:
\begin{equation}
\label{Jakarta}
\cR\into \mathcal{U}^{\leqslant 1}(\cL) \into \mathcal{U}^{\leqslant 2}(\cL)
\into \mathcal{U}^{\leqslant 3}(\cL) \into \cdots
\end{equation}
where, for any $p\geq 1$, $\mathcal{U}^{\leqslant p}(\cL)=
\text{Span}_\cR \big\{ X_1X_2 \cdots X_d|X_i\in \sections{\cL}, d\leq p
\big\}$.
This filtration \eqref{Jakarta} induces a descending filtration
on $\jet{\cL}$:
\begin{equation}
\label{Jakarta1}
\jet{\cL}\supset \jet{\cL}_{\geq 1} \supset \jet{\cL}_{\geq 2} \supset
\cdots
\end{equation}
where, for any $p\geq 1$,
\[ \jet{\cL}_{\geq p} =\{\xi\in \jet{\cL}|\duality{\xi}{u}=0, \ \forall u
\in\mathcal{U}^{\leqslant p}(\cL)\} . \]
It is clear that $\jet{\cL}$ is complete with respect to the topology
defined by this filtration, i.e.,
$\jet{\cL}=\varprojlim_p \frac{\jet{\cL}}{ \jet{\cL}_{\geq p}}$.
Note that the filtration \eqref{Jakarta1} conicides with the
adic filtration for the ideal $\jet{\cL}_{\geq 1}\subset \jet{\cL}$.
In the sequel, when we talk about tensor product of $\jet{\cL}$,
we always mean the complete tensor product, denoted by $\cotimes$,
with respect to this filtration.

By $\nshift \jet{\cL}$, we denote the suspended jet space
$\jet{\cL}[1]$, which is naturally an $\cR$-module as well.
For any $p\geq 1$,
$(\nshift\jet{\cL})^{\cotimes p}$ denotes the complete tensor product 
\[ \underset{\text{$p$-factors}}{\underbrace{ \nshift\jet{\cL}
\cotimes\cdots \cotimes \nshift\jet{\cL}}} \]
as left $\cR$-modules, of $p$-copies
of the suspended jet space $\nshift\jet{\cL}= \jet{\cL}[1]$,
called \emph{the space of $p$-polyjets} on $\cL$.
For any $p\geq 1$, by $\Cpolyl{-p}$ we denote
$(\nshift \jet{\cL})^{\cotimes p}$, and for $p=0$,
we let $\Cpolyl{0}=\cR$.
We have a degree $(+1)$ differential
\begin{equation}\label{eq:sjetL}
\cQ: \nshift\jet{\cL} \to \nshift\jet{\cL},
\end{equation}
which makes $\nshift\jet{\cL}$ into a dg module over the \dga $(\cR,Q)$. 
As a consequence, we obtain a degree $(+1)$ differential for each $p\geq 0$
\begin{equation}
\label{eq:QJ}
\cQ:\CpolyL{-p}{n}\to\CpolyL{-p}{n+1}
\end{equation}

Similarly to \eqref{eq:Shift_2_Identification}, we can identify $\Cpolyl{-p}$ with $\nshift^{\otimes p} (\jet{\cL}^{\cotimes p})$
by the isomorphism
\begin{equation}\label{eq:Shift_Cpoly_Identification}
\begin{split}
\nshift^{\otimes p} (\jet{\cL}^{\cotimes p})
&\cong \big(\nshift\, \jet{\cL}\big)^{\cotimes p} = \Cpolyl{-p} \\
\nshift^{\otimes p}(\xi_1 \otimes \cdots\otimes \xi_p) 
& \mapsto (-1)^{\sum_{i=1}^p |\xi_i|(p-i)} \,
\nshift \xi_1 \otimes \cdots\otimes \nshift \xi_p.
\end{split}
\end{equation}
 

We have the \RinehartConnes operator
\begin{equation}\label{eq:Connes3}
\connes:\CpolyL{-p}{n}\to\CpolyL{-p-1}{n-1}, 
\end{equation}
defined by
%
%
\begin{multline*}
\connes\big(\nshift^{\otimes p}(\xi_1\otimes\xi_2\otimes\cdots\otimes\xi_p) \big) \\
= \sum_{k=0}^p \pm \, \nshift^{\otimes p +1} \Big\{ 
\xi_k^{(2)}\otimes\cdots\otimes\xi_n^{(2)}\otimes
\antipode\big(\xi_1^{(1)}*\cdots*\xi_p^{(1)}\big)
\otimes\xi_1^{(2)}\otimes\cdots\otimes\xi_{k-1}^{(2)} \\
+\alpha\circ\counit(\xi_k^{(2)})\otimes
\xi_{k+1}^{(2)}\otimes\cdots\otimes\xi_n^{(2)}\otimes
\antipode\big(\xi_1^{(1)}*\cdots*\xi_p^{(1)}\big)
\otimes\xi_1^{(2)}\otimes\cdots\otimes\xi_{k-1}^{(2)}
\Big\}
\end{multline*}
for all $\xi_1,\xi_2,\dots,\xi_p\in\jet{\cL}$.
Here, to simplify notation, $\xi^{(2)}_{-1}$ is understood to mean $\xi^{(2)}_p$
while $\xi^{(2)}_0=\xi^{(2)}_{p+1}$ is understood to mean
$\antipode(\xi_1^{(1)}*\xi_2^{(1)}*\cdots*\xi_p^{(1)})$.
Recall that $\xi^{(1)}$ and $\xi^{(2)}$ are components of
the coproduct $\Delta(\xi)=\xi^{(1)}\otimes\xi^{(2)}$,
for all $\xi\in\jet{\cL}$,
$S: \jet{\cL}\to \jet{\cL}$ is the antipode, and $*$ denotes the
multiplication on $\jet{\cL}$. See Appendix \ref{sec:FormalGpdJet}. 

For any $\iD\in\Dpolyl{d}$, the contraction operator
\begin{equation}\label{eq:ILiealgebroid}
\iI_\iD:\Cpolyl{-p}\to\Cpolyl{-p+d}, 
\end{equation}
is defined by
\[ \iI_{\nshift \big((\pshift)^{\otimes d}(u_1 \otimes \cdots \otimes u_d)\big)}
\big(\nshift^{\otimes p}(\xi_1\otimes\xi_2\otimes\cdots\otimes\xi_p) \big)
= \pm\, \nshift^{\otimes p-d}\big(\alpha(\pairing{\xi_1}{u_1}\cdots\pairing{\xi_p}{u_p})*\xi_{p+1}
\otimes\xi_{p+2}\otimes\cdots\otimes\xi_{p} \big)\]
for all $u_1,u_2,\dots,u_d\in\uea(\algebroid)$
and $\xi_1,\xi_2,\dots,\xi_p\in\jet{\cL}$.

Now we introduce the Lie derivative
\begin{equation}\label{eq:L3}
\iL_\iD:\Cpolyl{-p}\to\Cpolyl{-p-1+d} 
\end{equation}

Recall from \cite{paper-zero} that there are two canonical representations
$\dmb$ and $\gro$ of the Lie algebroid $\cL$ on its jet-space $\jet{\cL}$:
\[ \pairing{\dmb_X\xi}{v}=\pairing{\xi}{v\cdot X}
\qquad\text{and}\qquad
\pairing{\gro_X\xi}{v}=\rho_X\big(\pairing{\xi}{v}\big)-\pairing{\xi}{X\cdot v} \]
for all $X\in\Gamma(\cL)$, $\xi\in\jet{\cL}$, and $v\in\mathcal{U}(\cL)$.
In other words, the jet-space $\jet{\cL}$ admits to distinct natural structures
of module over the algebra $\mathcal{U}(\cL)$:
\[ \pairing{\dmb_u\xi}{v}=\pairing{\xi}{v\cdot u}
\qquad\text{and}\qquad
\pairing{\gro_u\xi}{v}=\rho_{u_+}\big(\pairing{\xi}{u_-\cdot v}\big) \]
for all $u,v\in\Gamma(\cL)$ and $\xi\in\jet{\cL}$.
The latter representation, which is known as the \emph{Grothendieck connection},
satisfies $\gro_X\circ\alpha=0$ for all $X\in\Gamma(\cL)$ and therefore extends
to a representation $\org$ of th Lie algebroid $\cL$ on the tensor product
\[ \jet{\cL}\cotimes\underset{\text{$p$ factors}}{\underbrace{
\nshift\jet{\cL}\cotimes\cdots\cotimes\nshift\jet{\cL}}} \]
in the obvious way: 
\[ \org_X(\xi_0\otimes\nshift\xi_1\otimes\cdots\otimes\nshift\xi_p)
= (\gro_X\xi_0)\otimes\nshift\xi_1\otimes\cdots\otimes\nshift\xi_p
+ \sum_{k=1}^p {\pm} \xi_0\otimes\nshift\xi_1\otimes\cdots\otimes
\nshift(\gro_X\xi_k)\otimes\cdots\nshift\xi_p \]
for all $X\in\Gamma(\cL)$ and $\xi_0,\xi_1,\dots,\xi_p\in\jet{\cL}$.

For $p\geq 1$, we set
\[ \mathrm{C}_p(\algebroid)
=\underset{\text{$p$ factors}}{\underbrace{
\nshift\jet{\cL}\cotimes\cdots\cotimes
\nshift\jet{\cL}}} \]
and
\[ \mathrm{K}_p(\algebroid)
=\Big(\jet{\cL}\cotimes\underset{\text{$p$ factors}}{\underbrace{
\nshift\jet{\cL}
\cotimes\cdots\cotimes\nshift\jet{\cL}}}
\Big)^{\org\text{-flat}} ,\]
the subspace of $\org\text{-flat}$ elements.
For $p=0$, we set
$\mathrm{C}_0(\algebroid)=\mathcal{R}$
and $\mathrm{K}_0(\algebroid)=\big(
\jet{\cL}\big)^{\org\text{-flat}}$.
For $p\leq -1$, we set $\mathrm{C}_p(\algebroid)=0$ and $\mathrm{K}_p(\algebroid)=0$.

Recall from \cite{paper-zero} that there exists
a pair of mutually inverse algebra isomorphisms
\[\begin{tikzcd}[column sep=huge]
{\mathrm{C}_p(\cL)} & {\mathrm{K}_p(\cL)}
\arrow["{\pi^*}"', shift right=2, from=1-1, to=1-2]
\arrow["\cong"{description}, draw=none, from=1-1, to=1-2]
\arrow["{\iota^*}"', shift right=2, from=1-2, to=1-1]
\end{tikzcd}\]
The morphism $\pi^*$ is defined by
\[ \pi^*(\nshift\xi_1\otimes\nshift\xi_2
\otimes\cdots\otimes\nshift\xi_p)
=\sum_{(\xi_1)}\sum_{(\xi_2)}\cdots
\sum_{(\xi_p)}S\big(\xi_1^{(1)}
*\xi_2^{(1)}*\cdots*\xi_p^{(1)}\big)
\otimes\nshift\xi_1^{(2)}
\otimes\nshift\xi_2^{(2)}\otimes
\cdots\otimes\nshift\xi_p^{(2)} \]
while $\iota^*$ is the morphism 
\[ \iota^*(\xi_0\otimes\nshift\xi_1\otimes
\nshift\xi_2\otimes\cdots\otimes\nshift\xi_p)
=\big(\alpha\circ\counit(\xi_0)*\nshift\xi_1
\big)\otimes\nshift\xi_2\otimes\cdots
\otimes\nshift\xi_p \]
or more precisely its restriction
to $\org$-flat
elements.

The Lie derivative \eqref{eq:L3}
is the composition
\[ \iL_\iD=\iota^*\circ\tilde{L}_\iD\circ\pi^* ,\]
where $\tilde{L}$ denotes the restriction to
$\org$-flat elements of the operator defined by
\begin{multline*}
\tilde{L}_{\pshift u_1\otimes\pshift u_2\otimes
\cdots\otimes\pshift u_d}(\xi_0\otimes\nshift\xi_1\otimes
\nshift\xi_2\otimes\cdots\otimes\nshift\xi_p)
\\
=\sum_{j=d}^p(-1)^{(d-1)(1+j-d)}{\pm}
\xi_0\otimes\nshift\xi_1\otimes\cdots\otimes\nshift\xi_{j-d}\otimes
\nshift\big((\dmb_{u_1}\xi_{j-d+1})*\cdots*(\dmb_{u_d}\xi_{j})\big)
\otimes\nshift\xi_{j+1}\otimes\cdots\otimes\nshift\xi_p
\\
+\sum_{j=1}^d(-1)^{p(p-j+2)}{\pm}
\big((\dmb_{u_1}\xi_{p-j+2})*\cdots*(\dmb_{u_{j-1}}\xi_{p})*
(\dmb_{u_j}\xi_{0})*\cdots*(\dmb_{u_d}\xi_{d-j})\big)
\otimes\nshift\xi_{d-j+1}\otimes\cdots\otimes\nshift\xi_{p-j+1}
\end{multline*}
for all $u_1,u_2,\dots,u_d\in\mathcal{U}(\cL)$
and $\xi_0,\xi_1,\xi_2,\dots,\xi_p\in\jet{\cL}$.

The Hochschild boundary differential is defined by
\begin{equation}\label{eq:hochschildb}
\hochschildb = \iL_m : \CpolyL{-p}{n}\to \CpolyL{-p+1}{n+1},
\end{equation}
where $m$ is given by Equation~\eqref{eq:m}.
It can be expressed explicitly by
\[ \begin{aligned}
\hochschildb \big(\nshift^{\otimes p}(\xi_1\otimes\xi_2\otimes\cdots\otimes\xi_p) \big)
= {} & {\pm}\, \nshift^{\otimes p-1}\big(\big((\alpha\circ\counit)
(\xi_1)*\xi_2\big)\otimes\xi_3\otimes\cdots\otimes\xi_p \big)
\\ & {\pm}\, \nshift^{\otimes p-1}\big((\xi_1*\xi_2)\otimes\xi_3\otimes\cdots\otimes\xi_p\big)
\\ & {\pm}\, \nshift^{\otimes p-1}\big(\xi_1\otimes(\xi_2*\xi_3)\otimes\xi_4\otimes
\cdots\otimes\xi_p \big)
\\ & \qquad\vdots
\\ & {\pm}\, \nshift^{\otimes p-1}
\big(\xi_1\otimes\xi_2\otimes\cdots\otimes\xi_{p-2}\otimes(\xi_{n-1}*\xi_p)\big)
\\ & {\pm}\, \nshift^{\otimes p-1}\big(\xi_1\otimes\xi_2\otimes\cdots\otimes\xi_{p-2}\otimes
\big(\xi_{p-1}*(\alpha\circ\counit)(\xi_p)\big)\big)
\end{aligned} \]
for all $\xi_1,\xi_2,\dots,\xi_p\in\jet{\cL}$.


As before, we set $\Cpolyl{} = \bigoplus_{p \geq 0} \Cpolyl{-p}$ which is equipped with the bigrading: $\big(\Cpolyl{}\big)^{-p,q}=\CpolyL{-p}{-p+q} .$ 
This bigraded space gives us the direct-product total space
\begin{equation}\label{eq:GaredeOuest}
\totCpolyL{n}=\prod_{-p+q = n} \big(\Cpolyl{}\big)^{-p,q}
=\prod_{p = 0}^\infty \CpolyL{-p}{n} .
\end{equation}


The differential in \eqref{eq:jetL} is compatible with all the structure maps of the formal groupoid
$\jet{\cL}$ so that $\jet{\cL}$ is a dg formal groupoid over $(\cR,Q)$ (see Appendix~\ref{sec:FormalGpdJet}
for the structure maps of the formal groupoid $\jet{\cL}$). \footnote{When $\cL$ is
the Lie algebroid of a dg Lie groupoid $\cG\toto\cM$,
$\jet{\cL}$ is the space of formal functions along the unit $\cM$.}
As a consequence, we have $[\cQ,\hochschildb]=0$, and therefore $\big(\dQ+\hochschildb\big)^2=0$, where $b$ is the Hochschild homology differential \eqref{eq:hochschildb}.

The Hochschild homology of the dg Lie algebroid $\cL$,
denoted $\cohomology{}\big(\totCpolyL{\bullet}, \dQ+\hochschildb\big)$,
is the cohomology of the direct-product total cochain complex $ \big(\totCpolyL{\bullet}, \dQ+\hochschildb\big)$ of the double complex $(\Cpolyl{}, \hochschildb, \dQ)$.

It is known that the operations $\iI$ \eqref{eq:ILiealgebroid},
$\liederivative{}$ \eqref{eq:L3} and $B$ \eqref{eq:Connes3}
satisfy the Tamarkin--Tsygan calculus operations 
Definition~\ref{def:calc} (ii)--(v), up to homotopy
--- see~\cite{paper-zero} --- and 
%
descend to their respective Hochschild (co)homologies so that we have

\begin{proposition}\label{prop:CalH_DGLieAbd}
Let $\cL$ be a dg Lie algebroid. The pair
\[ \Theta^{\bullet}=\cohomology{}\big( \totDpolyL{\bullet}, \dQ+\hochschild\big)
\qquad\text{and}\qquad
\Xi_{\bullet}= \cohomology{}\big(\totCpolyL{\bullet}, \dQ+\hochschildb \big) \]
admits a Tamarkin--Tsygan calculus
structure, where the differential $d$,
the action operators $\iI$ and $\iL$ are defined,
respectively by \eqref{eq:Connes3}, \eqref{eq:ILiealgebroid}
and~\eqref{eq:L3}.
\end{proposition}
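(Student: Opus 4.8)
The plan is to bootstrap from the purely algebraic construction of the calculus (on the universal enveloping algebra / jet-space side) carried out in \cite{paper-zero}, and then to track the internal differential $\cQ$ through it. First I would record that the differential \eqref{eq:UL} turns $\enveloping{\cL}$ into a dg Hopf algebroid over the dg algebra $(\cR,Q)$ --- $\cQ$ is a derivation for the product, a coderivation for the coproduct, and graded-commutes with $\alpha,\beta,\varepsilon$ and the antipode --- and that, dually, \eqref{eq:jetL} turns $\jet{\cL}$ into a dg formal groupoid over $(\cR,Q)$, as already asserted in Section~\ref{sec:CalHdgAbd}; both facts are repackagings of the compatibility condition \eqref{eq:compatibility}. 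Because the cup product, the Gerstenhaber bracket, the contraction $\iI$, the Lie derivative $\iL$, and the Rinehart--Connes operator $\connes$ are all assembled out of these structure maps by the explicit formulas of Section~\ref{sec:CalHdgAbd}, a Koszul-sign computation then shows that $\cQ$ acts on $\totDpolyL{\bullet}$ as a derivation of $\cup$ and of $\gerstenhaber{\argument}{\argument}$, and is compatible with the actions $\iI$ and $\iL$ of $\totDpolyL{\bullet}$ on $\totCpolyL{\bullet}$ and with $\connes$, via graded Leibniz rules such as $\cQ(\iI_{\iD}\xi)=\iI_{\cQ\iD}\xi\pm\iI_{\iD}(\cQ\xi)$; in particular each of these operators is a morphism of complexes for the total differentials $\dQ+\hochschild$ and $\dQ+\hochschildb$, which square to zero as already observed (here one uses $\cQ m=0$, so that $\hochschild=\gerstenhaber{m}{\argument}$ and $\hochschildb=\iL_m$ graded-commute with $\cQ$).

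Next I would invoke \cite{paper-zero}: on the chain level, before twisting by $\cQ$, the triple $(\totDpolyL{\bullet},\cup,\gerstenhaber{\argument}{\argument})$ is a Gerstenhaber algebra up to homotopy, and the operators $\iI$, $\iL$, $\connes$ satisfy conditions (ii)--(v) of Definition~\ref{def:calc} up to homotopies whose defects are boundaries for the Hochschild differentials $\hochschild$ and $\hochschildb$. The homotopy operators occurring in these identities are themselves built from the formal-groupoid structure maps, so the argument of the previous paragraph applies to them verbatim: they graded-commute with $\cQ$. Hence every homotopy identity of \cite{paper-zero} remains valid when the Hochschild differentials are replaced throughout by the total differentials $\dQ+\hochschild$ and $\dQ+\hochschildb$. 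Passing to cohomology of the corresponding direct-sum (resp.\ direct-product) total complexes, all correction terms disappear: $\cohomology{}\big(\totDpolyL{\bullet},\dQ+\hochschild\big)$ is a genuine Gerstenhaber algebra (as already recorded), $\iI$, $\iL$ and $\connes$ descend to well-defined operators on the Hochschild (co)homologies, the identities \eqref{brick} and \eqref{straw} hold on the nose (whence also \eqref{wood}), and $\connes^{2}=0$. This is precisely the Tamarkin--Tsygan calculus structure the proposition asserts on $(\Theta^{\bullet},\Xi_{\bullet})$, with $d=\connes$ and the stated action operators.

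The step I expect to be the main obstacle is the compatibility bookkeeping in the second paragraph: one must check not only that the calculus operations but that the specific contracting homotopies furnished by \cite{paper-zero} are strictly compatible with the internal differential $\cQ$, so that the chain-level up-to-homotopy calculus survives the addition of $\cQ$ with no new error terms --- and, on the homology side, that these homotopies are degreewise finite, hence still define operators on the direct-product total complex $\totCpolyL{\bullet}$. Once this is secured, the descent to cohomology is formal, and the remaining labor is purely a matter of tracking Koszul signs when rewriting $\cup$, $\gerstenhaber{\argument}{\argument}$, $\iI$, $\iL$, $\connes$ in terms of $(m,\alpha,\beta,\Delta,\varepsilon,S)$ and exploiting the derivation/coderivation property of $\cQ$.
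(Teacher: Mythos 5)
Your proposal follows essentially the same route as the paper, which simply observes that the operations $\iI$, $\iL$ and $\connes$ satisfy the calculus identities up to homotopy by \cite{paper-zero}, that the internal differential $\cQ$ is compatible with all the Hopf-algebroid/formal-groupoid structure maps (hence with the operations and the homotopies), and that everything therefore descends to the (co)homology of the total complexes. Your elaboration of the $\cQ$-compatibility bookkeeping is consistent with what the paper leaves implicit.
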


Such a calculus is denoted $\calculus_H(\cL,\cQ)$.
In the case $\cL = T_\cM$, the calculus $\calculus_H(\cL,\cQ)$ reduces to 
noncommutative calculus $\calculus_H (\cM,\cQ)$ of the dg manifold
$(\cM,Q)$ in Section~\ref{sec:HtpCalDGMfd}.

\section{Fedosov dg Lie algebroids of graded manifolds}

\subsection{Fedosov dg manifolds}

In this subsection, we recall some basic results and
notations regarding Fedosov formal dg manifolds
associated to a graded manifold $\cM$, which will
be needed in future discussions. For details,
see~\cite{MR3910470}.
 
\subsubsection{Connections on graded manifold}

Let $\cE\to\cM$ be a graded vector bundle.
A linear connection on $\cE\to\cM$ is a $\KK$-linear map $\nabla:\sections{T_\cM}\otimes\sections{\cE}\to\sections{\cE}$ of degree~$0$, satisfying
\[ \nabla_{f\cdot X}S=f\cdot\nabla_X S
\qquad\text{and}\qquad
\nabla_X (f\cdot S)=X(f)\cdot S+(-1)^{\degree{X}\degree{f}}f\cdot\nabla_X S ,\]
for all homogeneous $f\in C^\infty(\cM)$, $X\in\sections{T_\cM}$, and $S\in\sections{\cE}$.
Its covariant differential is the map $d^\nabla:\OO^\bullet(\cM,\cE)\to\OO^{\bullet+1}(\cM,\cE)$ of degree~$(+1)$ satisfying 
\[ \interior{X}(d^\nabla S)=\nabla_X S
\qquad\text{and}\qquad
d^\nabla(\alpha\wedge\beta)=d\alpha\wedge\beta+(-1)^{\degree{\alpha}}\alpha\wedge
d^\nabla\beta ,\]
for all $X\in\sections{T_\cM}$, $S\in\sections{\cE}$,
$\alpha\in\OO(\cM)$, and $\beta\in\OO(\cM,\cE)$.
Its curvature is the $2$-form $R^\nabla\in\OO^2\big(\cM,\End(\cE)\big)$
defined by
\[ R^\nabla(X,Y)=(-1)^{\degree{Y}-1}\big\{\nabla_X\nabla_Y
-(-1)^{\degree{X}\degree{Y}}\nabla_Y\nabla_X-\nabla_{\lie{X}{Y}}\big\} ,\]
for all homogeneous $X,Y\in\sections{T_\cM}$.
For all $\omega\in\OO(\cM,\cE)$, we have $(d^\nabla)^2\omega=R^\nabla\wedge\omega$.

The connection $\nabla$ determines an induced linear connection
(also denoted $\nabla$ by abuse of notation) on the dual vector bundle $\cE^\vee\to\cM$
through the relation
\[ X(\duality{\zeta}{S})=\duality{\nabla_X\zeta}{S}
+(-1)^{\degree{X}\degree{\zeta}}\duality{\zeta}{\nabla_X S} ,\]
for all $X\in\sections{T_\cM}$, $\zeta\in\sections{\cE^\vee}$ of
homogeneous degrees and $S\in\sections{\cE}$.

If $\cE=T_\cM$, we also have the notion of torsion,
a tensor field of type $(1,2)$ on $T_\cM$ defined by
\[ T^\nabla(X,Y)=\nabla_X Y-(-1)^{\degree{X}\degree{Y}}\nabla_Y X-\lie{X}{Y} ,\]
for all homogeneous $X,Y\in\sections{T_\cM}$.

\subsubsection{Fedosov formal manifold}\label{sec:FedosovMfd}

Throughout the present paper, the graded manifold
$\cN=T_\cM[1]\oplus \Tformal\cM$ (with support $M$) will play a central role.

Let $\cM$ be a graded manifold. Denote by $\Tformal\cM$ the formal neighborhood
of the zero section in the tangent bundle.
It is the graded manifold with the support $M$ and the structure sheaf 
\[ \fA_{\Tformal\cM} = \varprojlim_p \frac{\fA_{\tilde T_\cM}}{I^p} ,\]
where $I$ is the sheaf of ideals generated by the fiberwise linear functions on $T_\cM$.
Here $\fA_{\tilde T_\cM}$ is a sheaf over $M$, given by the sections of $ST_\cM\dual$.
In other words, we have $\fA_{\Tformal\cM}(U) = \sections{\cM|_U, \hat{S} T_{\cM|_U}\dual}$
for any open subset $U \subset M$.

The shifted tangent bundle $T_\cM[1]$ is a graded vector bundle over $\cM$.
Its total space, also denoted by $T_\cM[1]$, is the graded manifold with support $M$
and structure sheaf $\fA_{T_\cM[1]}$ induced by sections of the graded vector bundle
$\hat S(T_\cM[1])\dual\to\cM$. 

The \emph{Fedosov manifold} is the graded manifold
$\cN=T_\cM[1]\oplus\Tformal\cM$ with support $M$, whose structure sheaf
$\fA_\cN$ is given by the sections of vector bundle
$\hat S(T_\cM[1] \oplus \Tformal \cM)\dual \to \cM$. 
%
Abusing notations, we will make frequent use of the heuristic notation
$\Omega\big(\cM,\hat{S}(T^\vee_\cM)\big)$
to denote the algebra $C^\infty(\cN)$.

\begin{remark}\label{rmk:CoordFedosovMfd}
A local chat $(x_1,\cdots, x_{m+r})$ on $\cM$ as in Section~\ref{corniche}
induces local functions $(\xi_1,\cdots, \xi_{m+r})$ on $T_\cM[1]$
and $(y_1,\cdots, y_{m+r})$ on $\Tformal\cM$, respectively.
Here $\xi_k$ and $y_k$ stand for the linear coordinate function
on the fibers of the vector bundle $T_\cM[1]$ and $\Tformal\cM$, respectively.
The local functions $(x_1,\cdots, x_{m+r}, \xi_1,\cdots, \xi_{m+r}, y_1, \cdots, y_{m+r})$
form a local chat on $\cN = T_\cM[1]\oplus \Tformal\cM$ 
with degrees $|\xi_k|=|x_k|+1$ and $|y_k|=|x_k|$, respectively. 
If $(x_1, \cdots, x_m)$ are smooth coordinates on an open subset $U$
in the support $M$, and $(x_{m+1},\cdots,x_{m+r})$ are
virtual homogeneous coordinate functions on $\cM$,
then a function on $\cN$ locally can be identified with an element in
\[ C^\infty(U)\llbracket x_{m+1},\cdots,x_{m+r},\xi_1,\cdots,\xi_{m+r},
y_1,\cdots,y_{m+r}\rrbracket .\]
\end{remark}

\subsubsection{Fedosov dg formal manifolds}
\label{sec:FedosovConnection}

Let $\cM$ be a graded manifold and let $\cD(\cM)$
denote its algebra of differential
operators. Given an affine connection $\nabla$ on $\cM$
(i.e.\ a linear connection on $T_\cM$),
there exists a unique well-defined
morphism of left $C^\infty(\cM)$-modules
\begin{equation}\label{eq:pbw}
\pbw^\nabla:\sections{S(T_\cM)}\to\cD(\cM)
\end{equation}
satisfying the relations
\begin{gather*}
\pbw^\nabla(f)=f, \quad\forall f\in C^\infty(\cM); \\
\pbw^\nabla(X)=X, \quad\forall X\in\sections{T_\cM};
\end{gather*}
and, for all $n\in\NN$ and any homogeneous $X_0,X_1,\dots,X_n\in\sections{T_\cM}$,
\[ \pbw^\nabla(X_0\odot\cdots\odot X_n)
=\frac{1}{n+1}\sum_{k=0}^{n}\epsilon_k\Big\{
X_k\cdot\pbw^\nabla(X^{\{k\}})-\pbw^\nabla\big(\nabla_{X_k}(X^{\{k\}})\big)
\Big\} ,\]
where $\epsilon_k=(-1)^{\degree{X_k}(\degree{X_0}+\cdots+\degree{X_{k-1}})}$
and $X^{\{k\}}=X_0\odot\cdots\odot X_{k-1}\odot X_{k+1}\odot\cdots\odot X_n$.
See~\cite{MR3910470}. 
When $\cM$ is an ordinary manifold $M$,
this map $\pbw^\nabla$ is the fiberwise $\infty$-order jet of
the fiber bundle map
$\exp^\nabla:T_M\to M\times M$ over $M$ induced by 
the exponential map of the connection $\nabla$.
Therefore, $\pbw^\nabla$ is called the \emph{`formal exponential map'}
arising form the affine connection $\nabla$.

The order of the differential operators and polynomials
determine natural filtrations on the associative algebras $\cD(\cM)$
and $\sections{S(T_\cM)}$, respectively. 
Besides being associative algebras, both
$\sections{S(T_\cM)}$ and $\cD(\cM)$
are naturally filtered coalgebras over
$\cR$ with deconcatenation as comultiplication:
\begin{multline}
\label{DOODLE}
\Delta(X_1\cdots X_n)=1\otimes(X_1\cdots X_n)
+\sum_{p=1}^{n-1}\sum_{\sigma\in\shuffle{p}{n-p}}
\epsilon^{\sigma}_{X_1,\dots,X_n}
(X_{\sigma_1}\cdots X_{\sigma(p)})\otimes
(X_{\sigma_{p+1}}\cdots X_{\sigma(n)}) \\
+(X_1\cdots X_n)\otimes 1
.\end{multline} 
The symbol $\epsilon^{\sigma}_{X_1,\dots,X_n}$ appearing in the terms of the sum
denotes the Koszul sign of the permutation $\sigma$ of the order in which the homogeneous elements
$X_1,\dots,X_n$ of $\sections{T_\cM}$ appear in that term. More precisely, we have
$X_{\sigma(1)} \odot \cdots \odot X_{\sigma(n)}=\epsilon^{\sigma}_{X_1,\dots,X_n} X_1
\odot \cdots \odot X_n$ in $\sections{ST_\cM}$.

It is known that the formal exponential map
$\pbw^\nabla:\sections{S(T_\cM)}\to\cD(\cM)$
is an isomorphism of filtered (left) coalgebras over $\cR$ \cite[Proposition~4.2]{MR3910470}.

We now introduce another linear connection $\nabla^\lightning$
on the vector bundle $S(T_\cM)\to\cM$ by
\begin{equation}
\label{eq:cntb}
\nabla^\lightning_X S = (\pbw^\nabla)^{-1}\big(X\cdot\pbw^\nabla(S)\big) ,
\end{equation}
for all $X\in\sections{T_\cM}$ and $S\in\sections{S(T_\cM)}$.
Thus each vector field $X\in\sections{\cM}$ determines
a coderivation $\nabla^\lightning_X$ of the coalgebra $\sections{S(T_\cM)}$
and, consequently, a derivation, also denoted $\nabla^\lightning_X$
by abuse of notation, of the dual algebra $\sections{\hat{S}(T_\cM^\vee)}$.
Since the formal exponential map $\pbw^\nabla:\sections{S^{\leq n}(T_\cM)}\to\cD^{\leq n}(\cM)$ respects the filtrations, the coderivation $\nabla^\lightning_X$
maps $\sections{S^{\leq n}(T_\cM)}$ to $\sections{S^{\leq n+1}(T_\cM)}$ and, consequently,
its dual $\nabla^\lightning_X$ is a derivation mapping
$\sections{\hat{S}^{\geq n}(T_\cM^\vee)}$
to $\sections{\hat{S}^{\geq n-1}(T_\cM^\vee)}$.

Unlike $\nabla$, this new linear connection $\nabla^\lightning$ on $S(T_\cM)$ is clearly flat.
Its induced linear connection $\nabla^\lightning$
on the dual bundle $\hat{S}(T_\cM^\vee)$ is also flat,
and is called \emph{Fedosov connection}.
We use the symbol $\fedosov$ to denote its corresponding covariant differential
\[ \fedosov:\OO^{\bullet}\big(\cM,\hat{S}(T_\cM^\vee)\big)
\to\OO^{\bullet+1}\big(\cM,\hat{S}(T_\cM^\vee)\big) .\]
Thus, we have $(\fedosov)^2=0$. Since $\nabla^\lightning_X:
\sections{\hat{S}^{}(T_\cM^\vee)}
\to\sections{\hat{S}^{}(T_\cM^\vee)}$ is a derivation,
it follows that $\fedosov$ is a degree $(+1)$ derivation of
$\OO^{\bullet}\big(\cM,\hat{S}(T_\cM^\vee)\big)$, and
hence is a homological vector field on the Fedosov
manifold $\cN = T_\cM[1]\oplus \Tformal\cM$.
Thus we have

\begin{proposition}[\cite{MR3910470}]
Let $\cM$ be any graded manifold and $\nabla$ an affine connection on $\cM$.
Then $\big(T_\cM[1]\oplus \Tformal\cM, \fedosov\big)$ is a dg formal 
manifold.
\end{proposition}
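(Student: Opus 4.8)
The plan is to verify directly that $\fedosov$ is a homological vector field on the formal graded manifold $\cN=T_\cM[1]\oplus\Tformal\cM$; since $C^\infty(\cN)=\OO^\bullet\big(\cM,\hat{S}(T_\cM^\vee)\big)$, this amounts to showing that $\fedosov$ is a degree-$(+1)$ derivation of this algebra with $(\fedosov)^2=0$. All the ingredients have essentially been assembled above, so the argument is largely one of bookkeeping, the only genuinely nontrivial input being the coalgebra property of $\pbw^\nabla$.

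First I would record the structure of $\nabla^\lightning$. Since $\pbw^\nabla:\sections{S(T_\cM)}\to\cD(\cM)$ is an isomorphism of filtered left coalgebras over $\cR$ \cite{MR3910470}, and since for a vector field $X$ the operator $u\mapsto X\cdot u$ on $\cD(\cM)$ satisfies the Leibniz rule over $\cR$ and is a coderivation for the deconcatenation comultiplication~\eqref{DOODLE}, transport through $\pbw^\nabla$ shows that $\nabla^\lightning_X$, defined by~\eqref{eq:cntb}, is a coderivation of $\sections{S(T_\cM)}$ satisfying $\nabla^\lightning_X(f\cdot S)=X(f)\cdot S+(-1)^{\degree{X}\degree{f}}f\cdot\nabla^\lightning_X S$, while left $\cR$-linearity of $\pbw^\nabla$ gives $\nabla^\lightning_{f\cdot X}=f\cdot\nabla^\lightning_X$. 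Hence $\nabla^\lightning$ is a genuine linear connection on $S(T_\cM)\to\cM$ acting fiberwise by coderivations, and dually $\nabla^\lightning$ on $\hat{S}(T_\cM^\vee)\to\cM$ acts fiberwise by \emph{derivations} of the commutative algebra $\hat{S}(T_\cM^\vee)$; by the filtration estimates already recorded it is continuous, and so extends to the completed symmetric algebra.

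Next I would establish flatness. For homogeneous vector fields $X,Y$, transporting through $\pbw^\nabla$ and using associativity of composition in $\cD(\cM)$, the operator $\nabla^\lightning_X\nabla^\lightning_Y-(-1)^{\degree{X}\degree{Y}}\nabla^\lightning_Y\nabla^\lightning_X-\nabla^\lightning_{\lie{X}{Y}}$ corresponds to left multiplication in $\cD(\cM)$ by $X\cdot Y-(-1)^{\degree{X}\degree{Y}}Y\cdot X-\lie{X}{Y}$, which vanishes since this is precisely the identity relating composition of first-order operators and the Lie bracket of vector fields. Thus $R^{\nabla^\lightning}=0$ on $S(T_\cM)$, hence also on the dual bundle $\hat{S}(T_\cM^\vee)$; by the general identity $(d^\nabla)^2\omega=R^\nabla\wedge\omega$ recalled above, the Fedosov covariant differential satisfies $(\fedosov)^2=0$.

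Finally, since $\nabla^\lightning$ acts fiberwise by derivations of $\hat{S}(T_\cM^\vee)$, the covariant differential $\fedosov$ combines the de Rham Leibniz rule on the $\OO(\cM)$-factor with this derivation property and is therefore a graded derivation of degree $+1$ of $\OO^\bullet\big(\cM,\hat{S}(T_\cM^\vee)\big)$; it is enough to check the Leibniz rule on the algebra generators $C^\infty(\cM)$, the fiber-linear coordinates $\xi_k$ on $T_\cM[1]$, and the fiber-linear coordinates $y_k$ on $\Tformal\cM$ of Remark~\ref{rmk:CoordFedosovMfd}. Consequently $\fedosov$ is a vector field of degree $+1$ on $\cN$ with $(\fedosov)^2=0$, i.e.\ a homological vector field, so $\big(T_\cM[1]\oplus\Tformal\cM,\fedosov\big)$ is a dg formal manifold. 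The only step requiring real work is the PBW transport argument, i.e.\ invoking the coalgebra-isomorphism property of $\pbw^\nabla$ from \cite{MR3910470}; granting it, flatness and the derivation property are formal.
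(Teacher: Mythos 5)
Your argument is correct and follows essentially the same route as the paper (and the cited reference), which defines $\nabla^\lightning$ by transport through $\pbw^\nabla$, observes that it acts by coderivations hence dually by derivations, and deduces $(\fedosov)^2=0$ from flatness. The one step the paper dismisses as ``clearly flat'' you verify explicitly — the curvature of $\nabla^\lightning$ corresponds under $\pbw^\nabla$ to left multiplication by $X\cdot Y-(-1)^{\degree{X}\degree{Y}}Y\cdot X-\lie{X}{Y}=0$ in $\cD(\cM)$ — which is exactly the intended justification.
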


We will refer to $\big(T_\cM[1]\oplus \Tformal\cM, \fedosov\big)$
as a Fedosov dg formal manifold associated to $\cM$.

According to \cite[Theorem~8.1]{MR3910470}, the cochain complex $(C^\infty(\cN),\fedosov)$
is quasi-isomorphic to the cochain complex $(C^\infty(\cM),0)$. 
We can interpret this quasi-isomorphism as saying that the Fedosov dg manifold
$(\cN,\fedosov)$ is quasi-isomorphic to $(\cM,0)$, the dg manifold obtained
by endowing the graded manifold $\cM$ with the trivial homological vector field.

\subsection{Fedosov connection in local coordinates}
\label{sect:2.2}
The affine connection $\nabla$ on $T_\cM$ determines a connection $\nabla$
on the vector bundle $S(T_\cM)\to\cM$,
also denoted $\nabla$ by abuse of notation,
through the relation
\[ \nabla_X(X_1\odot\cdots\odot X_n)
=\sum_{k=1}^n (-1)^{\degree{X}(\degree{X_1}+\cdots+\degree{X_{k-1}})}
X_1\odot\cdots X_{k-1}\odot\nabla_X X_k\odot X_{k+1}\odot\cdots\odot X_n .\]
For all $X\in\XX(\cM)$, the operator $\nabla_X$ is a coderivation
of the coalgebra $\sections{S(T_\cM)}$.
We use the same symbol $\nabla$ to denote the induced connection
on the dual vector bundle $\hat{S}(T_\cM^\vee)\to\cM$.
In local coordinates, the derivation $\nabla_{\frac{\partial}{\partial x_k}}$
of the algebra $\sections{\hat{S}(T_\cM^\vee)}$ reads
\begin{equation}\label{Yaoundé}
\nabla_{\frac{\partial}{\partial x_k}} = \frac{\partial}{\partial x_k}\otimes 1
- \sum_{l=1}^{m+r}\sum_{j=1}^{m+r} {(-1)^{|y_l|+ |y_l||y_j|}}
\Gamma_{k,l}^j \otimes y_l\frac{\partial}{\partial y_j}
,\end{equation}
where $\Gamma_{k,l}^j\in C^\infty(\cM)$ are the Christoffel symbols
of the connection $\nabla$ on $T_\cM$: $ \nabla_{\frac{\partial}{\partial x_k}}\frac{\partial}{\partial x_l}
=\sum_{j=1}^{m+r} \Gamma_{k,l}^j\frac{\partial}{\partial x_j} .$ 
The corresponding covariant differential $d^{\nabla}: \OO^\bullet\big(\cM,\hat{S}(T_\cM^\vee)\big)
\to \OO^{\bullet+1}\big(\cM,\hat{S}(T_\cM^\vee)\big)$ is the derivation
\begin{equation}\label{Lagos}
d^{\nabla}=\sum_{k=1}^{m+r}\xi_k\cdot\nabla_{\frac{\partial}{\partial x_k}}
\end{equation}
of the algebra $\OO^\bullet\big(\cM,\hat{S}(T_\cM^\vee)\big)$.


Next, we introduce the operators $\delta:\OO^p\big(\cM,S^q(T^\vee_\cM)\big)
\to\OO^{p+1}\big(\cM,S^{q-1}(T^\vee_\cM)\big)$ and $\eta:\OO^p\big(\cM,S^q(T^\vee_\cM)\big)
\to\OO^{p-1}\big(\cM,S^{q+1}(T^\vee_\cM)\big).$ 
They are locally defined by the relations
\begin{equation}\label{Cotonou}
\delta=\sum_{k=1}^{m+r} \xi_k\otimes\frac{\partial}{\partial y_k}
\qquad\text{and}\qquad
\eta=\sum_{k=1}^{m+r} \frac{\partial}{\partial\xi_k}\otimes y_k
.\end{equation}
Here the local coordinate functions
$x_1,\cdots,x_{m+r},\xi_1,\cdots,\xi_{m+r},y_1,\cdots,y_{m+r}$
on $\cN=T_{\cM}[1]\oplus T_{\cM}$
associated with a choice of local coordinates $x_1,\cdots,x_{m+r}$
on $\cM$ are as in Remark~\ref{rmk:CoordFedosovMfd}.
More precisely, for all homogeneous $\omega\in\OO^p(\cM)$
and $P\in\sections{S^q T^\vee_\cM}$, we have
\[ \delta(\omega\otimes P) =\sum_{k=1}^{m+r}
(-1)^{\degree{\frac{\partial}{\partial y_k}}\degree{\omega}}
(\xi_k\cdot\omega)\otimes\frac{\partial}{\partial y_k}(P) \]
and
\[ \eta(\omega\otimes P) =\sum_{k=1}^{m+r}
(-1)^{\degree{y_k}\degree{\omega}}\frac{\partial}{\partial\xi_k}(\omega)
\otimes (y_k\cdot P) .\]
Both operators $\delta$ and $\eta$ are well defined,
i.e.\ independent of the choice of local coordinates on $\cM$, and are 
derivations of the graded algebra $\OO^\bullet\big(\cM,S(T^\vee_\cM)\big)$.
The operator $\delta$ is of degree~$(+1)$ while the operator $\eta$ is
of degree~$(-1)$.
Obviously, we have $\delta\circ\delta=0$ and $\eta\circ\eta=0$.
Indeed $\delta$ is the fiberwise Koszul operator.

We define an operator $h$ on $\OO^\bullet\big(\cM,\hat{S}(T^\vee_\cM)\big)$
by declaring that $h(x)=0$ if $x\in\OO^0\big(\cM,S^0(T^\vee_\cM)\big)$ and
\begin{equation}
h(x)= - \frac{1}{p+q}\eta(x)
\end{equation}
for any $x\in\OO^p\big(\cM,S^q(T^\vee_\cM)\big)$ with $p\geq 1$ or $q\geq 1$.
The operator $h$ satisfies $h\circ h=0$ but, unlike $\eta$, it is \emph{not}
a derivation of the algebra $C^\infty(\cN)\cong 
\OO^\bullet\big(\cM,\hat{S}(T^\vee_\cM)\big)$.


Since $\delta$ and $h$, respectively, raise and lower the filtration
\[ \cdots\subset\Omega\big(\cM,S^{\geq q+1}(T^\vee_\cM)\big)\subset
\Omega\big(\cM,S^{\geq q}(T^\vee_\cM)\big)\subset
\Omega\big(\cM,S^{\geq q-1}(T^\vee_\cM)\big)\subset\cdots \]
by one rung, they can both be extended to the completed algebra
\[ \OO^\bullet\big(\cM,\hat{S}(T^\vee_\cM)\big) 
=\varprojlim_{q}\frac{\OO^\bullet\big(\cM,S(T^\vee_\cM)\big)}
{\OO^\bullet\big(\cM,S^{\geq q}(T^\vee_\cM)\big)} .\]


\begin{remark}\label{warzee}
It was proved in~\cite[Theorem~6.6 and Proposition~6.1]{MR3910470} that,
provided the connection $\nabla$ on $T_\cM$ is torsion-free,
we have \[ \fedosov=-\delta+d^{\nabla}+A^\nabla ,\] where
\begin{equation}\label{Conakry}
A^\nabla=\sum_{k=1}^{m+r} \xi_k\Bigg(\sum_{\substack{L\in\ZZ_{\geq 0}^{m+r} \\
\abs{L}\geq 2}}
\sum_{j=1}^{m+r} A^k_{L,j} \otimes y^L\frac{\partial}{\partial y_j}\Bigg),
\end{equation}
with $ A^k_{L,j}\in C^\infty (\cM)$, 
is an element of degree~$(+1)$ of
$\OO^1\big(\cM,\hat{S}^{\geq 2}(T_\cM^\vee)\otimes T_\cM\big)$
--- regarded as an operator acting on the algebra
$\OO^\bullet\big(\cM,\hat{S}(T_\cM^\vee)\big)$ by derivation 
satisfying 
\[ h_\natural(A^\nabla)=(h\otimes\id_{T_\cM})(A^\nabla)=0 .\]

Actually, $A^\nabla\in \OO^1\big(\cM,\hat{S}^{\geq 2}(T_\cM^\vee)\otimes T_\cM\big)$ 
is uniquely determined by the equations below:
\[ \begin{cases}
\big(-\delta+d^{\nabla}+A^\nabla \big)^2 = 0 \\
h_\natural(A^\nabla)=0
\end{cases} \]
\end{remark}




In local coordinates, the covariant differential $\fedosov$ reads
\begin{equation}\label{Abidjan}
\begin{split}
\fedosov = \sum_{k=1}^{m+r} \xi_k
\Bigg( -1\otimes\frac{\partial}{\partial y_k}
+\overset{\nabla_{\frac{\partial}{\partial x_k}}}
{\overbrace{\frac{\partial}{\partial x_k}\otimes 1
-\sum_{l=1}^{m+r}\sum_{j=1}^{m+r} {(-1)^{|y_l|+ |y_l||y_j|}}
\Gamma_{k,l}^j\otimes y_l\frac{\partial}{\partial y_j}}}
+\sum_{\substack{L\in\ZZ^{m+r}_{\geq 0}\\ \abs{L}\geq 2}}
\sum_{j=1}^{m+r} A^k_{L,j}\otimes y^L\frac{\partial}{\partial y_j} \Bigg)
,
\end{split}
\end{equation}
where
the Christoffel symbols $\Gamma_{k,l}^j$ of the connection $\nabla$ on $T_\cM$
and the coefficients $A^k_{L,j}$ are functions of the variables $x_1,\dots,x_{m+r}$ exclusively.


Note that Fedosov dg manifolds of graded manifolds
were also studied by Cattaneo-Felder \cite{MR2304327}. 

\subsection{Tamarkin--Tsygan calculi of Fedosov dg Lie algebroids}

\subsubsection{Fedosov dg Lie algebroid}


The ideal 
$I=\sections{\hat S^{\geq 1}(T_\cM[1] \oplus T_\cM)\dual}$
of the algebra
$C^\infty(\cN) = \sections{\hat S^{\geq 1}
(T_\cM[1] \oplus T_\cM)\dual}$
induces an $I$-adic topology on $C^\infty(\cN)$.
We will consider derivations of the algebra $C^\infty(\cN)$
which are continuous with respect to the $I$-adic topology
so that they are uniquely determined by their restrictions
to $\sections{\hat S^{\leq 1}(T_\cM[1] \oplus T_\cM)\dual}$. 
See, for example, \cite[Section~2.1]{MR4727081}. 
Therefore, in local coordinates, every (continuous) derivation $\cX$
of $C^\infty(\cN) = \OO\big(\cM,\hat{S}(T_\cM^\vee)\big)$ reads
\[ \cX = \sum_{k=1}^{m+r} \sum_{J\in\ZZ_{\geq 0}^{m+r}}
\left\{ \lambda_{k,J} \, y^J \otimes \frac{\partial}{\partial x_k}
+ \mu_{k,J} \, y^J \otimes \frac{\partial}{\partial\xi_k}
+ \nu_{k,J} \, y^J \otimes \frac{\partial}{\partial y_k} \right\} \]
for some $\lambda_{k,J}$, $\mu_{k,J}$ and $\nu_{k,J}$ in $\OO(\cM)$. 
We will use the symbol $\XX(\cN)$ to denote
the space of (continuous) derivations of the algebra $C^\infty(\cN)$. 
Together, the Lie algebra of derivations $\XX(\cN)$
and the algebra of functions $C^\infty(\cN)$ form a Lie--Rinehart algebra. 
Indeed, $\XX(\cN)$ can be considered
as the space of sections of the tangent bundle $T_\cN\to\cN$.

We will be most interested in the Lie subalgebroid $\cF\to\cN$ of
$T_\cN\to\cN$ whose sections $\cY$ are vector fields on $\cN$ of the following type:
\[ \cY = \sum_{k=1}^{m+r} \sum_{J\in\ZZ_{\geq 0}^{m+r}}
\nu_{k,J} \otimes y^J \frac{\partial}{\partial y_k} \]
with $\nu_{k,J}$ in $\OO(\cM)$.
As a vector bundle, $\cF\to\cN$ is the pullback of the vector bundle
$T_\cM\to\cM$ through the surjective submersion $\cN\onto\cM$.
It is a graded vector bundle whose total space $\cF$ is a graded manifold with support $M$.
We have the canonical identification
\begin{equation}
\sections{\cF}\cong C^\infty(\cN)\otimes_{C^\infty(\cM)}\sections{T_\cM},
\end{equation}
which will be denoted $\OO\big(\cM,\hat{S}(T_\cM^\vee)\otimes T_\cM\big)$ by abuse of notation.
It is clear that $\cF\to \cN$ is a Lie subalgebroid of $T_\cN\to \cN$.

The vector fields $\delta$ and $A^\nabla$ on $\cN$ introduced earlier
--- see Equations~\eqref{Cotonou} and~\eqref{Conakry} ---
are sections of $\cF\to\cN$, while the vector field $d^{\nabla}$ is not
--- see Equations~\eqref{Lagos} and~\eqref{Yaoundé}.
However, using Equation~\eqref{Abidjan},
one checks by inspection that the subspace
$\sections{\cF}$ of $\XX(\cN)$ is indeed stable under the endomorphism
$\lie{\fedosov}{\argument}$ of $\XX(\cN)$ encoding the dg structure
on the tangent Lie algebroid $T_\cN\to\cN$
of the Fedosov dg manifold $(\cN,\fedosov)$. Thus we have

\begin{lemma}\label{lem:Rome}
The pullback bundle $\cF\to\cN$ is a dg Lie subalgebroid of the tangent
dg Lie algebroid $T_{\cN}\to\cN$ of the Fedosov dg manifold $(\cN,\fedosov)$.
\end{lemma}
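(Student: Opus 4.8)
The plan is to verify directly that $\sections{\cF}$ is a subcomplex of $\big(\XX(\cN),\lie{\fedosov}{\argument}\big)$, which together with Lemma~\ref{lem:Rome}'s preliminary observation that $\cF\to\cN$ is a graded Lie subalgebroid of $T_\cN\to\cN$ gives the statement. By the discussion in Section~\ref{sec:CalculiDGLieAbd}, a dg Lie subalgebroid of $T_\cN\to\cN$ is precisely a graded Lie subalgebroid $\cF$ whose space of sections is preserved by the operator $\cQ=\lie{\fedosov}{\argument}$ encoding the dg structure on the tangent Lie algebroid; the compatibility condition~\eqref{eq:compatibility} is then inherited from that of $T_\cN\to\cN$, so no separate check of $\schouten{d_\cF}{\cQ}=0$ is needed. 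Thus the whole content is: for every $\cY\in\sections{\cF}$, the vector field $\lie{\fedosov}{\cY}$ on $\cN$ is again of the special form $\sum_{k,J}\nu_{k,J}\otimes y^J\tfrac{\partial}{\partial y_k}$ with coefficients pulled back from $\cM$.

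First I would write $\fedosov=-\delta+d^\nabla+A^\nabla$ as in Remark~\ref{warzee}, and compute $\lie{\fedosov}{\cY}=\lie{-\delta+d^\nabla+A^\nabla}{\cY}$ termwise, using the local coordinate expressions~\eqref{Cotonou}, \eqref{Lagos}, \eqref{Yaoundé}, \eqref{Conakry}, and~\eqref{Abidjan}, together with the local form $\cY=\sum_{k,J}\nu_{k,J}\otimes y^J\tfrac{\partial}{\partial y_k}$. The bracket $\lie{\delta}{\cY}$ is immediate: $\delta=\sum_k\xi_k\otimes\tfrac{\partial}{\partial y_k}$ only differentiates the $y$-variables and multiplies by $\xi$, so the commutator stays in the span of the $y^J\tfrac{\partial}{\partial y_k}$ with $\OO(\cM)$-coefficients. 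Likewise $\lie{A^\nabla}{\cY}$ stays in $\sections{\cF}$ because $A^\nabla$ itself lies in $\OO^1\big(\cM,\hat S^{\geq 2}(T_\cM^\vee)\otimes T_\cM\big)\subset\sections{\cF}$ (after the identification $C^\infty(\cN)\cong\OO(\cM,\hat S(T_\cM^\vee))$, the $\xi_k$ are the form degrees), and $\sections{\cF}$ is a Lie subalgebra of $\XX(\cN)$ by the preliminary step. The only term requiring genuine attention is $\lie{d^\nabla}{\cY}$, since $d^\nabla$ itself is \emph{not} a section of $\cF$ (it contains $\tfrac{\partial}{\partial x_k}$). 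Here one uses~\eqref{Yaoundé}: writing $d^\nabla=\sum_k\xi_k\cdot\nabla_{\partial/\partial x_k}$, the $\tfrac{\partial}{\partial x_k}\otimes 1$ piece acts on $\cY$ only through its coefficients $\nu_{k,J}\in\OO(\cM)$, producing $\tfrac{\partial\nu_{k,J}}{\partial x_l}\,\xi_l\otimes y^J\tfrac{\partial}{\partial y_k}$, which is still of the required type, while the Christoffel piece $-\sum_{l,j}\Gamma^j_{k,l}\otimes y_l\tfrac{\partial}{\partial y_j}$ is itself in $\sections{\cF}$. Crucially, the $\tfrac{\partial}{\partial x_k}$-derivative never hits a $y$ or $\xi$ variable in $\cY$, so no $\tfrac{\partial}{\partial x_k}$ or $\tfrac{\partial}{\partial\xi_k}$ component is ever generated; this is the key structural cancellation.

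The main obstacle, such as it is, is bookkeeping: one must confirm that the $\tfrac{\partial}{\partial x}$ and $\tfrac{\partial}{\partial\xi}$ components of $\lie{\fedosov}{\cY}$ vanish \emph{identically}, not merely up to the special form, and this requires being careful that the only way a $\tfrac{\partial}{\partial x_k}$ can appear is from $d^\nabla$ acting by derivation on a factor that already contains $\tfrac{\partial}{\partial x_k}$ — and $\cY$ has none. An alternative, coordinate-free route that avoids even this is to observe that $\cF$ fits in a short exact sequence of vector bundles over $\cN$, namely $0\to\cF\to T_\cN\to p^*T_\cM\to 0$ where $p:\cN\onto\cM$ is the projection, and to note that the composite $T_\cN\xto{\lie{\fedosov}{\argument}}T_\cN\to p^*T_\cM$ vanishes on $\sections{\cF}$ because $\fedosov$, being tangent to the fibers of $p$ in the appropriate sense (its $\tfrac{\partial}{\partial x}$-part is $d^\nabla$, whose symbol along $\sections{\cF}$ is killed), induces the zero map on the quotient; I would use whichever of the two formulations is cleaner in the paper's notation. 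Either way, the claim follows, and one records that $\delta$ and $A^\nabla$ are sections of $\cF$ while $d^\nabla$ is not, exactly as asserted in the paragraph preceding the lemma.
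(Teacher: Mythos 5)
Your proposal is correct and follows essentially the same route as the paper: the paper's argument is precisely to note that $\delta$ and $A^\nabla$ are sections of $\cF$ while $d^\nabla$ is not, and then to check by inspection of the local expression \eqref{Abidjan} that $\sections{\cF}$ is nonetheless stable under $\lie{\fedosov}{\argument}$. Your termwise computation (in particular the observation that the $\tfrac{\partial}{\partial x_k}$-part of $d^\nabla$ only differentiates the coefficients $\nu_{k,J}$ and never produces a $\tfrac{\partial}{\partial x}$ or $\tfrac{\partial}{\partial\xi}$ component) is exactly the inspection the paper leaves implicit.
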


In other words, $\cF$ is a dg foliation of the dg manifold $(\cN,\fedosov)$.
Each leaf of this foliation is essentially diffeomorphic to a fixed formal graded
vector space: the standard fiber of the vector bundle $T_\cM\to\cM$.
Any such dg Lie algebroid $\cF\to\cN$ is called a \emph{Fedosov dg Lie algebroid}
associated with the graded manifold $\cM$.

\subsubsection{Tamarkin--Tsygan calculus $\calculus_C(\cF,\fedosov)$}\label{sec:CalCFedosov}

Heuristically, a Fedosov dg Lie algebroid $\cF\to\cN$ associated to
a graded manifold $\cM$ is
``homotopy equivalent" to the tangent Lie algebroid $T_\cM\to\cM$.
In particular, according Section~\ref{sec:CalculiDGLieAbd}, to $\cF\to\cN$,
there are associated two calculi $\calculus_C(\cF,\fedosov)$
and $\calculus_H(\cF,\fedosov)$, respectively.
They are expected to be homotopy equivalent
to the calculi $\calculus_C(\cM)$, and $\calculus_H(\cM)$, respectively.
In order to establish the homotopy equivalence, one must make
some adaption in defining the spaces involved compared to
the general construction in Section~\ref{sec:CalCdgAbd}
and Section~\ref{sec:CalHdgAbd}.
 
Similar to the algebra of functions, for any $p\geq 1$, the spaces of p-polyvector fields
and p-differential forms are completed in the following way:
\begin{align}
\Tpolyf{p} &= \GG\big(S^p(\cF[-1])\big) \cong
\sections{(\hat S(T_\cM[1])\dual\cotimes\hat S T_\cM\dual)\otimes S^p(T_\cM[-1])}
,\label{eq:TpolypF} \\
\Apolyf{-p} &= \GG\big(S^p(\cF\dual[1])\big) \cong 
\sections{(\hat S(T_\cM[1])\dual\cotimes\hat S^{k}T_\cM\dual)\otimes S^p(T_\cM\dual[1])}
.\label{eq:ApolypF}
\end{align}
Abusing notations, we will make frequent use of the heuristic notations
\[ \OO\big(\cM,\hat{S}(T^\vee_\cM)\otimes S^p (T_\cM[-1])\big)
\qquad\text{and}\qquad
\OO\big(\cM,\hat{S}(T^\vee_\cM)\otimes S^p (T_\cM\dual[1])\big) \]
to denote $\Tpolyf{p}$ and $\Apolyf{-p}$, respectively. Set 
\begin{align*}
\Tpolyf{} & = \bigoplus_{p \geq 0} \Tpolyf{p} ,\\
\cApolyf{} & = \prod_{p \geq 0} \Apolyf{-p}
.\end{align*}
Note that it is possible to have certain infinite sums in $\Tpolyf{p}$
or $\Apolyf{p}$ which cause algebraic difficulties.
See Section~\ref{sec:FedosovDGmfd}. In order to avoid these difficulties,
we consider the following direct-sum total spaces:
\begin{align}
\totTpolyF{n} &=\bigoplus_{\substack{p,q,r\in\ZZ \\ p+q+r=n \\
p\geq 0,\ r\geq 0}} \prescript{\cF}{}{\sT}^{r,q,p}, \label{eq:totnTpolyF} \\
\totcApolyF{n} & =\bigoplus_{r\in\ZZ, \, r\geq 0}
\prod_{\substack{p,q\in\ZZ, \, p\geq 0 \\ -p+q=n-r }}
\prescript{\cF}{}{\sA}^{r,q,-p}, \label{eq:totnApolyF}
\end{align}
where 
\begin{align}
\prescript{\cF}{}{\sT}^{r,q,p} & =
\Big(\OO^r\big(\cM,\hat{S}(T^\vee_\cM)\otimes S^p( T_\cM[-1])\big) \Big)^{p+q+r}
\subset \Big(\sections{\cN; S^p(\cF[-1])}\Big)^{p+q+r}, \label{eq:TtripleDeg} \\
\prescript{\cF}{}{\sA}^{r,q,-p} & =
\Big(\OO^r\big(\cM,\hat{S}(T^\vee_\cM)
\otimes S^p( T_\cM\dual [1]) \big)\Big)^{-p+q+r}
\subset \Big(\sections{\cN;S^p( \cF^\vee[1])}\Big)^{-p+q+r} . \label{eq:AtripleDeg} 
\end{align}
The spaces defined in Equation~\eqref{eq:totnTpolyF}
and Equation~\eqref{eq:totnApolyF} are, respectively,
the direct-sum total spaces of the following bigraded spaces:
\begin{align*}
\Big(\Tpolyf{}\Big)^{r,s} & = \bigoplus_{p+q=s} \prescript{\cF}{}{\sT}^{r,q,p} ,\\
\Big(\cApolyf{}\Big)^{r,s} & = \prod_{-p+q=s} \prescript{\cF}{}{\sA}^{r,q,-p}
.\end{align*}

\begin{remark}\label{rmk:SumTotIsSmaller}
The total spaces defined in~\eqref{eq:totnTpolyF} and~\eqref{eq:totnApolyF}
are slightly different from the general total spaces defined in~\eqref{eq:GaredeNord}
and~\eqref{eq:GaredeEst}. If one follows \eqref{eq:GaredeNord}
and~\eqref{eq:GaredeEst}, and considers 
$$
\sT^n  =\bigoplus_{p\geq 0}\TpolyF{p}{n} \qquad \text{and} \qquad \sA^n  = \prod_{p \geq 0}\ApolyF{-p}{n},
$$
then, in general,
$$
\totTpolyF{n} \subsetneq \sT^n \qquad \text{and} \qquad \totcApolyF{n}  \subsetneq \sA^n.
$$
For instance, if $(x_1,\cdots, x_{m+r})$ are local coordinate functions
on $\cM$ such that $|x_{m+1}|=-3$ and $|x_{m+2}|=2$,
then the induced local coordinate functions
$(x_1, \cdots, x_{m+r}, \xi_1, \cdots, \xi_{m+r}, y_1, \cdots, y_{m+r})$
on $\cN$ have the properties that $|\xi_{m+1}|=-2$ and $|y_{m+2}|=2$.
See Remark~\ref{rmk:CoordFedosovMfd}.
Then $(\xi_{m+1}\otimes y_{m+2})^n \in\prescript{\cF}{}{\sT}^{n,-n,0}$,
and the series \[ \sum_{n=1}^\infty (\xi_{m+1} \otimes y_{m+2})^n \in\TpolyF{0}{0} \]
is an element in $\big(\sT^0 - \totTpolyF{0}\big)$ and also an element
in $\big(\sA^0 - \totcApolyF{0}\big)$.
\end{remark}

Although $\totTpolyF{n}$ and $\totcApolyF{n}$ are generally smaller
than the total spaces defined in~\eqref{eq:GaredeNord} and~\eqref{eq:GaredeEst},
it is not difficult to show that $\totTpolyF{\bullet}$ and $\totcApolyF{\bullet}$
are closed under the calculus operations defined in Section~\ref{sec:CalCdgAbd}.
Therefore, by Poposition~\ref{prop:CalC_DGLieAbd}, we have 

\begin{corollary}
Let $\cF \to \cN$ be a Fedosov dg Lie algebroid associated with a graded manifold $\cM$.
The pair
\[ \Theta^{\bullet}=\cohomology{}\big(\totTpolyF{\bullet},\iL_{\fedosov}\big)
\qquad\text{and}\qquad
\Xi_{\bullet}=\cohomology{}\big(\totcApolyF{\bullet},\iL_{\fedosov}\big) \]
admits a Tamarkin--Tsygan calculus structure. 
\end{corollary}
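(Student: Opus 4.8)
The plan is to reduce everything to Proposition~\ref{prop:CalC_DGLieAbd} applied to the dg Lie algebroid $\cL=\cF$. Applied to $\cF\to\cN$, that proposition already endows the cohomologies of the ``large'' complexes $\big(\sT^\bullet,\iL_\fedosov\big)$ and $\big(\sA^\bullet,\iL_\fedosov\big)$ --- where, as in \eqref{eq:GaredeNord1} and Remark~\ref{rmk:SumTotIsSmaller}, $\sT^\bullet=\bigoplus_{p\geq 0}\TpolyF{p}{\bullet}$ and $\sA^\bullet=\prod_{p\geq 0}\ApolyF{-p}{\bullet}$ --- with a Tamarkin--Tsygan calculus structure, whose $d$ is the Chevalley--Eilenberg differential $d_\cF$ and whose action operators $\iI$, $\iL$ are the contraction and Lie derivative of $\cF$. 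Since $\totTpolyF{\bullet}\subseteq\sT^\bullet$ and $\totcApolyF{\bullet}\subseteq\sA^\bullet$, it suffices to check that these subspaces are stable under the differential $\iL_\fedosov$ and under each of $\wedge$, $\schouten{\argument}{\argument}$, $d_\cF$, $\iI$, and $\iL$; the identities of Definition~\ref{def:calc}, in particular \eqref{brick} and \eqref{straw}, then hold on the subspaces by restriction, and passing to $\iL_\fedosov$-cohomology yields the assertion.

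First I would make precise what ``leaving the small total space'' means. By \eqref{eq:totnTpolyF}--\eqref{eq:totnApolyF}, the spaces $\totTpolyF{\bullet}$ and $\totcApolyF{\bullet}$ are assembled from the triple-graded pieces \eqref{eq:TtripleDeg}--\eqref{eq:AtripleDeg} by a \emph{direct} sum over the form degree $r$ (the number of $\xi$'s) and over the polyvector/polyform degree $p$, with completion occurring only in the remaining ``$q$''-direction (the $\hat S(T^\vee_\cM)$-factor, i.e.\ the $y$'s). Hence, exactly as in Remark~\ref{rmk:SumTotIsSmaller}, the only way an operation can fail to preserve these spaces is by producing a series whose homogeneous components spread over infinitely many values of $(r,p)$. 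For $\iL_\fedosov$ this cannot occur: by Remark~\ref{warzee} and the local formula \eqref{Abidjan}, $\fedosov=-\delta+d^\nabla+A^\nabla$, and each of $\delta$, $d^\nabla$, $A^\nabla$ carries exactly one $\xi$-factor (hence, as a Lie derivative, raises $r$ by $1$), preserves the polyvector/polyform degree $p$ (being a Lie derivative along a vector field), and has total degree $+1$ (which then forces $q$ to be unchanged). Therefore $\iL_\fedosov$ sends $\prescript{\cF}{}{\sT}^{r,q,p}$ into $\prescript{\cF}{}{\sT}^{r+1,q,p}$ and $\prescript{\cF}{}{\sA}^{r,q,-p}$ into $\prescript{\cF}{}{\sA}^{r+1,q,-p}$; the (possibly infinite) $y$-series generated by $A^\nabla$ stays inside that single completed piece. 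Thus $\iL_\fedosov$ restricts to a differential on both small total complexes.

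The remaining operations are handled by the same bookkeeping. The wedge product of $\cF$-polyvector fields and the contraction $\iI_X$ just add the relevant $r$- and $p$-indices and are continuous in the $y$-direction, so they take finite sums in $(r,p)$ to finite sums and preserve $\totTpolyF{\bullet}$ and $\totcApolyF{\bullet}$; the Schouten bracket, being built locally from the $\tfrac{\partial}{\partial y_k}$'s together with the module multiplication on $\OO\big(\cM,\hat S(T^\vee_\cM)\big)$, behaves identically. The Chevalley--Eilenberg differential $d_\cF$ involves only the fiber coordinates of $\cF$ and $\cF^\vee$, hence fixes $r$, raises $p$ by $1$, and fixes $q$, so it is a well-defined operator on $\totcApolyF{\bullet}$; and $\iL_X=(-1)^{|X|-1}\schouten{d_\cF}{\iI_X}$ is a composite of operators already shown to preserve $\totcApolyF{\bullet}$. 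This is precisely the closedness asserted just before the statement, and the corollary follows from Proposition~\ref{prop:CalC_DGLieAbd} by restriction and passage to $\iL_\fedosov$-cohomology.

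I expect the real work to lie entirely in the degree bookkeeping of the last two paragraphs: one must be scrupulous with the triple grading \eqref{eq:TtripleDeg}--\eqref{eq:AtripleDeg}, because --- as Remark~\ref{rmk:SumTotIsSmaller} shows with coordinates of opposite signs --- the naive total spaces are strictly larger, and one has to verify that no operation, and in particular no composite such as $\iL_X$ or $\schouten{d_\cF}{\argument}$, covertly reintroduces a cross-$r$ or cross-$p$ infinite series. Everything else --- the Gerstenhaber-algebra axioms on $\totTpolyF{\bullet}$ and the calculus relations \eqref{brick}, \eqref{straw} on the pair $\big(\totTpolyF{\bullet},\totcApolyF{\bullet}\big)$ --- is inherited for free from Proposition~\ref{prop:CalC_DGLieAbd} once this stability is known.
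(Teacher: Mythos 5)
Your proposal is correct and follows exactly the route the paper takes: the paper simply observes that $\totTpolyF{\bullet}$ and $\totcApolyF{\bullet}$ are closed under the calculus operations of Section~\ref{sec:CalCdgAbd} and then invokes Proposition~\ref{prop:CalC_DGLieAbd}, and your degree bookkeeping on the triple grading \eqref{eq:TtripleDeg}--\eqref{eq:AtripleDeg} supplies precisely the stability check that the paper leaves as ``not difficult to show.'' One minor imprecision: in \eqref{eq:totnApolyF} the $p$-direction of $\totcApolyF{n}$ is a \emph{product}, not a direct sum (only the $r$-direction is a direct sum there), but this only weakens what needs to be verified and does not affect your argument.
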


Such a calculus is denoted $\calculus_C(\cF,\fedosov)$.

\subsubsection{Tamarkin--Tsygan calculus $\calculus_H(\cF,\fedosov)$}\label{sec:CalHFedosov}

Regarding the calculus $\calculus_H(\cF,\fedosov)$,
 since the dg Lie algebroid $\cF\to\cN$ is a dg foliation on $\cN$,
its universal enveloping algebra $\enveloping{\cF}$ can be considered
 as the algebra
of leafwise differential operators on $\cN$.
It can be identified in a natural way with the $C^\infty(\cN)$-module
$\OO\big(\cM,\hat{S}(T^\vee_\cM)\otimes S(T_\cM)\big)$. 
Dually, the space $\jet{\cF}$ of $\cF$-jets can be identified with the $C^\infty(\cN)$-module
$\OO\big(\cM,\hat{S}(T^\vee_\cM)\otimes \hat S(T_\cM\dual)\big)$. 

\begin{remark}\label{rmk:VerticalDiffOp}
The universal enveloping algebra
$\enveloping{\cF}\cong\OO\big(\cM,\hat{S}(T^\vee_\cM)\otimes S(T_\cM)\big)$
can be identified with the algebra of continuous $\OO(\cM)$-linear differential
operators on $C^\infty(\cN)=\OO\big(\cM,\hat{S}(T^\vee_\cM)\big)$.
Thus, elements in $\enveloping{\cF}$ can be considered as differential operators
on $\cN$ that are tangent to the fibers of $\cN=T_\cM[1]\oplus\Tformal\cM\to T_\cM[1]$,
and they are sometimes referred to as vertical differential operators. See~\cite{MR4727081}.
\end{remark}

Consequently, we have the following identifications 
\begin{align}
\Dpolyf{p} &=\big(\pshift\,\enveloping{\cF}\big)^{\otimes p}
\cong\sections{(\hat S(T_\cM[1])\dual\cotimes\hat S T_\cM\dual) \otimes ((ST_\cM)[-1])^{\otimes p}},
\\
\Cpolyf{-p} & = \big(\nshift\jet{\cF}\big)^{\cotimes p} 
\cong\sections{(\hat S(T_\cM[1])\dual\cotimes\hat S T_\cM\dual)
\otimes ((\hat ST_\cM\dual)[1])^{\cotimes p}}.
\end{align}
Abusing notations, we often write $\OO\big(\cM,\hat{S}(T^\vee_\cM)
\otimes ((ST_\cM)[-1])^{\otimes p}\big)$ for $\Dpolyf{p}$ and write $\OO\big(\cM,\hat{S}(T^\vee_\cM)
\otimes ((\hat ST_\cM\dual)[1])^{\cotimes p}\big)$ for $\Cpolyf{-p}$.

Similarly to $\totTpolyF{n}$ and $\totcApolyF{n}$, we set
\begin{align}
\totDpolyF{n} &=\bigoplus_{\substack{p,q,r\in\ZZ \\ p+q+r=n \\
p\geq 0,\ r\geq 0}} \prescript{\cF}{}{\sD}^{r,q,p}, \label{eq:totnDpolyF} \\
\totcCpolyF{n} & =\bigoplus_{r\in\ZZ, \, r\geq 0}
\prod_{\substack{p,q\in\ZZ, \, p\geq 0 \\ -p+q=n-r }}
\prescript{\cF}{}{\sC}^{r,q,-p}, \label{eq:totnCpolyF}
\end{align}
where 
\begin{align}
\prescript{\cF}{}{\sD}^{r,q,p} &=
\Big(\OO^r\big(\cM,\hat{S}(T^\vee_\cM)
\otimes ((ST_\cM)[-1])^{\otimes p}\big) \Big)^{p+q+r} ,\label{eq:DtripleDeg} \\
\prescript{\cF}{}{\sC}^{r,q,-p} &=
\Big(\OO^r\big(\cM,\hat{S}(T^\vee_\cM)
\otimes ((\hat ST_\cM\dual)[1])^{\cotimes p}\big)\Big)^{-p+q+r}
.\label{eq:CtripleDeg} 
\end{align}
The spaces defined in Equation~\eqref{eq:totnDpolyF}
and Equation~\eqref{eq:totnCpolyF} are, respectively,
the direct-sum total spaces of the following bigraded spaces:
\begin{align*}
\Big(\Dpolyf{}\Big)^{r,s} &= \bigoplus_{p+q=s} \prescript{\cF}{}{\sD}^{r,q,p}, \\
\Big(\cCpolyf{}\Big)^{r,s} &= \prod_{-p+q=s} \prescript{\cF}{}{\sC}^{r,q,-p}.
\end{align*}

Again, the spaces $\totDpolyF{n}$ and $\totcCpolyF{n}$ are generally smaller
than the total spaces defined in \eqref{eq:GaredeSud} and \eqref{eq:GaredeOuest}.
However, they are closed under the operations defined in Section~\ref{sec:CalHdgAbd}. 
According to Proposition~\ref{prop:CalH_DGLieAbd}, we have 

\begin{corollary}
Let $\cF \to \cN$ be a Fedosov dg Lie algebroid associated with a graded manifold $\cM$. 
The pair 
\[ \Theta^{\bullet}=\cohomology{}\big(\totDpolyF{\bullet},\hochschild+\iL_{\fedosov}\big)
\quad\quad\text{and}\quad\quad
\Xi_{\bullet}=\cohomology{}\big(\totcCpolyF{\bullet},\hochschildb+\iL_{\fedosov}\big) \]
admits a Tamarkin--Tsygan calculus structure.
\end{corollary}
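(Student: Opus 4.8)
The plan is to obtain the statement by restricting the construction behind Proposition~\ref{prop:CalH_DGLieAbd}, applied to the dg Lie algebroid $\cF\to\cN$ of Lemma~\ref{lem:Rome} (whose internal differential is $\iL_\fedosov$), from the general direct-sum and direct-product total complexes $\totDpolyL{\bullet}$ and $\totCpolyL{\bullet}$ of~\eqref{eq:GaredeSud}--\eqref{eq:GaredeOuest} down to the smaller total spaces $\totDpolyF{\bullet}$ and $\totcCpolyF{\bullet}$ of~\eqref{eq:totnDpolyF}--\eqref{eq:totnCpolyF}. Recall from Remark~\ref{rmk:SumTotIsSmaller} that the latter are proper subspaces: an element is required to have only \emph{finitely many} nonzero tridegree components $\prescript{\cF}{}{\sD}^{r,q,p}$ (resp.\ $\prescript{\cF}{}{\sC}^{r,q,-p}$), where $r\geq 0$ is the $\OO(\cM)$-form degree, $p\geq 0$ is the poly-degree, and $q$ absorbs the internal degree together with the (completed) symmetric degree in $\hat S(T^\vee_\cM)$. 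Thus the only thing to check is that the differentials, the calculus operations, and the homotopies of~\cite{paper-zero} all preserve these subspaces.

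First I would verify that $\big(\totDpolyF{\bullet},\hochschild+\iL_\fedosov\big)$ and $\big(\totcCpolyF{\bullet},\hochschildb+\iL_\fedosov\big)$ are subcomplexes. From the explicit local form~\eqref{Abidjan} of $\fedosov=-\delta+d^\nabla+A^\nabla$ one reads off that $\iL_\fedosov$ raises $r$ by exactly one while preserving $p$ and $q$; the only infinite summation present, coming from $A^\nabla$, ranges over multi-indices $L$ with $\abs{L}\geq 2$, that is, over the symmetric degree of $\hat S(T^\vee_\cM)$, and this is harmless because only finitely many $L$ feed into a given output symmetric degree and all spaces are complete in that direction. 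Likewise $\hochschild=\gerstenhaber{m}{\argument}$ raises $p$ by one and fixes $r$ and $q$, and $\hochschildb=\iL_m$ lowers $p$ by one and fixes $r$ and $q$. Since $r$ and $p$ remain nonnegative and no infinite sum over $r$ or over $p$ is ever produced, a finitely supported element is sent to a finitely supported element, so both small total spaces are stable.

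Next, by the same tridegree bookkeeping, I would check that the cup product~\eqref{eq:cup}, the Gerstenhaber bracket~\eqref{eq:Gbraket}, the contraction $\iI_\iD$~\eqref{eq:ILiealgebroid}, the Lie derivative $\iL_\iD$~\eqref{eq:L3}, and the \RinehartConnes operator~\eqref{eq:Connes3} preserve $\totDpolyF{\bullet}$ and $\totcCpolyF{\bullet}$: each is a finite sum of terms built from the (co)multiplication, unit, counit and antipode of the formal groupoid $\jet{\cF}$, from the structure maps of $\enveloping{\cF}$, and from the auxiliary isomorphisms $\pi^*,\iota^*$ and the Grothendieck connection, all of which shift $r$ and $p$ by bounded amounts without any infinite summation over $r$ or $p$; the same holds for the homotopies of~\cite{paper-zero} realizing the calculus identities of Definition~\ref{def:calc}(ii)--(v) at the cochain level. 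Consequently the construction of Proposition~\ref{prop:CalH_DGLieAbd} applies verbatim over the small total complexes, and passing to cohomology yields the asserted Tamarkin--Tsygan calculus on $\big(\cohomology{}(\totDpolyF{\bullet},\hochschild+\iL_\fedosov),\ \cohomology{}(\totcCpolyF{\bullet},\hochschildb+\iL_\fedosov)\big)$. I expect the main obstacle to be exactly this bookkeeping --- confirming that none of the operations, and in particular none of the summands of $\iL_\fedosov$, secretly involves an infinite sum over $r$ or $p$ (as opposed to the symmetric degree of $\hat S(T^\vee_\cM)$, against which completeness is available); this is precisely the pathology, exhibited in Remark~\ref{rmk:SumTotIsSmaller}, that passing to the smaller total spaces is designed to avoid.
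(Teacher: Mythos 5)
Your proposal matches the paper's own argument: the paper likewise obtains this corollary by observing that $\totDpolyF{\bullet}$ and $\totcCpolyF{\bullet}$, though smaller than the general total spaces of~\eqref{eq:GaredeSud} and~\eqref{eq:GaredeOuest}, are closed under the operations of Section~\ref{sec:CalHdgAbd}, and then invoking Proposition~\ref{prop:CalH_DGLieAbd}. Your tridegree bookkeeping simply spells out the closure claim that the paper asserts without detail, so the approach is essentially identical.
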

Such a calculus is denoted $\calculus_H(\cF,\fedosov)$.

\section{Fedosov contractions for graded manifolds}

In order to establish Fedosov contractions for dg manifolds,
we need, first of all, consider the case of graded manifolds.
In this section, we investigate the relations between the poly spaces of a graded manifold $\cM$ 
and the poly spaces of an associated Fedosov dg Lie algebroid $\cF \to \cN$, and 
construct a set of contractions between them.
These contractions are referred to as \emph{Fedosov contractions}. 

\subsection{Fedosov contraction for tensor fields}

The main purpose of this subsection is to establish
Fedosov contractions for tensor fields, polyvector fields
and differential forms on a graded manifold
(Proposition~\ref{repository} and Corollary~\ref{cor:contractionTpolyApoly}).
We start with a brief review of the Fedosov contractions
for functions on a graded manifold \cite{MR3910470}. 

\subsubsection{Contraction of functions}

Let $\cM$ be a graded manifold, and $\cN=T_\cM[1]\oplus\Tformal\cM$
the associated Fedosov manifold. 
Recall from Section~\ref{sect:2.2} that there are operators
$\delta$, $\eta$, $h$ and $D^\nabla$ acting
on $C^\infty(\cN)=\OO\big(\cM,\hat{S}(T^\vee_\cM)\big)$. 
The canonical projection
\begin{equation}\label{eq:NAP}
\sigma:\OO^\bullet\big(\cM,\hat{S}(T^\vee_\cM)\big)
\to\OO^0\big(\cM,S^0(T^\vee_\cM)\big)=C^\infty(\cM) 
\end{equation}
and injection
\begin{equation}\label{eq:SCALEA}
\tau:C^\infty(\cM)=\OO^0\big(\cM,S^0(T^\vee_\cM)\big)
\to\OO^\bullet\big(\cM,\hat{S}(T^\vee_\cM)\big) 
\end{equation}
realize a chain equivalence between the cochain complex
$\big(\OO^\bullet\big(\cM,\hat{S}(T^\vee_\cM)\big),-\delta\big)$
and $C^\infty(\cM)$ regarded as a cochain complex concentrated in degree~$0$.
Indeed, we have a Kozul contraction:

\begin{equation}\label{Amos} \begin{tikzcd}
C^\infty(\cM) \arrow[r, "\tau", shift left] &
\big(\OO^\bullet\big(\cM,\hat{S}(T^\vee_\cM)\big),-\delta\big)
\arrow[l, "\sigma", shift left] \arrow[loop, "h", out=5, in=-5, looseness = 3]
\end{tikzcd} .\end{equation}


By homological pertubation, one proves the following


\begin{proposition}[{\cite[Section~8]{MR3910470}}]
\label{Zwijndrecht}
Let $\cM$ be a graded manifold.
Given any torsion-free affine connection $\nabla$ on $\cM$,
let \[ \varrho=-(d^{\nabla}+A^\nabla)=-(\fedosov+\delta)
\qquad\text{and}\qquad \breve{\varrho}=\sum_{k=0}^\infty\varrho(h\varrho)^k .\]
Then, there is a contraction:
\begin{equation}\label{Fortune}
\begin{tikzcd}
C^\infty(\cM) \arrow[r, "\breve{\tau}", shift left] &
\big(\OO^\bullet\big(\cM,\hat{S}(T^\vee_\cM)\big),\fedosov\big)
\arrow[l, "\sigma", shift left] \arrow[loop, "\breve{h}", out=5, in=-5, looseness = 3]
\end{tikzcd} ,\end{equation}
where \[ \breve{\tau}=\tau+h\breve{\varrho}\tau
\qquad\text{and}\qquad \breve{h}=h+h\breve{\varrho}h .\]
\end{proposition}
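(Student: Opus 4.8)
The plan is to deduce \eqref{Fortune} from the Koszul contraction \eqref{Amos} by a single application of the homological perturbation lemma, the perturbation of the differential $-\delta$ being $d^{\nabla}+A^\nabla=-\varrho$. The first thing I would check is that \eqref{Amos} is a \emph{full} contraction (an SDR satisfying all side conditions), since that is what the perturbation lemma needs in order to output a contraction rather than merely a homotopy equivalence: besides $\sigma\tau=\id$ and the homotopy identity $(-\delta)h+h(-\delta)=\id-\tau\sigma$, one needs $h^2=0$, $h\tau=0$, and $\sigma h=0$. All three are immediate from the definitions \eqref{Cotonou} of $\eta$ (hence of $h=-\tfrac1{p+q}\eta$) and of $\tau$, $\sigma$: the operator $h$ lowers the form-degree by one and raises the symmetric degree by one, while $\tau$ takes values in, and $\sigma$ only reads off, the summand $\OO^0\big(\cM,S^0(T^\vee_\cM)\big)$. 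By Remark~\ref{warzee}, the torsion-freeness of $\nabla$ gives $\fedosov=-\delta+d^{\nabla}+A^\nabla$ with $(\fedosov)^2=0$, so $d^{\nabla}+A^\nabla$ is a genuine perturbation of $-\delta$ and $\varrho=-(\fedosov+\delta)$, consistent with the statement.

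The only step with real content is verifying that the perturbation is \emph{small}, i.e.\ that $\id-h\varrho$ is invertible on $\OO^\bullet\big(\cM,\hat S(T^\vee_\cM)\big)$ and that $\breve{\varrho}=\sum_{k\geq 0}\varrho(h\varrho)^k$ --- and hence $\breve{h}=h+h\breve{\varrho}h$ and $\breve{\tau}=\tau+h\breve{\varrho}\tau$ --- converge. I would prove this with a filtration estimate on the fibrewise symmetric degree: from \eqref{Lagos} and \eqref{Yaoundé}, $d^{\nabla}$ raises the form-degree by one and preserves the symmetric degree; from \eqref{Conakry}, $A^\nabla$ lies in $\OO^1\big(\cM,\hat S^{\geq 2}(T^\vee_\cM)\otimes T_\cM\big)$, so as a derivation it raises the form-degree by one and the symmetric degree by at least one; and $h$ raises the symmetric degree by one. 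Hence $h\varrho=-h\,(d^{\nabla}+A^\nabla)$ strictly raises the symmetric degree. As $\OO^\bullet\big(\cM,\hat S(T^\vee_\cM)\big)$ is complete for the symmetric-degree filtration, $h\varrho$ is topologically nilpotent, $(\id-h\varrho)^{-1}=\sum_{k\geq 0}(h\varrho)^k$ exists, and all the series above converge; this is the point I expect to be the main obstacle, though it is not hard once the right filtration is spotted.

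With this in place I would simply invoke the homological perturbation lemma applied to \eqref{Amos}. It produces a new contraction of $\big(\OO^\bullet(\cM,\hat S(T^\vee_\cM)),\fedosov\big)$ whose small complex has underlying space $C^\infty(\cM)$, with perturbed differential $\sigma\circ\big(\text{perturbation series}\big)\circ\tau$, perturbed projection $\sigma$ plus a correction, perturbed inclusion $\tau+h\breve{\varrho}\tau$, and perturbed homotopy $h+h\breve{\varrho}h$. Here I would note that every summand of the relevant perturbation series has $d^{\nabla}$ or $A^\nabla$ as its outermost operator on the side facing $\sigma$, hence lands in $\OO^{\geq 1}$, which $\sigma$ annihilates; therefore the perturbed differential on $C^\infty(\cM)$ is $0$ and the perturbed projection remains exactly $\sigma$, as in \eqref{Fortune}. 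What is left of the perturbation-lemma formulas is precisely $\breve{\tau}=\tau+h\breve{\varrho}\tau$ and $\breve{h}=h+h\breve{\varrho}h$, completing the proof. Alternatively, one can avoid the abstract lemma and verify $\sigma\breve{\tau}=\id$, $\fedosov\breve{h}+\breve{h}\fedosov=\id-\breve{\tau}\sigma$, $\breve{h}^2=0$, $\breve{h}\breve{\tau}=0$, $\sigma\breve{h}=0$ directly, using $(\fedosov)^2=0$ and the side conditions of \eqref{Amos}; the perturbation lemma is just the efficient way to organize this bookkeeping.
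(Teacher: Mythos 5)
Your proposal is correct and takes essentially the same route as the paper: the paper likewise deduces \eqref{Fortune} by applying the homological perturbation lemma (Lemma~\ref{Riga}) to the Koszul contraction \eqref{Amos}, with smallness secured by the fact that $h\varrho$ strictly raises the fibrewise symmetric degree on the complete algebra $\OO^\bullet\big(\cM,\hat{S}(T^\vee_\cM)\big)$ (the same filtration argument as in Lemma~\ref{Varseille}), and with $\sigma\circ\varrho=0$ forcing the perturbed differential to vanish and the surjection to remain $\sigma$. No gaps.
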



\subsubsection{Contraction of tensor fields}

We note that the vector field $\delta$ is a section
of the Fedosov dg Lie algebroid $\cF\to\cN$. 
Therefore, the subspace $\sections{\cN,\cF}$ of $\XX(\cN)$
is stable under the Lie derivative $\liederivative{\delta}$.

Likewise, the space $\sections{\cN,\cF^{\otimes k}\otimes(\cF^\vee)^{\otimes l}}$ 
of $\cF$-longitudinal tensor fields of type $(k,l)$
on $\cN$ is an $\liederivative{\delta}$-stable
subspace of $\sections{\cN,T_\cN^{\otimes k}\otimes(T_\cN^\vee)^{\otimes l}}$.
It is immediate that, for all $g\in C^\infty(\cN)$
and all possible choices of indices 
$i_1,\ldots,i_k,j_1,\ldots,j_l$, we have 
\[ \liederivative{\delta}\left(g\cdot
\tfrac{\partial}{\partial y_{i_1}}\otimes\cdots\otimes\tfrac{\partial}{\partial y_{i_k}}
\otimes dy_{j_1}\otimes\cdots\otimes dy_{j_l}\right)
=\delta(g)\cdot
\tfrac{\partial}{\partial y_{i_1}}\otimes\cdots\otimes\tfrac{\partial}{\partial y_{i_k}}
\otimes dy_{j_1}\otimes\cdots\otimes dy_{j_l} .\]
The local coordinate functions
$x_1,\cdots,x_{m+r}, \xi_1,\cdots, \xi_{m+r},y_1,\cdots,y_{m+r}$
on $\cN=T_{\cM}[1]\oplus \Tformal{\cM}$ associated with a choice
of local coordinates $x_1,\cdots,x_{m+r}$ on $\cM$
are as in Remark~\ref{rmk:CoordFedosovMfd}.

Hence, modifying slightly the Koszul contraction \eqref{Amos},
we obtain the contraction
\begin{equation}\label{eq:Churchill}
\begin{tikzcd}[cramped]
\Big(\GG\big(\cM;(T_\cM)^{\otimes k}\otimes(T_\cM^\vee)^{\otimes l}\big),0\Big)
\arrow[r, "\tau_\natural^{k,l}", shift left] &
\Big(\GG\big(\cN;\cF^{\otimes k}\otimes(\cF^\vee)^{\otimes l}\big),\iL_{-\delta}\Big)
\arrow[l, "\sigma_\natural^{k,l}", shift left]
\arrow[loop, "h_\natural^{k,l}", out=7, in=-7, looseness=3]
\end{tikzcd}
\end{equation}
with the homotopy $h_\natural$, the surjection $\sigma_\natural$,
and the injection $\tau_\natural$, respectively, defined by the relations
\begin{gather}
h_\natural\left(g\cdot
\tfrac{\partial}{\partial y_{i_1}}\otimes\cdots\otimes\tfrac{\partial}{\partial y_{i_k}}
\otimes dy_{j_1}\otimes\cdots\otimes dy_{j_l}\right)
=h(g)\cdot
\tfrac{\partial}{\partial y_{i_1}}\otimes\cdots\otimes\tfrac{\partial}{\partial y_{i_k}}
\otimes dy_{j_1}\otimes\cdots\otimes dy_{j_l}, \label{eq:NAP1}
\\
\sigma_\natural\left(g\cdot
\tfrac{\partial}{\partial y_{i_1}}\otimes\cdots\otimes\tfrac{\partial}{\partial y_{i_k}}
\otimes dy_{j_1}\otimes\cdots\otimes dy_{j_l}\right)
=\sigma(g)\cdot
\tfrac{\partial}{\partial x_{i_1}}\otimes\cdots\otimes\tfrac{\partial}{\partial x_{i_k}}
\otimes dx_{j_1}\otimes\cdots\otimes dx_{j_l}, \label{eq:NAP2}
\\
\tau_\natural\left(f\cdot
\tfrac{\partial}{\partial x_{i_1}}\otimes\cdots\otimes\tfrac{\partial}{\partial x_{i_k}}
\otimes dx_{j_1}\otimes\cdots\otimes dx_{j_l}\right)
=\tau(f)\cdot
\tfrac{\partial}{\partial y_{i_1}}\otimes\cdots\otimes\tfrac{\partial}{\partial y_{i_k}}
\otimes dy_{j_1}\otimes\cdots\otimes dy_{j_l}, \label{eq:NAP3}
\end{gather}
for all $g\in C^\infty(\cN)$, $f\in C^\infty(\cM)$,
and all possible choices of indices 
$i_1,\ldots,i_k,j_1,\ldots,j_l$.

The degree of cochain in the complex
$\Big(\GG\big(\cN;\cF^{\otimes k}\otimes
(\cF^\vee)^{\otimes l}\big),\iL_{-\delta}\Big)$
is revealed by the canonical identification
\[ \begin{split}
\GG\big(\cN;\cF^{\otimes k}\otimes(\cF^\vee)^{\otimes l}\big)
& \cong C^\infty(\cN)\cotimes_{\cR}
\GG\big(\cM;(T_\cM)^{\otimes k}\otimes(T_\cM^\vee)^{\otimes l}\big) \\
& \cong \OO^\bullet\big(\cM,\hat{S}(T^\vee_\cM)
\otimes(T_\cM)^{\otimes k}\otimes(T_\cM^\vee)^{\otimes l}\big) ;
\end{split} \]
it is the degree ``$\bullet$'' of the differential form. 
On the other hand, $\GG\big(\cM;(T_\cM)^{\otimes k}\otimes(T_\cM^\vee)^{\otimes l}\big)$ 
is seen as a cochain complex concentrated in degree~$0$.

In particular, for $k=0$ and $l=0$, we obtain the Koszul contraction \eqref{Amos}
for functions; for $k=1$ and $l=0$, we obtain the Koszul contraction for vector fields:
\begin{equation}\label{eq:FcontractionVFnoPert}
\begin{tikzcd}
\XX(\cM) \arrow[r, "\tau_\natural", shift left] &
\big(\sections{\cN;\cF}=
\OO^\bullet\big(\cM,\hat{S}(T^\vee_\cM)\otimes T_\cM\big),\liederivative{-\delta}\big)
\arrow[l, "\sigma_\natural", shift left]
\arrow[loop, "h_\natural", out=5, in=-5, looseness = 3]
\end{tikzcd}
\end{equation} 

Let $\cR=C^\infty(\cM)$. 
The filtration of $C^\infty(\cN) \cong \OO\big(\cM,\hat{S}(T_\cM^\vee)\big)$:
\[ \cdots\subset\Omega\big(\cM,\hat S^{\geq q+1}(T^\vee_\cM)\big)\subset
\Omega\big(\cM,\hat S^{\geq q}(T^\vee_\cM)\big)\subset
\Omega\big(\cM,\hat S^{\geq q-1}(T^\vee_\cM)\big)\subset\cdots .\]
induces a filtration $\fF_\bullet$ on
$\sections{\cN;\cF^{\otimes k}\otimes(\cF^\vee)^{\otimes l}}$: 
\begin{equation}\label{eq:FiltationFedosovTensor}
\fF_{-u}=
\OO\big(\cM,\hat{S}^{\geq u}(T_\cM^\vee)
\otimes (T_\cM)^{\otimes k}\otimes(T_\cM^\vee)^{\otimes l}\big),
\end{equation}
which is exhaustive and complete.
Here $\hat{S}^{\geq u}(T_\cM^\vee) :=\hat{S}(T_\cM^\vee)$ for all $u\leq 0$. 

While the homological vector field $\fedosov$ on $\cN$
is not tangent to the foliation $\cF$,
the Lie derivative $\liederivative{\fedosov}$
stabilizes the subspace $\sections{\cN;\cF}$ of $\XX(\cN)$.
Indeed, the operator $\liederivative{\fedosov}$
on $\sections{\cN;\cF}$ encodes the dg vector bundle structure of $\cF\to\cN$.
More generally, $\liederivative{\fedosov}$ is, like $\liederivative{-\delta}$,
an endomorphism of the space 
$\sections{\cN,\cF^{\otimes k}\otimes(\cF^\vee)^{\otimes l}}$ 
of $\cF$-longitudinal tensor fields of type $(k,l)$ on $\cN$.

\begin{lemma}
\label{Varseille}
The restriction of $h_\natural\circ\liederivative{\varrho}$
to $\sections{\cN,\cF^{\otimes k}\otimes(\cF^\vee)^{\otimes l}}$
satisfies $(h_\natural\liederivative{\varrho})(\fF_{-u})\subset\fF_{-u-1}$
for all $u\in\ZZ$.
\end{lemma}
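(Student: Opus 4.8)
The plan is to reduce the statement to two independent filtration estimates, one for $h_\natural$ and one for $\liederivative{\varrho}$, and then compose them. Throughout I identify the filtration $\fF_\bullet$ of \eqref{eq:FiltationFedosovTensor} with the $y$-adic filtration on the coefficient of an element of $\OO\big(\cM,\hat{S}(T^\vee_\cM)\otimes(T_\cM)^{\otimes k}\otimes(T^\vee_\cM)^{\otimes l}\big)$, where $y_1,\dots,y_{m+r}$ are the fibre coordinates on $\Tformal\cM$ as in Remark~\ref{rmk:CoordFedosovMfd}; thus $\fF_{-u}$ consists of those longitudinal tensor fields whose coefficient functions have $y$-degree $\geq u$ (with $\fF_{-u}$ equal to the whole space when $u\leq 0$). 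The two estimates are: (i) $h_\natural(\fF_{-u})\subseteq\fF_{-u-1}$; and (ii) $\liederivative{\varrho}(\fF_{-u})\subseteq\fF_{-u}$, for every $u\in\ZZ$. Granting these, $(h_\natural\liederivative{\varrho})(\fF_{-u})\subseteq h_\natural\big(\liederivative{\varrho}(\fF_{-u})\big)\subseteq h_\natural(\fF_{-u})\subseteq\fF_{-u-1}$, which is the assertion.

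Estimate (i) is read off the definitions. By \eqref{Cotonou} the operator $\eta=\sum_k\tfrac{\partial}{\partial\xi_k}\otimes y_k$ multiplies the coefficient by $y_k$ (while differentiating with respect to $\xi_k$), hence raises the $y$-degree by exactly one; since $h$ equals $-\tfrac{1}{p+q}\eta$ on the $\OO^p\big(\cM,S^q(T^\vee_\cM)\big)$-component, we get $h\big(\hat{S}^{\geq u}(T^\vee_\cM)\big)\subseteq\hat{S}^{\geq u+1}(T^\vee_\cM)$. As $h_\natural$ acts on a longitudinal tensor field by applying $h$ to its coefficient function and leaving the legs $\tfrac{\partial}{\partial y_i}$, $dy_j$ unchanged --- see \eqref{eq:NAP1} --- estimate (i) follows.

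For estimate (ii) I use the form $\varrho=-(d^{\nabla}+A^\nabla)$ from Proposition~\ref{Zwijndrecht}; it is essential to keep $d^{\nabla}$ and $A^\nabla$ rather than $\fedosov$ and $\delta$ separately, since $\liederivative{\delta}$ by itself lowers the $y$-degree and only cancels against the $-\delta$ inside $\fedosov$. By the local formula \eqref{Yaoundé}, $\nabla_{\partial/\partial x_k}=\tfrac{\partial}{\partial x_k}-\sum_{l,j}(\pm)\Gamma_{k,l}^j\,y_l\tfrac{\partial}{\partial y_j}$ with $y$-independent Christoffel symbols, so in $d^{\nabla}=\sum_k\xi_k\nabla_{\partial/\partial x_k}$ every $\tfrac{\partial}{\partial y_j}$ is paired with a multiplication by some $y_l$; hence $d^{\nabla}$ preserves the $y$-degree of functions, and by the same token its Lie-derivative action on the legs $\tfrac{\partial}{\partial y_i}$, $dy_j$ is governed by the ($y$-degree $0$) Christoffel symbols, so $\liederivative{d^{\nabla}}(\fF_{-u})\subseteq\fF_{-u}$. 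On the other hand, by \eqref{Conakry} each term of $A^\nabla=\sum_k\xi_k\big(\sum_{\abs{L}\geq 2}\sum_j A^k_{L,j}\otimes y^L\tfrac{\partial}{\partial y_j}\big)$ replaces one $y$ by a monomial $y^L$ with $\abs{L}\geq 2$, a net gain of $\abs{L}-1\geq 1$ in $y$-degree, both on coefficients and on the legs, so $\liederivative{A^\nabla}(\fF_{-u})\subseteq\fF_{-u-1}\subseteq\fF_{-u}$. Adding the two summands gives (ii). (That $\liederivative{\varrho}$ restricts to an endomorphism of $\sections{\cN;\cF^{\otimes k}\otimes(\cF^\vee)^{\otimes l}}$ in the first place has already been recorded in the paragraph preceding the lemma, using $\varrho=-(\fedosov+\delta)$ together with the fact that $\delta$ and $A^\nabla$ are sections of $\cF$.)

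The computations involved are routine; the only step that deserves a moment's care is checking that the Lie-derivative action of $d^{\nabla}$ on the tensor legs $\tfrac{\partial}{\partial y_i}$ and $dy_j$ does not lower the $y$-degree. This comes down to the bracket $\big[\,\xi_k\,\Gamma^j_{k,l}\,y_l\tfrac{\partial}{\partial y_j},\,\tfrac{\partial}{\partial y_i}\,\big]=\mp\,\xi_k\Gamma^j_{k,i}\tfrac{\partial}{\partial y_j}$ (and the dual identity for $dy_j$), whose right-hand side again has $y$-degree $0$, whereas the analogous brackets for $A^\nabla$ produce coefficients of $y$-degree $\abs{L}-1\geq 1$. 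Everything else is a direct substitution of the local formulas \eqref{Yaoundé}, \eqref{Conakry}, \eqref{Cotonou} together with the definitions of $h_\natural$ and $\fF_\bullet$.
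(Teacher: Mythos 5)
Your proposal is correct. The paper's own proof of Lemma~\ref{Varseille} is just the sentence ``the proof is a straightforward computation,'' and your argument is precisely that computation spelled out in the natural way: the splitting into the two estimates ($h_\natural$ strictly raises the $y$-adic filtration degree because $\eta$ multiplies by a fibre coordinate, while $\liederivative{\varrho}=-\liederivative{d^\nabla}-\liederivative{A^\nabla}$ preserves it, with the $A^\nabla$ part even strictly raising it since $\abs{L}\geq 2$) is exactly what the authors implicitly rely on, as confirmed by their later use of the sharper bound $(h_\natural\liederivative{A^\nabla})(\fF_{-u})\subset\fF_{-u-2}$ in the proof of Lemma~\ref{lem:sigmainitialT}.
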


\begin{proof}
The proof is a straightforward computation.
\end{proof}

Since $\fedosov=-\delta-\varrho$ is a \emph{homological} vector field,
it follows immediately from Lemma~\ref{Varseille}
that $\liederivative{\varrho}$ is a small perturbation
of the contraction~\eqref{eq:Churchill}. See Appendix~\ref{Vilnius}.
Hence, the homological perturbation
of the contraction \eqref{eq:Churchill} by $\liederivative{\varrho}$
yields a new contraction. See Lemma~\ref{Riga}.
Thus we have the following

\begin{proposition}\label{repository}
Given a graded manifold $\cM$ and a torsion-free
affine connection $\nabla$ on $\cM$, let $\fedosov$ be
the induced Fedosov homological vector field
on $\cN=T_\cM[1]\oplus \Tformal\cM$ and  $\cF\to\cN$ 
the corresponding Fedosov dg Lie algebroid.
Then there are contractions for tensor fields
of any type $(k,l)$ with $k\geq 0$ and $l\geq 0$:
\begin{equation}
\label{eq:Tkl}
\begin{tikzcd}[cramped]
\Big(\sections{\cM;(T_\cM)^{\otimes k}\otimes(T_\cM^\vee)^{\otimes l}},0\Big)
\arrow[r, "\tauTensorF{k}{l}", shift left]
& \Big(\sections{\cN;\cF^{\otimes k}\otimes(\cF^\vee)^{\otimes l}},\iL_{\fedosov}\Big)
\arrow[l, "\sigmaTensorF{k}{l}", shift left]
\arrow[loop, "\hTensorF{k}{l}", out=7, in=-7, looseness=3]
\end{tikzcd}
\end{equation}
Moreover, we have $\breve{h}_\natural=\sum_{n=0}^\infty(h_\natural
\liederivative{\varrho})^n h_\natural$
and $\breve{\tau}_\natural=\sum_{n=0}^\infty 
(h_\natural\liederivative{\varrho})^n\tau_\natural$
with $\varrho=-(\fedosov+\delta)$.
\end{proposition}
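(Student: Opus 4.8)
The plan is to deduce the contractions \eqref{eq:Tkl} from the Koszul contraction \eqref{eq:Churchill} of $\cF$-longitudinal tensor fields by a single application of the homological perturbation lemma, taking as perturbation the Lie derivative $\iL_{\varrho}$ along $\varrho=-(\fedosov+\delta)$. The contraction \eqref{eq:Churchill} is already at hand (here $\delta$ is a degree $+1$ derivation with $\delta\circ\delta=0$, so $\iL_{-\delta}$ squares to zero and the Koszul complex is a genuine cochain complex). Using Remark~\ref{warzee}, which gives $\fedosov=-\delta+d^{\nabla}+A^\nabla=-\delta-\varrho$, I would record the operator identity $\iL_{\fedosov}=\iL_{-\delta}-\iL_{\varrho}$ on $\sections{\cN;\cF^{\otimes k}\otimes(\cF^\vee)^{\otimes l}}$. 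Since $\iL_{-\delta}$ and $\iL_{\fedosov}$ both preserve this subspace of $\cF$-longitudinal tensors (as noted just before the statement), so does $\iL_{\varrho}$; hence $-\iL_{\varrho}$ is a legitimate candidate perturbation of the differential of \eqref{eq:Churchill}, and the perturbed differential squares to zero automatically because $\fedosov$ is homological, so $\iL_{\fedosov}^{2}=\tfrac{1}{2}\iL_{\commutator{\fedosov}{\fedosov}}=0$.

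The one point that requires genuine work is to check that $\iL_{\varrho}$ is a \emph{small} perturbation, so that the perturbation lemma applies. The filtration $\fF_{\bullet}$ of \eqref{eq:FiltationFedosovTensor} on $\sections{\cN;\cF^{\otimes k}\otimes(\cF^\vee)^{\otimes l}}$ is exhaustive and complete, the maps $\iL_{-\delta}$, $\tau_\natural$, $\sigma_\natural$, $h_\natural$ all respect it, and by Lemma~\ref{Varseille} the composite $h_\natural\circ\iL_{\varrho}$ sends $\fF_{-u}$ into $\fF_{-u-1}$ for every $u\in\ZZ$. Hence, on any fixed element the iterates $(h_\natural\iL_{\varrho})^{n}$ sink into arbitrarily deep layers of the filtration, so the series $\sum_{n\geq 0}(h_\natural\iL_{\varrho})^{n}$ converges in the $\fF_{\bullet}$-adic topology; this is exactly the smallness hypothesis of the perturbation lemma (Lemma~\ref{Riga}; see Appendix~\ref{Vilnius}). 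Applying that lemma to \eqref{eq:Churchill} perturbed by $-\iL_{\varrho}$ then yields a new contraction with the same base complex, deformed homotopy $\breve{h}_\natural=\sum_{n\geq 0}(h_\natural\iL_{\varrho})^{n}h_\natural$, and deformed inclusion $\breve{\tau}_\natural=\sum_{n\geq 0}(h_\natural\iL_{\varrho})^{n}\tau_\natural$, the sign built into the perturbation lemma combining with the sign of $-\iL_{\varrho}$ to leave all terms positive; this matches the formulas in the statement.

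It remains to see that the projection and the base differential are undeformed. Since $\varrho=-(d^\nabla+A^\nabla)$ strictly raises the de~Rham degree on $\OO^{\bullet}\big(\cM,\hat S(T^\vee_\cM)\otimes(T_\cM)^{\otimes k}\otimes(T^\vee_\cM)^{\otimes l}\big)$, one has $\sigma_\natural\circ\iL_{\varrho}=0$, so the deformed projection collapses to $\sigma_\natural$ itself and the deformed differential on the base, $\sigma_\natural\circ(-\iL_{\varrho})\circ\breve{\tau}_\natural$, vanishes; the deformed data is therefore exactly the contraction \eqref{eq:Tkl}, with $\tauTensorF{k}{l}=\breve{\tau}_\natural$, $\sigmaTensorF{k}{l}=\sigma_\natural$ and $\hTensorF{k}{l}=\breve{h}_\natural$. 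I expect the smallness verification to be the main obstacle, but once Lemma~\ref{Varseille} and the completeness of $\fF_{\bullet}$ are in hand it is a short computation, and the rest is the formal bookkeeping of the perturbation lemma, carried out uniformly in $k$ and $l$.
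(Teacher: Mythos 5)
Your proposal is correct and follows essentially the same route as the paper: perturb the Koszul contraction \eqref{eq:Churchill} by $\iL_{\varrho}$, use Lemma~\ref{Varseille} together with the completeness and exhaustiveness of the filtration \eqref{eq:FiltationFedosovTensor} to justify smallness, and then observe that $\sigma_\natural\circ\iL_{\varrho}=0$ forces the perturbed surjection and base differential to remain $\sigma_\natural$ and $0$. The sign bookkeeping and the identification of $\breve{h}_\natural$ and $\breve{\tau}_\natural$ with the stated series also match the paper's use of Lemma~\ref{Riga}.
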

\begin{proof}
It remains to show that the surjection and the differential
on $\sections{\cM; (T_\cM)^{\otimes k}\otimes (T_\cM^\vee)^{\otimes l}}$
are not modified by the homological perturbation.
It is immediate that $\sigma_\natural\circ\liederivative{\varrho}=0$.
Therefore, according to Lemma~\ref{Riga}, the perturbed surjection
is $\sigma_\natural\sum_{n=0}^\infty(\liederivative{\varrho}h_\natural)^n=\sigma_\natural$
and the perturbed differential is 
$0+\sigma_\natural\liederivative{\varrho}
\sum_{n=0}^\infty(h_\natural\liederivative{\varrho})^n\tau_\natural=0$.
This concludes the proof of the proposition.
\end{proof}

In particular, for $k=1$ and $l=0$, we obtain
a Fedosov contraction for vector fields
\begin{equation}\label{Pohang} \begin{tikzcd}
\XX(\cM) \arrow[r, "\breve{\tau}_\natural", shift left] &
\big(\sections{\cN;\cF}=
\OO^\bullet\big(\cM,\hat{S}(T^\vee_\cM)\otimes T_\cM\big),\liederivative{\fedosov}\big)
\arrow[l, "\sigma_\natural", shift left]
\arrow[loop, "\breve{h}_\natural", out=5, in=-5, looseness=3]
\end{tikzcd} .\end{equation}
Here, $\XX(\cM)$ is again seen as a cochain complex concentrated in degree~$0$.

For $k=0$ and $l=0$, the contraction \eqref{eq:Tkl} reduces
to the contraction \eqref{Fortune} for the spaces of functions.

\subsubsection{Fedosov contractions for $\sT_{\poly}$ and $\sA_{\poly}$}
\label{sec:Subcontraction_TA}


Recall that
\begin{align*}
\Tpolym{p}&=\GG\big(\cM; S^p(T_\cM[-1])\big)\cong\GG\big(\Lambda^p T_\cM\big)[-p] ,\\
\Tpolyf{p}&=\GG\big(\cN; S^p(\cF[-1])\big)
=\OO\big(\cM,\hat{S}(T^\vee_\cM)\otimes S^p( T_\cM[-1])\big)
\cong\GG\big(\Lambda^p\cF\big)[-p] ,\\
\Apolym{-p}&=\GG\big(\cM; S^p(T^\vee_\cM[1])\big)
\cong\GG\big(\Lambda^p T^\vee_\cM\big)[p] ,\\ 
\Apolyf{-p}& =\GG\big(\cN; S^p(\cF^\vee[1])\big)
=\OO\big(\cM,\hat{S}(T^\vee_\cM)\otimes S^p(T_\cM\dual[1])\big)
\cong\GG\big(\Lambda^p\cF^\vee\big)[p]
.\end{align*}
For instance, if $X_1,\ldots,X_p\in\XX(\cM)$ are
homogeneous vector fields and $\degree{X_k}$ denotes
the degree of $X_k$ in $\XX(\cM)$, 
then $X_1\wedge\cdots\wedge X_p$ is a homogeneous element of degree
$\degree{X_1}+\cdots+\degree{X_p}$ in $\GG\big(\Lambda^p T_\cM\big)$ and 
\[ \pshift X_1\odot\cdots\odot\pshift X_p
=\pm (\pshift)^{\otimes p}(X_1\wedge\cdots\wedge X_p)
\in\big(\Tpolym{p}\big)^{p+\degree{X_1}+\cdots+\degree{X_p}} .\]
We have similar identifications for other spaces.

\begin{corollary}\label{cor:contractionTpolyApoly}
Under the same hypothesis as in Proposition~\ref{repository},
for every $p\geq 0$, there is a pair of contractions
\begin{enumerate}
\item at the level of polyvector fields:
\begin{equation}\label{eq:Debrecen}
\begin{tikzcd}[cramped]
\Big(\Tpolym{p},0\Big)
\arrow[r, "\tauTpolyF", shift left] &
\Big(\Tpolyf{p},\tQF\Big)
\arrow[l, "\sigmaTpolyF", shift left]
\arrow[loop, "\hTpolyF" , out=7, in=-7, looseness=3]
\end{tikzcd}
\end{equation}
\item and at the level of differential forms:
\begin{equation}
\label{eq:Oneonta}
\begin{tikzcd}[cramped]
\Big(\Apolym{-p},0\Big)
\arrow[r, "\tauTpolyF", shift left] &
\Big(\Apolyf{-p},\aQF\Big)
\arrow[l, "\sigmaTpolyF", shift left]
\arrow[loop, "\hTpolyF", out=7, in=-7, looseness=3]
\end{tikzcd}
\end{equation}
\end{enumerate}
\end{corollary}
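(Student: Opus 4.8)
The plan is to deduce both contractions from the tensor-field contractions of Proposition~\ref{repository} by restricting to the antisymmetric (wedge) part, exploiting that all the structure maps appearing there are natural with respect to permutations of tensor factors. By the identifications recalled at the beginning of Section~\ref{sec:Subcontraction_TA}, we have $\Tpolyf{p}\cong\GG\big(\Lambda^p\cF\big)[-p]$ and $\Apolyf{-p}\cong\GG\big(\Lambda^p\cF^\vee\big)[p]$, and similarly for $\Tpolym{p}$ and $\Apolym{-p}$ over $\cM$. In characteristic zero the bundle $\Lambda^p\cF$ is the image of the Koszul-signed symmetrization idempotent $e_p=\tfrac{1}{p!}\sum_{\sigma\in\mathfrak{S}_p}\mathrm{sgn}(\sigma)\,\sigma$, where $\mathfrak{S}_p$ acts on $\cF^{\otimes p}$ by permutation of the tensor slots (with Koszul signs), and likewise $\Lambda^p T_\cM=e_p\big(T_\cM^{\otimes p}\big)$, $\Lambda^p\cF^\vee=e_p\big((\cF^\vee)^{\otimes p}\big)$, $\Lambda^p T_\cM^\vee=e_p\big((T_\cM^\vee)^{\otimes p}\big)$; under the degree shift this $e_p$ is precisely the graded symmetrizer defining $S^p(\cF[-1])\cong\Lambda^p\cF[-p]$, resp.\ $S^p(\cF^\vee[1])\cong\Lambda^p\cF^\vee[p]$. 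So the strategy is to apply Proposition~\ref{repository} and then restrict the resulting contractions to the summand cut out by $e_p$.

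First I would invoke Proposition~\ref{repository} with $(k,l)=(p,0)$ and with $(k,l)=(0,p)$, obtaining contractions of $\big(\GG(\cN;\cF^{\otimes p}),\iL_{\fedosov}\big)$ onto $\big(\GG(\cM;T_\cM^{\otimes p}),0\big)$ and of $\big(\GG(\cN;(\cF^\vee)^{\otimes p}),\iL_{\fedosov}\big)$ onto $\big(\GG(\cM;(T_\cM^\vee)^{\otimes p}),0\big)$, with data $\breve{\tau}_\natural$, $\sigma_\natural$, $\breve{h}_\natural$ as in the proof of that proposition. The key step is to check that $e_p$ commutes with all of these maps and with the differential. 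By their defining formulas~\eqref{eq:NAP1}--\eqref{eq:NAP3}, the unperturbed maps $\tau_\natural$, $\sigma_\natural$, $h_\natural$ are each an operator acting on the $C^\infty(\cN)$- or $C^\infty(\cM)$-coefficient tensored with the change of frame $\partial_{y_i}\leftrightarrow\partial_{x_i}$ (and $dy_j\leftrightarrow dx_j$) applied identically in every tensor slot; hence they are $\mathfrak{S}_p$-equivariant and commute with $e_p$. The perturbation $\liederivative{\varrho}$ — and the differential $\iL_{\fedosov}$ itself — is the Lie derivative along a vector field on $\cN$, so on $\GG(\cN;\cF^{\otimes p})$ it acts as the total derivation $\sum_i\id\otimes\cdots\otimes\liederivative{\varrho}\otimes\cdots\otimes\id$, which is again $\mathfrak{S}_p$-equivariant; consequently the perturbed maps $\breve{\tau}_\natural=\sum_{n\geq 0}(h_\natural\liederivative{\varrho})^n\tau_\natural$ and $\breve{h}_\natural=\sum_{n\geq 0}(h_\natural\liederivative{\varrho})^n h_\natural$ commute with $e_p$ as well.

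Since $e_p$ is an idempotent chain map commuting with the contraction data, the contraction restricts to the pair of direct summands cut out by $e_p$. On the $\cM$-side the summand is $\GG(\cM;\Lambda^p T_\cM)$, resp.\ $\GG(\cM;\Lambda^p T_\cM^\vee)$, still with the zero differential because $\sigma_\natural\circ\liederivative{\varrho}=0$ exactly as in the proof of Proposition~\ref{repository}; on the $\cN$-side it is $\GG(\cN;\Lambda^p\cF)$, resp.\ $\GG(\cN;\Lambda^p\cF^\vee)$, with differential the restriction of $\iL_{\fedosov}$, which by~\eqref{eq:wedge1} together with Lemma~\ref{lem:Rome} coincides with the dg operator $\cQ$ of the dg vector bundle $\Lambda^p\cF\to\cN$, resp.\ $\Lambda^p\cF^\vee\to\cN$. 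Finally, applying the overall degree shift $[-p]$, resp.\ $[p]$ — an isomorphism of graded complexes that intertwines all the contraction data — yields exactly the contractions~\eqref{eq:Debrecen} and~\eqref{eq:Oneonta}; one then takes $\tauTpolyF$, $\sigmaTpolyF$, $\hTpolyF$ to be the resulting injection, surjection, and homotopy, using the same symbols in the polyvector-field and differential-form cases as in the statement.

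The only point that genuinely requires care is the sign bookkeeping: one must verify that the graded symmetrizer defining $S^p(\cF[-1])$, resp.\ $S^p(\cF^\vee[1])$, really matches the \emph{signed} symmetrizer $e_p$ on $\cF^{\otimes p}$, resp.\ $(\cF^\vee)^{\otimes p}$, after the degree shift, and that the $\mathfrak{S}_p$-equivariance of $\liederivative{\varrho}$, $h_\natural$, $\tau_\natural$, $\sigma_\natural$ holds with the Koszul signs built into the $\mathfrak{S}_p$-action. These are routine but are the natural source of errors; the homological-algebra content of the argument is entirely formal.
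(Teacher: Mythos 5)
Your proposal is correct and follows essentially the same route as the paper: the paper's proof likewise restricts the tensor-field contraction \eqref{eq:Tkl} of Proposition~\ref{repository} to skew-symmetric tensors of type $(p,0)$ (resp.\ $(0,p)$) and then identifies \eqref{eq:Debrecen} and \eqref{eq:Oneonta} as the $p$-fold suspensions of the resulting contractions. Your explicit justification via the Koszul-signed symmetrizer $e_p$ and the $\mathfrak{S}_p$-equivariance of $\tau_\natural$, $\sigma_\natural$, $h_\natural$, and $\liederivative{\varrho}$ simply spells out what the paper dismisses with ``it is simple to see that the contraction respects both symmetric and skew-symmetric tensors.''
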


\begin{proof}
It is simple to see that the contraction \eqref{eq:Tkl}
respects both symmetric and skew-symmetric tensors.
Restricting the contraction \eqref{eq:Tkl} to skew-symmetric tensors 
of type $(p,0)$, we obtain the contraction
\begin{equation}
\label{Lloydminster}
\begin{tikzcd}[cramped]
\Big( \sections{\cM;\Lambda^p T_\cM} , 0 \Big)
\arrow[r, "\tauTensorF{p}{0}", shift left] &
\Big( \sections{\cN;\Lambda^p\cF} , \iL_{\fedosov} \Big)
\arrow[l, "\sigmaTensorF{p}{0}", shift left]
\arrow[loop, "\hTensorF{p}{0}",out=7,in=-7,looseness = 3]
\end{tikzcd}
\end{equation}
The desired contraction \eqref{eq:Debrecen} arises
as the $p$-fold suspension of the contraction \eqref{Lloydminster}.
The second contraction \eqref{eq:Oneonta} can be obtained similarly.
\end{proof}


To establish Fedosov contractions for dg manifolds,
we will need certain subcontractions of \eqref{eq:Debrecen} and \eqref{eq:Oneonta}.
These subcontractions arise from a detailed analysis of different types of grading. 

Recall that, for $p\in\ZZ_{\geq 0}$, $q\in\ZZ$, and $r\in\ZZ_{\geq 0}$,
we considered the subspaces $\prescript{\cF}{}{\sT}^{r,q,p}\subset\big(\Tpolyf{p}\big)^{p+q+r}$
in~\eqref{eq:TtripleDeg} and $\prescript{\cF}{}\sA^{r,q,-p}\subset\big(\Apolyf{-p}\big)^{-p+q+r}$
in ~\eqref{eq:AtripleDeg}. Parallelly, set 
\[ \sT^{r,q,p}=\begin{cases} \Big(\GG\big(\cM;S^p( T_\cM[-1])\big)\Big)^{p+q}
& \text{if}\ r=0 \\ 0 & \text{if}\ r>0 \end{cases} \]
and
\[ \sA^{r,q,-p}=\begin{cases} \Big(\GG\big(\cM;S^p( T_\cM\dual [1])\big)\Big)^{-p+q}
& \text{if}\ r=0 \\ 0 & \text{if}\ r>0. \end{cases} \]


Observe that 
\begin{align*}
&\tQF  :\prescript{\cF}{}{\sT}^{r,q,p} \to \prescript{\cF}{}{\sT}^{r+1,q,p}, &  & \hTpolyF  :\prescript{\cF}{}{\sT}^{r,q,p} \to \prescript{\cF}{}{\sT}^{r-1,q,p}, \\
& \tauTpolyF : \sT^{r,q,p} \to \prescript{\cF}{}{\sT}^{r,q,p}, &  & \sigmaTpolyF  :\prescript{\cF}{}{\sT}^{r,q,p} \to \sT^{r,q,p}.
\end{align*}
Similarly,
\begin{align*}
&\tQF  :\prescript{\cF}{}{\sA}^{r,q,-p} \to \prescript{\cF}{}{\sA}^{r+1,q,-p}, &  & \hTpolyF  :\prescript{\cF}{}{\sA}^{r,q,-p} \to \prescript{\cF}{}{\sA}^{r+1,q,-p}, \\
& \tauTpolyF : \sA^{r,q,-p} \to \prescript{\cF}{}{\sA}^{r,q,-p}, &  & \sigmaTpolyF  :\prescript{\cF}{}{\sA}^{r,q,-p} \to \sA^{r,q,-p}.
\end{align*}


\begin{corollary}
\label{cor:contractionTA_HomogeneousParts}
Under the same hypothesis as in Proposition~\ref{repository},
for every $p,q\in\ZZ$ with $p\geq 0$, there is a pair of contractions:
\begin{equation}
\label{eq:Tcontraction0}
\begin{tikzcd}[cramped]
(\sT^{\bullet,q,p},0)
\arrow[r, "\breve{\tau}_\natural", shift left] &
(\prescript{\cF}{}{\sT}^{\bullet,q,p},\tQF)
\arrow[l, "\sigmaTpolyF", shift left]
\arrow[loop, "\hTpolyF" , out=7, in=-7, looseness=3]
\end{tikzcd}
\end{equation}
and
\begin{equation}
\label{eq:Ocontraction0}
\begin{tikzcd}[cramped]
(\sA^{\bullet,q,-p},0)
\arrow[r, "\breve{\tau}_\natural", shift left] &
(\prescript{\cF}{}{\sA}^{\bullet,q,-p},\tQF)
\arrow[l, "\sigmaTpolyF", shift left]
\arrow[loop, "\hTpolyF" , out=7, in=-7, looseness=3]
\end{tikzcd}
\end{equation}
\end{corollary}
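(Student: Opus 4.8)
The plan is to deduce Corollary~\ref{cor:contractionTA_HomogeneousParts} from Corollary~\ref{cor:contractionTpolyApoly} by observing that the contractions \eqref{eq:Debrecen} and \eqref{eq:Oneonta} are \emph{block diagonal} with respect to the decomposition of $\Tpolyf{p}$ (resp.\ $\Apolyf{-p}$) by the middle degree $q$, the polyvector/form degree $p$ being held fixed throughout. Recall from \eqref{eq:TtripleDeg} and \eqref{eq:AtripleDeg} that an element of $\prescript{\cF}{}{\sT}^{r,q,p}$ has total degree $p+q+r$, where $r$ is the differential-form degree and $p$ the polyvector degree; thus $q$ is determined by the total degree, the form degree and $p$, and the same holds (with the appropriate sign) for $\prescript{\cF}{}{\sA}^{r,q,-p}$. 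So it suffices to show that every structure map of \eqref{eq:Debrecen} and \eqref{eq:Oneonta} preserves the pair $(q,p)$, changing the form degree $r$ only by a fixed amount; a map of this kind then also preserves the subspace $\bigoplus_{r\geq 0}\prescript{\cF}{}{\sT}^{r,q,p}$ of elements with finitely many nonzero $r$-components, and the defining identities of the contraction, being identities between such maps, restrict to each block.

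First I would handle the unperturbed pieces. By the local formula \eqref{Abidjan}, the Fedosov homological vector field $\fedosov=-\delta+d^{\nabla}+A^{\nabla}$, and each of the three derivations $\delta$, $d^{\nabla}$, $A^{\nabla}$ separately, is a sum of terms of the form $\xi_k\cdot(\ast)$ with $(\ast)$ built only from $x$, $y$, $\partial/\partial x_k$, $\partial/\partial y_k$; hence $\fedosov$ has total degree $+1$, raises $r$ by exactly one, and preserves $p$, so $\tQF=\liederivative{\fedosov}$ sends $\prescript{\cF}{}{\sT}^{r,q,p}$ to $\prescript{\cF}{}{\sT}^{r+1,q,p}$ and $\aQF=\liederivative{\fedosov}$ sends $\prescript{\cF}{}{\sA}^{r,q,-p}$ to $\prescript{\cF}{}{\sA}^{r+1,q,-p}$. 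Dually, the Koszul operator $\eta=\sum_k\frac{\partial}{\partial\xi_k}\otimes y_k$ of \eqref{Cotonou}, hence $h$ and $h_\natural$, has total degree $-1$, lowers $r$ by one and preserves $p$, so it preserves $q$; and the projection $\sigma_\natural$ and injection $\tau_\natural$ preserve $r$, $q$ and $p$ outright. Consequently $\varrho=-(\fedosov+\delta)=-(d^{\nabla}+A^{\nabla})$ and $\liederivative{\varrho}$ preserve $(q,p)$, and therefore — term by term in the series of Proposition~\ref{repository}, whose convergence in the $\hat{S}$-adic filtration is compatible with the $(q,p)$-decomposition — so do the perturbed maps $\hTpolyF=\breve{h}_\natural=\sum_{n\geq 0}(h_\natural\liederivative{\varrho})^n h_\natural$ and $\tauTpolyF=\breve{\tau}_\natural=\sum_{n\geq 0}(h_\natural\liederivative{\varrho})^n\tau_\natural$; moreover every term of $\breve{h}_\natural$ lowers $r$ by one and every term of $\breve{\tau}_\natural$ lands in form degree $r=0$, so both preserve $r$-finiteness.

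Granting this, the conclusion is formal: restricting \eqref{eq:Debrecen} (resp.\ \eqref{eq:Oneonta}) to the summand indexed by a fixed pair $(q,p)$ yields the diagram \eqref{eq:Tcontraction0} (resp.\ \eqref{eq:Ocontraction0}), the base $\sT^{\bullet,q,p}$ (resp.\ $\sA^{\bullet,q,-p}$) being concentrated in degree $r=0$, where it equals $(\Tpolym{p})^{p+q}$ (resp.\ $(\Apolym{-p})^{-p+q}$) with zero differential, and the contraction identities — including the side conditions $\breve{h}_\natural\circ\breve{h}_\natural=0$, $\breve{h}_\natural\circ\breve{\tau}_\natural=0$, $\sigma_\natural\circ\breve{h}_\natural=0$ — pass to the restriction. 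I do not expect a real obstacle here: the argument is pure degree bookkeeping, and the only point demanding a little care is that $A^{\nabla}$ can increase the symmetric ($y$-)degree by an arbitrary amount; this is harmless because $q$ is defined through the total degree and the form degree alone, and $A^{\nabla}$ has total degree $+1$ and form degree $+1$ exactly like the other summands of $\fedosov$.
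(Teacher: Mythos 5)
Your proposal is correct and follows essentially the same route as the paper: the paper's proof consists precisely of observing that $\tau_\natural$, $\sigma_\natural$, $\liederivative{\delta}$, $h_\natural$, and $\liederivative{\varrho}$ all preserve the pair $(q,p)$ while shifting $r$ by a fixed amount, so that $\breve{h}_\natural$ and $\breve{\tau}_\natural$ do as well and the contraction of Corollary~\ref{cor:contractionTpolyApoly} restricts to each block $\prescript{\cF}{}{\sT}^{\bullet,q,p}$ (resp.\ $\prescript{\cF}{}{\sA}^{\bullet,q,-p}$). Your extra remarks — the degree bookkeeping via \eqref{Abidjan} and the observation that $A^{\nabla}$ is harmless because $q$ is read off from the total and form degrees — are sound elaborations of the same verification.
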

Here, by abuse of notation, we omit the superscripts for the contraction maps
$\breve{\tau}_\natural$, $\sigma_\natural$, and $\breve{h}_\natural$.

\begin{proof}
It suffices to observe that 
\begin{gather*}
\tau_\natural(\sT^{r,q,p})\subset\prescript{\cF}{}{\sT}^{r,q,p}; \qquad
\sigma_\natural(\prescript{\cF}{}{\sT}^{r,q,p})\subset\sT^{r,q,p}; \qquad
\liederivative{\delta}(\prescript{\cF}{}{\sT}^{r,q,p})
\subset\prescript{\cF}{}{\sT}^{r+1,q,p}; \\
h_\natural(\prescript{\cF}{}{\sT}^{r,q,p})\subset\prescript{\cF}{}{\sT}^{r-1,q,p};
\qquad\text{and}\qquad 
\liederivative{\varrho}(\prescript{\cF}{}{\sT}^{r,q,p})
\subset\prescript{\cF}{}{\sT}^{r+1,q,p}
.\end{gather*}
Indeed, it follows that $h_\natural\circ\liederivative{\varrho}$ 
and $\liederivative{\varrho}\circ h_\natural$ stabilize 
$\prescript{\cF}{}{\sT}^{r,q,p}$,
and thence that 
\[ \tQF(\prescript{\cF}{}{\sT}^{r,q,p})
\subset\prescript{\cF}{}{\sT}^{r+1,q,p};
\quad
\breve{h}_\natural(\prescript{\cF}{}{\sT}^{r,q,p})
\subset\prescript{\cF}{}{\sT}^{r-1,q,p};
\quad\text{and}\quad 
\breve{\tau}_\natural(\sT^{r,q,p})
\subset\prescript{\cF}{}{\sT}^{r,q,p} .\]
The same argument applies to $\sA^{\bullet,q,-p}$
and $\prescript{\cF}{}{\sA}^{\bullet,q,-p}$.
\end{proof}

By Equation~\eqref{eq:GaredeNord} and Equation~\eqref{eq:totnTpolyF},
it is clear that 
\[\totTpolyM{n}=\bigoplus_{\substack{p,q, r \in\ZZ \\
p+q+r=n \\ p\geq 0,\ r\geq 0}}\sT^{r,q,p}
,\qquad \totTpolyF{n}=\bigoplus_{\substack{p,q,r\in\ZZ \\ p+q+r=n \\
p\geq 0,\ r\geq 0}} \prescript{\cF}{}{\sT}^{r,q,p}. \]

Similarly, by Equation~\eqref{eq:GaredeEst}
and Equation~\eqref{eq:totnApolyF}, we also have
\[ \totApolyM{n}=\prod_{\substack{p,q,r\in\ZZ \\ p+q=n \\
p\geq 0,\ r\geq 0}}\sA^{r,q,-p}, \quad
\totcApolyF{n}=\bigoplus_{r\in\ZZ, \, r\geq 0}
\prod_{\substack{p,q\in\ZZ, \, p\geq 0 \\ -p+q=n-r }}
\prescript{\cF}{}{\sA}^{r,q,-p} .\]

By the same proof of Corollary~\ref{cor:contractionTA_HomogeneousParts},
we have the following 

\begin{corollary}
\label{pro:contractionTO}
Under the same hypothesis as in Proposition~\ref{repository},
there are contractions
\begin{enumerate}
\item at the level of polyvector fields:
\begin{equation}
\label{eq:Tcontraction}
\begin{tikzcd}[cramped]
\Big(\totTpolyM{\bullet},0\Big)
\arrow[r, "\tauTpolyF", shift left] &
\Big(\totTpolyF{\bullet},\tQF\Big)
\arrow[l, "\sigmaTpolyF", shift left]
\arrow[loop, "\hTpolyF" ,out=7,in=-7,looseness = 3]
\end{tikzcd}
\end{equation}
\item and at the level of differential forms:
\begin{equation}
\label{eq:Ocontraction}
\begin{tikzcd}[cramped]
\Big(\totApolyM{\bullet},0\Big)
\arrow[r, "\tauTpolyF", shift left] &
\Big(\totcApolyF{\bullet},\aQF\Big)
\arrow[l, "\sigmaTpolyF", shift left]
\arrow[loop, "\hTpolyF" ,out=7,in=-7,looseness = 3]
\end{tikzcd}
\end{equation}
\end{enumerate}
\end{corollary}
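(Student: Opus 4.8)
The plan is to glue together, over the remaining gradings, the ``block'' contractions \eqref{eq:Tcontraction0} and \eqref{eq:Ocontraction0} supplied by Corollary~\ref{cor:contractionTA_HomogeneousParts}. The key observation, already recorded in the arrays of arrows displayed just before Corollary~\ref{cor:contractionTA_HomogeneousParts}, is that the perturbed differential $\tQF$ and all three structure maps $\breve{\tau}_\natural$, $\sigma_\natural$, $\breve{h}_\natural$ preserve the pair of indices $(p,q)$ and shift only the form degree $r$ (and symmetrically $\aQF$ and its structure maps on $\prescript{\cF}{}{\sA}^{r,q,-p}$). Hence, for each fixed $(p,q)$ with $p\geq 0$, Corollary~\ref{cor:contractionTA_HomogeneousParts} already provides a contraction of $(\prescript{\cF}{}{\sT}^{\bullet,q,p},\tQF)$ onto $(\sT^{\bullet,q,p},0)$ in the $r$-direction, and likewise a contraction of $(\prescript{\cF}{}{\sA}^{\bullet,q,-p},\aQF)$ onto $(\sA^{\bullet,q,-p},0)$.

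First I would note, using \eqref{eq:GaredeNord}, \eqref{eq:GaredeEst}, \eqref{eq:totnTpolyF} and \eqref{eq:totnApolyF}, that the four total spaces are built from the tri-graded pieces $\sT^{r,q,p}$, $\prescript{\cF}{}{\sT}^{r,q,p}$, $\sA^{r,q,-p}$, $\prescript{\cF}{}{\sA}^{r,q,-p}$ by one and the same recipe: a direct sum over $(p,q,r)$ in the polyvector-field case, and a direct sum over $r$ combined with a direct product over $(p,q)$ (for fixed $-p+q$) in the differential-form case. Since forming direct sums and direct products of graded vector spaces is exact, and since $\tQF$, $\aQF$, $\breve{\tau}_\natural$, $\sigma_\natural$, $\breve{h}_\natural$ act diagonally on the $(p,q)$-blocks — each block being exactly the contraction of Corollary~\ref{cor:contractionTA_HomogeneousParts} — the assembled maps automatically satisfy the contraction identities ($\sigma\circ\tau=\id$, $\tau\circ\sigma=\id-Dh-hD$, $h^2=0$, $\sigma\circ h=0$, $h\circ\tau=0$), each of which is checked one $(p,q)$-block at a time. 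Taking $D=\tQF$ (respectively $D=\aQF$) then yields \eqref{eq:Tcontraction} (respectively \eqref{eq:Ocontraction}), with $\breve{\tau}_\natural$ and $\breve{h}_\natural$ given by the same perturbation series $\sum_{n\geq 0}(h_\natural\liederivative{\varrho})^n\tau_\natural$ and $\sum_{n\geq 0}(h_\natural\liederivative{\varrho})^n h_\natural$ as in Proposition~\ref{repository}.

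The one point that needs genuine care — and the place I expect the bulk of the (still routine) work — is checking that the assembled maps really land in the ``small'' total spaces of \eqref{eq:totnTpolyF} and \eqref{eq:totnApolyF} rather than in the larger naive totalizations warned about in Remark~\ref{rmk:SumTotIsSmaller}. For $\sigma_\natural$ and for the differentials this is immediate, since $\sigma_\natural$ simply projects onto the $r=0$ components while $\tQF$ and $\aQF$ shift $r$ by one, keeping the direct sum over $r$ finite. For the perturbation series defining $\breve{\tau}_\natural$ and $\breve{h}_\natural$ one invokes Lemma~\ref{Varseille}: the operator $h_\natural\circ\liederivative{\varrho}$ has total degree $0$, fixes the triple $(p,q,r)$, and strictly increases the symmetric-power filtration $\fF_\bullet$, so on each $\prescript{\cF}{}{\sT}^{r,q,p}$ (respectively $\prescript{\cF}{}{\sA}^{r,q,-p}$) the series converges componentwise in that complete space and produces no new $r$- or $p$-components. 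Therefore $\breve{\tau}_\natural$ and $\breve{h}_\natural$ respect both the direct sum over $r$ and the direct product over $(p,q)$, and the argument of Corollary~\ref{cor:contractionTA_HomogeneousParts} goes through verbatim at the level of total spaces, which is exactly the assertion of the corollary.
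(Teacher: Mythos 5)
Your proposal is correct and follows essentially the same route as the paper: the paper proves this corollary ``by the same proof'' as Corollary~\ref{cor:contractionTA_HomogeneousParts}, i.e.\ by observing that $\tau_\natural$, $\sigma_\natural$, $h_\natural$, $\liederivative{\delta}$ and $\liederivative{\varrho}$ all preserve the indices $(p,q)$ and shift only $r$, so the contraction of Proposition~\ref{repository} restricts to each tri-graded block and hence assembles over the direct sums and products defining the total spaces. Your extra care about convergence of the perturbation series and about landing in the smaller total spaces of \eqref{eq:totnTpolyF}--\eqref{eq:totnApolyF} is exactly the content of Lemma~\ref{Varseille} and the block-stability observations already recorded in the paper.
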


\begin{remark}\label{rmk:BiCx_TpolyApoly}
The cochain complex $\Big(\totTpolyF{\bullet},\iL_{\fedosov}\Big)$
can be considered as the direct-sum total complexes of the double complex: 
\begin{equation}
\begin{small}
\begin{tikzcd}
& \vdots & \vdots & \vdots & \\
0 \ar[r] & \displaystyle \bigoplus_{p+q=s+1}
\prescript{\cF}{}{\sT}^{0,q,p} \ar[u,"0"] \ar[r, "\iL_{\fedosov}"]
& \displaystyle \bigoplus_{p+q=s+1}\prescript{\cF}{}{\sT}^{1,q,p}
\ar[u,"0"] \ar[r, "\iL_{\fedosov}"]
& \displaystyle \bigoplus_{p+q=s+1}\prescript{\cF}{}{\sT}^{2,q,p}
\ar[u,"0"] \ar[r, "\iL_{\fedosov}"] & \cdots \\
0 \ar[r] & \displaystyle \bigoplus_{p+q=s}\prescript{\cF}{}{\sT}^{0,q,p}
\ar[u,"0"] \ar[r, "\iL_{\fedosov}"]
& \displaystyle \bigoplus_{p+q=s}\prescript{\cF}{}{\sT}^{1,q,p}
\ar[u,"0"] \ar[r, "\iL_{\fedosov}"]
& \displaystyle \bigoplus_{p+q=s}\prescript{\cF}{}{\sT}^{2,q,p}
\ar[u,"0"] \ar[r, "\iL_{\fedosov}"] & \cdots \\
& \vdots \ar[u,"0"] & \vdots \ar[u,"0"] & \vdots \ar[u,"0"] & 
\end{tikzcd}
\end{small}
\end{equation}
Parallelly, the cochain complex
$\Big(\totcApolyF{\bullet},\iL_{\fedosov}\Big)$
can be considered as the direct-sum total complexes of the double complex: 
\begin{equation}
\begin{small}
\begin{tikzcd}
& \vdots & \vdots & \vdots & \\
0 \ar[r] & \displaystyle \prod_{-p+q=s+1}\prescript{\cF}{}{\sA}^{0,q,-p}
\ar[u,"0"] \ar[r, "\iL_{\fedosov}"]
& \displaystyle \prod_{-p+q=s+1} \prescript{\cF}{}{\sA}^{1,q,-p}
\ar[u,"0"] \ar[r, "\iL_{\fedosov}"]
& \displaystyle \prod_{-p+q=s+1} \prescript{\cF}{}{\sA}^{2,q,-p}
\ar[u,"0"] \ar[r, "\iL_{\fedosov}"] & \cdots \\
0 \ar[r] & \displaystyle \prod_{-p+q=s}\prescript{\cF}{}{\sA}^{0,q,-p}
\ar[u,"0"] \ar[r, "\iL_{\fedosov}"]
& \displaystyle \prod_{-p+q=s} \prescript{\cF}{}{\sA}^{1,q,-p}
\ar[u,"0"] \ar[r, "\iL_{\fedosov}"]
& \displaystyle\prod_{-p+q=s} \prescript{\cF}{}{\sA}^{2,q,-p}
\ar[u,"0"] \ar[r, "\iL_{\fedosov}"] & \cdots \\
& \vdots \ar[u,"0"] & \vdots \ar[u,"0"] & \vdots \ar[u,"0"] & 
\end{tikzcd}
\end{small}
\end{equation}
\end{remark}

\subsection{Fedosov contractions for polydifferential operators and polyjets}

To construct Fedosov contraction for the calculus $\calculus_H(\cM)$
of a graded manifold $\cM$, we need contractions
of polydifferential operators and polyjets.

\subsubsection{Universal enveloping algebra of Fedosov Lie algebroid}

Recall that the universal enveloping algebra $\enveloping{\cF}$
of the Fedosov dg Lie algebroid $\cF\to\cN$ can be identified,
in a natural way, with the $C^\infty(\cN)$-module
$\GG\big(\cN;S(\cF)\big)$. In fact, this natural identification
can be established via a canonical connection of $\cF$
and its formal exponential map. 

As earlier, let $x_1,\cdots,x_{m+r},\xi_1,\cdots,\xi_{m+r},y_1,\cdots,y_{m+r}$
denote the local coordinate functions
on $\cN=T_{\cM}[1]\oplus \Tformal{\cM}$ as in Remark~\ref{rmk:CoordFedosovMfd}
with a choice of local coordinates $x_1,\cdots,x_{m+r}$ on $\cM$.
There exists a canonical Lie algebroid connection $\nabla^\cF:\sections{\cF}\times\sections{\cF}\to\sections{\cF}$ fully characterized, in local coordinates, by the relations
\begin{equation}\label{Tehran}
\nabla^\cF_{\tfrac{\partial}{\partial y_i}}\tfrac{\partial}{\partial y_j}=0, 
\quad\forall i,j\in\{1,2,\cdots,m+r\}
.\end{equation}
It induces a `formal exponential map,' 
which arises as the unique isomorphism of left $C^\infty(\cN)$-modules
\[ \pbw^\cF:\GG\big(\cN;S(\cF)\big)\to\enveloping{\cF} \]
satisfying the relations 
\begin{gather}
\label{Mashhad} \pbw^\cF(f)=f, \quad\forall f\in C^\infty(\cN); \\
\label{Isfahan} \pbw^\cF(X)=X, \quad\forall X\in\GG(\cF);
\end{gather}
and, for all $n\in\NN$ and any homogeneous $X_0,X_1,\ldots,X_n\in\sections{\cF}$,
\begin{equation}\label{Ardabil}
\pbw^\cF(X_0\odot X_1\odot\cdots\odot X_n)
=\frac{1}{n+1}\sum_{k=0}^n\epsilon_k\Big\{X_k\cdot\pbw^\cF(X^{\{k\}})
-\pbw^\cF\big(\nabla^\cF_{X_k}(X^{\{k\}})\big)\Big\}
,\end{equation}
where $\epsilon_k=(-1)^{\degree{X_k}(\degree{X_0}+\cdots+\degree{X_{k-1}})}$
and $X^{\{k\}}=X_0\odot\cdots\odot X_{k-1}\odot X_{k+1}\odot\cdots\odot X_n$.

It follows immediately from Equations~\eqref{Tehran}, \eqref{Mashhad},
\eqref{Isfahan}, and \eqref{Ardabil} that, in any local chart for $\cN$
of the type described in Section~\ref{corniche}, we have 
\[ \pbw^\cF\left(\frac{\partial}{\partial y_{i_1}}
\odot\frac{\partial}{\partial y_{i_2}}\odot\cdots
\odot\frac{\partial}{\partial y_{i_n}}\right)=
\frac{\partial^n}{\partial y_{i_1}\partial y_{i_2}\cdots
\partial y_{i_n}} .\]

\subsubsection{Contractions of polydifferential operators and polyjets}

Since $\delta$ is a section of the Fedosov dg Lie algebroid $\cF\to\cN$,
the operator $U\mapsto\gerstenhaber{-\delta}{U}$ is an endomorphism
of $\enveloping{\cF}$, which we denote by $\liederivative{-\delta}$
by slight abuse of notations.
It is immediate that, for all $g\in C^\infty(\cN)$
and all possible choices of indices $i_1,\ldots,i_n$, we have 
\[ \liederivative{-\delta}\circ\pbw^\cF
\left(g\cdot
\frac{\partial}{\partial y_{i_1}}
\odot\frac{\partial}{\partial y_{i_2}}\odot\cdots
\odot\frac{\partial}{\partial y_{i_n}}\right)
=\pbw^\cF\left(-\delta(g)
\cdot\frac{\partial}{\partial y_{i_1}}
\odot\frac{\partial}{\partial y_{i_2}}\odot\cdots
\odot\frac{\partial}{\partial y_{i_n}}\right) .\]

Hence, modifying slightly the Koszul contraction \eqref{Amos},
we obtain a contraction
\begin{equation}
\begin{tikzcd}[cramped]
\big(\cD(\cM),0\big)
\arrow[r, "\tau_\natural", shift left] &
\big(\enveloping{\cF},\iL_{-\delta}\big)
\arrow[l, "\sigma_\natural", shift left]
\arrow[loop, "h_\natural", out=7, in=-7, looseness=3]
\end{tikzcd}
\end{equation}
whose homotopy $h_\natural$, surjection $\sigma_\natural$,
and injection $\tau_\natural$ are the $\KK$-linear maps defined by the relations
\begin{gather}
h_\natural\circ\pbw^\cF
\left(g\cdot\tfrac{\partial}{\partial y_{i_1}}
\odot\tfrac{\partial}{\partial y_{i_2}}\odot\cdots
\odot\tfrac{\partial}{\partial y_{i_n}}\right)
=\pbw^\cF\left(h(g)
\cdot\tfrac{\partial}{\partial y_{i_1}}
\odot\tfrac{\partial}{\partial y_{i_2}}\odot\cdots
\odot\tfrac{\partial}{\partial y_{i_n}}\right) \label{eq:htp_UF}
\\
\sigma_\natural\circ\pbw^\cF
\left(g\cdot\tfrac{\partial}{\partial y_{i_1}}
\odot\tfrac{\partial}{\partial y_{i_2}}\odot\cdots
\odot\tfrac{\partial}{\partial y_{i_n}}\right)
=\pbw^\nabla\left(\sigma(g)
\cdot\tfrac{\partial}{\partial x_{i_1}}
\odot\tfrac{\partial}{\partial x_{i_2}}\odot\cdots
\odot\tfrac{\partial}{\partial x_{i_n}}\right) \label{eq:sigma_UF}
\\
\tau_\natural\circ\pbw^\nabla
\left(f\cdot\tfrac{\partial}{\partial x_{i_1}}
\odot\tfrac{\partial}{\partial x_{i_2}}\odot\cdots
\odot\tfrac{\partial}{\partial x_{i_n}}\right)
=\pbw^\cF\left(\tau(f)
\cdot\tfrac{\partial}{\partial y_{i_1}}
\odot\tfrac{\partial}{\partial y_{i_2}}\odot\cdots
\odot\tfrac{\partial}{\partial y_{i_n}}\right) \label{eq:tau_UF}
\end{gather}
for all $g\in C^\infty(\cN)$, $f\in C^\infty(\cM)$,
and all possible choices of indices $i_1,\ldots,i_n$.

Similarly, in general, for every $p\in\NN$, we obtain a Koszul contraction
\begin{equation}\label{Razgrad}
\begin{tikzcd}[cramped]
\big(\cD(\cM)^{\otimes p},0\big)
\arrow[r, "\tau_\natural", shift left] &
\big(\enveloping{\cF}^{\otimes p},\iL_{-\delta}\big)
\arrow[l, "\sigma_\natural", shift left]
\arrow[loop, "h_\natural", out=7, in=-7, looseness=3]
\end{tikzcd}
,\end{equation}
where $\cD(\cM)^{\otimes p}$ denotes the tensor product
of $p$-copies of the \emph{left $C^\infty(\cM)$-module} $\cD(\cM)$
and $\enveloping{\cF}^{\otimes p}$ denotes the tensor product
of $p$-copies of the \emph{left $C^\infty(\cN)$-module} $\enveloping{\cF}$.

The degree of the cochain complex
$\big(\enveloping{\cF}^{\otimes p},\iL_{-\delta}\big)$
is the one through the canonical identification
\[ \begin{split}
\enveloping{\cF}^{\otimes p} 
\xleftarrow[\cong]{(\pbw^\cF)^{\otimes p}} 
\GG\big(\cN;(S(\cF))^{\otimes p}\big)
& \cong \OO^\bullet\big(\cM,\hat{S}(T^\vee_\cM)
\otimes(S(T_\cM))^{\otimes p}\big) ;
\end{split} \]
i.e. the degree ``$\bullet$'' of the differential form.
On the other hand, $\cD(\cM)^{\otimes p}$
is seen as a cochain complex concentrated in degree~$0$.

Let $\fF_{-u}$ be the complete exhaustive filtration on 
$\enveloping{\cF}^{\otimes p}$
defined by 
\begin{equation*}
\fF_{-u}\big(\enveloping{\cF}^{\otimes p}\big)=(\pbw^\cF)^{\otimes p}
\bigg(
\OO\big(\cM,\hat{S}^{\geq u}(T_\cM^\vee)
\otimes 
S(T_\cM)^{\otimes p} \big)\bigg)
.\end{equation*}
While the homological vector field $\fedosov$
on $\cN$ is not tangent to the foliation $\cF$,
the Lie derivative $\liederivative{\fedosov}$ 
is the endomorphism of $\sections{\cN;\cF}$ which
encodes the dg vector bundle structure of $\cF\to\cN$,
and, consequently, the operator
$\liederivative{\fedosov}=\gerstenhaber{\fedosov}{\argument}$
stabilizes the subspace $\enveloping{\cF}^{\otimes p}$ of the space 
$\cD(\cN)^{\otimes p}$ of $p$-differential operators on $\cN$.
Hence, the difference $\liederivative{\varrho}=
\liederivative{\fedosov}-\liederivative{-\delta}$
stabilizes $\enveloping{\cF}^{\otimes p}$ as well. 
The following lemma can be verified by a straightforward computation.

\begin{lemma}
The operator $\liederivative{\varrho}=\gerstenhaber{\varrho}{\argument}$
on $\enveloping{\cF}^{\otimes p}$ satisfies
$(h_\natural \liederivative{\varrho})(\fF_{-u})\subset\fF_{-u-1}$ for all $u\in\NN$.
\end{lemma}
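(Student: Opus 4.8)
The plan is to argue exactly as for Lemma~\ref{Varseille}, the only new feature being that $\liederivative{\varrho}=\gerstenhaber{\varrho}{\argument}$ now acts on $\enveloping{\cF}^{\otimes p}$ through the Gerstenhaber bracket rather than on a module of tensor fields. Fix a local chart $x_1,\dots,x_{m+r}$ on $\cM$, with the induced coordinates $x,\xi,y$ on $\cN$ as in Remark~\ref{rmk:CoordFedosovMfd}, and use the $p$-fold tensor power of the isomorphism $\pbw^\cF:\GG\big(\cN;S(\cF)\big)\to\enveloping{\cF}$ --- which by~\eqref{Ardabil} sends $\tfrac{\partial}{\partial y_{i_1}}\odot\cdots\odot\tfrac{\partial}{\partial y_{i_n}}$ to $\tfrac{\partial^n}{\partial y_{i_1}\cdots\partial y_{i_n}}$ --- to identify $\enveloping{\cF}^{\otimes p}$ with $\OO\big(\cM,\hat{S}(T^\vee_\cM)\otimes(S(T_\cM))^{\otimes p}\big)$. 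Under this identification $\fF_{-u}$ is precisely the subspace on which the coefficient, i.e.\ the component in the $\OO\big(\cM,\hat{S}(T^\vee_\cM)\big)$-factor, has minimal $y$-degree at least $u$; and, by definition~\eqref{eq:htp_UF}, $h_\natural$ applies $h$ to that coefficient and is the identity on the $(S(T_\cM))^{\otimes p}$-factor. Since $h$ is, on each bihomogeneous component, a scalar multiple of $\eta=\sum_k\tfrac{\partial}{\partial\xi_k}\otimes y_k$, which raises the $y$-degree by exactly one, it follows that $h_\natural(\fF_{-u})\subset\fF_{-u-1}$ for every $u$.

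It therefore remains to check that $\liederivative{\varrho}$ does not \emph{decrease} the minimal $y$-degree of a coefficient, i.e.\ $\liederivative{\varrho}(\fF_{-u})\subset\fF_{-u}$; composing with the property of $h_\natural$ just noted then yields the lemma. (Just as for tensor fields, $\liederivative{\varrho}$ does not raise the filtration strictly --- the strict drop comes entirely from $h_\natural$.) Writing $\varrho=-(\fedosov+\delta)=-(d^{\nabla}+A^\nabla)$ and using the local expressions~\eqref{Lagos}, \eqref{Yaoundé} and~\eqref{Conakry}, the point is that $d^{\nabla}$ and $A^\nabla$ contain the vertical derivatives $\tfrac{\partial}{\partial y_j}$ only in the combinations $y_l\tfrac{\partial}{\partial y_j}$ (inside $d^{\nabla}$) and $y^L\tfrac{\partial}{\partial y_j}$ with $\abs{L}\geq 2$ (inside $A^\nabla$). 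Hence, with respect to the $\ZZ$-grading on differential operators on $\cN$ assigning weight $+1$ to each $y$-variable and weight $-1$ to each $\tfrac{\partial}{\partial y}$, the vector field $d^{\nabla}$ is homogeneous of weight $0$ while $A^\nabla$ is a sum of pieces of weight $\geq 1$; since this weight is additive under composition of differential operators, and since $\gerstenhaber{\varrho}{\argument}$ is built slot-wise out of the graded operator commutators $\commutator{\varrho}{\argument}$ together with $\varrho$ acting on coefficients by derivation, $\liederivative{\varrho}$ cannot lower the weight. Combined with the trivial fact that a coefficient can never have negative $y$-degree --- and a short normal-ordering bookkeeping in which each $\tfrac{\partial}{\partial y}$ contracted against a $y^L$ lowers the order of the operator and the $y$-degree of its coefficient by the same amount --- this gives $\liederivative{\varrho}(\fF_{-u})\subset\fF_{-u}$.

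The step I expect to need the most care is this last normal-ordering bookkeeping, together with two points of due diligence: that $\liederivative{\varrho}$ keeps us inside $\enveloping{\cF}^{\otimes p}$ --- which follows from the fact, recalled just before the statement, that $\liederivative{\fedosov}=\gerstenhaber{\fedosov}{\argument}$ stabilizes $\enveloping{\cF}^{\otimes p}$, together with $\liederivative{\delta}$ doing so (as $\delta\in\GG(\cF)$), hence also their difference $\liederivative{\varrho}$ --- and that the $\pbw^\cF$-transport is compatible with the $y$-degree count, which is exactly where one uses that $\pbw^\cF$ acts as plain iterated $y$-differentiation. Granting these, $h_\natural\circ\liederivative{\varrho}$ maps $\fF_{-u}$ into $h_\natural(\fF_{-u})\subset\fF_{-u-1}$, which is the assertion.
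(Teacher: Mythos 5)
Your argument is correct and is precisely the ``straightforward computation'' that the paper leaves to the reader here (as it does for the analogous Lemma~\ref{Varseille}): $h_\natural$ raises the $y$-degree of the normal-ordered coefficient by exactly one via $\eta$, while the local expressions \eqref{Yaoundé}, \eqref{Lagos} and \eqref{Conakry} show that $\liederivative{\varrho}$ never lowers it --- the one point genuinely requiring care being the normal-ordering step you flag, since the weight count alone (with $|J|$ varying) does not immediately give filtration-preservation, but each $\tfrac{\partial}{\partial y}$ contracted against a $y^L$ (with $\abs{L}\geq 2$ for $A^\nabla$ and $\abs{L}=1$ for the Christoffel part of $d^{\nabla}$) removes at most $\abs{L}$ from the coefficient's $y$-degree while the surviving factor $\partial_y^S(y^L)$ keeps that degree at or above its original value. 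Nothing is missing.
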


Since the vector field $\fedosov=-\delta-\varrho$ is homological,
it follows immediately that $\liederivative{\varrho}$ is a small perturbation 
of the Koszul differential $\liederivative{-\delta}$
on $\enveloping{\cF}^{\otimes p}$.
Hence, by Lemma~\ref{Riga}, the homological perturbation 
of the contraction \eqref{Razgrad}
by $\liederivative{\varrho}$ yields a new contraction.

\begin{proposition}
\label{Rousse}
Given a graded manifold $\cM$
and a torsion-free affine connection $\nabla$ on $\cM$,
let $\fedosov$ be the induced Fedosov homological vector field
on $\cN=T_\cM[1]\oplus \Tformal{\cM}$ and $\cF\to\cN$
 the corresponding Fedosov dg Lie algebroid.
Then, for every $p\geq 0$, there is a contraction:
\begin{equation}
\label{Sudbury}
\begin{tikzcd}[cramped]
\big(\cD(\cM)^{\otimes p},0\big)
\arrow[r, "\breve{\tau}_\natural", shift left] &
\big(\enveloping{\cF}^{\otimes p},\iL_{\fedosov}\big)
\arrow[l, "\sigma_\natural", shift left]
\arrow[loop, "\breve{h}_\natural", out=7, in=-7, looseness=3]
\end{tikzcd}
\end{equation}
Moreover, we have $\breve{h}_\natural=\sum_{n=0}^\infty(h_\natural
\liederivative{\varrho})^n h_\natural$
and $\breve{\tau}_\natural=\sum_{n=0}^\infty 
(h_\natural\liederivative{\varrho})^n\tau_\natural$
with $\varrho=-(\fedosov+\delta)$.
\end{proposition}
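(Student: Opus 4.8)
The plan is to establish Proposition~\ref{Rousse} in exact parallel with the proof of Proposition~\ref{repository}, by homological perturbation of the Koszul contraction~\eqref{Razgrad}. The ingredients are already in place: the contraction~\eqref{Razgrad}, with homotopy $h_\natural$, surjection $\sigma_\natural$ and injection $\tau_\natural$ described by Equations~\eqref{eq:htp_UF}--\eqref{eq:tau_UF}, is obtained by tensoring the Koszul contraction for functions~\eqref{Amos} with the identity on $(S(T_\cM))^{\otimes p}$ and transporting it through the PBW isomorphism $(\pbw^\cF)^{\otimes p}\colon\GG\big(\cN;(S(\cF))^{\otimes p}\big)\xrightarrow{\cong}\enveloping{\cF}^{\otimes p}$; in particular it is a genuine contraction with $h_\natural^2=0$. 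Moreover $\liederivative{\varrho}=\liederivative{\fedosov}-\liederivative{-\delta}$ stabilizes $\enveloping{\cF}^{\otimes p}$, and the Lemma stated immediately before Proposition~\ref{Rousse} shows $(h_\natural\liederivative{\varrho})\big(\fF_{-u}\big)\subset\fF_{-u-1}$ for all $u$.

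First I would note that, because the filtration $\fF_\bullet\big(\enveloping{\cF}^{\otimes p}\big)$ is exhaustive and complete and $\fedosov=-\delta-\varrho$ is homological, $\liederivative{\varrho}$ is a small perturbation of the Koszul differential $\liederivative{-\delta}$ in the sense of Appendix~\ref{Vilnius}: the series $\sum_{n\geq 0}(h_\natural\liederivative{\varrho})^n$ converges in the filtration topology. Lemma~\ref{Riga} (the homological perturbation lemma) then applies and yields a contraction between $\cD(\cM)^{\otimes p}$, equipped with the perturbed differential, and $\big(\enveloping{\cF}^{\otimes p},\liederivative{-\delta}+\liederivative{\varrho}\big)=\big(\enveloping{\cF}^{\otimes p},\liederivative{\fedosov}\big)$, with perturbed homotopy and injection $\breve h_\natural=\sum_{n\geq 0}(h_\natural\liederivative{\varrho})^n h_\natural$ and $\breve\tau_\natural=\sum_{n\geq 0}(h_\natural\liederivative{\varrho})^n\tau_\natural$.

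It then remains to check that the base side is unmodified, i.e.\ that the perturbed differential on $\cD(\cM)^{\otimes p}$ is still $0$ and the surjection is still $\sigma_\natural$. As in the proof of Proposition~\ref{repository}, everything reduces to the identity $\sigma_\natural\circ\liederivative{\varrho}=0$. To see it, transport through $(\pbw^\cF)^{\otimes p}$: the operator $\varrho=-(d^{\nabla}+A^\nabla)$ carries a factor $\xi_k$ in every term (see~\eqref{Abidjan}), hence strictly raises the differential-form degree on $\OO^\bullet\big(\cM,\hat S(T^\vee_\cM)\otimes(S(T_\cM))^{\otimes p}\big)$, while $\sigma_\natural$ is, after the same transport, the projection onto the form-degree-zero, $\hat S^0$-component; so the composite vanishes. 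By the formulas of Lemma~\ref{Riga}, the perturbed surjection is then $\sigma_\natural\sum_{n\geq 0}(\liederivative{\varrho}h_\natural)^n=\sigma_\natural$ and the perturbed base differential is $0+\sigma_\natural\liederivative{\varrho}\sum_{n\geq 0}(h_\natural\liederivative{\varrho})^n\tau_\natural=0$, completing the proof.

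The bookkeeping I would not carry out in detail is the verification that $(\pbw^\cF)^{\otimes p}$ intertwines $\liederivative{-\delta}$ and $\liederivative{\varrho}$ with the corresponding coefficientwise operators on $\OO^\bullet\big(\cM,\hat S(T^\vee_\cM)\otimes(S(T_\cM))^{\otimes p}\big)$ --- this is where the defining properties~\eqref{Mashhad}--\eqref{Ardabil} of $\pbw^\cF$ and the fact that $\pbw^\cF$ intertwines $\liederivative{-\delta}$ with $-\delta$ acting on coefficients enter --- together with the filtration estimate behind the Lemma preceding Proposition~\ref{Rousse}. The only genuinely delicate point, rather than an obstacle, is making these compatibilities between $\sigma_\natural$, $h_\natural$, $\liederivative{-\delta}$, $\liederivative{\varrho}$ and the PBW identification precise enough that Lemma~\ref{Riga} can be invoked verbatim; once that is done the statement is a formal consequence of homological perturbation, with exactly the shape of Proposition~\ref{repository}.
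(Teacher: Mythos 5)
Your proposal is correct and follows exactly the paper's route: the paper proves this proposition by declaring it ``virtually identical'' to Proposition~\ref{repository}, i.e.\ by perturbing the Koszul contraction~\eqref{Razgrad} via Lemma~\ref{Riga} using the filtration estimate of the preceding lemma, and then observing that $\sigma_\natural\circ\liederivative{\varrho}=0$ leaves the base differential and surjection unchanged. Your justification of $\sigma_\natural\circ\liederivative{\varrho}=0$ via the form-degree-raising property of $\varrho$ is the same (implicit) argument the paper relies on.
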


The proof of Proposition~\ref{Rousse} is virtually identical
to that of Proposition~\ref{repository}.

For $p=0$, the contraction \eqref{Sudbury} again reduces
to the contraction \eqref{Fortune} for the spaces of functions.

Dually, one can verify the following

\begin{proposition}
Given a graded manifold $\cM$
and a torsion-free affine connection $\nabla$ on $\cM$,
let $\fedosov$ be the induced Fedosov homological vector field
on $\cN=T_\cM[1]\oplus \Tformal{\cM}$ and $\cF\to\cN$
the corresponding Fedosov dg Lie algebroid.
Then, for every $p\geq 0$, there is a contraction:
\begin{equation}\label{eq:FcJetsTensor}
\begin{tikzcd}[cramped]
\big((\jm)^{\cotimes p},0\big)
\arrow[r, "\breve{\tau}_\natural", shift left] &
\big(\jet{\cF}^{\cotimes p},\iL_{\fedosov}\big)
\arrow[l, "\sigma_\natural", shift left]
\arrow[loop, "\breve{h}_\natural", out=7, in=-7, looseness=3]
\end{tikzcd}
\end{equation}
where $\jm=\Hom_{\cR}(\cD(\cM),\cR)$,
$\jet{\cF}=\Hom_{\cNR}(\enveloping{\cF},\cNR)$,
$\cR=C^\infty(\cM)$,
$\cNR=C^\infty(\cN)$,
and the completed tensor products of $\jm$ and $\jet{\cF}$
are over $\cR$ and $\cNR$, respectively. 

Moreover, we have $\breve{h}_\natural=\sum_{n=0}^\infty(h_\natural
\liederivative{\varrho})^n h_\natural$
and $\breve{\tau}_\natural=\sum_{n=0}^\infty 
(h_\natural\liederivative{\varrho})^n\tau_\natural$
with $\varrho=-(\fedosov+\delta)$.
\end{proposition}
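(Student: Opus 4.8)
The plan is to establish this proposition as the exact dual of Proposition~\ref{Rousse}. First I would dualize the formal exponential map $\pbw^\cF : \GG\big(\cN;S(\cF)\big)\to\enveloping{\cF}$: since it is an isomorphism of left $\cNR$-modules that respects the order filtrations, its transpose identifies the $\cF$-jet space $\jet{\cF}=\Hom_{\cNR}(\enveloping{\cF},\cNR)$ with $\OO\big(\cM,\hat S(T^\vee_\cM)\otimes\hat S(T_\cM^\vee)\big)$ and, more generally, $\jet{\cF}^{\cotimes p}$ with $\OO\big(\cM,\hat S(T^\vee_\cM)\otimes(\hat S(T_\cM^\vee))^{\cotimes p}\big)$. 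Under this identification the endomorphism $\liederivative{-\delta}$ dual to $\gerstenhaber{-\delta}{\argument}$ acts only on the $\hat S(T^\vee_\cM)$-coefficient, i.e.\ as $-\delta\otimes\id$, exactly as $\liederivative{-\delta}$ did on $\enveloping{\cF}^{\otimes p}$ after applying $\pbw^\cF$. Modifying the Koszul contraction~\eqref{Amos} in precisely the way that produced~\eqref{Razgrad}, I then obtain a Koszul contraction between $\big((\jm)^{\cotimes p},0\big)$ and $\big(\jet{\cF}^{\cotimes p},\iL_{-\delta}\big)$ whose homotopy $h_\natural$, surjection $\sigma_\natural$, and injection $\tau_\natural$ are built from $h$, $\sigma$, $\tau$ of~\eqref{Amos} by tensoring with the identity on the $(\hat S(T_\cM^\vee))^{\cotimes p}$-factor, with the surjection $\sigma_\natural$ using the transpose of $\pbw^\nabla$ on its target.

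Second, I would equip $\jet{\cF}^{\cotimes p}$ with the complete exhaustive filtration $\fF_{-u}=\OO\big(\cM,\hat S^{\geq u}(T^\vee_\cM)\otimes(\hat S(T_\cM^\vee))^{\cotimes p}\big)$ induced by the $\hat S^{\geq u}(T^\vee_\cM)$-adic filtration of $\cNR$. Because $\delta$ is a section of $\cF\to\cN$ whereas $\fedosov$ is not tangent to the foliation, the operator $\liederivative{\fedosov}=\gerstenhaber{\fedosov}{\argument}$ still stabilizes the subspace $\jet{\cF}^{\cotimes p}$ (it encodes the dg structure on the dg vector bundle $\cF^\vee\to\cN$ and its completed tensor powers), and hence so does $\liederivative{\varrho}=\liederivative{\fedosov}-\liederivative{-\delta}$ with $\varrho=-(\fedosov+\delta)$. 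A direct local computation using the coordinate expression~\eqref{Abidjan} of $\fedosov$ then gives the analogue of the filtration-lowering lemma, namely $(h_\natural\liederivative{\varrho})(\fF_{-u})\subset\fF_{-u-1}$ for all $u$; since $\fedosov=-\delta-\varrho$ is homological, this makes $\liederivative{\varrho}$ a small perturbation of $\liederivative{-\delta}$ in the sense of Appendix~\ref{Vilnius}. Applying the perturbation lemma (Lemma~\ref{Riga}) to the Koszul contraction perturbed by $\liederivative{\varrho}$ then yields the contraction~\eqref{eq:FcJetsTensor}, with perturbed homotopy and injection $\breve h_\natural=\sum_{n\geq 0}(h_\natural\liederivative{\varrho})^n h_\natural$ and $\breve\tau_\natural=\sum_{n\geq 0}(h_\natural\liederivative{\varrho})^n\tau_\natural$; the filtration-lowering property is what guarantees that these series converge in the complete filtered module $\jet{\cF}^{\cotimes p}$.

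Finally, exactly as in the proof of Proposition~\ref{repository}, I would check that $\sigma_\natural\circ\liederivative{\varrho}=0$ --- the perturbation $\varrho=-(d^\nabla+A^\nabla)$ raises the differential-form degree in $\OO^\bullet(\cM,-)$, which the projection $\sigma_\natural$ annihilates --- so that Lemma~\ref{Riga} leaves the surjection $\sigma_\natural$ and the trivial differential on $(\jm)^{\cotimes p}$ unchanged, which is what the statement asserts; the case $p=0$ recovers the contraction~\eqref{Fortune} for functions. The step I expect to be the main obstacle is the careful local verification that $h_\natural\circ\liederivative{\varrho}$ strictly lowers $\fF_\bullet$ after passing to the \emph{completed} tensor powers $(\hat S(T_\cM^\vee))^{\cotimes p}$: one must keep track of the interplay between the $\hat S$-completion inside each jet factor and the $\hat S(T^\vee_\cM)$-adic filtration that $\fedosov$ carries through~\eqref{Abidjan}, and ensure that the transpose of $\pbw^\cF$ is compatible with both gradings so that the bookkeeping of signs and of the formal-exponential corrections $A^\nabla$ proceeds exactly as in the enveloping-algebra case.
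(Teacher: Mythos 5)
Your proposal is correct and follows essentially the same route as the paper, which itself offers no written proof beyond the remark ``Dually, one can verify the following'' and an appeal to the argument of Proposition~\ref{Rousse} (itself modeled on Proposition~\ref{repository}): dualize $\pbw^\cF$ to identify $\jet{\cF}^{\cotimes p}$ with $\OO\big(\cM,\hat S(T^\vee_\cM)\otimes(\hat S(T_\cM^\vee))^{\cotimes p}\big)$ as in Lemma~\ref{lem:IVP_TauC}, build the Koszul contraction, verify the filtration-lowering property making $\liederivative{\varrho}$ a small perturbation, apply Lemma~\ref{Riga}, and use $\sigma_\natural\circ\liederivative{\varrho}=0$ to see that the surjection and the zero differential on $(\jm)^{\cotimes p}$ are unperturbed. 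Your closing caveat about the interplay between the completion in the jet factors and the $\hat S(T^\vee_\cM)$-adic filtration is a fair point of care, but it is exactly the bookkeeping the paper implicitly delegates to the duality with the enveloping-algebra case, so there is no gap.
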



Recall that
\begin{small}
\begin{align*}
\Dpolym{p} & =\textstyle \bigotimes_{\cR}^p\big(\pshift\cD(\cM)\big)
\cong\big(\cD(\cM)^{\otimes p}\big)[-p]
\cong\GG\big(\cM;(S(T_\cM)[-1])^{\otimes p}\big) ,\\
\Dpolyf{p}& = \textstyle \bigotimes_{\cNR}^p\big(\pshift \enveloping{\cF}\big)
\cong\big(\enveloping{\cF}^{\otimes p}\big)[-p]
\cong \OO\big(\cM,\hat{S}(T^\vee_\cM)
\otimes(S(T_\cM)[-1])^{\otimes p}\big) ,\\
\Cpolym{-p} & =\Hom_{\cR}\big(\Dpolym{p},\cR \big)
\cong\big((\jm)^{\hat\otimes p}\big)[p]
\cong \GG\big(\cM;(\hat{S}(T^\vee_\cM)[1])^{\hat\otimes p}\big) ,\\
\Cpolyf{-p}& =\Hom_{\cNR}\big(\Dpolyf{p},\cNR\big)
\cong\big(\jet{\cF}^{\hat\otimes p}\big)[p] 
\cong \OO\big(\cM,\hat{S}(T^\vee_\cM)
\otimes(\hat{S}(T^\vee_\cM)[1])^{\hat\otimes p}\big) .
\end{align*}
\end{small}
Here, the isomorphisms in the last $\cong$ are via the pbw maps.
For instance, if $D_1,\ldots,D_p\in\cD(\cM)$ are homogeneous differential operators
and $\degree{D_k}$ denotes the degree of $D_k$ in $\cD(\cM)$, 
then $D_1\otimes\cdots\otimes D_p$ is a homogeneous element of degree
$\degree{D_1}+\cdots+\degree{D_p}$ in $\cD(\cM)^{\otimes p}$ and
\begin{align*}
\pshift D_1\otimes\cdots\otimes\pshift D_p & \in
\big(\cD(\cM)[-1]\big)^{1+\degree{D_1}}\otimes\cdots\otimes
\big(\cD(\cM)[-1]\big)^{1+\degree{D_p}} \subset\big(\Dpolym{p}\big)^{p+\degree{D_1}+\cdots+\degree{D_p}} 
.\end{align*}
We have similar identifications for other spaces as well.

\begin{corollary}
\label{Maastricht}
Under the same hypothesis as in Proposition~\ref{Rousse},
for every $p\in\ZZ_{\geq 0}$, there is a pair of contractions
\begin{enumerate}
\item at the level of polydifferential operators:
\begin{equation}
\label{eq:OrlyD}
\begin{tikzcd}[cramped]
\Big(\Dpolym{p},0\Big)
\arrow[r, "\tauDpolyF", shift left] &
\Big(\Dpolyf{p},\dQF\Big)
\arrow[l, "\sigmaDpolyF", shift left]
\arrow[loop, "\hDpolyF",out=7,in=-7,looseness = 3]
\end{tikzcd}
\end{equation}
\item and at the level of polyjets:
\begin{equation}
\label{eq:OrlyC}
\begin{tikzcd}[cramped]
\Big(\Cpolym{-p},0\Big)
\arrow[r, "\tauDpolyF", shift left] &
\Big(\Cpolyf{-p},\cQF\Big)
\arrow[l, "\sigmaDpolyF", shift left]
\arrow[loop, "\hDpolyF",out=7,in=-7,looseness = 3]
\end{tikzcd}
\end{equation}
\end{enumerate}
\end{corollary}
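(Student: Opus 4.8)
The plan is to obtain both contractions simply by transporting, along the appropriate degree‑shift (suspension) isomorphisms, the contractions already established in Proposition~\ref{Rousse} and in the contraction~\eqref{eq:FcJetsTensor}. Recall from the identifications displayed just above the statement that, via the $\pbw$‑maps,
\[ \Dpolym{p}\cong\big(\cD(\cM)^{\otimes p}\big)[-p] \qquad\text{and}\qquad \Dpolyf{p}\cong\big(\enveloping{\cF}^{\otimes p}\big)[-p] ,\]
and that, by definition~\eqref{eq:QUL}, the internal differential $\dQF$ on $\Dpolyf{p}$ is the one induced by the operator $\iL_{\fedosov}=\gerstenhaber{\fedosov}{\argument}$ encoding the dg structure of the Fedosov dg Lie algebroid $\cF\to\cN$ (Lemma~\ref{lem:Rome}). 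Under the identification above, $\dQF$ is precisely the $p$‑fold suspension of the differential $\iL_{\fedosov}$ appearing in the contraction~\eqref{Sudbury}; the only bookkeeping needed is to track the Koszul signs produced by the shifts $\pshift$ in each tensor slot, exactly as in~\eqref{eq:Shift_2_Identification}.

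First I would record the elementary fact that the $p$‑fold suspension $V\mapsto V[-p]$ is an exact endofunctor of the category of cochain complexes, so that conjugation by the suspension isomorphism carries a contraction to a contraction: the projection identity, the homotopy identity, and the side conditions (the homotopy squares to zero and is annihilated on both sides by the structure maps) are all preserved verbatim. Hence, defining $\tauDpolyF$, $\sigmaDpolyF$, and $\hDpolyF$ to be the conjugates, by the suspension isomorphisms, of the maps $\breve{\tau}_\natural$, $\sigma_\natural$, and $\breve{h}_\natural$ of~\eqref{Sudbury}, one gets the desired contraction~\eqref{eq:OrlyD}. Concretely, $\hDpolyF=\sum_{n\geq 0}(h_\natural\liederivative{\varrho})^n h_\natural$ and $\tauDpolyF=\sum_{n\geq 0}(h_\natural\liederivative{\varrho})^n\tau_\natural$, read through the shift, with $\varrho=-(\fedosov+\delta)$.

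For the polyjet side I would argue in exactly the same way, now using the identifications
\[ \Cpolym{-p}\cong\big((\jm)^{\cotimes p}\big)[p] \qquad\text{and}\qquad \Cpolyf{-p}\cong\big(\jet{\cF}^{\cotimes p}\big)[p] ,\]
the contraction~\eqref{eq:FcJetsTensor} in place of~\eqref{Sudbury}, and the positive $p$‑fold suspension $V\mapsto V[p]$ in place of $V\mapsto V[-p]$; the sign bookkeeping is then governed by~\eqref{eq:Shift_Cpoly_Identification}. Since, by definition~\eqref{eq:QJ}, the differential $\cQF$ on $\Cpolyf{-p}$ is again the suspension of the operator $\iL_{\fedosov}$, conjugating the contraction data of~\eqref{eq:FcJetsTensor} by the suspension isomorphisms produces~\eqref{eq:OrlyC}.

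The argument is otherwise routine, and I do not expect a genuine obstacle: the content is essentially definitional once Proposition~\ref{Rousse} and~\eqref{eq:FcJetsTensor} are in hand. The one point that demands care is the compatibility of the two systems of Koszul signs — those built into the tensor‑power identifications $\pshift D_1\otimes\cdots\otimes\pshift D_p$ and $\nshift\xi_1\otimes\cdots\otimes\nshift\xi_p$, and those implicit in the maps $\tau_\natural$, $\sigma_\natural$, $h_\natural$ — which I would settle by a direct check on homogeneous generators.
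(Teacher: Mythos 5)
Your proposal is correct and follows essentially the same route as the paper: the paper obtains \eqref{eq:OrlyD} as the $p$-fold suspension of the contraction \eqref{Sudbury} and \eqref{eq:OrlyC} by the dual (polyjet) construction, which is exactly your transport along the shift isomorphisms using \eqref{eq:FcJetsTensor}. The sign bookkeeping you flag via \eqref{eq:Shift_2_Identification} and \eqref{eq:Shift_Cpoly_Identification} is the only point of care, and the paper likewise treats it as routine.
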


\begin{proof}
The desired contraction \eqref{eq:OrlyD} arises
as the $p$-fold suspension of the contraction \eqref{Sudbury}.
The construction \eqref{eq:OrlyC} can be constructed in a dual way. 
\end{proof}


\subsubsection{Fedosov contractions for $\sD_{\poly}$ and $\sC_{\poly}$}
\label{sec:Subcontraction_DC}

Recall that, for $p\in\ZZ_{\geq 0}$, $q\in\ZZ$, and $r\in\ZZ_{\geq 0}$,
we defined the subspaces $\prescript{\cF}{}{\sD}^{r,q,p}
\subset\big(\Dpolyf{p}\big)^{p+q+r}$ in Equation~\eqref{eq:DtripleDeg}
and $\prescript{\cF}{}\sC^{r,q,-p}\subset\big(\Cpolyf{-p}\big)^{-p+q+r}$
in Equation~\eqref{eq:CtripleDeg}. Parallelly, set 
\[ \sD^{r,q,p}=\begin{cases} \Big(\big(\pshift\cD(\cM)\big)^{\otimes p}\Big)^{p+q}
& \text{if}\ r=0 \\ 0 & \text{if}\ r>0 \end{cases} \]
and 
\[ \sC^{r,q,-p}=\begin{cases} \Big(\big(\nshift\jm\big)^{\hat\otimes p}\Big)^{-p+q}
& \text{if}\ r=0 \\ 0 & \text{if}\ r>0 \end{cases} \]


Observe that 
\begin{align*}
    & \tQF  :\prescript{\cF}{}{\sD}^{r,q,p} \to \prescript{\cF}{}{\sD}^{r+1,q,p}, && \hTpolyF  :\prescript{\cF}{}{\sD}^{r,q,p} \to \prescript{\cF}{}{\sD}^{r-1,q,p}, \\
    & \tauTpolyF : \sD^{r,q,p} \to \prescript{\cF}{}{\sD}^{r,q,p}, && \sigmaTpolyF  :\prescript{\cF}{}{\sD}^{r,q,p} \to \sD^{r,q,p}.
\end{align*}
Similarly,
\begin{align*}
    & \tQF  :\prescript{\cF}{}{\sC}^{r,q,-p} \to \prescript{\cF}{}{\sC}^{r+1,q,-p}, && \hTpolyF  :\prescript{\cF}{}{\sC}^{r,q,-p} \to \prescript{\cF}{}{\sC}^{r-1,q,-p}, \\
    & \tauTpolyF : \sC^{r,q,-p} \to \prescript{\cF}{}{\sC}^{r,q,-p}, && \sigmaTpolyF :\prescript{\cF}{}{\sC}^{r,q,-p} \to \sC^{r,q,-p}.
\end{align*}

Similarly to Corollary~\ref{cor:contractionTA_HomogeneousParts}, we have the following 

\begin{corollary}
\label{cor:contractionCD_HomogeneousParts}
Under the same hypothesis as in Proposition~\ref{Rousse},
for every $p,q\in\ZZ$ with $p\geq 0$, there is a pair of contractions:
\begin{equation}
\label{eq:Dcontraction0}
\begin{tikzcd}[cramped]
(\sD^{\bullet,q,p},0)
\arrow[r, "\breve{\tau}_\natural", shift left] &
(\prescript{\cF}{}{\sD}^{\bullet,q,p},\tQF)
\arrow[l, "\sigmaTpolyF", shift left]
\arrow[loop, "\hTpolyF" , out=7, in=-7, looseness=3]
\end{tikzcd}
\end{equation}
and
\begin{equation}
\label{eq:Ccontraction0}
\begin{tikzcd}[cramped]
(\sC^{\bullet,q,-p},0)
\arrow[r, "\breve{\tau}_\natural", shift left] &
(\prescript{\cF}{}{\sC}^{\bullet,q,-p},\tQF)
\arrow[l, "\sigmaTpolyF", shift left]
\arrow[loop, "\hTpolyF" , out=7, in=-7, looseness=3]
\end{tikzcd}
\end{equation}
\end{corollary}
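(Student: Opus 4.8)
The plan is to derive the two asserted contractions by restricting the contractions of Corollary~\ref{Maastricht} to their homogeneous components, following the pattern of the proof of Corollary~\ref{cor:contractionTA_HomogeneousParts}. Recall that~\eqref{eq:OrlyD} is the $p$-fold suspension of the contraction~\eqref{Sudbury} of Proposition~\ref{Rousse} and that~\eqref{eq:OrlyC} is the $p$-fold suspension of the dual contraction~\eqref{eq:FcJetsTensor}; in turn, both of these are produced from the Koszul-type contraction~\eqref{Razgrad} --- and its dual on jets --- by homological perturbation along $\liederivative{\varrho}$, where $\varrho=-(\fedosov+\delta)=-(d^{\nabla}+A^\nabla)$.

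First I would check, directly from the local coordinate formulas and the $\pbw^\cF$ identifications of Section~\ref{sec:Subcontraction_DC}, that every piece of data of the \emph{unperturbed} Koszul contraction respects the triple grading $(r,q,p)$ of~\eqref{eq:DtripleDeg}: the injection $\tau_\natural$ and the surjection $\sigma_\natural$ are degree-$0$ maps valued in, resp.\ supported on, form-degree zero, so they match $\sD^{0,q,p}$ with the layer $\prescript{\cF}{}{\sD}^{0,q,p}$ and annihilate $\prescript{\cF}{}{\sD}^{r,q,p}$ for $r>0$; the Koszul differential $\liederivative{-\delta}$ raises $r$ by one while fixing the polycount $p$ --- hence the weight $q$; and the homotopy $h_\natural$, built from $h=-\tfrac{1}{p+q}\eta$, lowers $r$ by one while fixing $p$ and $q$. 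A one-line inspection of~\eqref{Conakry} and of $d^{\nabla}$ then shows that the perturbation $\liederivative{\varrho}=-\liederivative{d^{\nabla}+A^\nabla}$ maps $\prescript{\cF}{}{\sD}^{r,q,p}$ into $\prescript{\cF}{}{\sD}^{r+1,q,p}$; the dual computations take care of the polyjet side $\prescript{\cF}{}{\sC}^{r,q,-p}$ of~\eqref{eq:CtripleDeg}.

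Granting these inclusions, $h_\natural\circ\liederivative{\varrho}$ and $\liederivative{\varrho}\circ h_\natural$ stabilize each $\prescript{\cF}{}{\sD}^{r,q,p}$, and the filtration-lowering estimate of the lemma preceding Proposition~\ref{Rousse} restricts verbatim to $\bigoplus_r\prescript{\cF}{}{\sD}^{r,q,p}$; it then follows that the perturbed data $\breve h_\natural=\sum_{n\geq 0}(h_\natural\liederivative{\varrho})^n h_\natural$ and $\breve\tau_\natural=\sum_{n\geq 0}(h_\natural\liederivative{\varrho})^n\tau_\natural$ converge on this subspace and again respect the triple grading, as does the perturbed differential $\tQF=\liederivative{-\delta}+\liederivative{\varrho}$; and, as in the proof of Proposition~\ref{repository}, the identity $\sigma_\natural\circ\liederivative{\varrho}=0$ shows that neither $\sigma_\natural$ nor the zero differential on $\sD^{\bullet,q,p}$ is altered by the perturbation. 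Restricting~\eqref{eq:OrlyD} to each fixed pair $(q,p)$ therefore yields the contraction~\eqref{eq:Dcontraction0}, and running the identical argument on~\eqref{eq:FcJetsTensor} yields~\eqref{eq:Ccontraction0}.

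The step I expect to require the most care is the bookkeeping of gradings rather than any computation: since the scalar $-\tfrac{1}{p+q}$ in $h$ depends on the internal (form, symmetric) bidegree while a single layer $\prescript{\cF}{}{\sD}^{r,q,p}$ may straddle several such bidegrees, one has to observe that $h_\natural$ restricted there is a convergent combination of rescaled Koszul homotopies, not a single one. On the polyjet side there is the extra wrinkle of the completed tensor products $\cotimes$ appearing in~\eqref{eq:totnCpolyF}, but this causes no difficulty: the symmetric-degree filtration along which the completion is taken is precisely the one controlling convergence of the perturbation series, so passing to a fixed weight $q$ keeps the spaces complete and the series summable.
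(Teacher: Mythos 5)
Your proposal is correct and follows essentially the same route as the paper: the paper proves this corollary by the same argument as Corollary~\ref{cor:contractionTA_HomogeneousParts}, namely observing that $\tau_\natural$, $\sigma_\natural$, $\liederivative{-\delta}$, $h_\natural$, and $\liederivative{\varrho}$ all respect (or shift only the $r$-index of) the triple grading of \eqref{eq:DtripleDeg} and \eqref{eq:CtripleDeg}, so that the perturbed data of Corollary~\ref{Maastricht} restricts to each layer $\prescript{\cF}{}{\sD}^{\bullet,q,p}$ and $\prescript{\cF}{}{\sC}^{\bullet,q,-p}$. Your additional remarks on the varying rescaling in $h$ and on completeness of the fixed-weight layers are sound and only make explicit what the paper leaves implicit.
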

Here, by abuse of notation, we omit the superscripts for the contraction maps
$\breve{\tau}_\natural$, $\sigma_\natural$, and $\breve{h}_\natural$.

It is simple to see that 
\begin{align*}
\totDpolyM{n}=\bigoplus_{\substack{p,q, r \in\ZZ \\ p+q+r=n \\ p\geq 0,\ r\geq 0}}\sD^{r,q,p},
& \quad \totDpolyF{n}=\bigoplus_{\substack{p,q,r\in\ZZ \\ p+q+r=n \\
p\geq 0,\ r\geq 0}} \prescript{\cF}{}{\sD}^{r,q,p}, \\
\totCpolyM{n}=\prod_{\substack{p,q,r\in\ZZ \\ p+q=n \\ p\geq 0,\ r\geq 0}}\sC^{r,q,-p}, 
&\quad \totcCpolyF{n} =\bigoplus_{r\in\ZZ, \, r\geq 0}
\prod_{\substack{p,q\in\ZZ, \, p\geq 0 \\ -p+q=n-r }} \prescript{\cF}{}{\sC}^{r,q,-p}. 
\end{align*}

Analogous to Corollary~\ref{pro:contractionTO}, we have 

\begin{corollary}
\label{cor:repositoryD}
Under the same hypothesis as in Proposition~\ref{Rousse},
there are contractions
\begin{enumerate}
\item at the level of polydifferential operators:
\begin{equation}
\label{eq:Dcontraction1}
\begin{tikzcd}[cramped]
\Big(\totDpolyM{\bullet},0\Big)
\arrow[r, "\tauDpolyF", shift left] &
\Big(\totDpolyF{\bullet},\dQF\Big)
\arrow[l, "\sigmaDpolyF", shift left]
\arrow[loop, "\hDpolyF",out=7,in=-7,looseness = 3]
\end{tikzcd}
\end{equation}
\item and at the level of polyjets:
\begin{equation}
\label{eq:Ccontraction1}
\begin{tikzcd}[cramped]
\Big(\totCpolyM{\bullet},0\Big)
\arrow[r, "\tauDpolyF", shift left] &
\Big(\totcCpolyF{\bullet},\cQF\Big)
\arrow[l, "\sigmaDpolyF", shift left]
\arrow[loop, "\hDpolyF",out=7,in=-7,looseness = 3]
\end{tikzcd}
\end{equation}
\end{enumerate}
\end{corollary}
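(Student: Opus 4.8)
The plan is to derive the two contractions \eqref{eq:Dcontraction1} and \eqref{eq:Ccontraction1} by assembling, over the pair of indices $(q,p)$, the block-wise contractions \eqref{eq:Dcontraction0} and \eqref{eq:Ccontraction0} provided by Corollary~\ref{cor:contractionCD_HomogeneousParts}, exactly as Corollary~\ref{pro:contractionTO} is assembled from Corollary~\ref{cor:contractionTA_HomogeneousParts}. First I would record --- reading off the list of compatibilities displayed just before Corollary~\ref{cor:contractionCD_HomogeneousParts} --- that each of the operators $\dQF$, $\cQF$, $\hDpolyF$, $\tauDpolyF$, $\sigmaDpolyF$ preserves the pair $(q,p)$ and restricts on the $r$-strand $\prescript{\cF}{}{\sD}^{\bullet,q,p}$ (respectively $\prescript{\cF}{}{\sC}^{\bullet,q,-p}$) to the corresponding operator of Corollary~\ref{cor:contractionCD_HomogeneousParts}: the differential raises $r$ by one, $\hDpolyF$ lowers $r$ by one, and $\tauDpolyF$, $\sigmaDpolyF$ identify the $r=0$ slot with $\sD^{0,q,p}$ (respectively $\sC^{0,q,-p}$). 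Combined with the decompositions $\totDpolyM{n}=\bigoplus_{p+q+r=n}\sD^{r,q,p}$, $\totDpolyF{n}=\bigoplus_{p+q+r=n}\prescript{\cF}{}{\sD}^{r,q,p}$ and their polyjet analogues recalled just before the statement, this exhibits every complex occurring in \eqref{eq:Dcontraction1}--\eqref{eq:Ccontraction1} as a direct sum or direct product, over $(q,p)$, of the complexes of Corollary~\ref{cor:contractionCD_HomogeneousParts}.

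For polydifferential operators this makes the claim essentially automatic: $\totDpolyM{\bullet}$ and $\totDpolyF{\bullet}$ are the direct sums over $(q,p)$ of the complexes $(\sD^{\bullet,q,p},0)$ and $(\prescript{\cF}{}{\sD}^{\bullet,q,p},\dQF)$, and a direct sum of contractions is a contraction; so \eqref{eq:Dcontraction1} is just the direct sum over $(q,p)$ of the contractions \eqref{eq:Dcontraction0}. For polyjets the one thing to be careful about is the interplay, in the definitions \eqref{eq:totCm} and \eqref{eq:totnCpolyF}, between the direct product over the jet-order index $p$ and the direct sum over the form-degree index $r$. I would check that the block-wise maps assemble to honest maps of total spaces: $\tauDpolyF$ sends a family $(c_p)_{p}\in\totCpolyM{n}$ to $(\breve{\tau}_\natural c_p)_{p}$, each term of which lies in the $r=0$ slot, so that the image lands in the $r=0$ summand of $\totcCpolyF{n}$; $\sigmaDpolyF$ annihilates every $r>0$ component and therefore outputs a family indexed by $p$ alone, i.e.\ an element of the direct product $\totCpolyM{n}$; and $\hDpolyF$ lowers $r$ by one, hence only shifts the finitely many nonzero $r$-components of a given element of $\totcCpolyF{\bullet}$ and is compatible with the direct product over $p$. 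The contraction identities $\sigmaDpolyF\tauDpolyF=\id$ and $\dQF\hDpolyF+\hDpolyF\dQF=\id-\tauDpolyF\sigmaDpolyF$ (with $\cQF$ in place of $\dQF$ on polyjets), together with the side conditions $\hDpolyF\tauDpolyF=0$, $\sigmaDpolyF\hDpolyF=0$ and $\hDpolyF^2=0$, then hold because they hold in each $(q,p)$-block by Corollary~\ref{cor:contractionCD_HomogeneousParts}.

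The one genuinely delicate point is the convergence of the series $\breve{\tau}_\natural=\sum_{n\geq 0}(h_\natural\liederivative{\varrho})^n\tau_\natural$ and $\breve{h}_\natural=\sum_{n\geq 0}(h_\natural\liederivative{\varrho})^n h_\natural$ inside each fixed block $\prescript{\cF}{}{\sD}^{\bullet,q,p}$ or $\prescript{\cF}{}{\sC}^{\bullet,q,-p}$, but this has already been established in Proposition~\ref{Rousse} and its polyjet counterpart: $h_\natural\liederivative{\varrho}$ preserves the triple degree $(r,q,p)$ while strictly lowering the exhaustive and complete $\hat{S}(T^\vee_\cM)$-adic filtration, and each $\prescript{\cF}{}{\sD}^{r,q,p}$ and $\prescript{\cF}{}{\sC}^{r,q,-p}$ is complete for that filtration. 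So I do not expect any new analytic input to be needed; the only part that a careful reader might find fiddly is precisely the direct-sum-versus-direct-product matching for polyjets isolated above, and once that is in place the verification reduces to an appeal to Corollary~\ref{cor:contractionCD_HomogeneousParts} (and to Lemma~\ref{Riga} for the perturbation step underlying it).
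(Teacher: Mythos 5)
Your proposal is correct and follows essentially the same route as the paper: the paper obtains Corollary~\ref{cor:repositoryD} by the same block-wise argument used for Corollary~\ref{pro:contractionTO}, namely observing that all five operators respect the triple grading $(r,q,p)$ and then assembling the contractions of Corollary~\ref{cor:contractionCD_HomogeneousParts} over the decompositions of the total spaces. Your extra care about the direct-sum-over-$r$ versus direct-product-over-$p$ structure of $\totcCpolyF{\bullet}$ is a point the paper leaves implicit, but it does not change the argument.
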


\section{Algebraic properties of the injection maps}

In this section, we show that the injective maps
$\tauDpolyF$ in the contractions \eqref{eq:Tcontraction},
\eqref{eq:Ocontraction}, \eqref{eq:Dcontraction1} and \eqref{eq:Ccontraction1}
respect the calculus structures. 
This fact is crucial to establish the Fedosov contractions
for dg manifolds. See Section~\ref{sect:5}. 



\subsection{Properties of the maps $\breve{\tau}_\sharp$ for $\sT_{\poly}$ and $\sA_{\poly}$}

The next proposition gives an alternative 
characterization of the map
$\etendu{\perturbed{\tau}}$ in \eqref{eq:Tcontraction}
and \eqref{eq:Ocontraction}
as the solution of an initial value problem.

\begin{proposition}\label{Mandalay}
Let $\etendu{\perturbed{\tau}}$ be the injection map
\eqref{eq:Tcontraction} as in Corollary~\ref{pro:contractionTO}.
Given $x\in\Tpolym{p}$ and $y\in\Tpolyf{p}$, we have
\[ \etendu{\perturbed{\tau}}(x)=y
\qquad\text{if and only if}\qquad
\begin{cases}
\etendu{h}(y) = 0 \\
\liederivative{\fedosov}(y) = 0 \\
\etendu{\sigma}(y) = x
\end{cases} \]
The same statement holds for 
$x\in\Apolym{-p}$
and $y\in\Apolyf{-p}$.
\end{proposition}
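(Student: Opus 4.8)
The plan is to deduce Proposition~\ref{Mandalay} formally from the defining relations of the perturbed contraction \eqref{eq:Tcontraction} furnished by Corollary~\ref{pro:contractionTO}; no Fedosov-type recursion needs to be solved by hand. Writing $i=\etendu{\perturbed{\tau}}$, $p=\etendu{\sigma}$ and $h=\etendu{h}$ for the injection, surjection and homotopy of that contraction of $\big(\Tpolyf{p},\liederivative{\fedosov}\big)$ onto $\big(\Tpolym{p},0\big)$, one has the contraction axioms $p\circ i=\id$, $\liederivative{\fedosov}\circ i=0$ (the injection is a cochain map into a complex with zero differential), $p\circ\liederivative{\fedosov}=0$, the side conditions $h\circ i=0$, $p\circ h=0$, $h\circ h=0$, together with the homotopy relation
\[ \id_{\Tpolyf{p}}=i\circ p+\liederivative{\fedosov}\circ h+h\circ\liederivative{\fedosov} \]
(signs being immaterial for what follows).

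For the ``only if'' direction I would take $y=i(x)$ and read the three conditions off directly: $p(y)=p\circ i(x)=x$, $\liederivative{\fedosov}(y)=\liederivative{\fedosov}\circ i(x)=0$, and $h(y)=h\circ i(x)=0$. (If one prefers, the last equality also follows from the explicit series $\etendu{\perturbed{\tau}}=\sum_{n\geq 0}(h_\natural\liederivative{\varrho})^n\tau_\natural$ of Proposition~\ref{repository} together with the Koszul identities $h_\natural\circ h_\natural=0$, $h_\natural\circ\tau_\natural=0$ of \eqref{eq:Churchill}; incidentally this shows that the plain Koszul homotopy $h_\natural$ already annihilates $i(x)$.) For the ``if'' direction, let $y\in\Tpolyf{p}$ satisfy $h(y)=0$, $\liederivative{\fedosov}(y)=0$, $p(y)=x$, and evaluate the homotopy relation at $y$:
\[ y=i\big(p(y)\big)+\liederivative{\fedosov}\big(h(y)\big)+h\big(\liederivative{\fedosov}(y)\big)=i(x)=\etendu{\perturbed{\tau}}(x). \]
The same computation shows that $y$ is \emph{uniquely} determined by the three conditions, which is the sense in which the proposition presents $\etendu{\perturbed{\tau}}(x)$ as the solution of an initial value problem.

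The statement for differential forms is obtained verbatim, replacing $\big(\Tpolyf{p},\liederivative{\fedosov}\big)$, $\big(\Tpolym{p},0\big)$ and \eqref{eq:Tcontraction} by $\big(\Apolyf{-p},\aQF\big)$, $\big(\Apolym{-p},0\big)$ and \eqref{eq:Ocontraction}; the argument uses nothing beyond the contraction axioms. Accordingly I do not anticipate a genuine obstacle: the only point requiring a sentence of care is that $\etendu{h}$ is the \emph{perturbed} homotopy $\breve{h}_\natural=\sum_{n\geq 0}(h_\natural\liederivative{\varrho})^n h_\natural$ rather than the Koszul homotopy $h_\natural$, but these have the same kernel --- $\breve{h}_\natural(y)=0$ follows from $h_\natural(y)=0$ immediately from the series, and the converse follows from the invertibility of $\id-h_\natural\liederivative{\varrho}$ on the complete filtered module guaranteed by Lemma~\ref{Varseille} --- so the two normalizations are interchangeable and the proof goes through unchanged.
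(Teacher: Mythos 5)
Your proof is correct and follows essentially the same route as the paper: the forward direction from the contraction axioms, the converse by evaluating the homotopy identity $\id-\etendu{\perturbed{\tau}}\etendu{\sigma}=\etendu{\perturbed{h}}\liederivative{\fedosov}+\liederivative{\fedosov}\etendu{\perturbed{h}}$ at $y$, with the equality of the kernels of $h_\natural$ and $\breve{h}_\natural$ (the paper's Lemma~\ref{rmk:Paris}) bridging the condition in the statement with the homotopy appearing in the contraction. One small correction: in the statement $\etendu{h}$ denotes the unperturbed Koszul homotopy $h_\natural$ and $\etendu{\perturbed{h}}$ the perturbed one $\breve{h}_\natural$ --- the reverse of what your last paragraph asserts --- but since you establish that the two kernels coincide in both directions, this mislabeling does not affect the argument.
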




The following lemma can be easily verified.

\begin{lemma}\label{rmk:Paris}
For every $p\geq 0$, there is a pair of natural identifications
\begin{gather}
\label{eq:Paris}
\Tpolyf{p} \xto{\cong}\OO \big(\cM,\hat{S}(T^\vee_\cM)
\otimes\Lambda^p T_\cM\big)[-p],
\\
\label{eq:Rome}
\Apolyf{-p} \xto{\cong}\OO \big(\cM,\hat{S}(T^\vee_\cM)
\otimes\Lambda^p T^\vee_\cM\big)[p]
.\end{gather}

Given $y\in\Tpolyf{p}$, 
the following assertions are equivalent:
\begin{itemize}
\item $\etendu{h}(y)=0$;
\item $y\in\OO^0\big(\cM,\hat{S}(T^\vee_\cM)
\otimes\Lambda^p T_\cM\big)[-p]$;
\item $\etendu{\perturbed{h}}(y)=0$.
\end{itemize}

Similarly, given $y\in\Apolyf{-p}$, 
the following assertions are equivalent:
\begin{itemize}
\item $\etendu{h}(y)=0$;
\item $y\in\OO^0\big(\cM,\hat{S}(T^\vee_\cM)\otimes\Lambda^p T^\vee_\cM\big)[p]$;
\item $\etendu{\perturbed{h}}(y)=0$.
\end{itemize}
Note that
\[ \etendu{\perturbed{h}}
=\sum_{k=0}^\infty(\etendu{h}\liederivative{\varrho})^k\etendu{h}
=(\id-\etendu{h}\liederivative{\varrho})^{-1}\etendu{h} \]
and
\[ \etendu{h}=(\id+\etendu{\perturbed{h}}\liederivative{\varrho})^{-1}\etendu{\perturbed{h}} \]
--- see Appendix~\ref{Vilnius}.
\end{lemma}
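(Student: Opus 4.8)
The plan is to verify the chain of equivalences in Lemma~\ref{rmk:Paris} by unwinding the definitions of the relevant operators; this is essentially bookkeeping with gradings, and the key realization is that $\etendu{h}$ acts on the $\hat S(T^\vee_\cM)$-and-form part of $\Tpolyf{p}$ (resp.\ $\Apolyf{-p}$) exactly as the contraction homotopy $h$ of Section~\ref{sect:2.2} acts on $C^\infty(\cN)$, tensored with the identity on $\Lambda^p T_\cM$ (resp.\ $\Lambda^p T^\vee_\cM$). First I would record the natural identifications \eqref{eq:Paris} and \eqref{eq:Rome}: since $\Tpolyf{p}=\GG\big(\cN;S^p(\cF[-1])\big)\cong\GG(\Lambda^p\cF)[-p]$ and $\cF$ is the pullback of $T_\cM$ along $\cN\onto\cM$, we get $\GG(\Lambda^p\cF)\cong C^\infty(\cN)\otimes_{C^\infty(\cM)}\GG(\Lambda^p T_\cM)\cong\OO\big(\cM,\hat S(T^\vee_\cM)\otimes\Lambda^p T_\cM\big)$, and the $[-p]$-shift is carried along; the differential-forms case is identical with $T_\cM$ replaced by $T^\vee_\cM$. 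Under this identification the operator $\etendu{h}=\hTensorF{p}{0}$ (or rather its $p$-fold suspension, by Corollary~\ref{cor:contractionTpolyApoly}) is $h\otimes\id$ by formula~\eqref{eq:NAP1}.

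Next I would prove the equivalence $\etendu{h}(y)=0\iff y\in\OO^0\big(\cM,\hat S(T^\vee_\cM)\otimes\Lambda^p T_\cM\big)[-p]$. One direction is immediate: on $\OO^0$, i.e.\ on elements with no $\xi$-variables, $h$ vanishes by definition (recall $h=-\tfrac1{p'+q'}\eta$ with $\eta=\sum_k\tfrac{\partial}{\partial\xi_k}\otimes y_k$, so $h$ kills anything of form-degree $0$ unless the fiber-degree $q'$ forces a contribution — but $\eta$ lowers form-degree, so on form-degree~$0$ one has $\eta=0$ outright). For the converse, I would use that $h$ is (up to the scalar $-\tfrac1{p'+q'}$) the Koszul-type homotopy whose kernel, within each total-degree piece, is concentrated in form-degree $0$: more precisely, from the contraction~\eqref{Amos} one has $h\delta+\delta h=\id-\tau\sigma$ on $C^\infty(\cN)$, and an element killed by $h$ that also lies in a positive-form-degree component cannot simultaneously be in the image of $\tau$; a direct computation with $\eta$ and the explicit local description in Remark~\ref{rmk:CoordFedosovMfd} shows $\ker h=\OO^0$ in the relevant range. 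The equivalence with $\etendu{\perturbed h}(y)=0$ then follows purely formally from the two displayed identities $\etendu{\perturbed h}=(\id-\etendu h\liederivative\varrho)^{-1}\etendu h$ and $\etendu h=(\id+\etendu{\perturbed h}\liederivative\varrho)^{-1}\etendu{\perturbed h}$ (see Appendix~\ref{Vilnius}): an invertible operator applied to $\etendu h(y)$ vanishes iff $\etendu h(y)$ does, and symmetrically. Finally I would note the two displayed formulas for $\etendu{\perturbed h}$ and $\etendu h$ themselves are exactly the standard homological-perturbation output recorded in Lemma~\ref{Riga}/Appendix~\ref{Vilnius}, together with the fact that $\etendu h\liederivative\varrho$ is topologically nilpotent (Lemma~\ref{Varseille}), so the geometric series converge and the inverses make sense.

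The main obstacle, such as it is, is the converse direction $\etendu h(y)=0\Rightarrow y\in\OO^0$: one must be a little careful because $h$ carries the coefficient $-\tfrac1{p'+q'}$ which vanishes only when $p'=q'=0$, so $\ker h$ could a priori contain elements of form-degree $p'\geq 1$ sitting in fiber-degree $q'$ with $p'+q'=0$ — impossible since $p',q'\geq 0$ — or elements where $\eta$ itself happens to vanish. The honest statement is that $\eta(y)=0$ with $y$ of positive form-degree forces $y=0$ in the relevant graded pieces (the fiberwise Koszul complex of $\eta$ on $\hat S(T^\vee_\cM)\otimes\Lambda^\bullet T^\vee_\cM$, say, is acyclic in positive form-degree), and this is where the grading analysis of Section~\ref{set:1.1} and Section~\ref{sec:Subcontraction_TA} is invoked. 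Once that fiberwise acyclicity is in hand, the lemma is immediate, and I would present it as a short computation rather than belabor the bookkeeping.
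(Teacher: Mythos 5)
The identifications \eqref{eq:Paris}--\eqref{eq:Rome}, the observation that $h_\natural$ acts as $h\otimes\id$ on the coefficient functions, the easy implication ($y\in\OO^0 \Rightarrow h_\natural(y)=0$, since $\eta$ involves $\partial/\partial\xi_k$ and therefore vanishes on form-degree zero), and the purely formal equivalence of the first and third bullets via the invertibility of $\id-h_\natural\liederivative{\varrho}$ are all fine. (The paper gives no proof of this lemma, so there is nothing to compare on those points.)

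The gap is in the converse direction $h_\natural(y)=0\Rightarrow y\in\OO^0$. You justify it by asserting that the fiberwise Koszul complex of $\eta$ is acyclic in positive form-degree and that therefore ``$\eta(y)=0$ with $y$ of positive form-degree forces $y=0$.'' This misreads acyclicity: exactness at $\OO^{p'}\big(\cM,S^{q'}(T^\vee_\cM)\big)$ with $p'\geq 1$ says $\ker\eta=\im\eta$ there, not $\ker\eta=0$, and $\im\eta$ is nonzero whenever $p'\geq 1$, $q'\geq 1$ and the rank is at least two. Concretely, $g=\xi_1 y_2-\xi_2 y_1=\pm\,\eta(\xi_1\xi_2)$ satisfies $\eta(g)=0$ (as $\eta^2=0$), hence $h(g)=0$, yet $g\in\OO^1\big(\cM,S^1(T^\vee_\cM)\big)$. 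So already for $p=0$ the element $g\in\Tpolyf{0}=C^\infty(\cN)$ is annihilated by $h_\natural$ (and hence by $\breve{h}_\natural$) without lying in $\OO^0$: the implication you are trying to prove fails for arbitrary $y$, so no repair of this particular step is possible. More structurally, $\ker h_\natural$ contains $\im\big(h_\natural\liederivative{-\delta}\big)$, which preserves the form-degree and therefore meets every $\OO^{p'}$.

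What survives, and what the downstream arguments (Propositions~\ref{Mandalay} and~\ref{pro:Paris}) actually require, is the chain $y\in\OO^0\Rightarrow h_\natural(y)=0\Leftrightarrow\breve{h}_\natural(y)=0$ together with the fact that the specific elements to which the lemma is applied --- images of $\breve{\tau}_\natural$ and wedge products or contractions thereof --- do lie in $\OO^0$. That last fact should be proved directly from the series $\breve{\tau}_\natural=\sum_{n\geq 0}(h_\natural\liederivative{\varrho})^n\tau_\natural$: $\tau_\natural$ lands in form-degree $0$, $\liederivative{\varrho}$ raises the form-degree by one and $h_\natural$ lowers it by one, so every term of the series has form-degree $0$, and $\OO^0$ is closed under the relevant products. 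If you want a genuine three-way equivalence you must restrict the class of $y$ accordingly; it cannot be extracted from Koszul acyclicity.
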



\begin{proof}[Proof of Proposition~\ref{Mandalay}]
Assume $\etendu{\perturbed{\tau}}(x)=y$.
It is immediate that 
\[ \etendu{\sigma}(y)=\etendu{\sigma}\etendu{\perturbed{\tau}}(x)=x .\]
Since $\breve{h}_\natural\circ\breve{\tau}_\natural=0$, we have 
\[ \etendu{h}(y)=\etendu{h}\etendu{\perturbed{\tau}}(x)=0 \]
by Lemma~\ref{rmk:Paris}.
Furthermore, since $\etendu{\perturbed{\tau}}$ is a cochain map,
we have $\liederivative{\fedosov}\circ\etendu{\perturbed{\tau}}=
\etendu{\perturbed{\tau}}\circ 0=0$
and thus
\[ \liederivative{\fedosov}(y)=
\liederivative{\fedosov}\big(\etendu{\perturbed{\tau}}(x)\big)=0 .\]

Conversely, assuming
$\etendu{h}(y)=0$;
$\liederivative{\fedosov}(y)=0$;
and $\etendu{\sigma}(y)=x$,
it follows from Lemma~\ref{rmk:Paris} that
$\breve{h}_\natural(y)=0$.
Thus it follows from 
\[ \id-\etendu{\perturbed{\tau}}\etendu{\sigma}
=\etendu{\perturbed{h}}\liederivative{\fedosov}
+\liederivative{\fedosov}\etendu{\perturbed{h}}, \]
that
\[ y-\etendu{\perturbed{\tau}}(x)=
y-\etendu{\perturbed{\tau}}\etendu{\sigma}(y)=
\etendu{\perturbed{h}}\liederivative{\fedosov}(y)
+\liederivative{\fedosov}\etendu{\perturbed{h}}(y)=0 .\]
Hence we conclude that
$\etendu{\perturbed{\tau}}(x)=y$.
\end{proof}

\begin{remark}
Although we will not need this,
we would like to point out that the map $\tauTensorF{k}{l}$
in~\eqref{eq:Tkl} in Proposition~\ref{repository}
admits a similar characterization in terms of an initial value problem
as well.
\end{remark}

The pair of maps $\tauTpolyF$ in Corollary~\ref{pro:contractionTO} 
enjoys good algebraic properties --- see Proposition~\ref{pro:Paris} below.
A few lemmas are needed in order to establish them.

\begin{lemma}
\label{lem:sigmainitialT}
Consider the pair of surjective maps $\sigmaTpolyF$ 
in the contractions of Corollary~\ref{pro:contractionTO}.
\begin{enumerate}
\item The maps $\etendu{\sigma}$ in \eqref{eq:Tcontraction}
and \eqref{eq:Ocontraction} respect the wedge products
of polyvector fields and of differential forms.
\item The maps $\etendu{\sigma}$ and $\tauApolyF$ in \eqref{eq:Ocontraction}
on differential forms satisfy
\begin{equation}\label{eq:wuxi4}
\etendu{\sigma}\circ d_\cF \circ \tauApolyF =d_{\DR}
.\end{equation}
\item The pair of maps $\etendu{\sigma}$ in \eqref{eq:Tcontraction} and
\eqref{eq:Ocontraction} satisfies the following relations:
\begin{equation}\label{eq:wuxi5}
\sigmaTpolyF(\iI_{\tilde{X}}\tilde{\omega})=
\iI_{\sigmaTpolyF(\tilde{X})}\sigmaTpolyF(\tilde{\omega})
\end{equation}
for all $\tilde{X}\in\Tpolyf{d}$ and $\tilde{\omega}\in\Apolyf{-p}$.
\end{enumerate}
\end{lemma}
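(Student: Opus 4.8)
The plan is to prove each of the three assertions by reducing to the unperturbed Koszul contraction \eqref{eq:Churchill}, using the explicit local descriptions of $\etendu{\sigma}$ from \eqref{eq:NAP2} together with the fact that, in the perturbed contraction of Corollary~\ref{pro:contractionTO}, the surjection is unchanged ($\etendu{\sigma}$ remains $\sigma_\natural$, as established in the proof of Proposition~\ref{repository}). So throughout I may work with the genuinely simple map $\sigma_\natural$, which in local coordinates kills all differential-form components and replaces each $\tfrac{\partial}{\partial y_i}$ by $\tfrac{\partial}{\partial x_i}$ (and $dy_j$ by $dx_j$), applying $\sigma:C^\infty(\cN)\to C^\infty(\cM)$ to the coefficient.

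For (1), the claim is that $\etendu{\sigma}(\tilde X\wedge\tilde Y)=\etendu{\sigma}(\tilde X)\wedge\etendu{\sigma}(\tilde Y)$ for polyvector fields, and similarly for forms. First I would observe that both $\Tpolyf{\bullet}$ and its image under $\sigma_\natural$ are generated, over the respective function algebras, by the coordinate vector fields $\tfrac{\partial}{\partial y_i}$ and $\tfrac{\partial}{\partial x_i}$; since $\sigma_\natural$ is $C^\infty(\cN)$-to-$C^\infty(\cM)$ semilinear along $\sigma$, and $\sigma:C^\infty(\cN)\to C^\infty(\cM)$ is an algebra morphism (it is the projection onto $\OO^0(\cM,S^0(T_\cM^\vee))$, which respects the wedge product of $\cN$ restricted to form-degree zero and symmetric-degree zero — note $\sigma$ annihilates anything with a $\xi$ or a $y$, and the product of two such annihilated-or-constant pieces is again of that form), the multiplicativity follows by writing arbitrary elements as sums of monomials $g\cdot\tfrac{\partial}{\partial y_{i_1}}\wedge\cdots\wedge\tfrac{\partial}{\partial y_{i_k}}$ and checking on monomials. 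The only subtlety is keeping track of Koszul signs in the $p$-fold suspension identifications \eqref{eq:Paris}, which I would handle by fixing the sign conventions once and noting they are compatible on both sides. The same argument, dualized via \eqref{eq:Rome}, gives the statement for differential forms.

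For (2), I want $\etendu{\sigma}\circ d_\cF\circ\tauApolyF=d_{\DR}$. Here I cannot replace $d_\cF$ by anything simpler, but I can use that $\tauApolyF=\etendu{\perturbed{\tau}}$ lands, by Lemma~\ref{rmk:Paris}, in $\OO^0\big(\cM,\hat S(T_\cM^\vee)\otimes\Lambda^p T_\cM^\vee\big)[p]$, i.e.\ in the $\fedosov$-flat, $h$-closed part. The Chevalley--Eilenberg differential $d_\cF$ of the Fedosov Lie algebroid acts by the formula \eqref{eq:CE2} with anchor the inclusion $\cF\hookrightarrow T_\cN$; applying $\sigma_\natural$ afterward projects onto the $x$-variables and form-degree zero. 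I expect that $d_\cF$ applied to a $\fedosov$-flat element, followed by $\sigma_\natural$, picks out exactly the ``horizontal'' part of the exterior derivative, which after identifying the flat sections with $\Omega^\bullet(\cM)$ (this identification is precisely $\sigma_\natural$ restricted to flat elements, inverse to $\etendu{\perturbed\tau}$) becomes $d_{\DR}$. Concretely I would verify this on a set of algebra generators: functions $f\in C^\infty(\cM)$ (where $\tauApolyF(f)=\breve\tau(f)$ and one uses the function-level contraction \eqref{Fortune}) and $1$-forms, using that $\breve\tau$ intertwines $0$ and $\fedosov$ and that $d_\cF$ is a derivation. The computation for generators should be short once the flatness is exploited.

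For (3), the relation $\sigmaTpolyF(\iI_{\tilde X}\tilde\omega)=\iI_{\sigmaTpolyF(\tilde X)}\sigmaTpolyF(\tilde\omega)$ is again a monomial check: contraction $\iI$ on $\cN$ is characterized by being $C^\infty(\cN)$-bilinear and pairing $\tfrac{\partial}{\partial y_i}$ with $dy_j$ as $\delta_{ij}$ (up to sign); $\sigma_\natural$ sends this pairing datum on $\cN$ to the corresponding datum on $\cM$ and is multiplicative along $\sigma$ by part (1)'s reasoning, so the identity reduces to $\sigma(gh)=\sigma(g)\sigma(h)$ plus bookkeeping. The main obstacle in the whole lemma is really part (2): unlike (1) and (3), it is not a pure multiplicativity statement but requires understanding how $d_\cF$ interacts with $\fedosov$-flatness and with the identification of flat sections with forms on $\cM$ — I would expect to spend most of the effort making that interaction precise, perhaps by first recording a sublemma that $\sigma_\natural$ restricted to $\fedosov$-flat elements is an isomorphism of algebras onto $\Omega^\bullet(\cM)$ intertwining $d_\cF$ with $d_{\DR}$, from which \eqref{eq:wuxi4} is immediate since $\tauApolyF$ is its inverse.
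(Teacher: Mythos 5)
Your treatment of parts (1) and (3) matches the paper's: both are direct verifications from the coordinate formula \eqref{eq:NAP2} for $\sigma_\natural$, using that the perturbed surjection coincides with $\sigma_\natural$. For part (2), however, there is a genuine gap. The proposed sublemma --- that $\sigma_\natural$ restricted to the $\fedosov$-flat, $h$-closed elements is an algebra isomorphism onto $\Omega^\bullet(\cM)$ intertwining $d_\cF$ with $d_{\DR}$ --- is not a reduction of \eqref{eq:wuxi4} but a restatement of it: since $\tauApolyF$ is a bijection onto that subspace with inverse $\etendu{\sigma}$ (Proposition~\ref{Mandalay} together with Lemma~\ref{rmk:Paris}), the intertwining clause of your sublemma \emph{is} Equation~\eqref{eq:wuxi4}. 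Declaring the latter ``immediate'' from the former therefore proves nothing; all the content sits in the deferred ``short computation on generators,'' which you neither perform nor correctly locate.

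That computation is exactly where the torsion-freeness of $\nabla$ --- never invoked in your proposal --- must enter. The derivation-plus-generators strategy is legitimate: granting part (1) and the multiplicativity of $\tauApolyF$ (Proposition~\ref{pro:Paris}(1), which depends only on part (1) of the present lemma, so the reordering is harmless), both sides of \eqref{eq:wuxi4} are derivations of the same type, so it suffices to check functions and the local generators $dx_i$; this would indeed spare you the general-$p$ bookkeeping. On functions the check is easy. But $\tauApolyF(dx_i)=dy_i+\sum_{k,l}\pm\, y_k\,\Gamma^i_{k,l}\,dy_l+\cdots$, so $\etendu{\sigma}\circ d_\cF\circ\tauApolyF(dx_i)$ produces the term $\sum_{k,l}\pm\,\Gamma^i_{k,l}\,dx_k\wedge dx_l$, which vanishes only because the Christoffel symbols are graded-symmetric, i.e.\ because $\nabla$ is torsion-free --- this is precisely the second half of the paper's proof. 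Moreover, to see that no higher terms of the series $\tauApolyF=\sum_{n}(h_\natural\liederivative{\varrho})^n\tau_\natural$ contribute after applying $\etendu{\sigma}\circ d_\cF$, you still need the filtration estimates the paper records ($h_\natural\liederivative{\varrho}$ lowers the filtration by one, and its $A^\nabla$-part by two); ``flatness'' alone does not deliver this. So your route can be completed and offers a modest simplification, but as written it omits the one step that carries the mathematical content of the assertion.
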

\begin{proof}
The assertions (1) and (3) can be verified directly
using the definition \eqref{eq:NAP2} of $\sigmaTpolyF$.
For the assertion (2), recall from Proposition~\ref{repository}
that the map $\tauApolyF$ is given by the formula 
\[ \tauApolyF = \sum_{n=0}^\infty 
(h_\natural\liederivative{\varrho})^n\tau_\natural .\]
As in \eqref{eq:FiltationFedosovTensor}, let
\[ \fF_{-u} = \OO\big(\cM,\hat{S}^{\geq u}(T^\vee_\cM)\otimes
S^p(T_\cM\dual[1])\big)\subset\Apolyf{-p} .\]
Note that
\[ \Apolyf{-p}=\fF_0\supset\ker(\etendu{\sigma})\supset\fF_{-1}\supset\fF_{-2}\supset\cdots \]
We obviously have $d_\cF(\fF_{-u})\subset\fF_{-u+1}$ and, by Lemma~\ref{Varseille},
$(h_\natural\liederivative{\varrho})(\fF_{-u})\subset\fF_{-u-1}$. 
Furthermore, the vector field $A^\nabla$ of Remark~\ref{warzee}
satisfies $\varrho=d^\nabla+A^\nabla$ and
$(h_\natural\liederivative{A^\nabla})(\fF_{-u}) \subset \fF_{-u-2}$.
Therefore, we have
\[ \im\big(d_\cF\circ(\etendu{h}L_\varrho)^2\big)\subset\ker(\etendu{\sigma})
\qquad\text{and}\qquad
\im\big(d_\cF\circ(\etendu{h}L_{A^\nabla})\big)\subset\ker(\etendu{\sigma}) :\]
\[ \begin{tikzcd}
{\Apolyf{-p}} & {\fF_0} & {\ker(\etendu{\sigma})} & {\fF_{-1}} & {\fF_{-2}} & {\cdots}
\arrow["{=}"{description}, draw=none, from=1-1, to=1-2]
\arrow["\supset"{description}, draw=none, from=1-2, to=1-3]
\arrow["{\etendu{h}L_\varrho}", bend left = 20, from=1-2, to=1-4]
\arrow["{\etendu{h}L_{A^\nabla}}", bend left=50, dashed, from=1-2, to=1-5]
\arrow["\supset"{description}, draw=none, from=1-3, to=1-4]
\arrow["{d_\cF}", bend left=20, from=1-4, to=1-2]
\arrow["\supset"{description}, draw=none, from=1-4, to=1-5]
\arrow["{\etendu{h}L_\varrho}", bend left=20, from=1-4, to=1-5]
\arrow["{d_\cF}", bend left=20, from=1-5, to=1-4]
\arrow["\supset"{description}, draw=none, from=1-5, to=1-6]
\end{tikzcd} \]
Hence, we obtain
\[ \etendu{\sigma}\circ d_\cF \circ \tauApolyF
=\etendu{\sigma}\circ d_\cF \circ \sum_{n=0}^\infty 
(h_\natural\liederivative{\varrho})^n\tau_\natural
=\etendu{\sigma}\circ d_\cF \circ (\id+h_\natural\liederivative{\varrho}) \circ \tau_\natural
=\etendu{\sigma}\circ d_\cF \circ (\id+h_\natural\liederivative{d^\nabla}) \circ \tau_\natural
\]
and, since $d_\cF\circ\tau_\natural=0$,
\[ \etendu{\sigma}\circ d_\cF\circ\tauApolyF
=\etendu{\sigma}\circ d_\cF\circ h_\natural\circ
\liederivative{d^\nabla}\circ\tau_\natural .\]

%
%

By Equations~\eqref{Yaoundé} and~\eqref{Lagos}, the covariant derivative
$d^{\nabla}=\sum_{k=1}^{m+r}\xi_k\cdot\nabla_{\frac{\partial}{\partial x_k}}$
can be locally expressed as 
\[ d^\nabla = d-\Gamma ,\]
where $d=\sum_{k=1}^{m+r} \xi_k\frac{\partial}{\partial x_k}$
and $\Gamma=\sum_{j,k,l=1}^{m+r} {(-1)^{|y_l|+ |y_l||y_j|}}
\xi_k \Gamma_{k,l}^j \, y_l\frac{\partial}{\partial y_j}$.
A direct computation shows that 
\[ \etendu{\sigma}\circ d_\cF \circ h_\natural \circ
\liederivative{d} \circ \tau_\natural = d_{\DR} ,\]
and that 
\begin{align*}
\etendu{\sigma}\circ d_\cF & \circ h_\natural \circ
\liederivative{\Gamma} \circ \tau_\natural (f dx_{n_1} \cdots dx_{n_i})
= \etendu{\sigma}\circ d_\cF \circ h_\natural \circ
\liederivative{\Gamma} (f dy_{n_1} \cdots dy_{n_i})
\quad \quad (\text{by Equation \eqref{eq:NAP3})} \\
& = \sum_{v=1}^i \varepsilon_v \, \etendu{\sigma}\circ
d_\cF \circ h_\natural \Big(\liederivative{\Gamma}
(dy_{n_v}) f dy_{n_1} \cdots \widehat{dy_{n_v}} \cdots dy_{n_i} \Big) \\ 
& = \sum_{v=1}^i \sum_{k,l=1}^{m+r} (-1)^{|y_l||y_{n_v}| -|y_{n_v}|}
\varepsilon_v \, \etendu{\sigma}\circ d_\cF \circ h_\natural
\Big((\xi_k \Gamma_{k,l}^{n_v} dy_l) f dy_{n_1} \cdots
\widehat{dy_{n_v}} \cdots dy_{n_i} \Big) \\
& = \sum_{v=1}^i \sum_{k,l=1}^{m+r} \varepsilon_v \,
\etendu{\sigma} \Big(\big((-1)^{|y_l||y_{n_v}| -|y_{n_v}|}
dy_k \Gamma_{k,l}^{n_v} dy_l \big) f dy_{n_1} \cdots
\widehat{dy_{n_v}} \cdots dy_{n_i} \Big),
\end{align*}
where $f$ is a local function (of the variables $x_1,\dots,x_{m+r}$) on $\cM$
and \[ \varepsilon_v = (-1)^{|dy_{n_v}|(|f|+|dy_{n_1}|+ \cdots + |dy_{n_{v-1}}|)} .\]
Since the chosen connection $\nabla$ is torsion-free, we have the equation
$\Gamma_{k,l}^j = (-1)^{|y_k||y_l|} \Gamma_{l,k}^j$. Thus, 
\begin{align*}
\sum_{k,l=1}^{m+r} (-1)^{|y_l||y_{n_v}| -|y_{n_v}|}
\, dy_k \Gamma_{k,l}^{n_v} dy_l &= \sum_{k,l=1}^{m+r}
(-1)^{|y_l||y_{n_v}| -|y_{n_v}|} (-1)^{|y_k||y_l|} \, dy_k \Gamma_{l,k}^{n_v} dy_l \\
& = \sum_{k,l=1}^{m+r} \varepsilon_{v,k,l} \, dy_l \Gamma_{l,k}^{n_v} dy_k \\
& = \sum_{k,l=1}^{m+r} \varepsilon_{v,l,k} \, dy_k \Gamma_{k,l}^{n_v} dy_l ,
\end{align*} 
where 
\begin{align*}
\varepsilon_{v,k,l} & = (-1)^{(|y_l|-1)|y_{n_v}|}
(-1)^{|y_k||y_l|+ |dy_k|(|y_{n_v}|-|y_k|-|y_l|+|dy_l|)
+ |dy_l|(|y_{n_v}|-|y_k|-|y_l|)} \\
& = (-1)^{1 + | y_k||y_{n_v}|-|y_{n_v}|}.
\end{align*}
This computation shows that
$\sum_{k,l=1}^{m+r} (-1)^{|y_l||y_{n_v}| -|y_{n_v}|} \,
dy_k \Gamma_{k,l}^{n_v} dy_l$ vanishes, and thus 
\[ \etendu{\sigma}\circ d_\cF \circ h_\natural \circ
\liederivative{\Gamma} \circ \tau_\natural =0 .\]
Therefore, 
\[ \etendu{\sigma}\circ d_\cF \circ \tauApolyF = d_{\DR} ,\]
and the proof is complete.
\end{proof}

We are now ready to prove the following 

\begin{proposition}
\label{pro:Paris}
Consider the pair of injective maps $\tauTpolyF$
in the contractions of
 Corollary~\ref{pro:contractionTO}.
\begin{enumerate}
\item The injection $\tauTpolyF$ in~\eqref{eq:Tcontraction}
and~\eqref{eq:Ocontraction} respect the wedge products
of polyvector fields and of differential forms.
\item The injection $\tauTpolyF$ on differential forms 
\eqref{eq:Ocontraction} satisfies
\begin{equation}\label{eq:wuxi1}
\breve{\tau}_\natural\circ d_{\DR}=d_\cF\circ\breve{\tau}_\natural
.\end{equation}
\item The pair of injections $\tauTpolyF$
in~\eqref{eq:Tcontraction} and~\eqref{eq:Ocontraction}
respects interior products and Lie derivatives:
\begin{gather}
\breve{\tau}_\natural(\iI_{X}\omega)
=\iI_{\breve{\tau}_\natural(X)}\breve{\tau}_\natural(\omega);
\label{eq:wuxi2} \\
\breve{\tau}_\natural(\iL_{X}\omega)
=\iL_{\breve{\tau}_\natural(X)}\breve{\tau}_\natural(\omega),
\label{eq:wuxi3}
\end{gather}
for all $X\in\Tpolym{d}$ and $\omega\in\Apolym{-p}$.
\end{enumerate}
\end{proposition}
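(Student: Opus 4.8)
The plan is to deduce all three parts from the initial value problem characterization of the injection $\breve{\tau}_\natural$ in Proposition~\ref{Mandalay}, combined with the algebraic properties of the surjection $\sigma_\natural$ in Lemma~\ref{lem:sigmainitialT}. Recall that, by Proposition~\ref{Mandalay}, for $x$ in $\Tpolym{p}$ (resp.\ $\Apolym{-p}$) and $y$ in $\Tpolyf{p}$ (resp.\ $\Apolyf{-p}$), the equality $\breve{\tau}_\natural(x)=y$ holds if and only if $h_\natural(y)=0$, $\iL_{\fedosov}(y)=0$, and $\sigma_\natural(y)=x$; and by Lemma~\ref{rmk:Paris} the first condition is equivalent to $y$ having vanishing form degree along $\cM$, i.e.\ lying in $\OO^0\big(\cM,\hat{S}(T^\vee_\cM)\otimes\cdots\big)$. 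So for each identity the strategy is: form the right-hand side $y$ out of $\breve{\tau}_\natural$-images using the operation on the $\cF$-side, verify that $y$ has vanishing form degree along $\cM$, that $\iL_{\fedosov}$ annihilates $y$, and that $\sigma_\natural$ sends $y$ to the left-hand side, and then invoke Proposition~\ref{Mandalay}. Throughout one uses that every $\breve{\tau}_\natural$-image lies in $\OO^0(\cM,\cdots)$ and is $\iL_{\fedosov}$-closed (being the image of a cochain map out of a complex with zero differential), and that $\sigma_\natural\circ\breve{\tau}_\natural=\id$.

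For part~(1) I would take $y=\breve{\tau}_\natural(X)\wedge\breve{\tau}_\natural(Y)$, and similarly for forms: the $\cF$-wedge product multiplies the coefficient algebras, hence preserves $\OO^0(\cM,\cdots)$; the operator $\iL_{\fedosov}$ is a derivation of $\wedge$ and annihilates both factors; and $\sigma_\natural(y)=X\wedge Y$ by Lemma~\ref{lem:sigmainitialT}(1). For part~(2) I would take $y=d_{\cF}\big(\breve{\tau}_\natural(\omega)\big)$: in a local frame of $\cF$ the leafwise de Rham differential reads $\sum_i dy_i\wedge\tfrac{\partial}{\partial y_i}$, so it raises only the $\cF$-form degree and leaves the form degree along $\cM$ unchanged; the dg Lie algebroid compatibility $\schouten{d_{\cF}}{\iL_{\fedosov}}=0$ of~\eqref{eq:compatibility} makes $y$ closed; and $\sigma_\natural(y)=d_{\DR}\omega$ by Lemma~\ref{lem:sigmainitialT}(2), i.e.\ by~\eqref{eq:wuxi4}. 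This gives~\eqref{eq:wuxi1}. For the interior product in part~(3) I would take $y=\iI_{\breve{\tau}_\natural(X)}\breve{\tau}_\natural(\omega)$: contraction does not affect the form degree along $\cM$; $\iL_{\fedosov}$ acts compatibly with $\iI$ (the compatibility of the dg structure $\cQ$ with the calculus operations recorded just before Proposition~\ref{prop:CalC_DGLieAbd}) and annihilates both inputs; and $\sigma_\natural(y)=\iI_X\omega$ by Lemma~\ref{lem:sigmainitialT}(3), i.e.\ by~\eqref{eq:wuxi5}. This gives~\eqref{eq:wuxi2}. The Lie derivative identity~\eqref{eq:wuxi3} then follows formally from~\eqref{eq:wuxi1} and~\eqref{eq:wuxi2}: expand both sides using the Cartan formula $\iL_X=(-1)^{|X|-1}\schouten{d_{\DR}}{\iI_X}$ on $\cM$ and its counterpart on $\cF$, noting that $\breve{\tau}_\natural$ preserves the total degree and the polyvector/form degree, so the signs match on the two sides.

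The only genuinely $\cF$-specific work — and the step I expect to need the most care — is the degree bookkeeping: one must check that $\wedge$, $d_{\cF}$, and $\iI$ each leave the subspace $\OO^0\big(\cM,\hat{S}(T^\vee_\cM)\otimes\cdots\big)$ of elements with vanishing form degree along $\cM$ invariant, so that Lemma~\ref{rmk:Paris} applies and yields $h_\natural(y)=0$; and one must verify that $\iL_{\fedosov}$ is a derivation of $\wedge$ and of $\iI$ and commutes with $d_{\cF}$. The latter assertions are consequences of the dg Lie algebroid structure of $\cF\to\cN$ — the compatibility~\eqref{eq:compatibility} and the compatibility remarks preceding Proposition~\ref{prop:CalC_DGLieAbd} — while the former is a matter of inspecting the local formulas of Section~\ref{sect:2.2} and the definitions of the operations in Section~\ref{sec:CalCdgAbd}. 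Once these are in place, everything else is immediate from Proposition~\ref{Mandalay} and Lemma~\ref{lem:sigmainitialT}.
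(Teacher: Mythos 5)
Your proposal is correct and follows essentially the same route as the paper: each identity is verified by checking the three conditions of the initial value problem in Proposition~\ref{Mandalay} (vanishing of $h_\natural$ via Lemma~\ref{rmk:Paris} and the fact that $\wedge$, $d_\cF$, $\iI$ preserve $\OO^0\big(\cM,\hat{S}(T^\vee_\cM)\otimes\cdots\big)$; vanishing of $\iL_{\fedosov}$ via its derivation/commutation properties; and the correct image under $\sigma_\natural$ via Lemma~\ref{lem:sigmainitialT}), with \eqref{eq:wuxi3} then deduced from \eqref{eq:wuxi1} and \eqref{eq:wuxi2} by Cartan's formula. This matches the paper's argument step for step.
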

\begin{proof}
(1) We prove that $\tauTpolyF$ in~\eqref{eq:Tcontraction}
respects the wedge product. That is, for all $X
\in \Tpolym{k}$ and $Y\in\Tpolym{l}$ with $k,l\geq 0$, we have
\[ \tauTpolyF(X\wedge Y)=\tauTpolyF(X)\wedge\tauTpolyF(Y) .\]
Since $\hTpolyF\circ\tauTpolyF=0$,
it follows from Lemma~\ref{rmk:Paris} that
\[ \tauTpolyF(X)\in
\OO^0\big(\cM,\hat{S}(T^\vee_\cM)\otimes\Lambda^k T_\cM\big)[-k] \]
and \[ \tauTpolyF(Y)\in\OO^0\big(\cM,\hat{S}(T^\vee_\cM)\otimes
\Lambda^l T_\cM\big)[-l] .\]
Therefore, we obtain \[ \tauTpolyF(X)\wedge\tauTpolyF(Y)\in
\OO^0\big(\cM,\hat{S}(T^\vee_\cM)\otimes\Lambda^{k+l} T_\cM\big)[-k-l] \]
and, according to Lemma~\ref{rmk:Paris}, 
\[ \etendu{h}
\big(\tauTpolyF(X)\wedge\tauTpolyF(Y)\big)=0 .\]
Since $\liederivative{\fedosov}\circ
\breve{\tau}_\natural=0$, we also have
\[ \tQF \big(\tauTpolyF(X) \wedge \tauTpolyF(Y) \big)
=\tQF \big(\tauTpolyF(X)\big) \wedge \tauTpolyF(Y)
+(-1)^k \tauTpolyF(X) \wedge \tQF\big(\tauTpolyF(Y)\big) =0 .\]
Moreover, by Lemma~\ref{lem:sigmainitialT}, we have
\[ \sigmaTpolyF\big(\tauTpolyF(X)\wedge\tauTpolyF(Y)\big)
=\sigmaTpolyF \tauTpolyF (X) \wedge \sigmaTpolyF \tauTpolyF (Y)
=X\wedge Y .\]
From Proposition~\ref{Mandalay}, it follows that
$\tauTpolyF (X) \wedge \tauTpolyF (Y)=\tauTpolyF (X\wedge Y)$.
Hence the injection $\tauTpolyF$ in \eqref{eq:Tcontraction}
respects the wedge product. 

The same argument can be used to prove
that the injection $\tauTpolyF$ in~\eqref{eq:Ocontraction}
respects the wedge product as well.

(2) Since $\breve{h}_\natural\circ
\breve{\tau}_\natural=0$, it follows from
Lemma~\ref{rmk:Paris} that
\[ \breve{\tau}_\natural(\omega)
\in \OO^0\big(\cM,\hat{S}(T^\vee_\cM)\otimes\Lambda^p T^\vee_\cM\big)[p] \] 
for all $\omega\in\Apolym{-p}$.
By the very definition of $d_\cF$ \eqref{eq:CE2}, we have
\[ d_\cF\big(\OO^0\big(\cM,\hat{S}(T^\vee_\cM)\otimes\Lambda^p T^\vee_\cM\big)[p]\big)\subseteq
\OO^0 \big(\cM,\hat{S}(T^\vee_\cM)\otimes \Lambda^{p+1} T^\vee_\cM\big)[p+1] .\]
It thus follows from Lemma~\ref{rmk:Paris} that
\[ \etendu{h}\big(d_\cF
\breve{\tau}_\natural(\omega)\big)=0 .\]
Since $\liederivative{\fedosov}\circ
\breve{\tau}_\natural=0$, we also have
\[ \aQF\big(d_\cF\tauTpolyF(\omega)\big)
= - d_\cF(\aQF\tauTpolyF(\omega))=0 .\]
Finally, according to Equation~\eqref{eq:wuxi4}, we have
\[ \sigmaTpolyF\big(d_\cF\tauTpolyF(\omega)\big)
= d_{\DR}(\omega) .\]
Therefore, by Proposition~\ref{Mandalay}, we conclude that
$d_\cF \tauTpolyF (\omega)= \tauTpolyF d_{\DR} (\omega)$.

(3) Since $\hTpolyF\circ\tauTpolyF=0$, 
it follows from Lemma~\ref{rmk:Paris} that
\[ \tauTpolyF (X)\in \OO^0 \big(\cM,\hat{S}(T^\vee_\cM)
\otimes \Lambda^d T_\cM\big)[d] \]
and \[ \tauTpolyF ( \omega) \in
\OO^0 \big(\cM,\hat{S}(T^\vee_\cM)\otimes \Lambda^p T^\vee_\cM\big) [-p] .\]
Therefore, we obtain
\[ \iI_{\tauTpolyF(X)}\tauTpolyF(\omega)\in
\OO^0\big(\cM,\hat{S}(T^\vee_\cM)\otimes
\Lambda^{p-d}T^\vee_\cM\big)[-p+d] \]
and, according to Lemma~\ref{rmk:Paris},
\[ \etendu{h}
\big(\iI_{\tauTpolyF(X)}\tauTpolyF(\omega)\big)=0 .\]
Since $\liederivative{\fedosov}\circ
\breve{\tau}_\natural=0$, we also have
\[ \aQF\big(\iI_{\tauTpolyF(X)}\tauTpolyF(\omega)\big)
=\iI_{\aQF\tauTpolyF(X)}\big(\tauTpolyF(\omega)\big)\pm
\iI_{\tauTpolyF(X)}\big(\aQF\tauTpolyF(\omega)\big)=0 .\]

Finally, it follows from
Equation~\eqref{eq:wuxi5} that
\[ \sigmaTpolyF\big(\iI_{\tauTpolyF(X)}\tauTpolyF(\omega)\big)=
\iI_{\sigmaTpolyF\tauTpolyF(X)}\big(\sigmaTpolyF\tauTpolyF(\omega)\big)=
\iI_{X}\omega .\]
Thus, by Proposition~\ref{Mandalay}, we conclude that
$\iI_{\tauTpolyF(X)}\tauTpolyF(\omega)=\tauTpolyF(\iI_{X}\omega)$.

This concludes the proof of the proposition,
for Equation~\eqref{eq:wuxi3} follows directly
from Equations~\eqref{eq:wuxi1} and~\eqref{eq:wuxi2}
together with Cartan's formula.
\end{proof}



\subsection{The maps $\breve{\tau}_\sharp$ respect the Hopf algebroid structures}
\label{sec:Hopf}

In order to show that the injections $\tauDpolyF$ in~\eqref{eq:Dcontraction1}
and~\eqref{eq:Ccontraction1} preserve the calculus operations,
we consider the contractions \eqref{Sudbury} and \eqref{eq:FcJetsTensor}. 
%
Specializing them to the particular case that $p=1$,
we obtain the following contractions:
\begin{equation}
\label{eq:Metz}
\begin{tikzcd}[cramped]
\Big(\cD(\cM),0\Big)
\arrow[r, "\tauDpolyF", shift left] & \Big(\enveloping{\cF},\dQF\Big)
\arrow[l, "\sigmaDpolyF", shift left] \arrow[loop, "\hDpolyF", out=7, in=-7, looseness=3]
\end{tikzcd}
\end{equation}
and
\begin{equation}\label{eq:FcontractionJets}
\begin{tikzcd}[cramped]
\big(\jm,0\big) \arrow[r, "\breve{\tau}_\natural", shift left]
& \big(\jet{\cF},\iL_{\fedosov}\big)
\arrow[l, "\sigma_\natural", shift left]
\arrow[loop, "\breve{h}_\natural", out=7, in=-7, looseness=3]
\end{tikzcd}.
\end{equation}

For a given dg Lie algebroid $(\cL,Q)$, since all the algebraic calculus
operations in $\calculus_H(\cL,\cQ)$ can be expressed in terms of
(i) the Hopf algebroid structure of $\enveloping{\cL}$,
(ii) the formal groupoid structure of $\jet{\cL}$
and (iii) the pairing between $\enveloping{\cL}$ and $\jet{\cL}$.
We aim to show that the injections $\tauDpolyF$
in~\eqref{eq:Metz} and~\eqref{eq:FcontractionJets}
preserve these structures (i), (ii) and (iii).

More precisely, the aim of the current section is to establish the following

\begin{proposition}\label{pro:Metz}
The cochain map
\[ \tauDpolyF:\big(\cD(\cM),0\big) \to\big(\enveloping{\cF},\iL_{\fedosov}\big) \]
respects {the source map, the target map,} the algebra structures,
the coalgebra structures and the counit maps,
and hence is a morphism of dg Hopf algebroids\footnote{They
are also called left bialgebroids \cite{MR506407}
or left Hopf algebroids \cite{MR1800718} in the literature.}
\end{proposition}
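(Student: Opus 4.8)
The plan is to verify, one structure at a time, that the injection $\tauDpolyF$ in \eqref{eq:Metz} commutes with each of the structure maps of the dg Hopf algebroid, and the essential tool throughout is the initial-value-problem characterization of $\tauDpolyF$ analogous to Proposition~\ref{Mandalay}. More precisely, I would first record (as a lemma in the spirit of Lemma~\ref{rmk:Paris}, applied with $p=1$) that for $u\in\cD(\cM)$, the element $y=\tauDpolyF(u)\in\enveloping{\cF}$ is the unique element satisfying $\hDpolyF(y)=0$, $\iL_{\fedosov}(y)=0$, and $\sigmaDpolyF(y)=u$; equivalently, $\tauDpolyF(u)$ is the unique $\iL_{\fedosov}$-flat element of $\enveloping{\cF}$ lying in $\OO^0\big(\cM,\hat S(T^\vee_\cM)\otimes S(T_\cM)\big)$ with $\sigma$-image $u$. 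The pbw-map description $\tauDpolyF\circ\pbw^\nabla = \pbw^\cF\circ\tau_\natural$ in the filtration-zero part, coupled with \eqref{eq:sigma_UF}, gives the explicit handle needed for the routine checks.

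The source and target maps $\alpha,\beta:\cR\to\cD(\cM)$ and $\cNR\to\enveloping{\cF}$ are the inclusions of functions; since $\tauDpolyF$ restricted to $\cR=\Dpolym{0}$ in filtration degree zero is just $\breve\tau:C^\infty(\cM)\to C^\infty(\cN)$ of Proposition~\ref{Zwijndrecht}, compatibility is immediate from $\breve\tau$ being an algebra map, which in turn follows from the same initial-value argument used for functions in \cite{MR3910470}. For the algebra structure, I would show $\tauDpolyF(u\cdot v)=\tauDpolyF(u)\cdot\tauDpolyF(v)$ by checking that the right-hand side satisfies the three defining conditions: it lies in the right filtration-zero subspace (a product of two elements of $\OO^0(\cdots)$ lands in $\OO^0(\cdots)$, hence is killed by $\hDpolyF$ by the $p=1$ case of Lemma~\ref{rmk:Paris}); it is $\iL_{\fedosov}$-flat because $\iL_{\fedosov}=\gerstenhaber{\fedosov}{\argument}$ is a derivation of the associative product on $\enveloping{\cF}$ and annihilates each factor; and its $\sigma_\natural$-image is $u\cdot v$ because $\sigma_\natural$ respects the product (this last point needs the analogue of Lemma~\ref{lem:sigmainitialT}(1) for $\enveloping{\cF}$, which follows by inspection from \eqref{eq:sigma_UF} since $\pbw^\nabla$ and $\pbw^\cF$ are coalgebra—not algebra—isomorphisms, so one must argue at the level of the PBW filtration and the associated graded). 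Uniqueness via Proposition~\ref{Mandalay}'s analogue then forces the identity. The counit $\epsilon:u\mapsto\rho_u(1)$ is handled similarly, using that $\sigma_\natural$ intertwines the anchors and that $\epsilon$ is the degree-zero projection.

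The coalgebra structure is the one place requiring more care, so this is where I expect the main obstacle. The comultiplication $\Delta:\enveloping{\cF}\to\enveloping{\cF}\otimes_{\cNR}\enveloping{\cF}$ is the deconcatenation coproduct transported via $\pbw^\cF$, and I want $\big(\tauDpolyF\otimes\tauDpolyF\big)\circ\Delta_{\cD(\cM)} = \Delta_{\enveloping{\cF}}\circ\tauDpolyF$. The strategy is again the initial-value problem, but now in the two-factor space $\enveloping{\cF}\otimes_{\cNR}\enveloping{\cF}$, which carries its own Fedosov-type contraction: one needs the $p=2$ instance of Proposition~\ref{Rousse} (the contraction \eqref{Sudbury} with the tensor-square filtration) together with the observation that $\Delta_{\enveloping{\cF}}$ is compatible with the filtrations $\fF_{-u}$ and with $\iL_{\fedosov}$ (because $\fedosov$, being a derivation-and-coderivation as recalled for $\enveloping{\cL}$ in general, restricts to one on $\enveloping{\cF}$). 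Then $\Delta_{\enveloping{\cF}}\circ\tauDpolyF(u)$ is $\iL_{\fedosov}$-flat, lies in the filtration-zero part (since $\pbw^\cF$ respects filtrations and $\Delta$ is the deconcatenation coproduct on the polynomial model, which preserves $\OO^0$), and maps under $\sigma_\natural\otimes\sigma_\natural$ to $\Delta_{\cD(\cM)}(u)$ because the Koszul projections intertwine the deconcatenation coproducts of $\sections{S(T_\cM)}$ and $\sections{S(\cF)}$ by \eqref{DOODLE}. The subtle point is confirming that $\Delta$ genuinely commutes with the homotopy $\hDpolyF$ on the two-factor side (so that one may invoke the uniqueness statement for the perturbed contraction on $\enveloping{\cF}^{\otimes 2}$); this reduces to the pre-perturbation statement that $\Delta$ commutes with $h_\natural\otimes\mathrm{id}$ appropriately, which is a direct computation from \eqref{eq:htp_UF}. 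Once all five compatibilities are in hand, Proposition~\ref{pro:Metz} follows, and the dg Hopf algebroid morphism property is the conjunction of these together with $\tauDpolyF$ being a cochain map, which is already part of the contraction \eqref{eq:Metz}.
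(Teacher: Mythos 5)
Your overall strategy --- characterize $\tauDpolyF(u)$ as the unique $\iL_{\fedosov}$-flat element of $\OO^0\big(\cM,\hat{S}(T^\vee_\cM)\otimes S(T_\cM)\big)$ whose $\sigma_\natural$-image is $u$, then test each structure map against this characterization --- is exactly the paper's framework (Proposition~\ref{prop:IVP_TauD} and Lemma~\ref{lem:IVP_TauD}), and your treatment of the source, target, counit and coalgebra maps is essentially sound. The gap is in the algebra structure. Your third verification condition, $\sigma_\natural\big(\tauDpolyF(u)\cdot\tauDpolyF(v)\big)=u\cdot v$, rests on the claim that $\sigma_\natural$ respects the products, and that claim is false: $\sigma_\natural$ is defined by \eqref{eq:sigma_UF} through the pbw maps, which are coalgebra but not algebra isomorphisms. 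Concretely, $\sigma_\natural\big(\tfrac{\partial}{\partial y_i}\cdot\tfrac{\partial}{\partial y_j}\big)=\pbw^\nabla\big(\tfrac{\partial}{\partial x_i}\odot\tfrac{\partial}{\partial x_j}\big)$, which differs from $\tfrac{\partial}{\partial x_i}\circ\tfrac{\partial}{\partial x_j}=\sigma_\natural\big(\tfrac{\partial}{\partial y_i}\big)\cdot\sigma_\natural\big(\tfrac{\partial}{\partial y_j}\big)$ by a first-order term involving the Christoffel symbols of $\nabla$. Passing to the PBW filtration and its associated graded, as you suggest, only yields the identity modulo lower-order terms, which is not enough for the uniqueness argument; and on the flat filtration-zero subspace $\sigma_\natural$ is the inverse of $\tauDpolyF$, so proving multiplicativity of $\sigma_\natural$ there is equivalent to the statement you are trying to prove.

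The paper closes this gap with a different device (Lemma~\ref{lem:tauR}): for $v\in\cD(\cM)$ one forms the dual right-multiplication operator $R_v^\top$ on $\jm$ and conjugates it by the algebra isomorphism $\pbwb^\top:\jm\to\sections{\hat{S}(T_\cM^\vee)}$ to obtain a vertical differential operator $\cY_v\in\enveloping{\cF}$; the commutation $[\nablag_X,R_v^\top]=0$ (Lemma~\ref{lem:GcntnRvCommute}) gives $\iL_{\fedosov}(\cY_v)=0$, and an explicit local pairing computation --- this is the genuinely nontrivial step your proposal is missing --- shows $\sigma_\natural(\cY_v)=v$, whence $\cY_v=\tauDpolyF(v)$ by the initial value problem. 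Multiplicativity is then automatic from $\cY_{vw}=\cY_v\circ\cY_w$. To repair your argument you would need either to adopt this identification or to supply an independent proof that $\sigma_\natural\big(\tauDpolyF(u)\cdot\tauDpolyF(v)\big)=u\cdot v$.
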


\subsubsection{Properties of the map $\breve{\tau}_\sharp$ for differential operators}

\begin{proposition}\label{pro:Jolly}
Let $\tauDpolyF$ be the map as in~\eqref{eq:Metz}.
Then the following assertions hold:
\begin{enumerate}
\item
The map $\tauDpolyF$ preserves the natural filtrations
on $\cD(\cM)$ and $\enveloping{\cF}$ determined by the orders.
\item
Recall that a function can be seen as
a polyvector field or as a polydifferential operator. 
The map $\breve{\tau}$ appearing in the contraction \eqref{Amos}
is the restriction to functions of the maps $\breve{\tau}_{\natural}$
on polyvector fields and polydifferential operators
appearing in the contractions \eqref{eq:Tcontraction} and \eqref{eq:Dcontraction1}.
\item
Recall that a vector field can be seen as
a polyvector field or as a polydifferential operator. 
The map $\breve{\tau}$ appearing in the contraction \eqref{Pohang}
is the restriction to vector fields of the maps $\breve{\tau}_{\natural}$
on polyvector fields and polydifferential operators
appearing in the contractions \eqref{eq:Tcontraction} and \eqref{eq:Dcontraction1}.
\end{enumerate}
\end{proposition}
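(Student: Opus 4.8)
The plan is to exploit the fact that every injection $\breve{\tau}_\natural$ occurring in the contractions \eqref{Fortune}, \eqref{Pohang}, \eqref{eq:Tcontraction}, \eqref{eq:Dcontraction1} and \eqref{eq:Metz} is produced by one and the same recipe: the homological perturbation series $\breve{\tau}_\natural=\sum_{n=0}^\infty(h_\natural\liederivative{\varrho})^n\tau_\natural$ of Proposition~\ref{Rousse}, with $\varrho=-(\fedosov+\delta)=-(d^\nabla+A^\nabla)$ (Remark~\ref{warzee}), assembled from the unperturbed Koszul data $\tau_\natural,h_\natural$ and the perturbation $\liederivative{\varrho}$. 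The guiding principle is then: whenever a subspace of the target is stable under both $h_\natural$ and $\liederivative{\varrho}$ and receives, under $\tau_\natural$, the image of the corresponding part of the source, the whole series restricts to that subspace and is computed there using only the restricted operators. All three assertions will be instances of this observation.

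For (1) I would first recall that, under the isomorphisms $\pbw^\nabla\colon\sections{S(T_\cM)}\xto{\cong}\cD(\cM)$ and $\pbw^\cF\colon\GG(\cN;S(\cF))\xto{\cong}\enveloping{\cF}$, the order filtrations correspond to the filtrations by symmetric (polynomial) degree in $T_\cM$, resp.\ in $\cF$. The unperturbed injection $\tau_\natural$ of \eqref{eq:tau_UF} visibly preserves this degree, and $h_\natural$ of \eqref{eq:htp_UF} acts only on the $\OO(\cM,\hat S T_\cM^\vee)$-coefficient, hence preserves it too. It therefore suffices to check that $\liederivative{\varrho}=\gerstenhaber{\varrho}{\argument}$, acting on $\enveloping{\cF}$, does not raise the order; but $\varrho=-(\fedosov+\delta)$ is a vector field on $\cN$, so the graded commutator of $\varrho$ with a vertical differential operator of order $\le p$ again has order $\le p$ (equivalently, write $\varrho=d^\nabla+A^\nabla$, with $d^\nabla$ a vector field on $\cN$ and $A^\nabla$ of \eqref{Conakry} a section of $\cF\to\cN$, i.e.\ of order $1$ in $\enveloping{\cF}$). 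Consequently each summand $(h_\natural\liederivative{\varrho})^n\tau_\natural$ preserves the order filtration, and so does $\tauDpolyF=\breve{\tau}_\natural$.

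For (2) the point is that in the degree-$0$ component $\Tpolym{0}=\Dpolym{0}=C^\infty(\cM)$, $\Tpolyf{0}=\Dpolyf{0}=C^\infty(\cN)$, the operators $\tau_\natural,h_\natural,\liederivative{\varrho}$ of \eqref{eq:Tcontraction} and \eqref{eq:Dcontraction1} restrict to the operators $\tau,h,\varrho$ of \eqref{Fortune} --- this reduction of the contraction to functions is already recorded in the text --- so the series restricts termwise and $\breve{\tau}_\natural|_{C^\infty(\cM)}=\sum_n(h\varrho)^n\tau=\breve{\tau}$. For (3) I would realize vector fields as $\sections{T_\cM}=\pbw^\nabla(\sections{T_\cM})\subset\cD(\cM)$ and, on the Fedosov side, as $\sections{\cN;\cF}=\pbw^\cF(\sections{\cF})\subset\enveloping{\cF}$. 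By Lemma~\ref{lem:Rome}, $\liederivative{\fedosov}$ --- hence $\liederivative{\varrho}$ --- carries $\sections{\cN;\cF}$ into itself; by \eqref{eq:htp_UF} so does $h_\natural$; and by \eqref{eq:tau_UF}, $\tau_\natural(\sections{T_\cM})\subset\sections{\cN;\cF}$. Moreover, since $\pbw^\cF$ and $\pbw^\nabla$ are the identity on elements of order $\le 1$, and since $\gerstenhaber{\varrho}{\argument}$ restricted to $\sections{\cN;\cF}\subset\enveloping{\cF}$ is \emph{literally} the Lie derivative $\liederivative{\varrho}$ of the corresponding $\cF$-longitudinal vector field, the restrictions of $\tau_\natural,h_\natural,\liederivative{\varrho}$ to $\sections{\cN;\cF}$ coincide with the data of the $(k,l)=(1,0)$ contraction \eqref{eq:Tkl}, that is, of \eqref{Pohang}. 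Hence $\breve{\tau}_\natural|_{\sections{T_\cM}}$, computed inside $\sections{\cN;\cF}$, equals the injection of \eqref{Pohang}; the identical argument, after the single suspensions used in Corollaries~\ref{cor:contractionTpolyApoly} and~\ref{Maastricht}, then handles the injections on the components $\Tpolym{1}=\pshift\,\sections{T_\cM}$ of \eqref{eq:Tcontraction} and $\Dpolym{1}=\pshift\,\cD(\cM)$ of \eqref{eq:Dcontraction1}. The step I single out as the main obstacle is the order-filtration behaviour of $\liederivative{\varrho}$ in (1) together with its intrinsic nature needed in (3); both become transparent once $\varrho$ is identified with the vector field $d^\nabla+A^\nabla$ via Remark~\ref{warzee}, so I do not anticipate a serious difficulty beyond careful bookkeeping of the PBW identifications and degree shifts.
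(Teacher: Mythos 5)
Your proposal is correct, and for parts (2) and (3) it simply fleshes out what the paper disposes of in one line (all the injections are the one perturbation series $\sum_{n\ge 0}(h_\natural\liederivative{\varrho})^n\tau_\natural$, so the restriction statements are read off from the formulas \eqref{eq:NAP3} and \eqref{eq:tau_UF} once one checks, as you do, that $h_\natural$ and $\liederivative{\varrho}$ stabilize $C^\infty(\cN)$ and $\sections{\cN;\cF}$ inside $\enveloping{\cF}$). Where you genuinely diverge is part (1). The paper proves filtration-preservation by exhibiting the decomposition $\tauDpolyF=\pbw^{\cF}\circ\big(\sum_{k\ge 0}\tauTensorF{k}{0}\big)\circ(\pbw^\nabla)^{-1}$ of Equation~\eqref{eq:TauDDecomposition}: since the tensor-level injections $\tauTensorF{k}{0}$ send $\sections{\cM;S^kT_\cM}$ into $\sections{\cN;S^k\cF}$ for each $k$ and the two pbw maps are filtered isomorphisms, the composite preserves the order filtration. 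You instead argue term by term in the perturbation series, the key input being that $\gerstenhaber{\varrho}{\argument}$ does not raise the order of a vertical differential operator because $\varrho$ is a vector field on $\cN$ stabilizing $\sections{\cN;\cF}$ (so that, by the derivation property of the commutator and the definition \eqref{Jakarta} of the order filtration, $\mathcal{U}^{\leqslant p}(\cF)$ is stable), while $\tau_\natural$ and $h_\natural$ visibly preserve the symmetric degree in the pbw presentations \eqref{eq:tau_UF} and \eqref{eq:htp_UF}. Your route is more elementary and avoids having to justify that the pbw conjugation intertwines the two perturbations (the Lie derivative on $\sections{\cN;S(\cF)}$ versus the commutator on $\enveloping{\cF}$), which is the implicit content of the paper's decomposition; the paper's route, in exchange, gives the stronger structural statement that $\tauDpolyF$ is pbw-conjugate to the tensor-level contraction, which is reused elsewhere. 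Both arguments are sound.
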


\begin{proof}
Recall that all the injections in the contractions \eqref{Amos}, \eqref{Pohang}, 
\eqref{eq:Tcontraction} and \eqref{eq:Dcontraction1} are given by the same formula 
$\breve{h}_\natural=\sum_{n=0}^\infty(h_\natural
\liederivative{\varrho})^n h_\natural$. The assertions (2) and (3) hold.
For the first assertion, recall that the injection $\tauDpolyF$
in~\eqref{eq:Metz} can be decomposed as
\begin{equation}\label{eq:TauDDecomposition}
\tauDpolyF = (\pbw^{\cF})\circ\Big(\sum_{k=0}^\infty\tauTensorF{k}{0}\Big)
\circ\pbw\inv: \cD(\cM)\to\enveloping{\cF}
,\end{equation}
where $\tauTensorF{k}{0}$ is the restriction of the injection
in~\eqref{eq:Tkl} to the symmetric $k$-tensors:
\begin{gather*}
\tauTensorF{k}{0} : \sections{\cM; S^k T_\cM} \to \sections{\cN; S^k\cF}, \\
\sum_{k=0}^\infty \tauTensorF{k}{0} : \sections{\cM; S T_\cM}
=\bigoplus_{k=0}^\infty \sections{\cM; S^k T_\cM}\to\sections{\cN; S \cF}
=\bigoplus_{k=0}^\infty \sections{\cN; S^k\cF}
.\end{gather*}
It is clear that the map $\sum_{k=0}^\infty\tauTensorF{k}{0}$
preserves the natural filtrations $\sections{\cM; S^{\leq p} T_\cM}$
and $\sections{\cN;S^{\leq p}\cF}$. 
Finally, since both maps $\pbw: \sections{\cM; ST_\cM} \to \cD(\cM)$
and $\pbw^\cF:\sections{\cN; S\cF} \to \enveloping{\cF}$
preserve the natural filtrations, it follows
from Equation~\eqref{eq:TauDDecomposition}
that $\tauDpolyF:\cD(\cM)\to\enveloping{\cF}$
preserves the natural filtrations. 
\end{proof}

The next proposition gives an alternative characterization
of the injections $\breve{\tau}_\natural$ in~\eqref{Sudbury}
as the solutions of initial value problems.

\begin{proposition}\label{prop:IVP_TauD}
Let $\breve{\tau}_\natural$ be the injection in the contraction \eqref{Sudbury}. 
Given $x\in \cD(\cM)^{\otimes p}$ and $y\in \enveloping{\cF}^{\otimes p}$, we have
\[ \etendu{\perturbed{\tau}}(x)=y
\qquad\text{if and only if}\qquad
\begin{cases}
\etendu{h}(y) = 0 \\
\liederivative{\fedosov}(y) = 0 \\
\etendu{\sigma}(y) = x
\end{cases} \]
The same statement holds for 
$x\in\Dpolym{p}$
and $y\in\Dpolyf{p}$.
\end{proposition}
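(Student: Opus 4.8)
The plan is to prove Proposition~\ref{prop:IVP_TauD} by transcribing the argument of Proposition~\ref{Mandalay} almost word for word, replacing the contractions of Corollary~\ref{pro:contractionTO} by the contraction~\eqref{Sudbury} of Proposition~\ref{Rousse} and, for the assertion about $\Dpolym{p}$ and $\Dpolyf{p}$, by its $p$-fold suspension~\eqref{eq:OrlyD} obtained in Corollary~\ref{Maastricht}. The only ingredient that has to be re-established in the present setting is the analogue of Lemma~\ref{rmk:Paris}, so the first step I would carry out is the following: under the identification $\enveloping{\cF}^{\otimes p}\xleftarrow[\cong]{(\pbw^{\cF})^{\otimes p}}\GG\big(\cN;(S(\cF))^{\otimes p}\big)\cong\OO^\bullet\big(\cM,\hat{S}(T^\vee_\cM)\otimes(S(T_\cM))^{\otimes p}\big)$ used in~\eqref{Razgrad}, the homotopy $h_\natural$ acts only on the $C^\infty(\cN)=\OO^\bullet\big(\cM,\hat{S}(T^\vee_\cM)\big)$-coefficients, through the Koszul homotopy $h=-\tfrac{1}{p+q}\eta$ of~\eqref{Cotonou}, whose kernel is exactly the form-degree-zero subspace; since $\breve{h}_\natural=(\id-h_\natural\liederivative{\varrho})^{-1}h_\natural$ has the same kernel as $h_\natural$, I would conclude that for $y\in\enveloping{\cF}^{\otimes p}$ (resp.\ $y\in\Dpolyf{p}$) the conditions $h_\natural(y)=0$, $\breve{h}_\natural(y)=0$, and $y\in\OO^0\big(\cM,\hat{S}(T^\vee_\cM)\otimes(S(T_\cM))^{\otimes p}\big)$ are all equivalent.

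With this lemma available, the forward implication is immediate: from $\breve{\tau}_\natural(x)=y$ one gets $\sigma_\natural(y)=\sigma_\natural\breve{\tau}_\natural(x)=x$ because $\sigma_\natural\breve{\tau}_\natural=\id$; one gets $\breve{h}_\natural(y)=\breve{h}_\natural\breve{\tau}_\natural(x)=0$ from the side condition $\breve{h}_\natural\circ\breve{\tau}_\natural=0$, hence $h_\natural(y)=0$ by the lemma; and one gets $\liederivative{\fedosov}(y)=\liederivative{\fedosov}\breve{\tau}_\natural(x)=\breve{\tau}_\natural(0)=0$ because $\breve{\tau}_\natural$ is a cochain map out of a complex with zero differential.

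For the converse I would assume $h_\natural(y)=0$, $\liederivative{\fedosov}(y)=0$, and $\sigma_\natural(y)=x$, upgrade $h_\natural(y)=0$ to $\breve{h}_\natural(y)=0$ via the lemma, and then apply the contraction identity $\id-\breve{\tau}_\natural\sigma_\natural=\breve{h}_\natural\liederivative{\fedosov}+\liederivative{\fedosov}\breve{h}_\natural$ attached to~\eqref{Sudbury}, which yields
\[ y-\breve{\tau}_\natural(x)=(\id-\breve{\tau}_\natural\sigma_\natural)(y)
=\breve{h}_\natural\liederivative{\fedosov}(y)+\liederivative{\fedosov}\breve{h}_\natural(y)=0 ,\]
so $\breve{\tau}_\natural(x)=y$. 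The statement for $x\in\Dpolym{p}$ and $y\in\Dpolyf{p}$ then requires nothing further: by Corollary~\ref{Maastricht} the contraction~\eqref{eq:OrlyD} is the $p$-fold suspension of~\eqref{Sudbury}, so $\breve{\tau}_\natural$, $\sigma_\natural$, $\breve{h}_\natural$ and the differential $\liederivative{\fedosov}$ are the same maps up to a degree shift, and the equivalence passes through unchanged.

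The single place that calls for genuine (if still routine) work is the analogue of Lemma~\ref{rmk:Paris}, and inside it the only substantive point is that $\eta$ is injective in strictly positive form degree, so that $h_\natural(y)=0$ really forces $y$ to be concentrated in form degree~$0$; this is checked directly from the local formula $\eta=\sum_k\tfrac{\partial}{\partial\xi_k}\otimes y_k$ in~\eqref{Cotonou} (using that the $y_k$ are algebraically independent), and it is the same Koszul fact that underlies Lemma~\ref{rmk:Paris} itself. I do not expect any real obstacle beyond this; everything else is formal bookkeeping with the contraction relations, precisely as in the proof of Proposition~\ref{Mandalay}.
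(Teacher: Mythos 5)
Your proposal is correct and follows exactly the route the paper takes: the paper omits the proof, stating that it is virtually identical to that of Proposition~\ref{Mandalay} and rests on Lemma~\ref{lem:IVP_TauD}, which is precisely the analogue of Lemma~\ref{rmk:Paris} that you identify and re-derive (the equivalence of $h_\natural(y)=0$, $\breve{h}_\natural(y)=0$, and $y$ lying in form degree zero under the identification $\enveloping{\cF}^{\otimes p}\cong\OO^\bullet\big(\cM,\hat{S}(T^\vee_\cM)\otimes(S T_\cM)^{\otimes p}\big)$). Your forward and converse arguments via the contraction identities are exactly those of Proposition~\ref{Mandalay} transposed to the contraction~\eqref{Sudbury}, as intended.
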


The proof of Proposition~\ref{prop:IVP_TauD} is virtually identical
to the proof of Proposition~\ref{Mandalay},
one can prove it by the following lemma. 
We omit the proof here.

\begin{lemma}
\label{lem:IVP_TauD}
For every $p\geq 0$,
there is a pair of natural identifications
\begin{gather}
\label{eq:FDtensorIso}
\enveloping{\cF}^{\otimes p} \xto{\cong}\OO\big(\cM,\hat{S}(T^\vee_\cM)
\otimes(S T_\cM)^{\otimes p}\big)
\\
\label{eq:FDpolyIso}
\Dpolyf{p}\xto{\cong}\OO \big(\cM,\hat{S}(T^\vee_\cM)
\otimes(S T_\cM)^{\otimes p}\big)[-p]
.\end{gather}

Given $y\in\enveloping{\cF}^{\otimes p}$, 
the following assertions are equivalent:
\begin{itemize}
\item $\etendu{h}(y)=0$
\item $y\in\OO^0\big(\cM,\hat{S}(T^\vee_\cM)
\otimes(S T_\cM)^{\otimes p}\big)$
\item $\etendu{\perturbed{h}}(y)=0$
\end{itemize}
Similarly, 
$y\in\Dpolyf{p}$, 
the following assertions are equivalent:
\begin{itemize}
\item $\etendu{h}(y)=0$
\item $y\in\OO^0\big(\cM,\hat{S}(T^\vee_\cM)
\otimes(S T_\cM)^{\otimes p}\big)[-p]$
\item $\etendu{\perturbed{h}}(y)=0$
\end{itemize}
Note that $\etendu{\perturbed{h}}=
\sum_{k=0}^\infty(\etendu{h}\liederivative{\varrho})^k\etendu{h}=
(\id-\etendu{h}\liederivative{\varrho})^{-1}
\etendu{h}$.
\end{lemma}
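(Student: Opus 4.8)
The plan is to reduce the whole statement to the Koszul/Fedosov contraction at the level of functions and then to the two perturbation formulas recorded at the end of the statement. First I would establish the identifications. The formal exponential map $\pbw^\cF\colon\GG\big(\cN;S(\cF)\big)\to\enveloping{\cF}$ is an isomorphism of left $C^\infty(\cN)$-modules, so its $p$-fold tensor product over $C^\infty(\cN)$ yields $\enveloping{\cF}^{\otimes p}\cong\GG\big(\cN;(S\cF)^{\otimes p}\big)$. Since $\cF\to\cN$ is the pullback of $T_\cM\to\cM$ along $\cN\onto\cM$ and $C^\infty(\cN)=\OO\big(\cM,\hat{S}(T^\vee_\cM)\big)$, this is precisely $\OO\big(\cM,\hat{S}(T^\vee_\cM)\otimes(ST_\cM)^{\otimes p}\big)$, giving \eqref{eq:FDtensorIso}; the grading ``$\bullet$'' of $\OO^\bullet$ is the form (that is, $\xi$-)degree. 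Identification \eqref{eq:FDpolyIso} then follows from $\Dpolyf{p}=(\pshift\,\enveloping{\cF})^{\otimes p}\cong\big(\enveloping{\cF}^{\otimes p}\big)[-p]$, the shift $[-p]$ recording the $p$ suspensions.

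Next I would observe that, under $(\pbw^\cF)^{\otimes p}$, the contraction data of \eqref{Sudbury} act only on the single $C^\infty(\cN)$-coefficient, leaving the symmetric fibre monomials $\tfrac{\partial}{\partial y_{i_1}}\odot\cdots\odot\tfrac{\partial}{\partial y_{i_n}}$ inert. For $p=1$ this is the content of the defining relations \eqref{eq:htp_UF}, \eqref{eq:sigma_UF}, \eqref{eq:tau_UF}, and the same holds for the tensor powers in \eqref{Razgrad}: the maps $\etendu{h}$, $\etendu{\sigma}$ and $\tau_\natural$ are nothing but $h$, $\sigma$ and $\tau$ of the function contraction \eqref{Amos} applied coefficientwise. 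Consequently $\liederivative{\varrho}=\liederivative{\fedosov}-\liederivative{-\delta}$ is governed by the coefficient as well, and hence so is $\etendu{\perturbed{h}}=\sum_{k\geq0}(\etendu{h}\liederivative{\varrho})^k\etendu{h}$. In this way the three-fold equivalence for $y\in\enveloping{\cF}^{\otimes p}$ reduces to the corresponding assertion for $C^\infty(\cN)=\OO\big(\cM,\hat{S}(T^\vee_\cM)\big)$ --- that is, to the exact analogue of Lemma~\ref{rmk:Paris} --- and the case of $\Dpolyf{p}$ is identical after the single shift $[-p]$.

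It then remains to prove the equivalence at the coefficient level. The implication $y\in\OO^0\Rightarrow\etendu{h}(y)=0$ is immediate, as $h=-\tfrac1{p+q}\eta$ is built from $\eta=\sum_k\tfrac{\partial}{\partial\xi_k}\otimes y_k$, which annihilates every element of form degree $0$; then $\etendu{\perturbed{h}}(y)=0$ as well. For $\etendu{h}(y)=0\Leftrightarrow\etendu{\perturbed{h}}(y)=0$ I would invoke the two perturbation formulas of the statement (Appendix~\ref{Vilnius}): since $\etendu{\perturbed{h}}=(\id-\etendu{h}\liederivative{\varrho})^{-1}\etendu{h}$ and $\etendu{h}=(\id+\etendu{\perturbed{h}}\liederivative{\varrho})^{-1}\etendu{\perturbed{h}}$ each express one homotopy as an \emph{invertible} operator composed with the other, the two share the same kernel. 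Finally, for $\etendu{h}(y)=0\Rightarrow y\in\OO^0$ I would use the Euler identity $\delta\eta+\eta\delta=(p+q)\,\id$ on $\OO^p\big(\cM,S^q(T^\vee_\cM)\big)$, exactly as in Lemma~\ref{rmk:Paris}: on each bidegree with $p+q\geq1$ it describes the kernel of $\eta$, so that $\etendu{h}(y)=0$ forces the components of $y$ of positive form degree to vanish, completeness of the $\hat{S}$-filtration guaranteeing convergence.

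The step I expect to be the main obstacle is not conceptual but bookkeeping: one must check that $(\pbw^\cF)^{\otimes p}$ intertwines the \emph{perturbed} homotopy $\etendu{\perturbed{h}}$ --- and not merely $\etendu{h}$ --- with a coefficientwise operator, so that the fibre $(ST_\cM)^{\otimes p}$ really stays inert throughout the homotopy-perturbation sum, and one must keep track of the degree shift when passing from $\enveloping{\cF}^{\otimes p}$ to $\Dpolyf{p}$. Once this reduction is secured, the equivalence is precisely that of Lemma~\ref{rmk:Paris}; and the half actually used in Proposition~\ref{prop:IVP_TauD}, namely $\etendu{h}(y)=0\Leftrightarrow\etendu{\perturbed{h}}(y)=0$, follows at once from the invertibility of $\id-\etendu{h}\liederivative{\varrho}$.
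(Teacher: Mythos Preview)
The paper gives no proof of this lemma (nor of its twin Lemma~\ref{rmk:Paris}). Your identifications are correct, your argument that $y\in\OO^0\Rightarrow\etendu{h}(y)=0$ is immediate as you say, and your argument that $\ker\etendu{h}=\ker\etendu{\perturbed{h}}$ via the two mutually inverse perturbation formulas is exactly right; this last equivalence is the only part of the lemma actually invoked in the proofs of Propositions~\ref{Mandalay} and~\ref{prop:IVP_TauD}.

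The gap is in the remaining implication $\etendu{h}(y)=0\Rightarrow y\in\OO^0$: your Euler-identity argument does not establish it, and in fact the implication is \emph{false} as stated. The identity $\delta\eta+\eta\delta=(p+q)\id$ only shows that on bidegrees with $p+q\geq1$ one has $\ker\eta=\im\eta$; it does not force form degree zero. For a concrete counterexample take $\cM=\RR^2$: the element $z=\xi_2y_1^{2}-\xi_1y_1y_2\in\OO^1\big(\cM,S^2(T_\cM^\vee)\big)$ satisfies $\eta(z)=y_1^2y_2-y_1^2y_2=0$, hence $h(z)=0$, while $z\notin\OO^0$; tensoring with the unit of $\enveloping{\cF}$ produces counterexamples for every $p\geq0$. (Your side claim that $\liederivative{\varrho}$ is ``governed by the coefficient'' is also not correct---the $\Gamma$- and $A^\nabla$-parts of $\varrho$ move the fibre generators $\partial/\partial y_i$ under the commutator---but this is harmless, since the kernel equality follows from the abstract perturbation formulas alone and needs no such reduction.) The paper's downstream uses are unaffected: whenever the condition $y\in\OO^0$ is actually invoked, e.g.\ in the proof of Proposition~\ref{pro:Paris}, $y$ lies in the image of $\breve{\tau}_\natural=\sum_{n\geq0}(h_\natural\liederivative{\varrho})^n\tau_\natural$, which visibly lands in $\OO^0$ because $\tau_\natural$ does and $h_\natural\liederivative{\varrho}$ preserves form degree.
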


\begin{proposition}\label{prop:TauDiffOpTensor}
The injections $\breve{\tau}_\natural$ in~\eqref{Sudbury}
and~\eqref{eq:Dcontraction1} preserve the tensor products.
\end{proposition}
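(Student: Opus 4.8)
The plan is to imitate the proof of Proposition~\ref{pro:Paris}(1): since the injection $\breve{\tau}_\natural$ of \eqref{Sudbury} admits the initial value problem characterisation of Proposition~\ref{prop:IVP_TauD}, it suffices to check that the evident candidate for the image of a tensor product already satisfies the three defining conditions. Fix $p_1,p_2\geq 0$ and let $\mu$ denote, on both sides, the canonical concatenation isomorphism $\cD(\cM)^{\otimes p_1}\otimes_{\cR}\cD(\cM)^{\otimes p_2}\xto{\cong}\cD(\cM)^{\otimes(p_1+p_2)}$ and $\enveloping{\cF}^{\otimes p_1}\otimes_{\cNR}\enveloping{\cF}^{\otimes p_2}\xto{\cong}\enveloping{\cF}^{\otimes(p_1+p_2)}$; on the polydifferential side this is the cup product of \eqref{eq:cup}, and under the identification \eqref{eq:FDtensorIso} it corresponds to the obvious map $\OO^\bullet\big(\cM,\hat{S}(T^\vee_\cM)\otimes(S T_\cM)^{\otimes p_1}\big)\cotimes\OO^\bullet\big(\cM,\hat{S}(T^\vee_\cM)\otimes(S T_\cM)^{\otimes p_2}\big)\to\OO^\bullet\big(\cM,\hat{S}(T^\vee_\cM)\otimes(S T_\cM)^{\otimes(p_1+p_2)}\big)$, which in particular adds the $\OO$-form degrees. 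Given $x_i\in\cD(\cM)^{\otimes p_i}$, I would set $y:=\breve{\tau}_\natural(x_1)\otimes\breve{\tau}_\natural(x_2)\in\enveloping{\cF}^{\otimes(p_1+p_2)}$ and, appealing to Proposition~\ref{prop:IVP_TauD}, reduce the desired equality $y=\breve{\tau}_\natural(x_1\otimes x_2)$ to the three conditions $\breve{h}_\natural(y)=0$, $\iL_{\fedosov}(y)=0$, and $\sigma_\natural(y)=x_1\otimes x_2$.

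Each condition is then checked as in Proposition~\ref{pro:Paris}(1). For the first, Lemma~\ref{lem:IVP_TauD} says that $\breve{h}_\natural(z)=0$ for $z\in\enveloping{\cF}^{\otimes p}$ is equivalent to $z$ lying in $\OO^0\big(\cM,\hat{S}(T^\vee_\cM)\otimes(S T_\cM)^{\otimes p}\big)$; since $\breve{h}_\natural\circ\breve{\tau}_\natural=0$, each factor $\breve{\tau}_\natural(x_i)$ lies in $\OO^0$, and because $\mu$ adds form degrees the product $y$ lies in $\OO^0$ as well, so $\breve{h}_\natural(y)=0$. For the second, $\iL_{\fedosov}=\gerstenhaber{\fedosov}{\argument}$ acts on the tensor powers $\enveloping{\cF}^{\otimes\bullet}$ as a graded derivation for concatenation (because $\fedosov$ is a vector field, i.e.\ a degree-one derivation of $C^\infty(\cN)$), so $\iL_{\fedosov}(y)=\iL_{\fedosov}\big(\breve{\tau}_\natural(x_1)\big)\otimes\breve{\tau}_\natural(x_2)\pm\breve{\tau}_\natural(x_1)\otimes\iL_{\fedosov}\big(\breve{\tau}_\natural(x_2)\big)=0$, using that $\breve{\tau}_\natural$ is a cochain map out of the zero-differential complex. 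For the third, the surjection $\sigma_\natural$ in \eqref{Sudbury} is the unperturbed one (as in the proof of Proposition~\ref{Rousse}), i.e.\ the factor-wise tensor power of the single-slot map \eqref{eq:sigma_UF}, hence is compatible with $\mu$ by the same computation that shows $\sigma_\natural$ respects wedge products (Lemma~\ref{lem:sigmainitialT}(1)); therefore $\sigma_\natural(y)=\sigma_\natural\breve{\tau}_\natural(x_1)\otimes\sigma_\natural\breve{\tau}_\natural(x_2)=x_1\otimes x_2$. This establishes the claim for the contraction \eqref{Sudbury}.

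The statement for \eqref{eq:Dcontraction1} then follows at once: $\totDpolyF{\bullet}$ is the triple-graded direct sum of the spaces $\prescript{\cF}{}{\sD}^{r,q,p}\subset(\Dpolyf{p})^{p+q+r}$, the cup product $\cup$ on $\totDpolyF{\bullet}$ is concatenation, the injection $\breve{\tau}_\natural$ on $\totDpolyF{\bullet}$ assembles from the maps $\breve{\tau}_\natural$ on $\Dpolyf{p}\cong(\enveloping{\cF}^{\otimes p})[-p]$, and all of these are given by the single formula $\breve{\tau}_\natural=\sum_{n=0}^{\infty}(h_\natural\liederivative{\varrho})^n\tau_\natural$, which respects the triple grading (Corollary~\ref{cor:contractionCD_HomogeneousParts}); the $p$-fold desuspension has no effect on multiplicativity, so $\breve{\tau}_\natural(D_1\cup D_2)=\breve{\tau}_\natural(D_1)\cup\breve{\tau}_\natural(D_2)$ for all polydifferential operators $D_1,D_2$ on $\cM$. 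The only genuinely delicate points are the sign bookkeeping in the derivation identity for $\iL_{\fedosov}$ and the matching of $\mu$ with the form-degree-additive map under \eqref{eq:FDtensorIso} and~\eqref{eq:FDpolyIso}, together with recording that among $\sigma_\natural$, $h_\natural$, $\tau_\natural$ only $h_\natural$ is not simply the factor-wise tensor power; once these are pinned down the argument is a verbatim transcription of the proof of Proposition~\ref{pro:Paris}(1).
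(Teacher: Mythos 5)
Your argument is correct and follows exactly the route the paper intends: the paper's own proof is a two-line remark that the surjections preserve tensor products and that the claim then follows, via Lemma~\ref{lem:IVP_TauD} and the initial value problem characterization of Proposition~\ref{prop:IVP_TauD}, by an argument parallel to Proposition~\ref{pro:Paris}(1) — which is precisely the three-condition verification you carry out. Your write-up simply supplies the details (the $\OO^0$ degree count, the Leibniz rule for $\iL_{\fedosov}$ under concatenation, and the multiplicativity of the unperturbed $\sigma_\natural$) that the paper leaves implicit.
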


\begin{proof}
It is clear that surjections $\etendu{\sigma}$ in \eqref{Sudbury}
and \eqref{eq:Dcontraction1} preserve the tensor products.
As a consequence of Lemma~\ref{lem:IVP_TauD}, the assertion
follows from an argument parallel to the proof of Proposition~\ref{pro:Paris}~(1). 
\end{proof}

\subsubsection{Actions induced by the multiplication of jets}

Recall that the \emph{Grothendieck flat 
connection}\footnote{Such a Grothendieck flat connection was
introduced by Nest--Tsygan \cite[Proposition~2.7]{MR1913813}
for the space of $L$-jets of any Lie algebroid $L$.}
$\nablag$ on $\jm=\Hom_\cR\big(\cD(\cM),\cR\big)$
is characterized by the relation
\begin{equation}\label{eq:GrothedieckCntnMfd}
\duality{\nablag_X\xi}{u}
={X}(\duality{\xi}{u})- {(-1)^{|X||\xi|} } \duality{\xi}{X\cdot u} ,
\end{equation}
for all homogeneous $X\in\sections{T_\cM}$; $\xi\in\jm$; and $u\in \cD(\cM)$.
Dualizing the isomorphism \eqref{eq:pbw} of (left) $\cR$-coalgebras
$\sections{S(T_\cM)}\to\cD(\cM)$,
we obtain an isomorphism of associative (left) $\cR$-algebras
\begin{equation}
\label{eq:pbwBT}
\pbwb^\top:\jm \xto{\cong}\sections{\hat{S}(T_\cM^\vee)} .
\end{equation}

It is simple to see that the isomorphism $\pbwb^\top$
identifies the Grothendieck flat connection $\nablag$ on $\jm$
with the flat connection $\nabla^\lightning$
on $\sections{\hat{S}(T_\cM^\vee)}$ dual to the flat connection
on $\sections{S(T_\cM)}$ defined by Equation~\eqref{eq:cntb}.
That is, the diagram
\begin{equation}\label{eq:Gconnection}
\begin{tikzcd}
\jm \arrow[r, "\nablag_X"] \arrow[d, "\cong", "\pbwb^\top"']
& \jm \arrow[d, "\cong"', "\pbwb^\top"]
\\ \sections{\hat{S}(T_\cM^\vee)} \arrow[r, "\nabla^\lightning_X"']
& \sections{\hat{S}(T_\cM^\vee)}
\end{tikzcd}
\end{equation}
commutes for all $X\in\sections{T_\cM}$.

Given any homogeneous $v\in\cD(\cM)$, introduce the map $R_v$
as the multiplication by $v$ from the right in $\cD(\cM)$:
\[ R_v:\cD(\cM)\to\cD(\cM), \qquad R_v(u)= {(-1)^{|v||u|}} u\cdot v .\]
It is clear that $R_v$ is an endomorphism of the (left) $\cR$-module $\cD(\cM)$.
Consider its dual $\cR$-endomorphism
\[ R_v^\top:\jm\to\jm, \qquad\duality{R_v^\top(\xi)}{u}
={(-1)^{|v|(|\xi|+|u|)}} \duality{\xi}{u\cdot v} .\]

\begin{lemma}\label{lem:GcntnRvCommute}
For any homogeneous $X\in\sections{T_\cM}$ and $v\in\cD(\cM)$, 
\[ [\nabla^G_X, R_v^\top] = \nabla^G_X \circ R_v^\top
- {(-1)^{|X||v|} } R_v^\top \circ \nabla^G_X = 0 .\]
\end{lemma}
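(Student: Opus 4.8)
The plan is to verify the identity $[\nabla^G_X, R_v^\top] = 0$ by duality: both $\nabla^G_X$ and $R_v^\top$ on $\jm=\Hom_\cR(\cD(\cM),\cR)$ are transposes of operators on $\cD(\cM)$, so it suffices to check that the corresponding commutator on $\cD(\cM)$ vanishes, and this is an easy consequence of the associativity of the multiplication in $\cD(\cM)$ together with the defining relation of the Grothendieck connection. First I would record the ``pre-dual'' operator of $\nabla^G_X$: from \eqref{eq:GrothedieckCntnMfd}, we have $\duality{\nabla^G_X\xi}{u}=X(\duality{\xi}{u})-(-1)^{|X||\xi|}\duality{\xi}{X\cdot u}$, so $\nabla^G_X$ is not literally a transpose, but the ``twisting'' term $\xi\mapsto X(\duality{\xi}{u})$ is a derivation of the module pairing and commutes trivially with any $\cR$-linear $R_v^\top$; the remaining term is $-(-1)^{|X||\xi|}$ times the transpose of left multiplication $L_X\colon u\mapsto X\cdot u$ on $\cD(\cM)$.

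Then the key computation reduces to the following: for homogeneous $\xi\in\jm$, $u\in\cD(\cM)$, $X\in\sections{T_\cM}$, $v\in\cD(\cM)$, expand both $\duality{(\nabla^G_X\circ R_v^\top)(\xi)}{u}$ and $(-1)^{|X||v|}\duality{(R_v^\top\circ\nabla^G_X)(\xi)}{u}$ using the definitions of $R_v^\top$ and $\nabla^G_X$, and observe that the ``$X$ acting on the scalar'' terms match because $R_v^\top$ lands in $\jm$ and $X(\duality{R_v^\top\xi}{u}) = X\big((-1)^{|v|(|\xi|+|u|)}\duality{\xi}{u\cdot v}\big)$, while on the other side $X(\duality{\xi}{u\cdot v})$ appears with the matching Koszul sign; and the ``$X$ acting inside the operator'' terms match because $X\cdot(u\cdot v) = (X\cdot u)\cdot v$ in $\cD(\cM)$, i.e.\ $L_X$ and $R_v$ commute (up to the evident sign) as endomorphisms of $\cD(\cM)$. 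Chasing the Koszul signs carefully — using $|R_v(u)| = |u|+|v|$ and $|X\cdot u| = |X|+|u|$ — shows the two sides are equal term by term, so the commutator $[\nabla^G_X,R_v^\top]$ annihilates every $\xi$.

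I expect the only delicate point to be bookkeeping of the Koszul signs, since three independent degrees $|X|$, $|v|$, $|\xi|$ (and the dummy $|u|$) are in play and the convention in \eqref{eq:GrothedieckCntnMfd} and in the definition of $R_v^\top$ each carry their own sign factor; the underlying algebraic content (associativity of $\cD(\cM)$ plus $\cR$-linearity of $R_v^\top$) is immediate. An alternative, perhaps cleaner, route is to transport everything through the isomorphism $\pbwb^\top\colon\jm\xrightarrow{\cong}\sections{\hat S(T_\cM^\vee)}$ of \eqref{eq:pbwBT}: by the commuting square \eqref{eq:Gconnection}, $\nabla^G_X$ becomes the Fedosov-type flat connection $\nabla^\lightning_X$, which is a coderivation of the coalgebra $\sections{S(T_\cM)}$ dual side, and $R_v^\top$ becomes the transpose of right multiplication by $v$; since $\nabla^\lightning$ is defined in \eqref{eq:cntb} precisely so that $\pbw^\nabla$ intertwines it with left multiplication in $\cD(\cM)$, the commutation $[\nabla^\lightning_X, \text{(right mult.\ by }v)] = 0$ is again just left-right associativity in $\cD(\cM)$. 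Either way the proof is a short sign-chase, so I would present the direct pairing computation and simply assert that all signs cancel.

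\begin{proof}
Both operators in question are $\KK$-linear endomorphisms of $\jm = \Hom_\cR(\cD(\cM),\cR)$, so it suffices to show that $[\nabla^G_X, R_v^\top](\xi)$ pairs to zero against every homogeneous $u\in\cD(\cM)$. Writing $L_X\colon\cD(\cM)\to\cD(\cM)$, $L_X(u) = X\cdot u$, the defining relation \eqref{eq:GrothedieckCntnMfd} reads
\[ \duality{\nabla^G_X\xi}{u} = X\big(\duality{\xi}{u}\big) - (-1)^{|X||\xi|}\duality{\xi}{L_X u} . \]
Since $R_v^\top$ is $\cR$-linear and raises degree by $|v|$, one has, for homogeneous $\xi$,
\[ \duality{(\nabla^G_X\circ R_v^\top)(\xi)}{u}
 = X\big(\duality{R_v^\top\xi}{u}\big) - (-1)^{|X|(|\xi|+|v|)}\duality{R_v^\top\xi}{L_X u}
 = X\big((-1)^{|v|(|\xi|+|u|)}\duality{\xi}{R_v u}\big) - (-1)^{|X|(|\xi|+|v|)}(-1)^{|v|(|\xi|+|X|+|u|)}\duality{\xi}{R_v L_X u} , \]
where $R_v u = (-1)^{|v||u|} u\cdot v$. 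Similarly,
\[ (-1)^{|X||v|}\duality{(R_v^\top\circ\nabla^G_X)(\xi)}{u}
 = (-1)^{|X||v|}(-1)^{|v|(|\xi|+|X|+|u|)}\duality{\nabla^G_X\xi}{R_v u}
 = (-1)^{|X||v|}(-1)^{|v|(|\xi|+|X|+|u|)}\Big( (X)\big(\duality{\xi}{R_v u}\big) - (-1)^{|X||\xi|}\duality{\xi}{L_X R_v u}\Big) . \]
The first (``$X$ on the scalar'') terms of the two expressions agree: on the left it is $(-1)^{|v|(|\xi|+|u|)}X\big(\duality{\xi}{R_v u}\big)$ (the degree $|v|$ prefactor commutes past $X$), and on the right the sign is $(-1)^{|X||v| + |v|(|\xi|+|X|+|u|)} = (-1)^{|v|(|\xi|+|u|)}$. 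For the second terms, associativity of $\cD(\cM)$ gives $L_X R_v = R_v L_X$ as $\KK$-linear maps $\cD(\cM)\to\cD(\cM)$, i.e.\ $X\cdot(u\cdot v) = (X\cdot u)\cdot v$; comparing the two sign prefactors, the left side carries $(-1)^{|X|(|\xi|+|v|) + |v|(|\xi|+|X|+|u|)}$ and the right side carries $(-1)^{|X||v| + |v|(|\xi|+|X|+|u|) + |X||\xi|}$, and these exponents differ by $|X||v| + |X||\xi| + |X||v| + |X||\xi| = 2|X|(|v|+|\xi|) \equiv 0$. Hence the two expressions coincide, so $[\nabla^G_X, R_v^\top](\xi)$ pairs to zero against all $u$, and therefore vanishes.
\end{proof}
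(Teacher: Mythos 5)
Your overall strategy --- pair $[\nabla^G_X,R_v^\top]\xi$ against an arbitrary homogeneous $u\in\cD(\cM)$, expand both compositions using the definitions of $\nabla^G_X$ and $R_v^\top$, and match terms --- is exactly the paper's proof, and the lemma is true. But your write-up is not internally consistent, and one step asserts a false identity. First, the rewriting $\duality{R_v^\top\xi}{u}=(-1)^{|v|(|\xi|+|u|)}\duality{\xi}{R_v u}$ contradicts your own conventions: since $\duality{R_v^\top\xi}{u}=(-1)^{|v|(|\xi|+|u|)}\duality{\xi}{u\cdot v}$ and $R_v u=(-1)^{|v||u|}u\cdot v$, the correct relation is $\duality{R_v^\top\xi}{u}=(-1)^{|v||\xi|}\duality{\xi}{R_v u}$. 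Second, and more seriously, the claim that ``associativity gives $L_X R_v=R_v L_X$'' is false with the Koszul sign built into $R_v$: one has $L_X R_v(u)=(-1)^{|v||u|}X\cdot u\cdot v$ but $R_v L_X(u)=(-1)^{|v|(|X|+|u|)}X\cdot u\cdot v$, so $L_X\circ R_v=(-1)^{|X||v|}R_v\circ L_X$. The stray factor $(-1)^{|X||v|}$ is precisely the sign in the graded commutator you are computing, so if you carried your displayed formulas through consistently, your two ``second terms'' would differ by $(-1)^{|X||v|}$ and the argument would appear to fail; as written, the proof only closes because this error compensates the earlier one.

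The fix is to drop the detour through $R_v u$ altogether, as the paper does: use $\duality{R_v^\top\xi}{w}=(-1)^{|v|(|\xi|+|w|)}\duality{\xi}{w\cdot v}$ directly with $w=u$ and $w=X\cdot u$. Then
\begin{align*}
\duality{\nabla^G_X R_v^\top(\xi)}{u}
&= (-1)^{|v|(|\xi|+|u|)} X(\duality{\xi}{u\cdot v})
- (-1)^{|X||\xi|+|v|(|\xi|+|u|)} \duality{\xi}{X\cdot u\cdot v}, \\
\duality{ R_v^\top \nabla^G_X (\xi)}{u}
&= (-1)^{|v|(|X|+|\xi|+|u|)} X(\duality{\xi}{u\cdot v})
- (-1)^{|X||\xi|+|v|(|X|+|\xi|+|u|)} \duality{\xi}{X\cdot u\cdot v},
\end{align*}
where the second terms involve the single element $X\cdot u\cdot v$ (associativity, with no extra sign), and the two lines visibly differ by the overall factor $(-1)^{|X||v|}$, which is the statement of the lemma.
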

\begin{proof}
For any $X\in\sections{T_\cM}$, $u,v\in\cD(\cM)$ and $\xi\in\jm$, 
\begin{align*}
\duality{\nabla^G_X R_v^\top(\xi)}{u}
&= (-1)^{|v|(|\xi|+|u|)} X(\duality{\xi}{u\cdot v})
- (-1)^{|X||\xi|+|v|(|\xi|+|u|)} \duality{\xi}{X\cdot u\cdot v}, \\
\duality{ R_v^\top \nabla^G_X (\xi)}{u}
&= (-1)^{|v|(|X|+|\xi|+|u|)} X(\duality{\xi}{u\cdot v})
- (-1)^{|X||\xi|+|v|(|X|+|\xi|+|u|)} \duality{\xi}{X\cdot u\cdot v}.
\end{align*}
Thus, the lemma follows.
\end{proof}

The following lemma asserts that $\tauDpolyF(v)$
coincides with the operator $R_v^\top$ conjugated by the algebra isomorphism
$\pbwb^\top$ as in \eqref{eq:pbwBT}. Note that
$\tauDpolyF (v) \in \OO^0 \big(\cM,\hat{S}(T^\vee_\cM)\otimes
S T_\cM \big) \subset \enveloping{\cF}$
is considered as a vertical differential operator on $T_\cM$,
i.e. an $\cR$-linear
differential operator on the algebra $\sections{\hat{S}(T_\cM^\vee)}$.

\begin{lemma}
\label{lem:tauR}
For any given $v\in\cD(\cM)$, the diagram
\[ \begin{tikzcd}
\jm \arrow[r, "R_v^\top"] \arrow[d, "\cong", "\pbwb^\top"']
& \jm \arrow[d, "\cong"', "\pbwb^\top"]
\\ \sections{\hat{S}(T_\cM^\vee)}
\arrow[r, "\etendu{\perturbed{\tau}}(v)"']
& \sections{\hat{S}(T_\cM^\vee)}
\end{tikzcd} \]
commutes.
\end{lemma}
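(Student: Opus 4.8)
The plan is to use the initial value problem characterization of $\etendu{\perturbed{\tau}}$ from Proposition~\ref{Mandalay} (or rather its analogue Proposition~\ref{prop:IVP_TauD} for $p=1$). Concretely, write $\Psi_v := (\pbwb^\top) \circ R_v^\top \circ (\pbwb^\top)^{-1}$, an $\cR$-linear endomorphism of $\sections{\hat S(T^\vee_\cM)} \cong \enveloping{\cF}^{(0)} \subset \enveloping{\cF}$ (the zero differential-form-degree part). Viewing $\Psi_v$ as an element of $\enveloping{\cF}$, I want to show $\Psi_v = \etendu{\perturbed{\tau}}(v)$. By Proposition~\ref{prop:IVP_TauD} (specialized to $p=1$, i.e.\ the contraction \eqref{eq:Metz}), it suffices to verify the three conditions $\etendu{h}(\Psi_v)=0$, $\liederivative{\fedosov}(\Psi_v)=0$, and $\etendu{\sigma}(\Psi_v)=v$.

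The first condition is essentially immediate: $\Psi_v$ lies in $\OO^0\big(\cM,\hat S(T^\vee_\cM)\otimes S T_\cM\big)$ by construction (it has no $\xi$-components, being conjugation of an $\cR$-linear operator), so $\etendu{h}(\Psi_v)=0$ since $h$ lowers the form degree and $\Psi_v$ already has form degree $0$ — apply Lemma~\ref{rmk:Paris}/Lemma~\ref{lem:IVP_TauD}. The third condition, $\etendu{\sigma}(\Psi_v)=v$, follows by unwinding the definition \eqref{eq:sigma_UF} of $\sigma_\natural$ together with the compatibility \eqref{eq:pbwBT} of $\pbwb^\top$ with the pbw map $\pbw^\nabla$: evaluating $\Psi_v$ at the ``origin'' $y=0$ recovers the right-multiplication-by-$v$ operator transported back through $\pbw^\nabla$, which is exactly $v$ itself under the identification of $\cD(\cM)$ with $\sections{S T_\cM}$. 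These two steps are routine bookkeeping with the identifications already set up in the paper.

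The real content is the middle condition $\liederivative{\fedosov}(\Psi_v)=0$, i.e.\ that $\Psi_v$ is $\fedosov$-flat. Here I would argue as follows. The Fedosov differential $\fedosov$ acts on vertical differential operators by $\liederivative{\fedosov} = \commutator{\fedosov}{\argument}$, and by \eqref{eq:cntb} the derivation $\nabla^\lightning_X$ of $\sections{\hat S(T^\vee_\cM)}$ is precisely conjugation-by-$\pbw^\nabla$ of left-multiplication-by-$X$ on $\cD(\cM)$; the Fedosov connection $\fedosov = \nabla^\lightning$ (its covariant differential) therefore corresponds, fiberwise, to the left-regular action. On the other hand $\Psi_v$ corresponds to the right-regular action $R_v^\top$. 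Since left and right multiplication on $\cD(\cM)$ commute, and this commutation is intertwined by $\pbw^\nabla$ (resp.\ $\pbwb^\top$) — this is the content one extracts from Lemma~\ref{lem:GcntnRvCommute}, which says $\commutator{\nabla^G_X}{R_v^\top}=0$, together with the commuting square \eqref{eq:Gconnection} identifying $\nabla^G$ with $\nabla^\lightning$ — it follows that $\commutator{\nabla^\lightning_X}{\Psi_v}=0$ for every $X$, hence $\commutator{\fedosov}{\Psi_v}=0$.

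The step I expect to be the main obstacle is making the identification ``$\fedosov$ acting on $\Psi_v$ = commutator with the left-regular action'' fully rigorous, because $\fedosov = -\delta + d^\nabla + A^\nabla$ involves the $\xi$-direction and the higher correction term $A^\nabla$, whereas $\nabla^\lightning_X$ as literally defined by \eqref{eq:cntb} is only the $y$-direction coderivation for a fixed $X$. One must package the family $\{\nabla^\lightning_X\}_X$ into the single covariant differential $\fedosov$ — this is exactly Remark~\ref{warzee} / \cite[Theorem~6.6]{MR3910470} — and check that conjugating $R_v^\top$ is annihilated by each $\nabla^\lightning_{\partial/\partial x_k}$ after the $\xi_k$-contraction, i.e.\ that the flatness in Lemma~\ref{lem:GcntnRvCommute} assembles correctly into $\liederivative{\fedosov}$-flatness. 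Once that bridge is built, the remaining verifications are short. I would therefore organize the proof as: (i) set up $\Psi_v$ and the diagram \eqref{eq:Gconnection}; (ii) check $\etendu{h}(\Psi_v)=0$ and $\etendu{\sigma}(\Psi_v)=v$; (iii) use Lemma~\ref{lem:GcntnRvCommute} plus \eqref{eq:Gconnection} and the assembly of $\nabla^\lightning_X$ into $\fedosov$ to get $\liederivative{\fedosov}(\Psi_v)=0$; (iv) invoke Proposition~\ref{prop:IVP_TauD} to conclude $\Psi_v = \etendu{\perturbed{\tau}}(v)$, which is the claim.
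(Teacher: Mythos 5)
Your strategy coincides with the paper's proof: the paper sets $\cY_v=\pbwb^\top\circ R_v^\top\circ(\pbwb^\top)\inv$ and verifies exactly the three conditions of the initial value problem of Proposition~\ref{prop:IVP_TauD} (with $p=1$), obtaining the $\fedosov$-flatness from Lemma~\ref{lem:GcntnRvCommute} together with the commuting square \eqref{eq:Gconnection}. The ``assembly'' step you flag as the main obstacle is in fact a non-issue: since $\cY_v$ has form degree $0$, one has $\interior{X}\commutator{\fedosov}{\cY_v}=\commutator{\interior{X}\fedosov}{\cY_v}$, and $\interior{X}\fedosov=\nabla^\lightning_X$ holds by the very definition of $\fedosov$ as the covariant differential of the Fedosov connection; the correction term $A^\nabla$ is already absorbed into $\nabla^\lightning_X$ and never needs to be handled separately.

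There is, however, one substantive omission. You assert that $\Psi_v$ is ``an $\cR$-linear endomorphism of $\sections{\hat{S}(T_\cM^\vee)}\cong\enveloping{\cF}^{(0)}$'' and then view it as an element of $\enveloping{\cF}$, but an arbitrary $\cR$-linear endomorphism of $\sections{\hat{S}(T_\cM^\vee)}$ is \emph{not} an element of $\enveloping{\cF}$: one must show that $\Psi_v$ is a continuous $\cR$-linear \emph{differential operator} (a vertical differential operator in the sense of Remark~\ref{rmk:VerticalDiffOp}). Without this, Proposition~\ref{prop:IVP_TauD} does not apply, since it characterizes elements $y\in\enveloping{\cF}$. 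The paper devotes the first half of its proof to precisely this point, using the coalgebra structure of $\cD(\cM)$: for $f\in\cR$ the identity $R_f^\top(\xi)=\pm\,\xi\cdot R_f^\top(\one)$ shows $\cY_f$ is of order zero; for a vector field $X$, $R_X^\top$ is a derivation, so $\cY_X$ is of order one; multiplicativity $\cY_{vw}=\cY_v\circ\cY_w$ then handles general $v$; and continuity follows from filtration preservation. You should supply this step before invoking the initial value problem. The remaining verifications ($\etendu{h}(\Psi_v)=0$ and $\etendu{\sigma}(\Psi_v)=v$) are as you describe, though the latter is a somewhat lengthy multi-index computation rather than pure bookkeeping.
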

\begin{proof}
Given any $v\in\cD(\cM)$, set
\begin{equation}
\label{Yrhs}
\cY_v=\pbwb^\top\circ R_v^\top \circ (\pbwb^\top)\inv
.\end{equation}

We divide our proof into two steps.

\textsl{Step 1 --- We prove that $\cY_v$ is an $\cR$-linear differential
operator on the algebra $\sections{\hat{S}(T_\cM^\vee)}$.}

We adopt the Sweedler notation
\[ \Delta(u)=\sum_{(u)}u_{(1)}\otimes u_{(2)}
=\sum_{(u)} {(-1)^{|u_{(1)}||u_{(2)}|}} u_{(2)} \otimes u_{(1)} \]
to denote the cocommutative comultiplication on $\cD(\cM)$
defined by Equation~\eqref{DOODLE}. It is simple to see
that, for all $f\in\cR\subset\cD(\cM)$ and $u\in\cD(\cM)$, we have
\begin{equation}
\label{eq:Rf}
R_f (u)= {(-1)^{|f||u|}} u\cdot f
=\sum_{(u)} { (-1)^{|f||u_{(2)}| + |u_{(1)}||u_{(2)}|} } \,
{u_{(2)}}(f)\cdot u_{(1)}.
\end{equation}
Hence, for any $\xi\in \jm$, and $u\in \cD(\cM)$, we have
\begin{align*}
\duality{R^\top_f (\xi)}{u}
&= {(-1)^{|f||\xi|}} \duality{\xi}{R_f(u)}
=\sum_{(u)} {(-1)^{|f|(|\xi|+ |u_{(2)}|) + |u_{(1)}||u_{(2)}|}}
\big\langle\xi\big|{u_{(2)}}(f)\cdot u_{(1)}\big\rangle \\
&= \sum_{(u)} {(-1)^{|f|(|\xi|+ |u|) }} \duality{\xi}{u_{(1)}}\cdot {u_{(2)}}(f)
=\sum_{(u)} {(-1)^{|f|(|\xi|+ |u|) }} \duality{\xi}{u_{(1)}}
\cdot\duality{\one}{u_{(2)}\cdot f} \\
&= \sum_{(u)} {(-1)^{|f|(|\xi|+ |u_{(1)}|)}}
\duality{\xi}{u_{(1)}}\cdot\duality{R_f^\top(\one)}{u_{(2)}} \\
&= {(-1)^{|f||\xi|}} \big\langle\xi\otimes R_f^\top(\one)
\big|\sum_{(u)}u_{(1)}\otimes u_{(2)}\big\rangle \\
&= {(-1)^{|f||\xi|}} \duality{\xi\otimes R_f^\top(\one)}{\Delta(u)} \\
&= {(-1)^{|f||\xi|}} \duality{\xi\cdot R_f^\top(\one)}{u}
,\end{align*}
where the element $\one\in\jm$ denotes the morphism
of left $\cR$-modules $\cD(\cM)\ni u\mapsto{u}(1)\in\cR$
associated with the constant function $1\in\cR=C^\infty(\cM)$.
Hence, we have $R_f^\top(\xi)={(-1)^{|f||\xi|}} \xi\cdot R_f^\top(\one)$,
for all $f\in C^\infty(\cM)$ and $\xi\in\jm$.
Therefore, it follows that
\[ {\cY}_f(\varsigma)={(-1)^{|f||\varsigma|}} \varsigma\cdot {\cY}_f(\one),
\qquad\forall f\in C^\infty(\cM),\;\forall
\varsigma\in\sections{\hat{S}(T_\cM^\vee)} .\]
This shows that ${\cY}_f$ is indeed an $\cR$-linear differential operator
\emph{of order zero} on the algebra $\sections{\hat{S}(T_\cM^\vee)}$.

For every $X\in\sections{T_\cM}\subset\cD(\cM)$,
the operator $R_X$ is a coderivation of the (left)
$\cR$-coalgebra $\cD(\cM)$
and, consequently, $R_X^\top$ is a derivation of the (left)
$\cR$-algebra $\jm$. It thus follows that ${\cY}_X$
is a derivation of the $\cR$-algebra $\sections{\hat{S}(T_\cM^\vee)}$.
Since
\begin{equation}
\label{eq:xy}
\cY_{vw}=\cY_v\circ\cY_w, \qquad\forall v,w\in\cD(\cM),
\end{equation}
it follows immediately that, for all $v\in\cD(\cM)$,
the associated operator $\cY_v$ acts on the algebra
$\sections{\hat{S}(T_\cM^\vee)}$ as an
$\cR$-linear differential operator. Furthermore,
since $\pbw$ preserves the natural filtrations,
it is not difficult to show that the operator $\cY_v$
is continuous w.r.t. the $I$-adic topology,
and by Remark~\ref{rmk:VerticalDiffOp}, it can be identified
with an element in $\enveloping{\cF}$. 

\textsl{Step 2 --- We prove that}
\begin{equation}\label{eq:ROME}
\cY_v=\tauDpolyF(v)
\end{equation}

More precisely, we will show that $\cY_v$ is a solution
of the initial value problem in Proposition~\ref{prop:IVP_TauD} specializing in the case $p=1$.

First, by Step~1, we know that $\cY_v$ is a formal vertical
differential operator on $\Tformal\cM$, and thus
\[ \cY_v\in\OO^0\big(\cM,\hat{S}(T^\vee_\cM)\otimes S(T_\cM)\big) .\]
According to Lemma~\ref{lem:IVP_TauD}, we have $\etendu{\perturbed{h}}(\cY_v)=0$.

According to \eqref{eq:Gconnection}, for all $X\in\sections{T_\cM}$
and $\varsigma\in\GG\big(\hat{S}(T^\vee_\cM)\big)$, we have
\[ \interior{X}\fedosov(\varsigma)
=\nabla^\lightning_X\varsigma=
\pbwb^\top\circ\nablag_X\circ(\pbwb^\top)\inv(\varsigma) .\]
Therefore, a direct computation yields that
\[ \interior{X} \commutator{\fedosov}{\cY_v}
=\commutator{\interior{X}\fedosov}{\cY_v}=\pbwb^\top\circ
\commutator{\nablag_X}{R_v^\top}\circ(\pbwb^\top)\inv=0 \]
for all $X\in\sections{T_\cM}$.
Here the last equality follows from Lemma~\ref{lem:GcntnRvCommute}.
Hence we obtain \[ \aQF(\cY_v)=\commutator{\fedosov}{\cY_v}=0 .\]

Let $\{\check{x}_i\}_{i=1,\ldots,m+r}$ and $\{y_i\}_{i=1,\ldots,m+r}$
be the local frames for the vector bundles $T_\cM\dual$ and $\Tformal\cM\dual$,
respectively, induced by local coordinate functions $(x_1,\cdots, x_{m+r})$ on $\cM$.
See Section~\ref{sec:FedosovMfd} for an explanation of the notations.
We need the following multi-indices notations:
\begin{align*}
\check{x}^{\odot I} & = (\check{x}_1)^{\odot i_1} \odot\cdots\odot
(\check{x}_{m+r})^{\odot i_{m+r}} \in \sections{S^{|I|} T_\cM\dual} ,\\
\partial_x^{\odot J} & = \Big(\dfrac{\partial}{\partial x_1}\Big)^{\odot j_1}
\odot\cdots\odot \Big(\dfrac{\partial}{\partial x_{m+r}}\Big)^{\odot j_{m+r}}
\in \sections{S^{|J|}T_\cM}, \\
y^K & = (y_1)^{k_1} \cdots (y_{m+r})^{k_{m+r}} \in\sections{S^{|K|}\Tformal\cM\dual}, \\ 
\partial_y^{L} &= \dfrac{\partial^{l_1}}{\partial(y_1)^{l_1}}
\cdots\dfrac{\partial^{l_{m+r}}}{\partial(y_{m+r})^{l_{m+r}}}
\in\sections{S^{|L|}\Tformal\cM}.
\end{align*}
With these notations, we have 
\begin{align}
\partial_y^I(y^I) &= \epsilon_I \cdot I!, \\
\duality{\check{x}^{\odot I}}{\partial_x^{\odot I}}
&= (-1)^{|\check{x}^{\odot I}||\partial_x^{\odot I}|} \,
\epsilon_I \cdot I! = (-1)^{|\check{x}^{\odot I}|} \,
\epsilon_I \cdot I!, \label{eq:ShuffledPairing}
\end{align}
where $I! = i_1! \cdots i_{m+r}!$ and 
\[ \epsilon_I = (-1)^{\sum_{l=1}^{m+r-1}
i_l|y_l|(i_{l+1}|y_{l+1}| + \cdots + i_{m+r} |y_{m+r}|)} .\]
Now given any formal vertical differential operator $\phi$, locally, it can be expressed as 
\[ \phi = \sum_{I,J \in \NO^{m+r}} \phi_{I,J} \cdot y^I \otimes \partial_y^J ,\]
where $\phi_{I,J}$ are local functions on $\cM$. Since 
\[ \duality{y^I \partial_y^J(y^K) }{1} = \begin{cases}
\epsilon_K \cdot K!, & \text{if } I=0, J=K, \\
0, & \text{otherwise},
\end{cases} \]
we have 
\begin{align*}
\sigma_\natural(\phi)
& = \sum_{J \in \NO^{m+r}} \phi_{0,J} \pbw( \partial_x^{\odot J}) \\
& = \sum_{I,J,K \in \NO^{m+r}} \dfrac{\epsilon_K}{K!} \duality{ \phi_{I,J}
\cdot y^I \partial_y^J(y^K) }{1} \pbw( \partial_x^{\odot J}) \\
& = \sum_{J \in \NO^{m+r}} \dfrac{\epsilon_J}{J!}
\duality{ \phi(y^J)}{1} \pbw( \partial_x^{\odot J})
.\end{align*}
In particular, for the formal vertical differential operator
$\cY_{v}=\pbwb^\top\circ R_{v}^\top\circ(\pbwb^\top)\inv$, we have
\begin{align*}
\sigma_\natural(\cY_v) & = \sum_{J \in \NO^{m+r}} \dfrac{\epsilon_J}{J!}
\duality{ \cY_v (y^J)}{1} \pbw( \partial_x^{\odot J}) \\
&= \sum_{J \in \NO^{m+r}} (-1)^{|v||\check{x}^{\odot J}|} \,
\dfrac{\epsilon_J}{J!} \duality{ \check{x}^{\odot J}}{\pbw\inv\circ R_v\circ\pbw(1)}
\pbw(\partial_x^{\odot J}) \\
&= \sum_{J \in \NO^{m+r}} (-1)^{|v||\check{x}^{\odot J}|} \,
\dfrac{\epsilon_J}{J!} \duality{\check{x}^{\odot J}}{\pbw\inv(v)}
\pbw(\partial_x^{\odot J})
.\end{align*}
Assume $\pbw\inv(v)=\sum_{L\in\NO^{m+r}}\tilde{v}_L\cdot\partial_x^{\odot L}.$ Since
\begin{align*}
\duality{ \check{x}^{\odot J}}{\pbw\inv(v)}
&= \sum_{L\in\NO^{m+r}} \duality{ \check{x}^{\odot J}}
{\tilde{v}_L \cdot \partial_x^{\odot L}} \\
&= (-1)^{|\tilde v_J||\check{x}^{\odot J}| + |\check{x}^{\odot J}|} \,
\epsilon_J \cdot J! \cdot \tilde{v}_J \qquad\text{by Equation~\eqref{eq:ShuffledPairing}} \\
& =(-1)^{| v||\check{x}^{\odot J}| } \, \epsilon_J \cdot J! \cdot \tilde{v}_J,
\end{align*}
we have 
\begin{align*}
\sigma_\natural(\cY_v) & = \sum_{J \in \NO^{m+r}} \tilde{v}_J \cdot \pbw( \partial_x^{\odot J}) \\
& = \pbw\Big( \sum_{J \in \NO^{m+r}} \tilde{v}_J \cdot \partial_x^{\odot J} \Big) \\
& = \pbw \big( \pbw\inv(v)\big) = v.
\end{align*}
It thus follows from Proposition~\ref{prop:IVP_TauD} that $\cY_v=\tauDpolyF(v)$.
\end{proof}

As an immediate consequence of Lemma~\ref{lem:tauR}
and Equation~\eqref{eq:xy}, we have the following 

\begin{lemma}\label{lem:ProdDiffOp_tau}
The injection $\tauDpolyF$ in the contraction \eqref{eq:Metz}
preserves the multiplications: 
\[ \tauDpolyF(vw)=\tauDpolyF(v)\circ\tauDpolyF(w),
\qquad\forall v,w\in\cD(\cM) .\]
\end{lemma}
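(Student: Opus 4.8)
The plan is to read the statement off directly from the operator-theoretic description of $\tauDpolyF$ established in Lemma~\ref{lem:tauR}, combined with the multiplicativity relation~\eqref{eq:xy}. First I would recall that, by Lemma~\ref{lem:tauR} (equivalently, Equation~\eqref{eq:ROME}), for every $v\in\cD(\cM)$ the element $\tauDpolyF(v)\in\OO^0\big(\cM,\hat{S}(T^\vee_\cM)\otimes S T_\cM\big)\subset\enveloping{\cF}$, viewed through the algebra isomorphism $\pbwb^\top$ of~\eqref{eq:pbwBT} as a vertical $\cR$-linear differential operator on $\sections{\hat{S}(T^\vee_\cM)}$, coincides with $\cY_v=\pbwb^\top\circ R_v^\top\circ(\pbwb^\top)\inv$ from~\eqref{Yrhs}.

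Next I would observe that, under the identification of $\enveloping{\cF}$ with the algebra of continuous $\OO(\cM)$-linear differential operators on $C^\infty(\cN)$ from Remark~\ref{rmk:VerticalDiffOp}, the product of $\enveloping{\cF}$ is composition of such operators, and the form-degree-zero piece $\OO^0\big(\cM,\hat{S}(T^\vee_\cM)\otimes S T_\cM\big)$ is a subalgebra (composition preserves the form-degree). Hence the product $\tauDpolyF(v)\circ\tauDpolyF(w)$ in $\enveloping{\cF}$ is computed by composing $\cY_v$ and $\cY_w$ as operators on $\sections{\hat{S}(T^\vee_\cM)}$. Invoking Equation~\eqref{eq:xy}, namely $\cY_{vw}=\cY_v\circ\cY_w$, together with Equation~\eqref{eq:ROME} once more, I would conclude
\[ \tauDpolyF(vw)=\cY_{vw}=\cY_v\circ\cY_w=\tauDpolyF(v)\circ\tauDpolyF(w),\qquad\forall v,w\in\cD(\cM). \]

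There is no substantive obstacle here: all the content has already been absorbed into Lemma~\ref{lem:tauR} and into~\eqref{eq:xy} (the latter itself reflecting that right multiplication in $\cD(\cM)$ is turned into a homomorphism upon dualizing, up to the Koszul sign built into $R_\bullet^\top$). The only point requiring a moment of care is the routine bookkeeping — confirming that the composite of the two completed vertical operators $\cY_v$, $\cY_w$ remains inside the completed algebra and that the order of composition agrees with the convention used to identify $\tauDpolyF(v)$ with an element of $\enveloping{\cF}$ — and this is automatic once Lemma~\ref{lem:tauR} and~\eqref{eq:xy} are granted.
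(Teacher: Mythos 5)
Your proposal is correct and follows exactly the paper's own route: the paper derives this lemma as an immediate consequence of Lemma~\ref{lem:tauR} (identifying $\tauDpolyF(v)$ with $\cY_v$) together with Equation~\eqref{eq:xy} ($\cY_{vw}=\cY_v\circ\cY_w$). Your additional remark that composition of vertical operators realizes the product in $\enveloping{\cF}$ is the same implicit bookkeeping the paper relies on.
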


\subsubsection{Proof of Proposition~\ref{pro:Metz}}

Recall that the comultiplication of $\enveloping{\cF}$ is given by the formula
\begin{multline}\label{eq:CoprodUF}
\Delta(X_1\cdots X_n)=1\otimes(X_1\cdots X_n)
+\sum_{p=1}^{n-1}\sum_{\sigma\in\shuffle{p}{n-p}}
\epsilon^{\sigma}_{X_1,\dots,X_n}
(X_{\sigma_1}\cdots X_{\sigma(p)})\otimes
(X_{\sigma_{p+1}}\cdots X_{\sigma(n)}) \\
+(X_1\cdots X_n)\otimes 1,
\end{multline} 
where $X_1,\cdots,X_n\in\sections{\cF}$, and the Koszul sign
$\epsilon^{\sigma}_{X_1,\dots,X_n}=\pm 1$ is determined
by the equation $X_{\sigma(1)}\odot\cdots\odot X_{\sigma(n)}
=\epsilon^{\sigma}_{X_1,\dots,X_n}X_1\odot\cdots\odot X_n$
in $\sections{S(\cF)}$.

Since the injection $\tauDpolyF$ in~\eqref{Sudbury}
preserves the tensor products according to Proposition~\ref{prop:TauDiffOpTensor},
from Equation~\eqref{DOODLE}, Equation~\eqref{eq:CoprodUF}
and Lemma~\ref{lem:ProdDiffOp_tau}, it follows by a direct verification
that $\tauDpolyF$ preserves the coalgebra structures as well.

\begin{lemma}\label{lem:CoprodDiffOp_tau}
The diagram
\[ \begin{tikzcd}
\cD(\cM) \arrow[d, "\Delta"'] \ar[r,"\tauDpolyF"]
& \enveloping{\cF} \arrow[d, "\Delta"] \\
\cD(\cM) \otimes_\cR \cD(\cM) \ar[r,"\tauDpolyF"]
& \enveloping{\cF}\otimes_\cNR\enveloping{\cF}
\end{tikzcd} \]
commutes.
Here the symbols $\tauDpolyF$ refer to the injection appearing
in the contraction \eqref{Sudbury} with either $p=1$ or $p=2$. 
\end{lemma}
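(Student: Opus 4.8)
The plan is to reduce the claim to the initial value problem characterization of $\tauDpolyF$ provided by Proposition~\ref{prop:IVP_TauD} applied with $p=2$, exactly as was done in the proofs of Proposition~\ref{pro:Paris} and Proposition~\ref{prop:TauDiffOpTensor}. Fix a homogeneous $v\in\cD(\cM)$ and set $y=\Delta\big(\tauDpolyF(v)\big)\in\enveloping{\cF}\otimes_\cNR\enveloping{\cF}\cong\enveloping{\cF}^{\otimes 2}$. I want to show $y=\tauDpolyF\big(\Delta(v)\big)$, i.e.\ that $y$ solves the initial value problem characterizing $\tauDpolyF$ on $\Dpolyf{2}$-type spaces: namely $\etendu{h}(y)=0$, $\liederivative{\fedosov}(y)=0$, and $\sigma_\natural(y)=\Delta(v)$.

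First I would verify $\etendu{h}(y)=0$. By Proposition~\ref{pro:Jolly}(1), $\tauDpolyF$ preserves the order filtration, and by Lemma~\ref{lem:IVP_TauD} (with $p=1$), $\tauDpolyF(v)$ lies in the degree-zero part $\OO^0\big(\cM,\hat{S}(T^\vee_\cM)\otimes S(T_\cM)\big)$. Since the comultiplication $\Delta$ of $\enveloping{\cF}$ is $C^\infty(\cN)$-linear and does not change the $\Omega$-degree (it is the deconcatenation of the $S(T_\cM)$-factor, see Equation~\eqref{eq:CoprodUF}), $y$ lies in $\OO^0\big(\cM,\hat{S}(T^\vee_\cM)\otimes(S T_\cM)^{\otimes 2}\big)$, whence $\etendu{h}(y)=0$ by Lemma~\ref{lem:IVP_TauD} with $p=2$. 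Next, $\liederivative{\fedosov}(y)=0$: since $\tauDpolyF$ is a cochain map into $(\enveloping{\cF},\iL_{\fedosov})$ with trivial source differential, $\iL_{\fedosov}\big(\tauDpolyF(v)\big)=0$; and $\Delta$ is compatible with the dg structure (the differential \eqref{eq:UL} on $\enveloping{\cF}$ is a coderivation of $\Delta$, as recalled in the paragraph preceding Equation~\eqref{eq:GaredeSud}), so $\iL_{\fedosov}\big(\Delta(\tauDpolyF(v))\big)=\Delta\big(\iL_{\fedosov}\tauDpolyF(v)\big)=0$.

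The main work is the third condition, $\sigma_\natural(y)=\Delta(v)$; this is the step I expect to be the real obstacle. Here $\sigma_\natural$ on $\enveloping{\cF}^{\otimes 2}$ is, via $(\pbw^\cF)^{\otimes 2}$ and $\pbw^{\otimes 2}$, the two-fold tensor of the surjection from Equation~\eqref{eq:sigma_UF}, i.e.\ it sends a vertical bidifferential operator $\phi$ to $\sum_{J,J'}\tfrac{\epsilon_J\epsilon_{J'}}{J!\,J'!}\,\duality{\phi(y^J\otimes y^{J'})}{1\otimes 1}\,\pbw(\partial_x^{\odot J})\otimes\pbw(\partial_x^{\odot J'})$, in parallel with the computation of $\sigma_\natural(\cY_v)$ in the proof of Lemma~\ref{lem:tauR}. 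I would combine: (i) Lemma~\ref{lem:tauR}, which identifies $\tauDpolyF(v)$ with the operator $\cY_v=\pbwb^\top\circ R_v^\top\circ(\pbwb^\top)^{-1}$; (ii) the fact that the comultiplication $\Delta$ on $\enveloping{\cF}$ corresponds, under $\pbwb^\top$-duality, to the multiplication map $\cD(\cM)\otimes_\cR\cD(\cM)\to\cD(\cM)$ (this is the coalgebra/algebra duality \eqref{eq:pbwBT} between $\jm$ and $\sections{\hat S(T_\cM^\vee)}$, together with the deconcatenation coproduct \eqref{DOODLE} on $\cD(\cM)$); and (iii) the compatibility of $R_v^\top$ with these Sweedler manipulations — concretely, for $u,w\in\cD(\cM)$ one has $R_v^\top$ acting on the product dualizes, under $\Delta$, to $\Delta(R_v^\top\xi) = \sum_{(v)}\pm\,(R_{v_{(1)}}^\top\otimes R_{v_{(2)}}^\top)\Delta(\xi)$, which reflects that $\Delta$ is an algebra map and $R_{vw}=R_v\circ R_w$ (up to sign), cf.\ Equation~\eqref{eq:xy}. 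Feeding this through the $\sigma_\natural$-formula exactly as in Step~2 of the proof of Lemma~\ref{lem:tauR} — now tracking two jet-variables $y^J, y^{J'}$ and using $\duality{\check x^{\odot J}\otimes\check x^{\odot J'}}{\Delta(\pbw^{-1}(v))}$ together with the pairing identity \eqref{eq:ShuffledPairing} in each slot — yields $\sigma_\natural(y)=\Delta(v)$, the Koszul signs matching because the coproducts on both sides deconcatenate the same symmetric tensor $\pbw^{-1}(v)$. The $p=2$ case then follows for a general element of $\cD(\cM)\otimes_\cR\cD(\cM)$ by $\cR$-bilinearity, since $\tauDpolyF$ and $\Delta$ are both $\cR$-linear and $\Delta$ is multiplicative. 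With all three conditions verified, Proposition~\ref{prop:IVP_TauD} forces $y=\tauDpolyF(\Delta(v))$, which is the claim.

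Once Lemma~\ref{lem:CoprodDiffOp_tau} is established, Proposition~\ref{pro:Metz} follows immediately: preservation of the multiplication is Lemma~\ref{lem:ProdDiffOp_tau}; preservation of the comultiplication is Lemma~\ref{lem:CoprodDiffOp_tau}; preservation of source and target maps and of the counit is the content of Proposition~\ref{pro:Jolly}(2) (the restriction of $\tauDpolyF$ to $\cR=C^\infty(\cM)$ agrees with $\breve\tau$, which lands in $C^\infty(\cN)$ and is a unital algebra map by construction of the contraction \eqref{Fortune}); and the compatibility with $\iL_{\fedosov}$ is built into the contraction \eqref{eq:Metz}.
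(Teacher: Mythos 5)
Your route is genuinely different from the paper's. The paper disposes of this lemma in the sentence immediately preceding its statement: because the coproducts on $\cD(\cM)$ and on $\enveloping{\cF}$ are given by the \emph{same} shuffle formulas \eqref{DOODLE} and \eqref{eq:CoprodUF} on products of generators, and because $\tauDpolyF$ preserves products (Lemma~\ref{lem:ProdDiffOp_tau}), preserves the order filtration and hence sends functions to functions and vector fields to sections of $\cF$ (Proposition~\ref{pro:Jolly}), and preserves tensor products (Proposition~\ref{prop:TauDiffOpTensor}), one evaluates both sides of the square on $f\,X_1\cdots X_n$ and compares term by term, the Koszul signs agreeing since $\tauDpolyF$ preserves degrees. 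Your reduction to the initial value problem of Proposition~\ref{prop:IVP_TauD} with $p=2$ is instead the mechanism the paper uses for Proposition~\ref{pro:Paris}, Proposition~\ref{prop:TauDiffOpTensor} and Lemma~\ref{lem:tauR}; it is a legitimate alternative, and your verifications of $\etendu{h}(y)=0$ and of $\iL_{\fedosov}(y)=0$ are correct as stated.

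The one step that does not survive scrutiny is the displayed identity in your item (iii). If $\Delta$ on $\jm$ denotes the formal-groupoid coproduct \eqref{eq:coproduct}, dual to the multiplication of $\cD(\cM)$, then $\duality{\Delta(R_v^\top\xi)}{u\otimes w}=\pm\,\duality{\xi}{u\cdot w\cdot v}$ whereas $\duality{(R_{v_{(1)}}^\top\otimes R_{v_{(2)}}^\top)\Delta(\xi)}{u\otimes w}=\sum\pm\,\duality{\xi}{u\cdot v_{(1)}\cdot w\cdot v_{(2)}}$; already for $v$ a vector field these differ by the noncommutativity of $\cD(\cM)$, so the identity is false as written. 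What you actually need is the dual of the \emph{multiplicativity} of the deconcatenation coproduct \eqref{DOODLE}, namely the measuring identity $R_v^\top(\xi*\eta)=\sum_{(v)}\pm\,R_{v_{(1)}}^\top(\xi)*R_{v_{(2)}}^\top(\eta)$ for the commutative product $*$ of \eqref{eq:mjet}; transported by $\pbwb^\top$ this says precisely that $\Delta(\cY_v)=\sum\pm\,\cY_{v_{(1)}}\otimes\cY_{v_{(2)}}$, which together with Lemma~\ref{lem:tauR} finishes the argument without any further $\sigma_\natural$-computation. Alternatively, your third condition can be obtained more cheaply by noting that $\sigma_\natural$ itself intertwines the two coproducts: it is defined through \eqref{eq:sigma_UF} via the coalgebra isomorphisms $\pbw^\nabla$ and $\pbw^\cF$, and the underlying map $g\cdot\tfrac{\partial}{\partial y_{i_1}}\odot\cdots\odot\tfrac{\partial}{\partial y_{i_n}}\mapsto\sigma(g)\cdot\tfrac{\partial}{\partial x_{i_1}}\odot\cdots\odot\tfrac{\partial}{\partial x_{i_n}}$ visibly commutes with deconcatenation of symmetric tensors, whence $\sigma_\natural\big(\Delta(\tauDpolyF(v))\big)=\Delta\big(\sigma_\natural\tauDpolyF(v)\big)=\Delta(v)$. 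With either repair your proof closes; the paper's direct verification is shorter, but your version has the merit of applying the uniqueness principle uniformly across all the structure maps.
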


Note that, by Proposition~\ref{prop:TauDiffOpTensor},
the injection $\tauDpolyF:\cD(\cM)\otimes_\cR\cD(\cM)
\to\enveloping{\cF}\otimes_{\cNR}\enveloping{\cF}$ in~\eqref{Sudbury}
coincides with the tensor power of the injection in~\eqref{eq:Metz} 
\[ \tauDpolyF\otimes\tauDpolyF: \cD(\cM)\otimes_\cR\cD(\cM)
\to \enveloping{\cF}\otimes_{\cNR}\enveloping{\cF} .\]
This tensor power $\tauDpolyF\otimes\tauDpolyF$ is well-defined
due to Proposition~\ref{pro:Jolly}~(2).

We are now ready to prove Proposition~\ref{pro:Metz}.

\begin{proof}[Proof of Proposition~\ref{pro:Metz}]
By Lemma~\ref{lem:ProdDiffOp_tau} and Lemma~\ref{lem:CoprodDiffOp_tau},
we know that $\tauDpolyF:\sD(\cM)\to\enveloping{\cF}$ preserves
both the algebra structures and the coalgebra structures.
Furthermore, recall that both the source map and the target map
of the Hopf algebroid of differential operators are the natural inclusion
from the functions into the differential operators \cite{MR1815717}.
Thus, by Proposition~\ref{pro:Jolly}~(2), the map $\tauDpolyF$
preserves the source map and the target map. 
It remains to prove that $\tauDpolyF$ respects the counit maps.

Note that, according to Proposition~\ref{pro:Jolly},
the injection $\tauDpolyF$ preserves the natural filtrations
on $\cD(\cM)$ and $\enveloping{\cF}$ determined by the orders,
and the counit maps $\epsilon:\cD(\cM)\to C^\infty(\cM)$
and $\epsilon:\enveloping{\cF}\to C^\infty(\cN)$ are,
by Definition~\cite{MR1815717}, the projections to their $0$-th order components.
Thus it follows immediately that the diagram
\[ \begin{tikzcd}
\cD(\cM) \ar[d,"\epsilon"'] \ar[r,"\tauDpolyF"]
& \enveloping{\cF} \ar[d,"\epsilon"] \\
C^\infty(\cM) \ar[r,"\tauDpolyF"] & C^\infty(\cN) 
\end{tikzcd} \]
commutes.
The proof is complete.
\end{proof}

\subsection{The maps $\breve{\tau}_\sharp$ respect the formal groupoid structures}

In this section, we aim to establish the following 

\begin{proposition}\label{prop:TauFormalGpd}
The injection
\[ \tauDpolyF: \big(\jm, 0\big) \to \big(\jet{\cF}, \iL_{\fedosov}\big) \]
in the contraction \eqref{eq:FcontractionJets}
respects the dg formal groupoid structures of $\jm$ and $\jet{\cF}$. 
\end{proposition}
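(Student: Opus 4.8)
The plan is to mirror the strategy already used for the universal enveloping algebras (Proposition~\ref{pro:Metz}), passing everything through the dual/transpose picture afforded by the PBW isomorphisms. Recall that $\jm = \Hom_\cR(\cD(\cM),\cR)$ carries a formal groupoid structure $(m,\alpha,\beta,\Delta,\varepsilon,S)$ dual to the Hopf algebroid structure on $\cD(\cM)$, and likewise $\jet{\cF}=\Hom_\cNR(\enveloping{\cF},\cNR)$ carries the formal groupoid structure dual to the Hopf algebroid $\enveloping{\cF}$. The key input is Proposition~\ref{pro:Metz}: the cochain map $\tauDpolyF\colon\cD(\cM)\to\enveloping{\cF}$ is a morphism of dg Hopf algebroids, preserving the source, target, multiplication, comultiplication, and counit. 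The first step is to identify, using the transpose isomorphisms $\pbwb^\top\colon\jm\xto{\cong}\sections{\hat S(T_\cM^\vee)}$ of~\eqref{eq:pbwBT} and its Fedosov analogue for $\jet{\cF}$, the injection $\tauDpolyF$ on jets appearing in~\eqref{eq:FcontractionJets} with the transpose $(\tauDpolyF^{\,\cD})^\top$ of the injection on differential operators in~\eqref{eq:Metz} --- more precisely, with a suitably completed/continuous transpose, since $\jm$ is a pro-finite object. This identification should follow from the uniqueness characterization of $\tauDpolyF$ on jets as a solution of an initial value problem (the dual of Proposition~\ref{prop:IVP_TauD}), combined with the fact that the transpose of a cochain map between the underlying complexes is again a cochain map, has vanishing $\breve h_\natural$-image, and has the correct $\sigma_\natural$.

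With that identification in hand, the second step is purely formal: duality turns the five dg-Hopf-algebroid identities for $\tauDpolyF$ on $\cD(\cM)$ into the corresponding identities for its transpose on $\jm$. Concretely, $\tauDpolyF$ preserving multiplication on $\cD(\cM)$ dualizes to $\tauDpolyF$ preserving the comultiplication on jets; $\tauDpolyF$ preserving comultiplication dualizes to preservation of the multiplication $*$ on jets; preservation of source/target dualizes to preservation of the two $\cR$-module structures (equivalently $\alpha$, $\beta$) on jets; and preservation of the counit dualizes to preservation of the unit. One also needs that $\tauDpolyF$ respects the antipode $S$: since the antipode on the jet space is determined by the other structure maps via the Hopf(-algebroid) axioms --- or, in the Lie algebroid case, expressible through the coproduct and counit as recalled in Appendix~\ref{sec:FormalGpdJet} --- preservation of $S$ follows automatically once the remaining structure maps are respected, by the same uniqueness argument. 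Finally one checks compatibility with the dg structures, i.e.\ $\tauDpolyF\circ 0 = \iL_{\fedosov}\circ\tauDpolyF$, which is already built in: $\tauDpolyF$ is a cochain map for the contraction~\eqref{eq:FcontractionJets}, so $\iL_{\fedosov}\circ\tauDpolyF=0$.

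I expect the main obstacle to be the first step: carefully setting up the transpose/duality dictionary in the \emph{pro-finite, completed} setting. The spaces $\jm$ and $\jet{\cF}$ are inverse limits along the order filtration~\eqref{Jakarta1}, and the structure maps (especially $\Delta\colon\jet{\cF}\to\jet{\cF}\cotimes\jet{\cF}$ and the multiplication $*$) involve completed tensor products. One must verify that $\tauDpolyF$ is continuous with respect to these filtrations --- which should follow from Proposition~\ref{pro:Jolly}~(1), i.e.\ $\tauDpolyF^{\,\cD}$ preserves the order filtrations on $\cD(\cM)$ and $\enveloping{\cF}$ --- and that the transpose of a filtration-preserving Hopf-algebroid morphism is automatically a morphism of the dual (pro-finite) formal groupoids, including on the completed tensor square. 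A secondary technical point is matching the Koszul signs introduced by the shift conventions $\nshift,\pshift$ when one dualizes; but these are bookkeeping, already systematized in the paper, rather than a genuine difficulty. Once the dictionary is in place, the statement reduces to Proposition~\ref{pro:Metz} together with the observation that the formal groupoid structure maps of $\jet{\cF}$ (resp.\ $\jm$) are, by definition, the transposes of the Hopf algebroid structure maps of $\enveloping{\cF}$ (resp.\ $\cD(\cM)$).
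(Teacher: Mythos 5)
Your overall strategy --- reduce everything to Proposition~\ref{pro:Metz} by exploiting the duality between the Hopf algebroid $\cD(\cM)$ (resp.\ $\enveloping{\cF}$) and the formal groupoid $\jm$ (resp.\ $\jet{\cF}$) --- is the same one the paper follows, and the reductions you list (multiplication on jets dual to comultiplication on operators, etc.) are exactly the ones carried out there. But your first step contains a genuine gap: the injection $\tauDpolyF\colon\jm\to\jet{\cF}$ is \emph{not} the transpose of the injection $\tauDpolyF\colon\cD(\cM)\to\enveloping{\cF}$. A transpose of a map $\cD(\cM)\to\enveloping{\cF}$ goes from (a suitable dual of) $\enveloping{\cF}$ to $\jm$, i.e.\ in the direction of $\sigma_\natural$, not of $\tau_\natural$; moreover $\jet{\cF}=\Hom_{\cNR}(\enveloping{\cF},\cNR)$ is a dual over $\cNR$, not over $\cR$, so there is no transpose construction producing a map $\jm\to\jet{\cF}$ at all. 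What the paper uses instead is the weaker but correct statement that the two injections are \emph{compatible with the pairings}, $\duality{\tauDpolyF(\xi)}{\tauDpolyF(u)}=\tauDpolyF(\duality{\xi}{u})$ (Equation~\eqref{eq:wuxi81}), proved by showing that $\sigma_\natural$ preserves the pairing by a local computation and then invoking the initial-value-problem characterizations (Propositions~\ref{prop:IVP_TauD} and~\ref{prop:IVP_TauC}).

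Even granting the pairing compatibility, your dualization does not close without the spanning statement of Lemma~\ref{pro:Orly}: $\enveloping{\cF}$ is the $C^\infty(\cN)$-span of $\tauDpolyF\big(\cD(\cM)\big)$ (and hence $\enveloping{\cF}\otimes^{rl}_{\cNR}\enveloping{\cF}$ is the $\cNR$-span of $\tauDpolyF\cD(\cM)\otimes^{rl}_{\cR}\tauDpolyF\cD(\cM)$). Identities such as $\Delta(\tauDpolyF\xi)=(\tauDpolyF\otimes\tauDpolyF)\Delta(\xi)$ are only verified by pairing against elements of the form $\tauDpolyF(u)\otimes^{rl}\tauDpolyF(v)$, and one needs these to span in order to conclude equality in $\jet{\cF}\cotimes^{\beta\alpha}\jet{\cF}$. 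This spanning lemma is the real workhorse of the paper's proof and does not appear in your outline; it is proved by a nontrivial geometric-series inversion argument in local coordinates, not by filtration-preservation alone. Finally, for the antipode the paper does not rely on uniqueness of $S$ but computes directly from formula~\eqref{eq:antipode}, which requires the additional Lemma~\ref{lem:nablag} that $\tauDpolyF$ intertwines the Grothendieck connections; your ``$S$ is determined by the other structure maps'' shortcut would need its own justification in the formal-groupoid-over-$\cR$ setting before it could replace that computation.
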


\subsubsection{Properties of the map $\breve{\tau}_\sharp$ for jets}

We need a few lemmas.

\begin{lemma}\label{pro:Orly}
Let $\tauDpolyF$ be the injection in the contractions \eqref{eq:Metz}.
The space $\enveloping{\cF}$ is the $C^\infty(\cN)$-span
of $\tauDpolyF\big(\cD(\cM)\big)$.
\end{lemma}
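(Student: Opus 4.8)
The plan is to exploit the explicit decomposition of $\tauDpolyF$ recorded in Equation~\eqref{eq:TauDDecomposition}, namely
\[ \tauDpolyF = \pbw^{\cF}\circ\Bigl(\textstyle\sum_{k=0}^\infty\tauTensorF{k}{0}\Bigr)\circ\pbw^{-1}, \]
together with the PBW identifications of $\cD(\cM)$ with $\sections{\cM;S(T_\cM)}$ and of $\enveloping{\cF}$ with $\sections{\cN;S(\cF)}$. Under these identifications it suffices to show that $\sections{\cN;S(\cF)}$ is the $C^\infty(\cN)$-span of the image of $\sum_{k}\tauTensorF{k}{0}:\sections{\cM;S(T_\cM)}\to\sections{\cN;S(\cF)}$, since $\pbw^\cF$ is an isomorphism of left $C^\infty(\cN)$-modules (it is $C^\infty(\cN)$-linear by its defining relation \eqref{Mashhad}) and $\pbw$ is a bijection. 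Because $\sections{\cN;S^k(\cF)}\cong C^\infty(\cN)\otimes_{C^\infty(\cM)}\sections{\cM;S^k(T_\cM)}$, everything reduces to the statement at the level of symmetric tensor fields of fixed degree $k$.

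First I would invoke the tensor-field contraction of Proposition~\ref{repository}, specialized to type $(k,0)$ and restricted to symmetric tensors as in the proof of Corollary~\ref{cor:contractionTpolyApoly}, to get the injection $\tauTensorF{k}{0}:\sections{\cM;S^k T_\cM}\to\sections{\cN;S^k\cF}$. By Lemma~\ref{rmk:Paris} (or rather its symmetric analogue, which follows by the same argument), an element $y\in\sections{\cN;S^k\cF}$ lies in the image of $\tauTensorF{k}{0}$ iff $h_\natural(y)=0$ and $\iL_{\fedosov}(y)=0$; more importantly, the leading term of $\tauTensorF{k}{0}(X)$ is exactly $\tau_\natural(X)$, which in the local frame from Remark~\ref{rmk:CoordFedosovMfd} sends $f\,\tfrac{\partial}{\partial x_{i_1}}\odot\cdots\odot\tfrac{\partial}{\partial x_{i_k}}$ to $\tau(f)\,\tfrac{\partial}{\partial y_{i_1}}\odot\cdots\odot\tfrac{\partial}{\partial y_{i_k}}$, and the correction terms $h_\natural\liederivative{\varrho}(\cdots)$ raise the $\hat S(T^\vee_\cM)$-filtration degree by Lemma~\ref{Varseille}. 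Thus modulo $\fF_{-1}=\OO\bigl(\cM,\hat S^{\geq 1}(T^\vee_\cM)\otimes S^k(T_\cM)\bigr)$ the image of $\tauTensorF{k}{0}$ generates $\sections{\cN;S^k\cF}$ as a $C^\infty(\cN)$-module, because $\{\tfrac{\partial}{\partial y_{i_1}}\odot\cdots\odot\tfrac{\partial}{\partial y_{i_k}}\}$ is a local $C^\infty(\cN)$-module frame for $S^k\cF$.

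The remaining step is a standard completeness-of-filtration argument. Given any $w\in\sections{\cN;S^k\cF}$, one writes $w = \sum_j g_j\cdot\tauTensorF{k}{0}(X_j) + w'$ with $g_j\in C^\infty(\cN)$, $X_j\in\sections{\cM;S^k T_\cM}$, and $w'\in\fF_{-1}$; iterating on $w'$ and using that each $\fF_{-u}$ is pushed into $\fF_{-u-1}$ at the next stage, the process converges in the $I$-adic (equivalently $\fF$-adic) topology, which is complete by the remarks following \eqref{eq:FiltationFedosovTensor}. This exhibits $w$ as a (convergent) $C^\infty(\cN)$-linear combination of elements of $\tauTensorF{k}{0}\bigl(\sections{\cM;S^k T_\cM}\bigr)$. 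Assembling over $k$ and transporting through $\pbw^\cF$ and $\pbw^{-1}$ gives the claim. The only real subtlety — and the step I expect to require the most care — is bookkeeping the completeness/convergence: one must check that the heuristic identification of $\enveloping{\cF}$ with $\OO\bigl(\cM,\hat S(T^\vee_\cM)\otimes S(T_\cM)\bigr)$ is compatible with the filtration $\fF_\bullet$ used above (this is where Proposition~\ref{pro:Jolly}~(1), that $\tauDpolyF$ preserves the order filtrations, and Remark~\ref{rmk:VerticalDiffOp} enter), so that the $C^\infty(\cN)$-span is genuinely the whole of $\enveloping{\cF}$ and not merely a dense subspace in some coarser topology. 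Once that compatibility is pinned down, the result follows.
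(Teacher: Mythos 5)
Your argument is essentially correct, but it takes a genuinely different route from the paper's. The paper works at the level of vector fields: writing $\tfrac{\partial}{\partial y_j}-\breve{\tau}_\natural\big(\tfrac{\partial}{\partial x_j}\big)=\sum_i A_{ij}\tfrac{\partial}{\partial y_i}$ with $A_{ij}\in\sI=\GG\big(\hat{S}^{\geq 1}(T^\vee_\cM)\big)$, it inverts the matrix $I-A$ by a geometric series (convergent by completeness of the $\hat{S}(T^\vee_\cM)$-adic filtration), concludes that $\sections{\cN;\cF}=C^\infty(\cN)\cdot\breve{\tau}_\natural\big(\XX(\cM)\big)$, and then bootstraps to all of $\enveloping{\cF}$ using the fact that $\tauDpolyF$ is an algebra morphism (Lemma~\ref{lem:ProdDiffOp_tau}) together with the commutation rule $\tauDpolyF(X)\cdot g=\tauDpolyF(X)(g)+(-1)^{\degree{X}\degree{g}}g\cdot\tauDpolyF(X)$. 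You instead go through the PBW decomposition \eqref{eq:TauDDecomposition} and attack each symmetric degree $S^k\cF$ directly by a successive-approximation argument along the filtration $\fF_\bullet$. What your route buys is independence from the algebra-morphism property of $\tauDpolyF$ (you only need Proposition~\ref{pro:Jolly} and the filtration estimates of Lemma~\ref{Varseille}); what the paper's route buys is that the inversion is done once, for $k=1$, in closed form.

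One point you should make explicit: as written, your iteration produces an \emph{infinite series} of span elements, whereas the $C^\infty(\cN)$-span means finite $C^\infty(\cN)$-linear combinations. The fix is to use the same fixed finite local frame $\{\partial_x^{\odot J}\}_{|J|=k}$ at every stage of the iteration, observe that the $n$-th remainder lies in $\fF_{-n}$ so its frame coefficients lie in $\OO\big(\cM,\hat{S}^{\geq n}(T^\vee_\cM)\big)$, and then collect the coefficients of each fixed generator $\tauTensorF{k}{0}(\partial_x^{\odot J})$ into a single function $\sum_n g_J^{(n)}$, which converges in $C^\infty(\cN)$ by $I$-adic completeness. This is precisely what the paper's matrix inverse $B=\sum_{n\geq 0}A^n$ accomplishes in one step, and with this adjustment your proof is complete.
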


\begin{proof}
Let $\tau_\natural$ be the injection in the contraction \eqref{eq:FcontractionVFnoPert},
and $\tau_\natural\big(\sections{T_\cM}\big)\subset\sections{\cN;\cF}$
the image of $\tau_\natural$. We have
\[ \sections{\cN;\cF}=C^\infty(\cN)\cdot \tau_\natural\big(\TcM \big)
\cong\OO\big(\cM,\hat{S}(T^\vee_\cM)\otimes T_\cM\big) .\]

Consider the exhaustive and complete filtration
\[ \sP\supset\sI\supset\sI^2\supset\sI^3\supset\cdots \]
of the algebra $\sP=\GG\big(\hat{S}(T^\vee_\cM)\big)$
induced by its ideal $\sI=\GG\big(\hat{S}^{\geq 1}(T^\vee_\cM)\big)$.

Since $\breve{h}_\natural\circ\breve{\tau}_\natural=0$,
according to Lemma~\ref{rmk:Paris},
the image of the map $\breve{\tau}_\natural:\XX(\cM)\to\GG(\cN;\cF)$ 
is contained in the subspace 
$\OO^0\big(\cM,\hat{S}(T^\vee_\cM)\otimes T_\cM\big)
\cong\sP\cdot\tau_\natural\big(\TcM\big)$ of $\GG(\cN;\cF)$.

In any local chart of the type described in Remark~\ref{rmk:CoordFedosovMfd} we have
\[ \sigma_\natural\left(
\frac{\partial}{\partial y_j}-\breve{\tau}_\natural
\left(\frac{\partial}{\partial x_j}\right)
\right)=\frac{\partial}{\partial x_j}
-\frac{\partial}{\partial x_j}=0 \]
since 
$\sigma_\natural\circ\breve{\tau}_\natural=\id$.

Therefore, we have
\[ \frac{\partial}{\partial y_j}-\breve{\tau}_\natural
\left(\frac{\partial}{\partial x_j}\right)
\in \OO^0\big(\cM,\hat{S}(T^\vee_\cM)\otimes T_\cM\big)\cap\ker(\sigma_\natural)
=\sI\cdot\tau_\natural\big(\sections{T_\cM}\big) \]
and, hence, there exists a family of functions $A_{ij}\in\sI$ such that 
\[ \frac{\partial}{\partial y_j}
-\breve{\tau}_\natural\left(\frac{\partial}{\partial x_j}\right)
=\sum_i A_{ij}\frac{\partial}{\partial y_i} \]
or, equivalently, 
\begin{equation}\label{tecnico} 
\breve{\tau}_\natural\left(\frac{\partial}{\partial x_j}\right)
=\sum_i (I-A)_{ij}\frac{\partial}{\partial y_i}
.\end{equation}


We think of the function $A_{ij}$ as the entry of a square matrix $A$
located at the intersection of row $i$ and column $j$.
Then $A^n$ is a square matrix with entries in $\sI^n$,
the geometric series $\sum_{n=0}^\infty A^n$ converges 
to a square matrix $B$ with coefficients in $\sS$, and we have
\[ (I-A)B=(I-A)\sum_{n=0}^\infty A^n=I .\]

It follows from Equation~\eqref{tecnico} that 
\begin{align*}
\sum_j B_{jk}\cdot\breve{\tau}_\natural\left(\frac{\partial}{\partial x_j}\right) 
& =\sum_j B_{jk}\left(\sum_i{(I-A)}_{ij}\frac{\partial}{\partial y_i}\right)  =\sum_i \left(\sum_j{(I-A)}_{ij}B_{jk}\right)\frac{\partial}{\partial y_i} \\
&=\sum_i \delta_{ik}\frac{\partial}{\partial y_i}=\frac{\partial}{\partial y_k}
=\tau_\natural(\frac{\partial}{\partial x_k})
.\end{align*}
Hence, we have proved that $\tau_\natural\big(\TcM\big)\subset
\sS\cdot\breve{\tau}_\natural\big(\XX(\cM)\big)$.
Since $\sS$ is a subalgebra of $C^\infty(\cN)$, it follows immediately that
\[ \sections{\cN;\cF}=C^\infty(\cN)\cdot\tau_\natural\big(\TcM\big)
=C^\infty(\cN)\cdot\breve{\tau}_\natural\big(\XX(\cM)\big) .\]


Since $\tauDpolyF:\cD(\cM)\to\enveloping{\cF}$ is a morphism of algebras
and $\tauDpolyF(X)\cdot g=\tauDpolyF(X)(g) + (-1)^{|X||g|} g\cdot\tauDpolyF(X)$
for all homogeneous $X\in\XX(\cM)$ and $g\in C^\infty(\cN)$, we infer that 
\[ \enveloping{\cF}
=C^\infty(\cN)\cdot\breve{\tau}_\natural\big(\cD(\cM)\big) .\]
Therefore, $\enveloping{\cF}$ is the $C^\infty(\cN)$-span
of $\tauDpolyF\big(\cD(\cM)\big)$.
%
%
\end{proof}

The next proposition gives an alternative characterization
of the injections $\breve{\tau}_\natural$ in~\eqref{eq:FcJetsTensor}
and~\eqref{eq:Ccontraction1} as the solutions of initial value problems.

\begin{proposition}\label{prop:IVP_TauC}
Let $\breve{\tau}_\natural$ be the injection in the contraction \eqref{eq:FcJetsTensor}. 
Given $x\in \jm^{\cotimes p}$ and $y\in \jet{\cF}^{\cotimes p}$, we have
\[ \etendu{\perturbed{\tau}}(x)=y
\qquad\text{if and only if}\qquad
\begin{cases}
\etendu{h}(y) = 0 \\
\liederivative{\fedosov}(y) = 0 \\
\etendu{\sigma}(y) = x
\end{cases} \]
The same statement holds for $x\in\Cpolym{-p}$ and $y\in\Cpolyf{-p}$.
\end{proposition}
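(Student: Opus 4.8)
The plan is to prove Proposition~\ref{prop:IVP_TauC} by reproducing, almost verbatim, the argument of Proposition~\ref{Mandalay}, replacing the spaces of polyvector fields/forms by the spaces of (poly)jets and invoking the jet-analogue of Lemma~\ref{rmk:Paris}. Accordingly, the first step is to establish this jet-analogue: for every $p\geq 0$ there is a natural identification
\[ \jet{\cF}^{\cotimes p} \xto{\cong} \OO\big(\cM,\hat{S}(T^\vee_\cM)\otimes(\hat{S}(T^\vee_\cM))^{\cotimes p}\big) \]
(and a similarly shifted one for $\Cpolyf{-p}$), obtained by dualizing the identification~\eqref{eq:FDtensorIso} of Lemma~\ref{lem:IVP_TauD} via the $\pbw^\cF$ maps; under this identification the homotopy $h_\natural$ is the Koszul homotopy $h$ acting on the $\OO^\bullet(\cM,-)$-degree, so that for $y\in\jet{\cF}^{\cotimes p}$ the three conditions $h_\natural(y)=0$, $y\in\OO^0\big(\cM,\hat{S}(T^\vee_\cM)\otimes(\hat{S}(T^\vee_\cM))^{\cotimes p}\big)$, and $\breve h_\natural(y)=0$ are equivalent. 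This last equivalence uses, as in Lemma~\ref{rmk:Paris}, that $\breve h_\natural=(\id-h_\natural\liederivative{\varrho})^{-1}h_\natural$ together with the fact that $\liederivative{\varrho}$ raises the $\OO^\bullet$-degree (Lemma~\ref{Varseille} and its analogue for jets), so $h_\natural\liederivative{\varrho}$ preserves the subspace $\ker(h_\natural)$.

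Granting that lemma, the proof of the proposition proceeds exactly as for Proposition~\ref{Mandalay}. For the forward direction, assume $\breve\tau_\natural(x)=y$. Then $\breve\sigma_\natural(y)=\breve\sigma_\natural\breve\tau_\natural(x)=x$ since $\sigma_\natural$ is unchanged by the perturbation; $\breve h_\natural(y)=\breve h_\natural\breve\tau_\natural(x)=0$ because $\breve h_\natural\circ\breve\tau_\natural=0$ in any contraction, hence $h_\natural(y)=0$ by the jet-analogue of Lemma~\ref{rmk:Paris}; and $\liederivative{\fedosov}(y)=\liederivative{\fedosov}\breve\tau_\natural(x)=\breve\tau_\natural(0\cdot x)=0$ because $\breve\tau_\natural$ is a cochain map from the complex with zero differential. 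For the converse, assume $h_\natural(y)=0$, $\liederivative{\fedosov}(y)=0$, $\sigma_\natural(y)=x$; the first condition gives $\breve h_\natural(y)=0$ by the lemma, and then the contraction identity
\[ \id-\breve\tau_\natural\breve\sigma_\natural=\breve h_\natural\liederivative{\fedosov}+\liederivative{\fedosov}\breve h_\natural \]
yields $y-\breve\tau_\natural(x)=y-\breve\tau_\natural\breve\sigma_\natural(y)=\breve h_\natural\liederivative{\fedosov}(y)+\liederivative{\fedosov}\breve h_\natural(y)=0$. The case $x\in\Cpolym{-p}$, $y\in\Cpolyf{-p}$ is identical after the $p$-fold suspension, since suspension commutes with all the structure maps of the contraction.

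The only genuinely new content is the jet-analogue of Lemma~\ref{rmk:Paris}/Lemma~\ref{lem:IVP_TauD}, and here the main obstacle is a bookkeeping one rather than a conceptual one: one must check that dualizing the $C^\infty(\cN)$-module identification $\enveloping{\cF}\cong\OO\big(\cM,\hat S(T_\cM^\vee)\otimes S(T_\cM)\big)$ and passing to completed tensor products over $\cNR=C^\infty(\cN)$ really does carry the homotopy operator $h_\natural$ on jets (the one induced from the Koszul $h$ via~\eqref{eq:htp_UF} and dualization) to the Koszul homotopy acting purely on the $\OO^\bullet(\cM,-)$-degree, and that the filtration $\fF_{-u}$ on $\jet{\cF}^{\cotimes p}$ is exhaustive and complete so that the geometric series $(\id-h_\natural\liederivative{\varrho})^{-1}$ converges. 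Both facts are the dual/completed counterparts of statements already proved for $\enveloping{\cF}^{\otimes p}$ in Section~\ref{sec:Subcontraction_DC}, so once those dualizations are spelled out the rest is, as the paper says, virtually identical to the proof of Proposition~\ref{Mandalay}; we therefore state the lemma and omit the routine verification.
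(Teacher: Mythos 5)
Your proposal is correct and follows essentially the same route as the paper: the paper likewise reduces Proposition~\ref{prop:IVP_TauC} to the argument of Proposition~\ref{Mandalay}, with the jet-analogue of Lemma~\ref{rmk:Paris} (stated in the paper as Lemma~\ref{lem:IVP_TauC}, giving the identification $\jet{\cF}^{\cotimes p}\cong\OO\big(\cM,\hat{S}(T^\vee_\cM)\otimes(\hat{S}(T_\cM\dual))^{\cotimes p}\big)$ and the equivalence of $\etendu{h}(y)=0$, $y\in\OO^0$, and $\etendu{\perturbed{h}}(y)=0$) as the only new ingredient. You have in fact spelled out the details that the paper declares ``virtually identical'' and omits.
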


The proof of Proposition~\ref{prop:IVP_TauC} is virtually identical
to the proof of Proposition~\ref{Mandalay},
one can prove it by the following lemma. 
We omit the proof here.

\begin{lemma}
\label{lem:IVP_TauC}
For every $p\geq 0$,
there is a pair of natural identifications
\begin{gather}
\label{eq:FJtensorIso}
\jet{\cF}^{\cotimes p} \xto{\cong}\OO\big(\cM,\hat{S}(T^\vee_\cM)
\otimes(\hat{S} (T_\cM\dual))^{\cotimes p}\big)
\\
\label{eq:FCpolyIso}
\Cpolyf{-p}\xto{\cong}\OO\big(\cM,\hat{S}(T^\vee_\cM)
\otimes(\hat{S} (T_\cM\dual))^{\cotimes p}\big)[p]
.\end{gather}

Given $y\in \jet{\cF}^{\cotimes p}$, 
the following assertions are equivalent:
\begin{itemize}
\item $\etendu{h}(y)=0$
\item $y\in \OO^0\big(\cM,\hat{S}(T^\vee_\cM)
\otimes(\hat{S} (T_\cM\dual))^{\cotimes p}\big)$
\item $\etendu{\perturbed{h}}(y)=0$
\end{itemize}
Similarly, 
$y\in\Cpolyf{-p}$, 
the following assertions are equivalent:
\begin{itemize}
\item $\etendu{h}(y)=0$
\item $y\in \OO^0\big(\cM,\hat{S}(T^\vee_\cM)
\otimes(\hat{S} (T_\cM\dual))^{\cotimes p}\big)[p]$
\item $\etendu{\perturbed{h}}(y)=0$
\end{itemize}
Note that $\etendu{\perturbed{h}}=
\sum_{k=0}^\infty(\etendu{h}\liederivative{\varrho})^k\etendu{h}=
(\id-\etendu{h}\liederivative{\varrho})^{-1}
\etendu{h}$.
\end{lemma}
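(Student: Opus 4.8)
The plan is to follow the same scheme as Lemma~\ref{rmk:Paris} and Lemma~\ref{lem:IVP_TauD}, with the universal enveloping algebra $\enveloping{\cF}$ replaced throughout by its $C^\infty(\cN)$-linear dual $\jet{\cF}$; only the identification step requires a little extra bookkeeping, and it is the only part I regard as not entirely routine.

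First I would record the identifications \eqref{eq:FJtensorIso} and \eqref{eq:FCpolyIso}. From Section~\ref{sec:CalHFedosov} we already have $\jet{\cF}=\Hom_{C^\infty(\cN)}(\enveloping{\cF},C^\infty(\cN))\cong\OO\big(\cM,\hat{S}(T^\vee_\cM)\otimes\hat{S}(T_\cM\dual)\big)$, the completion $\hat{S}(T_\cM\dual)$ being forced by the order filtration dual to the PBW filtration of $\enveloping{\cF}\cong\GG(\cN;S(\cF))$. Since $C^\infty(\cN)=\OO\big(\cM,\hat{S}(T^\vee_\cM)\big)$, the $p$-fold completed tensor product over $C^\infty(\cN)$ simply glues $p$ copies of the fibrewise factor $\hat{S}(T_\cM\dual)$ over a single copy of the Fedosov factor $\hat{S}(T^\vee_\cM)$, which yields \eqref{eq:FJtensorIso}; composing with the reordering-of-shifts isomorphism $\Cpolyf{-p}=(\nshift\jet{\cF})^{\cotimes p}\xto{\cong}\jet{\cF}^{\cotimes p}[p]$ of \eqref{eq:Shift_Cpoly_Identification} then gives \eqref{eq:FCpolyIso}. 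The point to be careful about is that completed tensor products over $C^\infty(\cN)$ commute both with $C^\infty(\cN)$-dualization and with the factorization $C^\infty(\cN)=\OO(\cM,\hat{S}(T^\vee_\cM))$, and that the Koszul signs are tracked exactly as in \eqref{eq:Shift_Cpoly_Identification}.

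Next I would prove the three-way equivalence. Under \eqref{eq:FJtensorIso} the space $\jet{\cF}^{\cotimes p}$ carries the triple grading $\prescript{\cF}{}{\sC}^{r,q,-p}$ of \eqref{eq:CtripleDeg}, with $r$ the de Rham form-degree on $\cM$; the unperturbed Koszul homotopy $\etendu{h}=h_\natural$ on $\jet{\cF}^{\cotimes p}$ (which becomes the $\breve{h}_\natural$ of \eqref{eq:FcJetsTensor} after perturbation) acts as the Koszul homotopy $h$ on the Fedosov factor $\hat{S}(T^\vee_\cM)$, $C^\infty(\cM)$-linearly extended over the $(\hat{S}(T_\cM\dual))^{\cotimes p}$ factors, and lowers $r$ by one. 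Since $h$ is built from $\eta=\sum_k\partial/\partial\xi_k\otimes y_k$, whose factor $\partial/\partial\xi_k$ kills everything of form-degree zero, $\etendu{h}$ vanishes on $\OO^0\big(\cM,\hat{S}(T^\vee_\cM)\otimes(\hat{S}(T_\cM\dual))^{\cotimes p}\big)$; conversely, the Koszul identity relating $\delta$ and $\eta$ shows, exactly as in Lemma~\ref{rmk:Paris} and Lemma~\ref{lem:IVP_TauD}, that any element annihilated by $\etendu{h}$ already lies in form-degree zero. This is the equivalence of the first two bullet points. For the equivalence with the third, I would use the perturbation formula $\etendu{\perturbed{h}}=(\id-\etendu{h}\liederivative{\varrho})^{-1}\etendu{h}$: by the jet-space analogue of Lemma~\ref{Varseille} the operator $\etendu{h}\liederivative{\varrho}$ strictly lowers the $\hat{S}(T^\vee_\cM)$-filtration of $\jet{\cF}^{\cotimes p}$, so $\id-\etendu{h}\liederivative{\varrho}$ is an invertible filtered operator and hence $\ker\etendu{\perturbed{h}}=\ker\etendu{h}$. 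The statements for $\Cpolyf{-p}$ are obtained by transporting every operator through the degree-shift isomorphism \eqref{eq:FCpolyIso}.

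The hardest part will be the identification step: because one now dualizes over the non-field ring $C^\infty(\cN)$ rather than staying inside $\enveloping{\cF}$, the honest symmetric algebra is replaced by a completed one, and this must be shown to interact correctly with the completed tensor powers, so some care is needed to see that \eqref{eq:FJtensorIso} and \eqref{eq:FCpolyIso} hold on the nose. Once they are in place, the remaining steps are word-for-word parallel to the ones already carried out for $\Tpolyf{}$ and $\Dpolyf{}$, and I expect no further difficulty.
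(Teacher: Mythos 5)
Your identifications \eqref{eq:FJtensorIso}--\eqref{eq:FCpolyIso}, the equivalence of the first and third bullets via the invertibility of $\id-\etendu{h}\liederivative{\varrho}$ on the filtered space, and the implication ``$y$ of form-degree zero $\Rightarrow\etendu{h}(y)=0$'' are all fine and follow the route the paper evidently intends (the paper itself leaves the verification to the reader). The genuine gap is the converse you assert: ``the Koszul identity relating $\delta$ and $\eta$ shows \dots\ that any element annihilated by $\etendu{h}$ already lies in form-degree zero.'' The identity $\delta\eta+\eta\delta=(p+q)\,\id$ on $\OO^p\big(\cM,S^q(T^\vee_\cM)\big)$ yields only that, in total degree $p+q>0$, one has $\ker\eta=\im\eta$; it does not confine $\ker\eta$ to form-degree zero. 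Concretely, as soon as $\cM$ has at least two coordinates, $w=\eta(\xi_1\xi_2)=\pm\,\xi_2\otimes y_1\mp\,\xi_1\otimes y_2\in\OO^1\big(\cM,S^1(T^\vee_\cM)\big)$ is a nonzero element of form-degree one with $\eta(w)=0$ (because $\eta^2=0$), hence $\etendu{h}\big(w\cdot c\big)=0$ for any constant-coefficient polyjet factor $c$, while $w\cdot c\notin\OO^0\big(\cM,\hat{S}(T^\vee_\cM)\otimes(\hat{S}(T_\cM\dual))^{\cotimes p}\big)$. So the implication $\etendu{h}(y)=0\Rightarrow y\in\OO^0(\cdots)$ is not a formal consequence of the Koszul identity, and the appeal to ``exactly as in Lemma~\ref{rmk:Paris} and Lemma~\ref{lem:IVP_TauD}'' does not repair this, since those statements are asserted in the paper without proof and face the same issue at the same point.

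To obtain something usable where the lemma is actually invoked (Proposition~\ref{prop:IVP_TauC} and the morphism properties of $\breve{\tau}_\natural$), note that only the following are needed: the equivalence of the first and third bullets (which you prove correctly), the implication from the second bullet to the first, and the fact that the relevant elements --- those in the image of $\breve{\tau}_\natural=\sum_{n\geq 0}(h_\natural\liederivative{\varrho})^n\tau_\natural$, or products and pairings thereof --- lie in form-degree zero. The latter should be argued directly from the formula: $\tau_\natural$ lands in form-degree zero, $\liederivative{\varrho}$ raises the form-degree by exactly one and $h_\natural$ lowers it by exactly one, so every summand of $\breve{\tau}_\natural(x)$ has form-degree zero. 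As written, your middle bullet cannot be deduced from the first, and this is the one step of the lemma that is not routine.
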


\subsubsection{Proof of Proposition~\ref{prop:TauFormalGpd}}

In what follows, by abuse of notation, by $1$, we always denote both
$1\in C^\infty(\cM)\subset \cD(\cM)$ and
$1\in C^\infty(\cN)\subset\enveloping{\cF}$.
Similarly, by $\one$, we always denote both
the element $\one\in\jm$ and
$\one\in\jet{\cF}$ which are the morphism
of left $C^\infty(\cM)$-modules
$\cD(\cM)\ni u\mapsto u(1)\in C^\infty(\cM)$
and left $C^\infty(\cN)$-modules
$\enveloping{\cF}\ni u\mapsto u(1)\in C^\infty(\cN)$, respectively.
It is clear that
\[ \duality{\one}{u}=\epsilon(u),\quad u\in\cD(\cM) ,\]
where $\epsilon:\cD(\cM)\to C^\infty(\cM)$ is the counit
map. We have a similar identity for $\one\in\jet{\cF}$ as well.

\begin{lemma}\label{lem:TauFormalGpd}
We have the following identities.
\begin{align}
\duality{\tauDpolyF(\xi)}{\tauDpolyF(u)} & =\tauDpolyF(\duality{\xi}{u}),
\quad\forall\xi\in\jm,\ u\in \cD(\cM) ; \label{eq:wuxi81} \\
\tauDpolyF ( u (f)) & =\tauDpolyF (u) \big( \tauDpolyF (f)\big), \quad \forall f
\in C^\infty (\cM), \ u\in \cD(\cM); \label{eq:wuxi82} \\
\tauDpolyF (f) \cdot \tauDpolyF (\xi) & =\tauDpolyF (f \cdot \xi), \quad \forall f
\in C^\infty (\cM), \ \xi\in \jm; 
\label{eq:wuxi83} \\
\tauDpolyF (1) & =1, \quad \quad \tauDpolyF (\one)=\one. \label{eq:wuxi84}
\end{align}
\end{lemma}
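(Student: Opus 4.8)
The four identities in Lemma~\ref{lem:TauFormalGpd} are all of the same flavour: each asserts that $\tauDpolyF$ intertwines a structure map of $\jm$ (or of the pairing $\jm\times\cD(\cM)\to\cR$) with the corresponding structure map of $\jet{\cF}$. The unifying strategy is to reduce everything to the two facts already established: that $\tauDpolyF:\cD(\cM)\to\enveloping{\cF}$ is a morphism of dg Hopf algebroids (Proposition~\ref{pro:Metz}) and that, by Lemma~\ref{lem:tauR}, the action of $\tauDpolyF(u)$ on $\jet{\cF}\cong\sections{\hat S(T_\cM^\vee)}$ (via the algebra isomorphism $\pbwb^\top$) is conjugate to the right-multiplication operator $R_u^\top$ on $\jm$. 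So the first step is to make precise the statement that $\tauDpolyF$ on jets is dual, under $\pbwb^\top$ and its Fedosov analogue, to the inclusion $\cD(\cM)\hookrightarrow\enveloping{\cF}$ — in other words that the pairing between $\jet{\cF}$ and $\enveloping{\cF}$, restricted to the images of $\tauDpolyF$, computes the pairing between $\jm$ and $\cD(\cM)$. This is exactly Equation~\eqref{eq:wuxi81}, so I would prove that one first.

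For Equation~\eqref{eq:wuxi81}, I would use the initial-value-problem characterization of $\tauDpolyF$ on jets from Proposition~\ref{prop:IVP_TauC} (the $p=1$ case): to identify $\tauDpolyF(\xi)$ it suffices to check that a candidate $y\in\jet{\cF}$ satisfies $\breve h_\natural(y)=0$, $\iL_{\fedosov}(y)=0$, and $\sigma_\natural(y)=\xi$. Here the natural candidate is the element of $\jet{\cF}=\Hom_{C^\infty(\cN)}(\enveloping{\cF},C^\infty(\cN))$ given by $u\mapsto$ (the $C^\infty(\cN)$-linear extension, using Lemma~\ref{pro:Orly} that $\enveloping{\cF}$ is the $C^\infty(\cN)$-span of $\tauDpolyF(\cD(\cM))$, of $\tauDpolyF(\xi)$ against $\tauDpolyF$-images). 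The filtration/degree bookkeeping giving $\breve h_\natural(y)=0$ comes from Lemma~\ref{lem:IVP_TauC} (that $\breve h_\natural(y)=0$ iff $y$ lies in $\OO^0$), the $\iL_{\fedosov}$-closedness comes from the fact that $\tauDpolyF$ on both $\cD(\cM)$ and $\jm$ is a cochain map into the $\fedosov$-complexes together with compatibility of the pairing with $\iL_{\fedosov}$ (which holds because $\fedosov$ is a derivation/coderivation — this is the dg Hopf algebroid compatibility), and the $\sigma_\natural$-condition comes from the fact that $\sigma_\natural$ on jets is dual to $\tau_\natural$ on differential operators, which is built into the definitions \eqref{eq:sigma_UF}–\eqref{eq:tau_UF}. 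Once Equation~\eqref{eq:wuxi81} is in hand, Equations~\eqref{eq:wuxi82}, \eqref{eq:wuxi83}, and \eqref{eq:wuxi84} follow by unwinding definitions: \eqref{eq:wuxi82} because $u(f)=\duality{\one}{u\cdot f}$ (or $=\rho_u(f)$) and $\tauDpolyF$ is an algebra morphism preserving $\one$ and $1$; \eqref{eq:wuxi83} because $f\cdot\xi$ is computed by $\duality{f\cdot\xi}{u}=f\cdot\duality{\xi}{u}$ and $\tauDpolyF$ is $\cR$-linear on functions and respects the source/target maps by Proposition~\ref{pro:Metz}; and \eqref{eq:wuxi84} because $\one\in\jm$ and $\one\in\jet{\cF}$ are the counits, which $\tauDpolyF$ respects (again Proposition~\ref{pro:Metz}), while $\tauDpolyF(1)=1$ is immediate since $1$ is the unique $\fedosov$-closed, $\breve h_\natural$-killed, $\sigma_\natural$-normalized lift of $1\in\cR$.

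The main obstacle, as I see it, is bookkeeping rather than conceptual: making the duality statement precise across the web of identifications $\jm\cong\sections{\hat S(T_\cM^\vee)}$ (via $\pbwb^\top$), $\jet{\cF}\cong\OO(\cM,\hat S(T_\cM^\vee)\otimes\hat S(T_\cM^\vee))$, $\enveloping{\cF}\cong\OO(\cM,\hat S(T_\cM^\vee)\otimes S(T_\cM))$, and keeping the Koszul signs straight when transporting the pairing and the right-multiplication operators through the $\pbw$-maps. Concretely, the delicate point is verifying the $\iL_{\fedosov}$-closedness of the candidate element in the IVP for \eqref{eq:wuxi81}: this amounts to checking that the pairing $\jet{\cF}\times\enveloping{\cF}\to C^\infty(\cN)$ is a morphism of complexes, i.e.\ $\fedosov\duality{\eta}{v}=\duality{\iL_{\fedosov}\eta}{v}+(-1)^{|\eta|}\duality{\eta}{\iL_{\fedosov}v}$, which is exactly the statement that $\jet{\cF}$ is the dg $\cF$-jet space dual to the dg algebra $\enveloping{\cF}$ — true by the construction in Section~\ref{sec:CalHdgAbd}, but needing to be invoked with care. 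Everything else is a direct consequence of Propositions~\ref{pro:Metz} and~\ref{prop:IVP_TauC} and Lemmas~\ref{lem:tauR} and~\ref{pro:Orly}.
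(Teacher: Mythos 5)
Your overall architecture matches the paper's: establish the pairing identity \eqref{eq:wuxi81} first via the initial-value-problem characterizations, then deduce \eqref{eq:wuxi82}--\eqref{eq:wuxi84} from Proposition~\ref{pro:Metz}, the already-proven \eqref{eq:wuxi81}, and the spanning Lemma~\ref{pro:Orly}. Your treatment of the last three identities is essentially the paper's. The problem is in your proof of \eqref{eq:wuxi81}, where you deviate from the paper and open two gaps. First, your candidate $y\in\jet{\cF}$ is defined by prescribing $\duality{y}{\tauDpolyF(u)}:=\tauDpolyF(\duality{\xi}{u})$ and extending $C^\infty(\cN)$-linearly over the span furnished by Lemma~\ref{pro:Orly}. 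That lemma only gives \emph{surjectivity} of $C^\infty(\cN)\cdot\tauDpolyF\big(\cD(\cM)\big)\to\enveloping{\cF}$, not a presentation, so you must check that your prescription is compatible with all $C^\infty(\cN)$-linear relations among the elements $\tauDpolyF(u)$; this well-definedness is neither automatic nor addressed. Second, you assert that the condition $\sigma_\natural(y)=\xi$ is ``built into the definitions'' because $\sigma_\natural$ on jets is dual to $\tau_\natural$ on differential operators. It is not: $\sigma_\natural$ is defined by projecting the $\OO\big(\cM,\hat{S}(T^\vee_\cM)\big)$-coefficients, and the compatibility of $\sigma_\natural$ with the pairing, namely $\sigma_\natural\big(\duality{\tilde\xi}{\tilde u}\big)=\duality{\sigma_\natural(\tilde\xi)}{\sigma_\natural(\tilde u)}$, is exactly the nontrivial step the paper proves by an explicit local multi-index computation (Equation~\eqref{eq:SigmaPreservePair}); you cannot skip it.

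Both gaps disappear if you follow the paper's route for \eqref{eq:wuxi81}: instead of manufacturing a candidate element of $\jet{\cF}$ and using the $p=1$ case of Proposition~\ref{prop:IVP_TauC}, apply the $p=0$ case directly to the \emph{scalar} $\duality{\tauDpolyF(\xi)}{\tauDpolyF(u)}\in C^\infty(\cN)$. One checks that this function is killed by $\breve{h}_\natural$ (Lemmas~\ref{lem:IVP_TauD} and~\ref{lem:IVP_TauC}, since both arguments live in the $\OO^0$-components and the pairing preserves this), is $\iL_{\fedosov}$-closed (Leibniz rule for the pairing plus $\iL_{\fedosov}\circ\tauDpolyF=0$), and is sent by $\sigma_\natural$ to $\duality{\xi}{u}$ (this is where \eqref{eq:SigmaPreservePair} is genuinely needed). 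The IVP for functions then identifies it with $\tauDpolyF(\duality{\xi}{u})$, with no candidate construction and no well-definedness issue. I recommend you rewrite the proof of \eqref{eq:wuxi81} along these lines and supply the local computation behind \eqref{eq:SigmaPreservePair}; the rest of your argument then goes through.
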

\begin{proof}
(1) We first show that 
\begin{equation}\label{eq:SigmaPreservePair}
\sigmaDpolyF\big(\duality{\tilde\xi \, }{\, \tilde u}\big) = \duality{\sigmaDpolyF(\tilde\xi) \, }{ \, 
\sigmaDpolyF(\tilde u)}, \qquad \forall \tilde\xi \in \jet{\cF}, \tilde u \in \enveloping{\cF}.
\end{equation}
We use the same multi-indices symbols as in the proof of Lemma~\ref{lem:tauR}:
$\check{x}^{\odot I}$, $\partial_x^{\odot J}$, $y^K$, $\partial_y^L$.
Given any $\tilde\xi \in \jet{\cF}$ and $\tilde u \in \enveloping{\cF}$,
we have local expressions
\begin{align*}
\tilde\xi & = \sum_{I,J \in \NO^{m+r}} \tilde\xi_{I,J} \cdot y^I \otimes y^J, \\
\tilde u & = \sum_{K,L \in \NO^{m+r}} \tilde u_{K,L} \cdot y^K \otimes \partial_y^L,
\end{align*}
where $\tilde\xi_{I,J}$ and $\tilde u_{K,L}$ are local functions on $\cM$. Then
\begin{align*}
\sigmaDpolyF\big(\duality{\tilde\xi \, }{\, \tilde u}\big)
&= \sum_{J \in \NO^{m+r}} (-1)^{|\tilde u_{0,J}||y^J|} \,
\tilde\xi_{0,J}\cdot\tilde u_{0,J}\cdot\duality{y^J}{\partial_y^J} \\
&= \sum_{J \in \NO^{m+r}} (-1)^{|\tilde u_{0,J}||\check{x}^{\odot J}|} \,
\tilde\xi_{0,J}\cdot\tilde u_{0,J}\cdot
\duality{(\pbw^\top)\inv(\check{x}^{\odot J})}{\pbw(\partial_x^{\odot J})} \\
&= \sum_{J,L \in \NO^{m+r}}
\duality{\tilde\xi_{0,J}\cdot(\pbw^\top)\inv(\check{x}^{\odot J}) \, }{\, \tilde u_{0,L}
\cdot\pbw(\partial_x^{\odot L})} \\
&= \duality{\sigmaDpolyF(\tilde\xi) \, }{ \, \sigmaDpolyF(\tilde u)}.
\end{align*}
This proves Equation~\eqref{eq:SigmaPreservePair}.

By Proposition~\ref{prop:IVP_TauD} and Proposition~\ref{prop:IVP_TauC},
it is clear that $\iL_{\fedosov}\big(\duality{\tauDpolyF(\xi)}{\tauDpolyF(u)}\big)=0$.
Furthermore, by Lemma~\ref{lem:IVP_TauD} and Lemma~\ref{lem:IVP_TauC},
we have $h_\natural\big(\duality{\tauDpolyF(\xi)}{\tauDpolyF(u)}\big)=0$. 
Moreover, by Equation~\eqref{eq:SigmaPreservePair},
we have $\sigmaDpolyF\big(\duality{\tauDpolyF(\xi)}
{\tauDpolyF(u)}\big)=\duality{\xi}{u}$.
Thus, by Proposition~\ref{prop:IVP_TauD} with $p=0$,
it follows that $\tauDpolyF(\duality{\xi}{u})
=\duality{\tauDpolyF(\xi)}{\tauDpolyF(u)}$, as desired.

(2) Note that \[ u(f)=\epsilon(u\cdot f) ,\]
where $\epsilon$ is the counit of $\cD(\cM)$,
and $f$ on the right-hand-side is considered
as a differential operator of order zero.
By Proposition~\ref{pro:Metz}, 
\[ \tauDpolyF\big(u(f)\big)
=\epsilon\big(\tauDpolyF(u)\cdot\tauDpolyF(f)\big)
=\tauDpolyF(u)\big(\tauDpolyF(f)\big) .\]
This proves Equation~\eqref{eq:wuxi82}.


(3) According to Proposition~\ref{pro:Metz},
for any $f\in C^\infty (\cM)$, $\xi \in \jm$ and
$u \in \cD(\cM)$, we have
\begin{align*}
\duality{ \tauDpolyF (f) \cdot \tauDpolyF (\xi)}{\tauDpolyF (u)}
 & =\duality{ \tauDpolyF (\xi)}{\tauDpolyF (f) \cdot\tauDpolyF (u)}	
=\duality{ \tauDpolyF (\xi)}{\tauDpolyF (f\cdot u)} \\
& =\tauDpolyF(\duality{ \xi}{f\cdot u})
=\tauDpolyF(\duality{f\cdot \xi}{u})
=\duality{\tauDpolyF (f\cdot \xi)}{\tauDpolyF(u)}
\end{align*}
According to Lemma~\ref{pro:Orly}, $\enveloping{\cF}$
is the $C^\infty (\cN)$-span of $\tauDpolyF \cD(\cM)$,
it thus follows that
\[ \tauDpolyF (f) \cdot \tauDpolyF (\xi)=\tauDpolyF (f\cdot \xi). \]

(4) Note that $\tauDpolyF (1)=1$ essentially follows from
the construction. To prove the second identity,
for any $u \in \cD(\cM)$, we have
\[ \duality{\tauDpolyF(\one)}{\tauDpolyF (u)}
=\tauDpolyF(\duality{\one}{u})
=\tauDpolyF \circ \epsilon (u)
=\epsilon\circ \tauDpolyF(u)
=\duality{\one}{\tauDpolyF(u)} \]
Thus, by Lemma~\ref{pro:Orly}, it follows that $\tauDpolyF(\one)=\one$.
\end{proof}

\begin{lemma}
\label{lem:Lyon}
The following diagrams
\[ \begin{tikzcd}
\jm \ar[r,"\tauDpolyF"] \ar[d, bend right = 60, "\varepsilon"'] &
\jet{\cF} \ar[d, bend left = 60, "\varepsilon"] \\
C^\infty(\cM) \ar[r, "\tauDpolyF"] \ar[u, shift left, "\alpha"]
\ar[u, shift right, "\beta"'] & C^\infty(\cN)
\ar[u, shift left, "\alpha"] \ar[u, shift right, "\beta"']
\end{tikzcd} \]
commute.
\end{lemma}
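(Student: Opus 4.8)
The plan is to reduce the three commutativities to algebraic identities for $\tauDpolyF$ that are already in hand. First I would recall from Appendix~\ref{sec:FormalGpdJet} the explicit form of the structure maps of the formal groupoid $\jet{\cF}$ (and, in parallel, of $\jm$): the counit is pairing with the unit, $\varepsilon(\tilde\xi)=\duality{\tilde\xi}{1}$ for $\tilde\xi\in\jet{\cF}$; and the source and target maps are, for $g\in C^\infty(\cN)$, $\alpha(g)=g\cdot\one$ and $\beta(g)=\one\cdot g$, where $\one\in\jet{\cF}$ is the unit of the multiplication $*$ (equivalently $\duality{\one}{u}=\epsilon(u)$ for all $u\in\enveloping{\cF}$, with $\epsilon$ the counit of the Hopf algebroid $\enveloping{\cF}$) and the two module structures are the left and right $C^\infty(\cN)$-module structures on $\jet{\cF}$ coming from the bialgebroid structure. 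The same formulas, with the same signs, describe $\varepsilon,\alpha,\beta$ on the $\jm$ side.

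The counit square is then immediate from $\tauDpolyF(1)=1$ (Equation~\eqref{eq:wuxi84}) and pairing-preservation (Equation~\eqref{eq:wuxi81}): for $\xi\in\jm$,
\[
\varepsilon\bigl(\tauDpolyF(\xi)\bigr)=\duality{\tauDpolyF(\xi)}{1}=\duality{\tauDpolyF(\xi)}{\tauDpolyF(1)}=\tauDpolyF\bigl(\duality{\xi}{1}\bigr)=\tauDpolyF\bigl(\varepsilon(\xi)\bigr).
\]
For the source square I would use Equations~\eqref{eq:wuxi83} and~\eqref{eq:wuxi84}: for $f\in C^\infty(\cM)$,
\[
\tauDpolyF\bigl(\alpha(f)\bigr)=\tauDpolyF(f\cdot\one)=\tauDpolyF(f)\cdot\tauDpolyF(\one)=\tauDpolyF(f)\cdot\one=\alpha\bigl(\tauDpolyF(f)\bigr).
\]
For the target square I first need the right-module analogue of Equation~\eqref{eq:wuxi83}, namely $\tauDpolyF(\xi\cdot f)=\tauDpolyF(\xi)\cdot\tauDpolyF(f)$; this I would prove exactly as in part~(3) of Lemma~\ref{lem:TauFormalGpd}, pairing both sides against $\tauDpolyF(u)$ and using Equation~\eqref{eq:wuxi81}, the fact that $\tauDpolyF:\cD(\cM)\to\enveloping{\cF}$ is an algebra morphism respecting the target map (Proposition~\ref{pro:Metz}), and the density statement of Lemma~\ref{pro:Orly} that $\enveloping{\cF}=C^\infty(\cN)\cdot\tauDpolyF(\cD(\cM))$. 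Granting it, $\tauDpolyF(\beta(f))=\tauDpolyF(\one\cdot f)=\tauDpolyF(\one)\cdot\tauDpolyF(f)=\one\cdot\tauDpolyF(f)=\beta(\tauDpolyF(f))$.

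I expect the only real obstacle to be bookkeeping: making sure that the left/right $C^\infty$-module conventions and the Koszul signs in the definitions of $\alpha$ and $\beta$ on jets, and in the source/target maps of the Hopf algebroid of differential operators, match consistently between the $\cM$- and $\cN$-sides. Since $\tauDpolyF$ is compatible with the left module structures (it intertwines the source maps, by Proposition~\ref{pro:Jolly}~(2) and Proposition~\ref{pro:Metz}) and preserves every pairing, no genuinely new sign enters, so once the conventions of Appendix~\ref{sec:FormalGpdJet} are spelled out this step should be routine.
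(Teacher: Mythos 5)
Your proposal is correct and follows essentially the same route as the paper: the counit square is verbatim the paper's argument, and the source and target squares rest on the same ingredients (pairing preservation \eqref{eq:wuxi81}, the algebra-morphism property of $\tauDpolyF$ on $\cD(\cM)$ from Proposition~\ref{pro:Metz}, the unit identities \eqref{eq:wuxi84}, and the density statement of Lemma~\ref{pro:Orly}), merely repackaged by writing $\alpha(f)=f\cdot\one$ and $\beta(f)=\one\cdot f$ instead of pairing $\tauDpolyF(\alpha(f))$ and $\tauDpolyF(\beta(f))$ directly against $\tauDpolyF(u)$ as the paper does. The one item not literally available in the paper, the right-module analogue $\tauDpolyF(\xi\cdot f)=\tauDpolyF(\xi)\cdot\tauDpolyF(f)$, does indeed follow by the same pairing-plus-density argument as Lemma~\ref{lem:TauFormalGpd}~(3), so there is no gap.
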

\begin{proof}
(1) We prove that $\varepsilon\circ\tauDpolyF=\tauDpolyF\circ\varepsilon$.

Using Equation~\eqref{eq:wuxi84}
for any $\xi\in\jm$, we have
\[ \varepsilon\circ\tauDpolyF(\xi)=\duality{\tauDpolyF(\xi)}{1}=
\duality{\tauDpolyF(\xi)}{\tauDpolyF(1)}=\tauDpolyF(\duality{\xi}{1})
=\tauDpolyF\circ\varepsilon(\xi) .\]

(2) We prove that $\tauDpolyF\circ\alpha=\alpha\circ\tauDpolyF$.

By Proposition~\ref{pro:Metz} and Lemma~\ref{lem:TauFormalGpd},
for any $f\in C^\infty(\cM)$ and $u\in\cD(\cM)$, we have
\[ \duality{\tauDpolyF\big(\alpha(f)\big)}{\tauDpolyF(u)}
=\tauDpolyF\big(f\cdot\epsilon(u)\big)
=\tauDpolyF(f)\cdot\epsilon\big(\tauDpolyF(u)\big) 
=\duality{\alpha\big(\tauDpolyF(f)\big)}{\tauDpolyF(u)} .\]
Thus, it follows from Lemma~\ref{pro:Orly}
that $\tauDpolyF\circ\alpha=\alpha\circ\tauDpolyF$.

(3) We prove that $\tauDpolyF\circ\beta=\beta\circ\tauDpolyF$.

By Proposition~\ref{pro:Metz} and Lemma~\ref{lem:TauFormalGpd},
for any $f\in C^\infty(\cM)$ and $u\in\cD(\cM)$, we have
\begin{align*}
\duality{(\tauDpolyF\big(\beta(f)\big)}{\tauDpolyF(u)}
& =\tauDpolyF\big(\duality{\beta(f)}{u}\big)=(-1)^{|u||f|} \,
\tauDpolyF\big(\epsilon(u \cdot f)\big) \\
& =(-1)^{|u||f|} \, \epsilon\big(\tauDpolyF(u)\cdot\tauDpolyF(f)\big)
=\duality{\beta\big(\tauDpolyF(f)\big)}{\tauDpolyF(u)}
.\end{align*} 
Thus, it follows from Lemma~\ref{pro:Orly}
that $\tauDpolyF\circ\beta=\beta\circ\tauDpolyF$.
\end{proof}

\begin{lemma}
\label{lem:nablag}
For all $u\in\cD(\cM)$ and $\xi\in\jm$,
\begin{equation}\label{eq:nablag}
 \nablag_{\tauDpolyF(u)}\big(\tauDpolyF(\xi)\big)
=\tauDpolyF\big(\nablag_{u}\xi\big).
\end{equation}
\end{lemma}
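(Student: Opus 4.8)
The plan is to verify Equation~\eqref{eq:nablag} by exploiting the characterization of $\tauDpolyF$ as the solution of an initial value problem (Proposition~\ref{prop:IVP_TauC}) together with the already-established compatibilities of $\tauDpolyF$ with the algebra and coalgebra structures of differential operators and jets. Recall that the Grothendieck connection $\nablag$ on $\jm$ is one of the two canonical $\cD(\cM)$-module structures introduced before Equation~\eqref{eq:L3}; in Hopf-algebroid terms, for $u\in\cD(\cM)$ and $\xi\in\jm$, $\nablag_u\xi$ is defined by the pairing formula $\duality{\nablag_u\xi}{v}=\rho_{u_+}\big(\duality{\xi}{u_-\cdot v}\big)$ (equivalently, via the counit and multiplication of $\enveloping{\cF}$ on $\jet{\cF}$). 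The same formula, applied inside $\enveloping{\cF}$ and $\jet{\cF}$, defines $\nablag_{\tauDpolyF(u)}$ on $\jet{\cF}$.

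First I would reduce the problem to checking the three initial-value conditions of Proposition~\ref{prop:IVP_TauC} (with $p=1$) for the element $y:=\nablag_{\tauDpolyF(u)}\big(\tauDpolyF(\xi)\big)$. Namely, I must show: (a) $\breve h_\natural(y)=0$; (b) $\liederivative{\fedosov}(y)=0$; and (c) $\sigma_\natural(y)=\nablag_u\xi$. For (c), I would prove that $\sigma_\natural$ intertwines the Grothendieck connections: $\sigma_\natural\big(\nablag_{\tilde u}\tilde\xi\big)=\nablag_{\sigma_\natural(\tilde u)}\sigma_\natural(\tilde\xi)$ for all $\tilde u\in\enveloping{\cF}$, $\tilde\xi\in\jet{\cF}$. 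This follows by expressing $\nablag$ through the structure maps $m,\Delta,\epsilon,\rho$ and invoking: that $\sigma_\natural$ on $\enveloping{\cF}\to\cD(\cM)$ is an algebra and coalgebra morphism compatible with counits (the $\sigma$-side counterpart of Proposition~\ref{pro:Metz}, or rather its left inverse property), and that $\sigma_\natural$ preserves the pairing, which is Equation~\eqref{eq:SigmaPreservePair} from the proof of Lemma~\ref{lem:TauFormalGpd}. Applying $\sigma_\natural$ to $y$ then gives $\nablag_{\sigma_\natural\tauDpolyF(u)}\sigma_\natural\tauDpolyF(\xi)=\nablag_u\xi$ since $\sigma_\natural\circ\tauDpolyF=\id$.

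For (b), I would use that $\tauDpolyF$ is a cochain map, so $\liederivative{\fedosov}\tauDpolyF(u)=0$ and $\liederivative{\fedosov}\tauDpolyF(\xi)=0$; since $\fedosov$ is a homological vector field, $\liederivative{\fedosov}$ acts as a derivation with respect to all the Hopf-algebroid/formal-groupoid operations (this is exactly the statement, recorded earlier, that $\jet{\cF}$ and $\enveloping{\cF}$ are \emph{dg} Hopf algebroid/formal groupoid over $(\cNR,\fedosov)$, i.e.\ $[\cQ,\text{structure maps}]=0$). Hence $\liederivative{\fedosov}$ annihilates $\nablag_{\tauDpolyF(u)}\big(\tauDpolyF(\xi)\big)$ by the Leibniz rule. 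For (a), by Lemma~\ref{lem:IVP_TauC} it suffices to check $y\in\OO^0\big(\cM,\hat S(T^\vee_\cM)\otimes\hat S(T_\cM^\vee)\big)$, i.e.\ $y$ has form-degree zero; since by Lemma~\ref{lem:IVP_TauD} and Lemma~\ref{lem:IVP_TauC} both $\tauDpolyF(u)$ and $\tauDpolyF(\xi)$ lie in form-degree zero, and the Grothendieck action is built from $C^\infty(\cN)$-multilinear structure maps that do not raise form-degree, $y$ stays in form-degree zero. Then Proposition~\ref{prop:IVP_TauC} immediately yields $y=\tauDpolyF\big(\nablag_u\xi\big)$.

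The main obstacle I anticipate is step (c): making rigorous the claim that $\sigma_\natural$ on $\enveloping{\cF}$ and $\jet{\cF}$ is compatible with the formal-groupoid structure maps (product $*$, antipode $S$, counit, comultiplication) and hence with $\nablag$. Unlike $\tauDpolyF$, for which Proposition~\ref{pro:Metz} and Proposition~\ref{prop:TauFormalGpd} have been (or are being) proved, the corresponding statements for $\sigma_\natural$ are not separately stated; one has to either derive them from the fact that $\sigma_\natural\circ\tauDpolyF=\id$ together with Lemma~\ref{pro:Orly} (which says $\enveloping{\cF}$ is the $C^\infty(\cN)$-span of $\tauDpolyF(\cD(\cM))$, so $\sigma_\natural$ is determined by $C^\infty(\cN)$-semilinearity and its values on the image of $\tauDpolyF$), or verify the intertwining of $\nablag$ by a direct local computation in the coordinates $x_i,\xi_i,y_i$ using the explicit formula~\eqref{eq:GrothedieckCntnMfd} and the local description of $\sigma_\natural$ via $\pbw$ and $\pbw^\cF$. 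I expect the cleanest route is the former: use Lemma~\ref{pro:Orly} to write an arbitrary $\tilde u\in\enveloping{\cF}$ as $\sum_i g_i\cdot\tauDpolyF(u_i)$ with $g_i\in C^\infty(\cN)$, then compute $\nablag_{\tilde u}\tilde\xi$ and apply $\sigma_\natural$ term by term, reducing everything to the already-proven identities for $\tauDpolyF$ (Lemma~\ref{lem:TauFormalGpd}, Lemma~\ref{lem:Lyon}) and the semilinearity/pairing-preservation of $\sigma_\natural$.
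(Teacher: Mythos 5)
Your overall strategy (verify the three conditions of the initial-value-problem characterization, Proposition~\ref{prop:IVP_TauC}) is in the spirit of how the paper handles Lemma~\ref{lem:tauR} and identity~\eqref{eq:wuxi81}, but it is not the route the paper takes for this lemma, and your step (c) rests on a false intermediate claim. The general intertwining you propose to prove, $\sigma_\natural\big(\nablag_{\tilde u}\tilde\xi\big)=\nablag_{\sigma_\natural(\tilde u)}\sigma_\natural(\tilde\xi)$ for \emph{all} $\tilde u\in\enveloping{\cF}$ and $\tilde\xi\in\jet{\cF}$, fails: unlike $\tauDpolyF$, the surjection $\sigma_\natural$ is not a morphism of the algebraic structures (it only preserves the pairing, Equation~\eqref{eq:SigmaPreservePair}). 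Concretely, take $\tilde u=\partial/\partial y_j$ and $\tilde\xi=y_j\cdot\one$ (with $|x_j|=0$); then $\duality{\nablag_{\tilde u}\tilde\xi}{1}=\partial_{y_j}(y_j)-\duality{y_j\one}{\partial/\partial y_j}=1$, so $\varepsilon\big(\sigma_\natural(\nablag_{\tilde u}\tilde\xi)\big)=1$, whereas $\sigma_\natural(y_j\cdot\one)=0$ because $\sigma(y_j)=0$, and hence $\nablag_{\sigma_\natural(\tilde u)}\sigma_\natural(\tilde\xi)=0$. You only need the intertwining for $\tilde u=\tauDpolyF(u)$ and $\tilde\xi=\tauDpolyF(\xi)$, and for those elements it does hold --- but verifying it amounts to expanding $\duality{\nablag_{\tauDpolyF(u)}\tauDpolyF(\xi)}{\tauDpolyF(v)}$ via the definition of $\nablag$ and the compatibilities of $\tauDpolyF$ with multiplication and the pairing, which is exactly the computation constituting the paper's entire proof.

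The paper's argument is more economical: it first reduces to $u=X\in\sections{T_\cM}$ using that $\tauDpolyF$ is an algebra morphism (a reduction you omit, and which you would want, since for general $u$ the Grothendieck action involves the Sweedler-type decomposition $\rho_{u_+}\big(\duality{\xi}{u_-\cdot v}\big)$); it then shows directly, by Proposition~\ref{pro:Metz} and Lemma~\ref{lem:TauFormalGpd}, that $\duality{\nablag_{\tauDpolyF(X)}\tauDpolyF(\xi)}{\tauDpolyF(v)}=\duality{\tauDpolyF(\nablag_X\xi)}{\tauDpolyF(v)}$ for all $v$, and concludes from Lemma~\ref{pro:Orly} together with the $\cNR$-linearity of the pairing. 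Once that computation is done, your conditions (a) and (b) become superfluous: the nondegeneracy/span argument finishes the proof without invoking Proposition~\ref{prop:IVP_TauC} at all. So the core calculation you would eventually be forced into is correct and coincides with the paper's, but the IVP scaffolding as written should either be discarded or have its step (c) restricted to the image of $\tauDpolyF$ and proved by that same direct calculation.
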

\begin{proof}
Since $\tauDpolyF$ preserves the algebra structures of $\cD(\cM)$ and $\enveloping{\cF}$, it suffices to verify Equation~\eqref{eq:nablag} for the case $u=X \in \sections{T_\cM}$. 

By Proposition~\ref{pro:Metz} and Lemma~\ref{lem:TauFormalGpd}, for any $v\in \cD(\cM)$, we have
\[ \begin{split}
 \duality{\nablag_{\tauDpolyF(X)}\big(\tauDpolyF(\xi)\big)}{\tauDpolyF(v)} 
=& (\tauDpolyF(X))\big(\duality{\tauDpolyF(\xi)}{\tauDpolyF(v)} \big) - (-1)^{|X||\xi|}
\duality{\tauDpolyF(\xi)}{\tauDpolyF(X) \cdot \tauDpolyF(v)} \\
=& \tauDpolyF\big(X(\duality{\xi}{v})\big) -(-1)^{|X||\xi|}
\tauDpolyF(\duality{\xi}{X \cdot v}) \\
=& \tauDpolyF\big(\duality{\nablag_{X}\xi}{v} \big) = \duality{\tauDpolyF\big(\nablag_{X}\xi\big)}{\tauDpolyF(v)}.
\end{split} \]
Since $\enveloping{\cF}$
is the $C^\infty(\cN)$-span of $\tauDpolyF\sD(\cM)$
according to Lemma~\ref{pro:Orly}, it thus follows that
$\nablag_{\tauDpolyF (X)} \big(\tauDpolyF (\xi) \big)=\tauDpolyF \big(\nablag_{X} \xi\big)$.
\end{proof}

%
%
%

As earlier, let $\cR = C^\infty(\cM)$ and $\cNR = C^\infty(\cN)$.
Now we are ready to compelete the proof of Proposition~\ref{prop:TauFormalGpd}.

\begin{proof}[Proof of Proposition~\ref{prop:TauFormalGpd}]
From Lemma~\ref{lem:Lyon}, we already know that $\tauDpolyF$ respects
the source, target and unit morphisms.
Now we prove that $\tauDpolyF$ is a morphism of algebras.
According to Proposition~\ref{pro:Metz}, the map
$\tauDpolyF:\cD(\cM)\to\enveloping{\cF}$
respects the coalgebra structures. 
Hence, for any $u \in \cD(\cM)$ and $\xi,\eta\in\jm$,
\begin{align*}
\duality{\tauDpolyF(\xi)\tauDpolyF(\eta)}{\tauDpolyF(u)}
& =\duality{\tauDpolyF(\xi)\otimes\tauDpolyF(\eta)}{\Delta(\tauDpolyF(u))} \\
& =\duality{(\tauDpolyF\otimes\tauDpolyF)(\xi\otimes\eta)}{(\tauDpolyF\otimes\tauDpolyF)\Delta(u)} \\
& =\tauDpolyF\duality{\xi\otimes\eta}{\Delta(u)}
=\duality{\tauDpolyF(\xi\eta)}{\tauDpolyF(u)}
.\end{align*}
It thus follows that
\[ \tauDpolyF(\xi)\tauDpolyF(\eta)=\tauDpolyF(\xi\eta) ,\]
since $\enveloping{\cF}$ is the $\cNR$-span of $\tauDpolyF\cD(\cM)$
according to Lemma~\ref{pro:Orly}. Hence, the injection
$\tauDpolyF:\jm\to\jet{\cF}$ is a morphism of algebras.

Next, we show that $\tauDpolyF$ is a morphism of coalgebras.

First of all, we claim that
$\enveloping{\cF}\otimes_{\cNR}^{rl}\enveloping{\cF}$
is the $\cNR$-span of
$\tauDpolyF\cD(\cM)\otimes_{\cR}^{rl}\tauDpolyF\cD(\cM)$.

By Lemma~\ref{pro:Orly}, any element in $\enveloping{\cF}$
can be expressed as $\sum_i f_i\cdot\tauDpolyF(u_i)$
for some $f_i\in\cNR$ and $u_i\in\cD(\cM)$, and thus any element
in $\enveloping{\cF}\otimes_{\cNR}^{rl}\enveloping{\cF}$ can be written as
\[ \sum_j\tilde{v}_j\otimes^{rl}
\Big(\sum_i f_{i,j}\cdot\tauDpolyF(u_{i,j})\Big)
=\sum_{i,j} (\tilde{v}_j\cdot f_{i,j})\otimes^{rl}\tauDpolyF(u_{i,j})
\in\enveloping{\cF}\otimes_{\cR}^{rl}\tauDpolyF\cD(\cM) ,\]
where $\tilde{v}_j\in\enveloping{\cF}$, $f_{i,j}\in\cNR$
and $u_{i,j}\in\cD(\cM)$. Thus, we know that
$\enveloping{\cF}\otimes_{\cNR}^{rl}\enveloping{\cF}$
is the $\cNR$-span of $\enveloping{\cF}\otimes_{\cR}^{rl}\tauDpolyF\sD(\cM)$.
Then, applying Lemma~\ref{pro:Orly} to the left $\enveloping{\cF}$,
we prove the claim: $\enveloping{\cF}\otimes_{\cNR}^{rl}\enveloping{\cF}$
is the $\cNR$-span of $\tauDpolyF\cD(\cM)\otimes_{\cR}^{rl}\tauDpolyF\cD(\cM)$.

Since $\tauDpolyF:\cD(\cM)\to\enveloping{\cF}$ respects the multiplications,
for any $\xi\in\jm$ and $u,v\in\cD(\cM)$, we have
\begin{align*}
\duality{\Delta(\tauDpolyF(\xi))}{\tauDpolyF(u)\otimes^{rl}\tauDpolyF(v)}
&= \duality{\tauDpolyF(\xi)}{\tauDpolyF(u)\tauDpolyF(v)} 
=\duality{\tauDpolyF(\xi)}{\tauDpolyF(u v)} \\
&= \tauDpolyF\big(\duality{\xi}{uv}\big) 
=\tauDpolyF\big(\duality{\Delta\xi}{u\otimes^{rl} v}\big) \\
&= \duality{(\tauDpolyF\otimes\tauDpolyF)\big(\Delta(\xi)\big)}
{\tauDpolyF(u)\otimes^{rl}\tauDpolyF(v)}
.\end{align*}
The last equality follows from Lemma~\ref{lem:TauFormalGpd},
Proposition~\ref{pro:Metz}, and Equation~\eqref{eq:HKG}: 
\[ \duality{\xi'\otimes^{\beta\alpha}\eta' \, }{\, u \otimes^{rl} v}
= (-1)^{|u||\eta'|}\duality{\xi'}{u\cdot\duality{\eta'}{v}} .\]

Since $\enveloping{\cF}\otimes_{\cNR}^{rl}\enveloping{\cF}$
is the $\cNR$-span of $\tauDpolyF\cD(\cM)\otimes_{\cR}^{rl}\tauDpolyF\cD(\cM)$,
it follows that
\[ \Delta(\tauDpolyF(\xi))=(\tauDpolyF\otimes\tauDpolyF)\Delta(\xi) .\]
Hence $\tauDpolyF:\jm\to\jet{\cF}$ respects the coproducts.

Finally, we need to prove that $\tauDpolyF$ respects the antipodes. 
By Equation \eqref{eq:antipode}, Lemma~\ref{lem:TauFormalGpd}
and Lemma~\ref{lem:nablag}, for any $\xi\in\jm$ and $u\in \cD(\cM)$, we have
\begin{align*}
\duality{S\tauDpolyF(\xi)}{\tauDpolyF(u)}
& = (-1)^{|\xi||u|} \duality{\nablag_{\tauDpolyF(u)} \tauDpolyF(\xi)}{1}
=\duality{\tauDpolyF(\nablag_u \xi )}{\tauDpolyF(1)} \\
& =\tauDpolyF\big(\duality{\nablag_u \xi}{1} \big)
=\tauDpolyF\big(\duality{S\xi}{u} \big)
=\duality{\tauDpolyF \big( S(\xi)\big)}{\tauDpolyF(u)}.
\end{align*}
Hence, it follows from` Lemma~\ref{pro:Orly} that
\[ S\tauDpolyF(\xi)=\tauDpolyF S(\xi). \]
Thus $\tauDpolyF:\jm\to\jet{\cF}$ respects the antipodes.

This concludes the proof.
\end{proof}

Now let us recall the following main theorem in~\cite{paper-zero}.

\begin{theorem}\cite{paper-zero}
\label{thm:paper-zero}
Let $\cL_i\to \cM_i$, $i=1, 2$, be dg Lie algebroids.
Assume that we have the following
\begin{itemize}
\item there is a morphism of dg formal groupoids
\[ \breve{\tau}: \jet{\cL_1} \to \jet{\cL_2} \]
\item there is a morphism of dg bialgebroids
\[ \breve{\tau}: \enveloping{\cL_1} \to \enveloping{\cL_2} \]
\item there is a morphism of dg algebras
\[ \breve{\tau}: C^\infty(\cM_1)\to C^\infty(\cM_2) \]
\end{itemize}
such that
\[ \breve{\tau}\pairing{\xi}{u}=\pairing{\breve{\tau}\xi}{\breve{\tau}u},
\qquad\forall\xi\in\jet{\cL_1}\quad\text{and}\quad u\in\enveloping{\cL_1} \]
Then $\breve{\tau}$ preserves the Gerstenhaber brackets,
cup products and the calculus operations $\iI$, $\iL$ and $B$,
an in particular induces a morphism of calculus from 
$\calculus_H(\cL_1,\cQ_1)$ to $\calculus_H(\cL_2,\cQ_2)$.
\end{theorem}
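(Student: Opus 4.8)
The plan is to reduce the statement to the observation that every operation of the calculus $\calculus_H(\cL_i,\cQ_i)$ described in Section~\ref{sec:CalHdgAbd} --- the cup products on $\totDpolyL{\bullet}$ and $\totCpolyL{\bullet}$, the Gerstenhaber bracket $\gerstenhaber{\argument}{\argument}$, the contraction $\iI$, the Lie derivative $\iL$, and the \RinehartConnes operator $B$ --- is given by an \emph{explicit} formula built only from the following primitive ingredients: the multiplication, comultiplication $\Delta$, counit $\epsilon$ and source/target maps of the Hopf algebroid $\enveloping{\cL_i}$; the multiplication $*$, comultiplication, counit $\counit$, antipode $S$ and source/target maps $\alpha,\beta$ of the formal groupoid $\jet{\cL_i}$; the pairing $\pairing{\argument}{\argument}\colon\jet{\cL_i}\times\enveloping{\cL_i}\to C^\infty(\cM_i)$; and the $C^\infty(\cM_i)$-linear structure. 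By hypothesis $\breve{\tau}$ is compatible with all of these: compatibility with the pairing is assumed outright, and the action of $\enveloping{\cL_i}$ on functions is $u(f)=\epsilon(u\cdot\alpha(f))$, hence also intertwined. Since $\breve{\tau}$ is moreover a degree $0$ map, all the Koszul signs occurring in these formulas --- which depend only on internal degrees --- are preserved. It therefore suffices to check each operation formula by formula.

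First I would dispose of the cup products: on $\totDpolyL{\bullet}$ the cup product \eqref{eq:cup} is the tensor product over $C^\infty(\cM_i)$ of left modules, and the defining tensor factors of $\totCpolyL{\bullet}$ are iterated tensor powers of $\nshift\jet{\cL_i}$; both are immediately preserved. Next, $\phi\star\psi$ in \eqref{eq:Gbraket} is assembled from iterated coproducts, the multiplication of $\enveloping{\cL_i}$ and the action on functions, so $\gerstenhaber{\argument}{\argument}$ is intertwined; in particular the Hochschild differential $\hochschild=\gerstenhaber{m}{\argument}$ is preserved because $\breve{\tau}$ sends the shifted multiplication $m$ of $\enveloping{\cL_1}$, defined in \eqref{eq:m}, to that of $\enveloping{\cL_2}$ (a consequence of $\breve{\tau}(1)=1$ together with $\breve{\tau}$ being a coalgebra morphism). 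Likewise the contraction \eqref{eq:ILiealgebroid} is a composite of the pairing, the source map $\alpha$ of $\jet{\cL_i}$ and the product $*$, all intertwined.

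The main obstacle is the Lie derivative $\iL_\iD=\iota^*\circ\tilde L_\iD\circ\pi^*$ and, relatedly, the operator $B$, since these are defined not directly from the primitive maps but through the derived objects of \cite{paper-zero}: the $\enveloping{\cL_i}$-action $\dmb$ on $\jet{\cL_i}$, the subspaces $\mathrm{K}_p(\cL_i)$ of $\org$-flat tensors, and the mutually inverse isomorphisms $\pi^*\colon\mathrm{C}_p(\cL_i)\to\mathrm{K}_p(\cL_i)$ and $\iota^*$. The key intermediate step will run as follows. From $\pairing{\dmb_u\xi}{v}=\pairing{\xi}{v\cdot u}$ and the compatibility of $\breve{\tau}$ with the pairing and the product one obtains $\breve{\tau}\circ\dmb_u=\dmb_{\breve{\tau}u}\circ\breve{\tau}$ for every $u\in\enveloping{\cL_1}$, whence $\breve{\tau}\circ\tilde L_\iD=\tilde L_{\breve{\tau}\iD}\circ\breve{\tau}$ on $\org$-flat elements, since $\tilde L$ is built from $\dmb$. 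On the other hand $\pi^*$ and $\iota^*$ are written purely in terms of $S$, $*$, $\Delta$ and $\alpha\circ\counit$, hence commute with $\breve{\tau}$, and by construction $\pi^*$ takes values in the $\org$-flat subspace; composing these facts yields $\iL_{\breve{\tau}\iD}\circ\breve{\tau}=\breve{\tau}\circ\iL_\iD$. The explicit formula \eqref{eq:Connes3} for $B$, assembled from $\Delta$, $S$, $*$ and $\alpha\circ\counit$, is intertwined by $\breve{\tau}$ by the same reasoning, and consequently so is the Hochschild boundary $\hochschildb=\iL_m$ of \eqref{eq:hochschildb}.

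Finally, since $\breve{\tau}$ is by hypothesis a cochain map for the differentials induced by $\cQ_1$ and $\cQ_2$, it commutes with the total differentials $\dQ+\hochschild$ on $\totDpolyL{\bullet}$ and $\dQ+\hochschildb$ on $\totCpolyL{\bullet}$, hence descends to a pair of maps on the Hochschild (co)homologies, where --- by Proposition~\ref{prop:CalH_DGLieAbd} --- the Tamarkin--Tsygan calculus operations become honest. By the foregoing, these descended maps satisfy the conditions of Definition~\ref{def:calc-mor}: the first is a morphism of Gerstenhaber algebras (both $\cup$ and $\gerstenhaber{\argument}{\argument}$ preserved), the second commutes with $d=B$, and the module action $\iI$ is preserved; the Cartan formula then forces preservation of $\iL$ as well, as noted after Definition~\ref{def:calc-mor}. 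This exhibits $\breve{\tau}$ as a morphism of Tamarkin--Tsygan calculi $\calculus_H(\cL_1,\cQ_1)\to\calculus_H(\cL_2,\cQ_2)$, completing the proof.
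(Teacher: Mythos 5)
The paper never proves this statement: it is imported verbatim from the companion work \cite{paper-zero}, and the only justification given here is the remark immediately preceding the theorem that every operation of $\calculus_H(\cL,\cQ)$ is expressible through (i) the Hopf algebroid structure of $\enveloping{\cL}$, (ii) the formal groupoid structure of $\jet{\cL}$, and (iii) the pairing. Your proposal is exactly an elaboration of that remark, so it follows the intended route; your reductions for the cup products, the Gerstenhaber bracket, $\hochschild$, $\iI$, $B$ and $\hochschildb$, and the final descent to cohomology via Definition~\ref{def:calc-mor} and the Cartan formula, are all sound.

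The one step that is thinner than it reads is the one you yourself flag as the main obstacle. You deduce $\breve{\tau}\circ\dmb_u=\dmb_{\breve{\tau}u}\circ\breve{\tau}$ from $\pairing{\dmb_u\xi}{v}=\pairing{\xi}{v\cdot u}$ together with compatibility of $\breve{\tau}$ with the pairing and the product. That computation only shows that the jets $\breve{\tau}(\dmb_u\xi)$ and $\dmb_{\breve{\tau}u}(\breve{\tau}\xi)$ agree when paired against elements of the form $\breve{\tau}(v)$, $v\in\enveloping{\cL_1}$; nothing in the hypotheses forces $\breve{\tau}(\enveloping{\cL_1})$ to span $\enveloping{\cL_2}$ over $C^\infty(\cM_2)$, so this does not yet identify them as elements of $\jet{\cL_2}=\Hom_{C^\infty(\cM_2)}(\enveloping{\cL_2},C^\infty(\cM_2))$. (In the concrete Fedosov case the paper proves such a spanning statement separately, Lemma~\ref{pro:Orly}, precisely because arguments of this shape need it; the abstract theorem carries no such hypothesis.) The repair is to express $\dmb$ intrinsically: up to Koszul signs, $\dmb_u\xi=\beta\big(\pairing{\xi^{(2)}}{u}\big)*\xi^{(1)}$, i.e.\ $\dmb$ is a composite of the coproduct of $\jet{\cL}$, the pairing, the target map $\beta$ and the multiplication $*$, each of which $\breve{\tau}$ intertwines by hypothesis. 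With that substitution your treatment of $\tilde{L}_\iD$, of $\iL_\iD=\iota^*\circ\tilde{L}_\iD\circ\pi^*$, and hence of $\iL$ goes through as written.
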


Since the injections $\tauDpolyF$ in~\eqref{eq:Metz}
and~\eqref{eq:FcontractionJets} are degree-preserving maps
which respect (i) the dg Hopf algebroid structures
(by Proposition~\ref{pro:Metz}), (ii) the dg formal groupoid structures
(by Proposition~\ref{prop:TauFormalGpd}) and (iii) the pairing
(by Equation~\eqref{eq:wuxi81}),
according to Theorem~\ref{thm:paper-zero}, we have the following 

\begin{corollary}\label{cor:TauPreserveCalH}
The injections $\tauDpolyF$ in Corollary~\ref{cor:repositoryD}
preserve the Gerstenhaber brackets, cup products
and the calculus operations $\iI$, $\iL$ and $B$.
\end{corollary}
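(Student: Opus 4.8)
The plan is to obtain the statement as an immediate application of Theorem~\ref{thm:paper-zero}, applied to the pair of dg Lie algebroids $\cL_1=(T_\cM\to\cM,0)$ — the tangent Lie algebroid of the graded manifold $\cM$, endowed with the trivial homological vector field — and $\cL_2=(\cF\to\cN,\iL_{\fedosov})$, the Fedosov dg Lie algebroid. Under the canonical identifications $\enveloping{T_\cM}\cong\cD(\cM)$ and $\jet{T_\cM}\cong\jm$, the injections appearing in the contractions of Corollary~\ref{cor:repositoryD} are precisely the maps to which Theorem~\ref{thm:paper-zero} refers: by Proposition~\ref{pro:Jolly}~(2) and Proposition~\ref{prop:TauDiffOpTensor}, $\tauDpolyF$ on $\totDpolyM{\bullet}$ is assembled term by term in the $p$-index out of the injection $\breve{\tau}$ on functions \eqref{Fortune}, the injection $\tauDpolyF\colon\cD(\cM)\to\enveloping{\cF}$ of \eqref{eq:Metz}, and its tensor powers; dually, $\tauDpolyF$ on $\totCpolyM{\bullet}$ is assembled from the injection $\tauDpolyF\colon\jm\to\jet{\cF}$ of \eqref{eq:FcontractionJets} and its completed tensor powers. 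So the whole task reduces to checking the hypotheses of Theorem~\ref{thm:paper-zero} for these three maps.

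First I would record that $\breve{\tau}\colon C^\infty(\cM)\to C^\infty(\cN)$ is a morphism of dg algebras: it is a cochain map by Proposition~\ref{Zwijndrecht} and it is multiplicative because, by Proposition~\ref{pro:Jolly}~(2), it is the restriction to order-zero operators of the algebra morphism $\tauDpolyF\colon\cD(\cM)\to\enveloping{\cF}$. That same map is a morphism of dg Hopf algebroids (equivalently, dg bialgebroids) by Proposition~\ref{pro:Metz}, and $\tauDpolyF\colon\jm\to\jet{\cF}$ is a morphism of dg formal groupoids by Proposition~\ref{prop:TauFormalGpd}. Finally, the compatibility with the pairing, $\breve{\tau}\duality{\xi}{u}=\duality{\breve{\tau}\xi}{\breve{\tau}u}$ for all $\xi\in\jm$ and $u\in\cD(\cM)$, is Equation~\eqref{eq:wuxi81} of Lemma~\ref{lem:TauFormalGpd}. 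With these four facts in hand, Theorem~\ref{thm:paper-zero} produces exactly the desired conclusion: $\breve{\tau}$ preserves the Gerstenhaber brackets, the cup products, and the calculus operations $\iI$, $\iL$ and $B$.

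One bookkeeping point must still be addressed: the poly spaces entering $\calculus_H(\cF,\fedosov)$ are the \emph{truncated} total spaces $\totDpolyF{\bullet}$ and $\totcCpolyF{\bullet}$ of \eqref{eq:totnDpolyF} and \eqref{eq:totnCpolyF}, rather than the ambient total complexes of Section~\ref{sec:CalHdgAbd} on which Theorem~\ref{thm:paper-zero} is formulated. Here one simply uses the observations of Section~\ref{sec:CalHFedosov} that $\tauDpolyF$ lands inside these truncated spaces and that they are stable under all the calculus operations, so that the identities delivered by Theorem~\ref{thm:paper-zero} restrict verbatim to the injections of Corollary~\ref{cor:repositoryD}. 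I do not expect any genuine obstacle at this stage: all the substantive work — identifying $\tauDpolyF$ as a morphism of Hopf algebroids and of formal groupoids, and verifying the pairing identity — has already been carried out in Propositions~\ref{pro:Metz} and~\ref{prop:TauFormalGpd} and Lemma~\ref{lem:TauFormalGpd}, so the corollary is essentially the harvest; if care is needed anywhere it is in matching the triple grading $(r,q,p)$ so that the term-by-term assembly of $\tauDpolyF$ is compatible with the completed tensor products implicit in the cup product, $\iI$, $\iL$ and $B$.
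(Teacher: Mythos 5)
Your proposal is correct and follows essentially the same route as the paper: the corollary is deduced directly from Theorem~\ref{thm:paper-zero} once one knows that $\tauDpolyF$ respects the dg Hopf algebroid structures (Proposition~\ref{pro:Metz}), the dg formal groupoid structures (Proposition~\ref{prop:TauFormalGpd}), and the pairing (Equation~\eqref{eq:wuxi81}). Your additional remarks on the term-by-term assembly in $p$ and on the truncated total spaces are elaborations the paper leaves implicit, but they do not change the argument.
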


\subsection{Main results}

We are ready to prove the main result of this section.

\begin{theorem}\label{thm:Germain}
For any graded manifold $\cM$, let $\cF\to\cN$ be
its Fedosov dg Lie algebroid corresponding
to a torsion-free affine connection $\nabla$ on $\cM$. 
\begin{enumerate}
\item The pair of injections $\tauTpolyF$ in Corollary~\ref{pro:contractionTO}
respect all the algebraic operations defining the calculi
$\calculus_C(\cM)$ to $\calculus_C (\cF,\fedosov)$;
\item The pair of injections $\tauDpolyF$ in Corollary~\ref{cor:repositoryD}
respect all the algebraic operations defining the calculi 
$\calculus_H(\cM)$ and $\calculus_H(\cF,\fedosov)$.
Consequently, it induces, on the cohomology level,
a morphism of calculi from $\calculus_H(\cM)$ to $\calculus_H(\cF,\fedosov)$.
\end{enumerate}
\end{theorem}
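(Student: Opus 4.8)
\textbf{Proof strategy for Theorem~\ref{thm:Germain}.}
The plan is to bootstrap from the algebraic properties of the injection maps established in the previous two subsections, together with the standard fact that a pair of maps which intertwines all the structure operations of two Tamarkin--Tsygan calculi descends to a morphism of calculi on cohomology (Definition~\ref{def:calc-mor}). For part~(1), I would proceed as follows. First, recall from Corollary~\ref{pro:contractionTO} that $\tauTpolyF$ is a cochain map between $\big(\totTpolyM{\bullet},0\big)$ and $\big(\totTpolyF{\bullet},\tQF\big)$, and likewise between $\big(\totApolyM{\bullet},0\big)$ and $\big(\totcApolyF{\bullet},\aQF\big)$. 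Then invoke Proposition~\ref{pro:Paris}: item~(1) gives that $\tauTpolyF$ respects the wedge products $\wedge$ on polyvector fields and differential forms; item~(2) gives $\breve{\tau}_\natural\circ d_{\DR}=d_\cF\circ\breve{\tau}_\natural$, i.e.\ compatibility with the differential $d$; item~(3) gives compatibility with $\iI$ and $\iL$. The only operation not yet explicitly addressed is the Schouten bracket $\schouten{\argument}{\argument}$ on $\totTpolyM{\bullet}$. For this I would reuse the \emph{initial value problem} characterization of Proposition~\ref{Mandalay}: given $X\in\Tpolym{k}$ and $Y\in\Tpolym{l}$, the element $\iI_{\text{--}}$-free bracket $\schouten{\tauTpolyF(X)}{\tauTpolyF(Y)}$ lies in $\OO^0\big(\cM,\hat{S}(T^\vee_\cM)\otimes\Lambda^{k+l-1}T_\cM\big)[-(k+l-1)]$ because $\hTpolyF\circ\tauTpolyF=0$ and the Schouten bracket of two elements killed by $\etendu{h}$ is again killed by $\etendu{h}$ (by Lemma~\ref{rmk:Paris}); it is annihilated by $\tQF$ since $\liederivative{\fedosov}\circ\breve{\tau}_\natural=0$ and $\tQF$ is a derivation of the Schouten bracket; and applying $\sigmaTpolyF$ yields $\schouten{X}{Y}$ because $\sigmaTpolyF$ is a morphism of Gerstenhaber algebras on the nose (this last fact, the compatibility of $\sigmaTpolyF$ with brackets, should be recorded either by the same direct computation used for Lemma~\ref{lem:sigmainitialT}(1) or by the general principle that a surjection in a contraction between a Gerstenhaber algebra and a dg Gerstenhaber algebra whose differential is a derivation is automatically a Gerstenhaber morphism when the other structure maps are). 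By Proposition~\ref{Mandalay}, $\tauTpolyF(\schouten{X}{Y})=\schouten{\tauTpolyF(X)}{\tauTpolyF(Y)}$. Passing to cohomology, all these identities persist, so $\tauTpolyF$ induces a morphism of calculi $\calculus_C(\cM)\to\calculus_C(\cF,\fedosov)$.

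For part~(2), the work has essentially all been done: Corollary~\ref{cor:TauPreserveCalH} already states that the injections $\tauDpolyF$ of Corollary~\ref{cor:repositoryD} preserve the Gerstenhaber brackets $\gerstenhaber{\argument}{\argument}$, the cup products $\cup$, and the calculus operations $\iI$, $\iL$, and $B$ --- this is extracted from Theorem~\ref{thm:paper-zero} applied to the data $(\tauDpolyF\colon\jm\to\jet{\cF},\ \tauDpolyF\colon\cD(\cM)\to\enveloping{\cF},\ \tauDpolyF\colon C^\infty(\cM)\to C^\infty(\cN))$, whose hypotheses are verified by Proposition~\ref{pro:Metz} (dg Hopf/bialgebroid morphism), Proposition~\ref{prop:TauFormalGpd} (dg formal groupoid morphism), and Equation~\eqref{eq:wuxi81} (compatibility with the pairing). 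Since moreover $\tauDpolyF$ is a cochain map for the total differentials $\dQF+\hochschild$ and $\cQF+\hochschildb$ (it commutes with $\hochschild$ and $\hochschildb$ because it respects all the structure maps entering their definitions, and with $\dQF$, $\cQF$ because it is a cochain map for the contractions of Corollary~\ref{cor:repositoryD}), it descends to the Hochschild (co)homologies. Therefore $\tauDpolyF$ induces a morphism of calculi $\calculus_H(\cM)\to\calculus_H(\cF,\fedosov)$, which is the stated consequence.

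\textbf{Main obstacle.} The genuinely laborious ingredient behind this theorem is \emph{not} visible in the final assembly; it has already been isolated and dispatched in Propositions~\ref{pro:Metz} and~\ref{prop:TauFormalGpd}. Within the proof of Theorem~\ref{thm:Germain} itself, the only point requiring care is the Schouten-bracket compatibility in part~(1), because the Schouten bracket is the one Gerstenhaber-algebra operation on $\totTpolyM{\bullet}$ that is not literally listed among the items of Proposition~\ref{pro:Paris}; one must either add a one-line supplement to Lemma~\ref{lem:sigmainitialT} showing $\sigmaTpolyF$ respects brackets, or observe that for a contraction with zero differential on the source the injection is a section of a Gerstenhaber morphism and hence itself a Gerstenhaber morphism once it is shown to land in the subspace of $\etendu{h}$-closed elements. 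Either way the argument is short and follows the exact template of the other cases (reduce to the initial value problem, check the three conditions $\etendu{h}(\cdot)=0$, $\liederivative{\fedosov}(\cdot)=0$, $\sigmaTpolyF(\cdot)=\text{target}$, conclude by Proposition~\ref{Mandalay}). Everything else is formal: a morphism respecting all structure operations at the cochain level descends to a calculus morphism on cohomology, which is exactly Definition~\ref{def:calc-mor} together with the remark following it that the Cartan formula forces preservation of the Lie-module structure once the associative-module structure and $d$ are preserved.
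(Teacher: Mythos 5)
Your part~(2) and the bulk of part~(1) follow the paper's own assembly: the paper's proof is literally ``the first assertion follows from Corollary~\ref{cor:SFO} and Proposition~\ref{pro:Paris}; the second is an immediate consequence of Corollary~\ref{cor:TauPreserveCalH}.'' Where you diverge --- and where there is a genuine gap --- is the Schouten bracket in part~(1). You propose to run the initial-value-problem argument of Proposition~\ref{Mandalay} on $\schouten{\tauTpolyF(X)}{\tauTpolyF(Y)}$, and the first two conditions ($\etendu{h}$-closedness and $\liederivative{\fedosov}$-closedness) do go through as you say. But the third condition requires $\sigma_\natural\big(\schouten{\tauTpolyF(X)}{\tauTpolyF(Y)}\big)=\schouten{X}{Y}$, and your two suggested justifications both fail. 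The surjection $\sigma_\natural$ is \emph{not} compatible with the Schouten bracket ``on the nose'': unlike the wedge product, the bracket involves differentiation along the fibers, and $\sigma_\natural$ only records the $\hat{S}^0$-component. Concretely, for $V=\nu(x)\,\tfrac{\partial}{\partial y_1}$ and $W=\mu(x)\,y_1\tfrac{\partial}{\partial y_2}$ one has $\sigma_\natural(W)=0$ yet $\sigma_\natural(\lie{V}{W})=\nu\mu\,\tfrac{\partial}{\partial x_2}\neq 0$, so the ``direct computation as in Lemma~\ref{lem:sigmainitialT}(1)'' does not establish what you need; and there is no general principle making the surjection of a contraction an algebra morphism. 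Restricting to the image of $\tauTpolyF$ does not rescue the argument without further input, because $\tauTpolyF(\tfrac{\partial}{\partial x_j})=\tfrac{\partial}{\partial y_j}-\sum_i A_{ij}\tfrac{\partial}{\partial y_i}$ with $A_{ij}\in\GG\big(\hat{S}^{\geq 1}(T^\vee_\cM)\big)$, so the bracket of two such images produces linear-in-$y$ contributions whose cancellation under $\sigma_\natural$ is exactly the nontrivial content to be proved.

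The paper closes this gap by a different route that you do not invoke: Lemma~\ref{lem:Paris} identifies $\tauTpolyF$ on functions and vector fields with the restriction of the Hopf-algebroid morphism $\tauDpolyF\colon\cD(\cM)\to\enveloping{\cF}$ to operators of order $\leq 1$ (via Proposition~\ref{pro:Jolly}), so that multiplicativity of $\tauDpolyF$ (Proposition~\ref{pro:Metz}) forces $\tauTpolyF$ to be a morphism of dg Lie--Rinehart algebras; Corollary~\ref{cor:SFO} then extends this to the full Schouten bracket on $\totTpolyF{\bullet}$ using the wedge-compatibility of Proposition~\ref{pro:Paris}(1) and the Leibniz rule. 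In other words, the bracket compatibility is inherited from the associative structure on differential operators rather than checked by the IVP template. You should replace your Schouten-bracket paragraph with an appeal to Corollary~\ref{cor:SFO} (or reproduce its derivation from Lemma~\ref{lem:Paris}); the rest of your argument is sound and matches the paper.
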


\begin{lemma}\label{lem:Paris}
The pair of injections $\tauTpolyF$ in~\eqref{Fortune} and~\eqref{Pohang}
induces a morphism of dg Lie--Rinehart algebras
from $\big(C^\infty(\cM),\Gamma(\cM,T_\cM),0\big)$
to $\big(C^\infty(\cN),\Gamma(\cN,\cF),\tQF\big)$.
\end{lemma}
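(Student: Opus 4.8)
The plan is to verify that the pair of maps $\tauTpolyF$ appearing in the two contractions \eqref{Fortune} (on functions) and \eqref{Pohang} (on vector fields) is simultaneously a morphism of dg algebras on functions and a compatible morphism on the module of sections, in the precise sense of a morphism of dg Lie--Rinehart algebras. Concretely, I must check four things: (a) on $C^\infty(\cM)$, the map $\breve{\tau}$ is a morphism of commutative algebras intertwining the zero differential with $\iL_{\fedosov}$ restricted to $C^\infty(\cN)$; (b) on $\sections{\cM,T_\cM}=\XX(\cM)$, the map $\breve{\tau}_\natural$ intertwines the zero differential with $\iL_{\fedosov}$ on $\sections{\cN,\cF}$; (c) the two maps are compatible with the $C^\infty$-module structures, i.e.\ $\breve{\tau}_\natural(f\cdot X)=\breve{\tau}(f)\cdot\breve{\tau}_\natural(X)$; and (d) the Lie bracket of vector fields and the anchor action are respected, i.e.\ $\breve{\tau}_\natural(\schouten{X}{Y})=\schouten{\breve{\tau}_\natural(X)}{\breve{\tau}_\natural(Y)}$ and $\breve{\tau}_\natural(X)(\breve{\tau}(f))=\breve{\tau}(X(f))$.

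The cochain-map assertions (a) and (b) are immediate: they are built into the statements of Proposition~\ref{Zwijndrecht} and Proposition~\ref{repository}, since $\breve{\tau}$ and $\breve{\tau}_\natural$ are the injections of contractions onto the complexes $(C^\infty(\cN),\fedosov)$ and $(\sections{\cN,\cF},\iL_{\fedosov})$ whose sources are concentrated in degree $0$ with zero differential. For (a) one also needs that $\breve{\tau}:C^\infty(\cM)\to C^\infty(\cN)$ is an algebra morphism; but $C^\infty(\cM)\cong\Tpolym{0}$ and $C^\infty(\cN)\cong\Tpolyf{0}$, so this is exactly the $p=0$ case of Proposition~\ref{pro:Paris}~(1), which asserts that $\tauTpolyF$ respects the wedge product (which in degree $0$ is just multiplication of functions). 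For (c), I would invoke the initial-value-problem characterization of Proposition~\ref{Mandalay}: the element $\breve{\tau}(f)\cdot\breve{\tau}_\natural(X)$ lies in $\OO^0(\cM,\hat S(T_\cM^\vee)\otimes T_\cM)$ by Lemma~\ref{rmk:Paris}, hence is killed by $\breve{h}_\natural$; it is $\iL_{\fedosov}$-closed because $\fedosov$ is a derivation and both factors are closed; and applying $\sigma_\natural$ gives $f\cdot X$ since $\sigma_\natural$ is multiplicative on the relevant pieces (this is the $k=1,l=0$ instance of the multiplicativity already recorded in the proof of Lemma~\ref{lem:sigmainitialT} via formula \eqref{eq:NAP2}). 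Thus by Proposition~\ref{Mandalay}, $\breve{\tau}_\natural(f\cdot X)=\breve{\tau}(f)\cdot\breve{\tau}_\natural(X)$.

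For (d), the bracket compatibility, I would again use the same three-part argument through Proposition~\ref{Mandalay}: $\schouten{\breve{\tau}_\natural(X)}{\breve{\tau}_\natural(Y)}$ is $\breve{h}_\natural$-closed (it lands in $\OO^0$ because the Schouten bracket on $\sections{\cN,\cF}$, being the Lie-algebroid bracket, preserves the form-degree-zero part once we know $\cF$ is the vertical foliation and $\delta,\fedosov$ act as derivations — here one uses that $\iL_{-\delta}$-closed, $\breve h_\natural$-trivial sections form a subalgebra under the bracket, which follows from the explicit local description of $\cF$ in terms of $\partial/\partial y_k$), it is $\iL_{\fedosov}$-closed since $\iL_{\fedosov}$ is a derivation of the bracket (the dg-Lie-algebroid compatibility \eqref{eq:compatibility}), and $\sigma_\natural$ sends it to $\schouten{X}{Y}$ by the same local computation with $\partial/\partial y_k\mapsto\partial/\partial x_k$ as in \eqref{eq:NAP2}. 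The anchor relation $\breve{\tau}_\natural(X)(\breve\tau(f))=\breve\tau(X(f))$ is either a direct consequence of Lemma~\ref{lem:TauFormalGpd}~\eqref{eq:wuxi82} specialized to vector fields via Proposition~\ref{pro:Jolly}~(3) (which identifies $\breve\tau_\natural$ on vector fields with the restriction of the polydifferential-operator injection $\tauDpolyF$), or can be checked directly by the same initial-value-problem method. The main obstacle I anticipate is (d): establishing that the relevant ``degree-zero, $\breve h_\natural$-closed'' subspaces of $\sections{\cN,\cF}$ and $C^\infty(\cN)$ are closed under the Lie bracket and the anchor action, so that Proposition~\ref{Mandalay} can be applied — everything else is a routine bookkeeping of the contraction data, but this stability statement needs the explicit local form $\cY=\sum\nu_{k,J}\otimes y^J\,\partial/\partial y_k$ of sections of $\cF$ together with the fact that $\fedosov=-\delta+d^\nabla+A^\nabla$ has no $\partial/\partial x$- or $\partial/\partial\xi$-components acting inside $\cF$.
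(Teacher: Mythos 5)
Your route is genuinely different from the paper's. The paper proves this lemma in three lines: by Proposition~\ref{pro:Jolly} the map $\tauDpolyF$ preserves the order filtration, hence restricts to $\cD(\cM)_{\leq 1}\cong C^\infty(\cM)\oplus\XX(\cM)\to\enveloping{\cF}_{\leq 1}\cong C^\infty(\cN)\oplus\sections{\cN;\cF}$, where it coincides with the injections of \eqref{Fortune} and \eqref{Pohang}; since $\tauDpolyF$ is a morphism of algebras (Proposition~\ref{pro:Metz}), and the bracket, anchor and module structure of a Lie--Rinehart algebra are all encoded in the associative structure of its universal enveloping algebra, the whole statement follows at once. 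Your items (a), (b), (c), and your first route to the anchor identity (via Lemma~\ref{lem:TauFormalGpd} and Proposition~\ref{pro:Jolly}) are correct and consistent with the paper's toolkit.

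The gap is in (d), at the step ``$\sigma_\natural$ sends $\schouten{\breve{\tau}_\natural(X)}{\breve{\tau}_\natural(Y)}$ to $\schouten{X}{Y}$ by the same local computation as in \eqref{eq:NAP2}.'' Equation~\eqref{eq:NAP2} only defines $\sigma_\natural$ componentwise; it does \emph{not} make $\sigma_\natural$ a morphism of Lie algebras on $\OO^0\big(\cM,\hat{S}(T^\vee_\cM)\otimes T_\cM\big)$. Indeed, for $\cM=\RR$ take $\cX=y_1\,\partial/\partial y_1$ and $\cY=\partial/\partial y_1$: then $\schouten{\cX}{\cY}=-\partial/\partial y_1$, so $\sigma_\natural(\schouten{\cX}{\cY})=-\partial/\partial x_1$, while $\schouten{\sigma_\natural(\cX)}{\sigma_\natural(\cY)}=\schouten{0}{\partial/\partial x_1}=0$. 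The bracket differentiates in the $y$-directions, so $\sigma_\natural$ of the bracket depends on the \emph{first-order} (in $y$) terms of $\breve{\tau}_\natural(X)$ and $\breve{\tau}_\natural(Y)$, which \eqref{eq:NAP2} discards. Since the $\breve{h}_\natural$-closed, $\iL_{\fedosov}$-closed elements are exactly the image of $\breve{\tau}_\natural$ and $\sigma_\natural$ is injective there, the identity $\sigma_\natural\big(\schouten{\breve{\tau}_\natural X}{\breve{\tau}_\natural Y}\big)=\schouten{X}{Y}$ is not a bookkeeping reduction but is precisely equivalent to the claim being proved; you cannot get it for free from Proposition~\ref{Mandalay}. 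To close the gap along your lines one must compute the linear term of $\breve{\tau}_\natural(Y)$, which is $y^k\nabla_k Y^j\,\partial/\partial y_j$, so that $\sigma_\natural$ of the bracket is $\nabla_XY-\nabla_YX$, and then invoke torsion-freeness of $\nabla$ — the same mechanism as in the proof of Lemma~\ref{lem:sigmainitialT}~(2) — or else route the bracket through $\cD(\cM)$ as the paper does (Lemma~\ref{lem:tauR} identifies $\tauDpolyF(v)$ with a right-multiplication operator, from which multiplicativity, hence compatibility with commutators, is automatic). Incidentally, the obstacle you flagged — stability of the degree-zero, $\breve{h}_\natural$-closed subspace under the bracket — is the easy part; the real difficulty sits exactly where you waved it away.
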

\begin{proof}
By Proposition~\ref{pro:Jolly}, we have
\begin{equation}\label{eq:SFO}
\tauDpolyF : \cD(\cM)_{\leq 1} \to \enveloping{\cF}_{\leq 1}.
\end{equation}
Note that we have the decompositions
\[ \cD(\cM)_{\leq 1}\cong C^\infty(\cM)\oplus \Gamma (\cM; T_\cM)
\qquad\text{and}\qquad
\enveloping{\cF}_{\leq 1}\cong C^\infty(\cN)\oplus \Gamma (\cN; \cF).\]
Again according to Proposition~\ref{pro:Jolly}, under such decompositions,
$\tauDpolyF$ in~\eqref{eq:SFO} indeed coincides with the injections
$\tauTpolyF$ in~\eqref{Fortune} and~\eqref{Pohang}. 
Since, by Proposition~\ref{pro:Metz},
the injection $\tauDpolyF:\cD(\cM)\to\enveloping{\cF}$
is morphism of algebras, it follows immediately
that the injections $\tauTpolyF$ in~\eqref{Fortune} and~\eqref{Pohang}
constitutes a morphism of dg Lie--Rinehart algebras.
\end{proof}

As an immediate consequence, we have the following

\begin{corollary}\label{cor:SFO}
The map $\tauTpolyF$ in the contraction \eqref{eq:Tcontraction}
is a morphism of differential graded Lie algebras.
\end{corollary}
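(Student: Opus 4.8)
The plan is to deduce Corollary~\ref{cor:SFO} directly from Lemma~\ref{lem:Paris} together with the algebraic properties of $\tauTpolyF$ already established in Proposition~\ref{pro:Paris} and Corollary~\ref{pro:contractionTO}. Recall that $\big(\totTpolyM{\bullet},\schouten{\argument}{\argument},\wedge\big)$ and $\big(\totTpolyF{\bullet},\schouten{\argument}{\argument},\wedge\big)$ are Gerstenhaber algebras, and the differential graded Lie algebra structure in question is the one on $\totTpolyM{\bullet}[1]$ (resp.\ $\totTpolyF{\bullet}[1]$) given by the Schouten bracket, together with the trivial differential $0$ (resp.\ the differential $\tQF$). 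So what must be checked is: (i) $\tauTpolyF$ is a cochain map, i.e.\ $\tQF\circ\tauTpolyF=\tauTpolyF\circ 0=0$, and (ii) $\tauTpolyF$ respects the Schouten bracket, i.e.\ $\tauTpolyF\schouten{X}{Y}=\schouten{\tauTpolyF(X)}{\tauTpolyF(Y)}$ for all $X,Y\in\totTpolyM{\bullet}$.

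Point (i) is immediate: $\tauTpolyF$ is by construction the injection of a contraction onto the complex $\big(\totTpolyF{\bullet},\tQF\big)$ whose source is concentrated in degree zero with zero differential, so being a chain map forces $\tQF\circ\tauTpolyF=0$ (this is already used repeatedly, e.g.\ in the proof of Proposition~\ref{pro:Paris}). For point (ii), the first step is to reduce to generators. The Schouten bracket on $\totTpolyM{\bullet}$ is the unique extension, as a biderivation with respect to $\wedge$, of the Lie--Rinehart bracket on $C^\infty(\cM)\oplus\Gamma(\cM,T_\cM)=\Tpolym{0}\oplus\Tpolym{1}$; likewise for $\totTpolyF{\bullet}$. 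Since, by Proposition~\ref{pro:Paris}(1), $\tauTpolyF$ is an algebra morphism for $\wedge$, and since the Gerstenhaber compatibility identity between $\wedge$ and $\schouten{\argument}{\argument}$ lets one propagate the bracket-preservation from generators to all of $\totTpolyM{\bullet}$ by induction on polyvector-field degree, it suffices to verify $\tauTpolyF\schouten{u}{v}=\schouten{\tauTpolyF(u)}{\tauTpolyF(v)}$ for $u,v$ in the degree-$\le 1$ part. But that is exactly the content of Lemma~\ref{lem:Paris}: $\tauTpolyF$ restricted to $C^\infty(\cM)\oplus\Gamma(\cM,T_\cM)$ is a morphism of (dg) Lie--Rinehart algebras into $C^\infty(\cN)\oplus\Gamma(\cN,\cF)$, hence it preserves the anchor action and the Lie bracket, which together are the Schouten bracket in this low degree.

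Concretely, I would organize the write-up as follows. First invoke Corollary~\ref{pro:contractionTO} to recall that $\tauTpolyF\colon\big(\totTpolyM{\bullet},0\big)\to\big(\totTpolyF{\bullet},\tQF\big)$ is a cochain map, giving compatibility with the differentials. Next recall from Proposition~\ref{pro:Paris}(1) that $\tauTpolyF$ is a morphism of graded commutative algebras for $\wedge$. Then argue, using that both Schouten brackets are biderivations extending the Lie--Rinehart bracket on the degree-$\le 1$ part and that $\tauTpolyF$ is multiplicative, that bracket-preservation for all polyvector fields follows from bracket-preservation on $C^\infty(\cM)$ and $\Gamma(\cM,T_\cM)$; the induction step is the standard one where $\schouten{X}{Y\wedge Z}$ is expanded by the Leibniz rule and one applies the inductive hypothesis to $\schouten{X}{Y}$ and $\schouten{X}{Z}$ and multiplicativity to $Y\wedge Z$. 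Finally cite Lemma~\ref{lem:Paris} for the base case. I do not expect a serious obstacle here; the only mild subtlety is making the biderivation/induction argument clean in the graded setting — one must track Koszul signs in the Leibniz identity $\schouten{X}{Y\wedge Z}=\schouten{X}{Y}\wedge Z+(-1)^{(|X|-1)|Y|}Y\wedge\schouten{X}{Z}$ and check that a single polyvector field is generated by degree-$\le 1$ elements under $\wedge$, which is clear since $\Lambda^p T_\cM$ is generated by decomposable elements. This is routine, so the corollary follows essentially formally once Lemma~\ref{lem:Paris} and Proposition~\ref{pro:Paris}(1) are in hand.
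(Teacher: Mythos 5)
Your proposal is correct and is essentially the argument the paper intends: the corollary is stated as an ``immediate consequence'' of Lemma~\ref{lem:Paris}, and the unstated reasoning is precisely your reduction --- compatibility with the differentials comes for free from $\tauTpolyF$ being the injection of the contraction \eqref{eq:Tcontraction}, multiplicativity for $\wedge$ is Proposition~\ref{pro:Paris}(1), and since both Schouten brackets are the unique biderivation extensions of the Lie--Rinehart brackets on the degree-$\le 1$ parts (which are identified with the restrictions of $\tauTpolyF$ via Proposition~\ref{pro:Jolly}), bracket-preservation propagates from Lemma~\ref{lem:Paris} to all polyvector fields by the Leibniz induction you describe. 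No gap; your write-up just makes explicit what the paper leaves implicit.
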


\begin{proof}[Proof of Theorem \ref{thm:Germain}]
The first assertion follows from Corollary~\ref{cor:SFO}
and Proposition~\ref{pro:Paris}. The second assertion
is an immediate consequence of Corollary~\ref{cor:TauPreserveCalH}.
This concludes the proof.
%
\end{proof}	

Note that one also have the following results for the tensor fields:

\begin{proposition}\label{prop:TauTensor}
Let $X\in\sections{T_\cM}$, $\cY\in\sections{(T_\cM)^{\otimes k}
\otimes(T_\cM\dual)^{\otimes l}}$ and $\cY'\in\sections{(T_\cM)^{\otimes k'}
\otimes(T_\cM\dual)^{\otimes l'}}$. The injections in~\eqref{eq:Tkl}
respects the tensor product and Lie derivatives:
\begin{align}
\tauTensorF{k+k'}{l+l'}(\cY\otimes\cY')
&= \tauTensorF{k}{l}(\cY)\otimes\tauTensorF{k'}{l'}(\cY'), \label{eq:TauKLTensor} \\ 
\tauTensorF{k}{l}\big(\iL_X(\cY)\big)
&= \iL_{\tauTpolyF(X)}\big(\tauTensorF{k}{l}(\cY)\big). \label{eq:TauTensor_Lie}
\end{align}
\end{proposition}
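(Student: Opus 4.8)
\textbf{Proof proposal for Proposition~\ref{prop:TauTensor}.}

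The plan is to reuse the initial-value-problem characterisation of the injection maps, exactly as in the proof of Proposition~\ref{pro:Paris}. First I would record the analogue of Lemma~\ref{rmk:Paris} for tensor fields of arbitrary type: since $\hTensorF{k}{l}\circ\tauTensorF{k}{l}=0$, the image $\tauTensorF{k}{l}(\cY)$ lies in $\OO^0\big(\cM,\hat S(T^\vee_\cM)\otimes (T_\cM)^{\otimes k}\otimes(T^\vee_\cM)^{\otimes l}\big)$, and on this degree-zero subspace $\hTensorF{k}{l}$ and $h_\natural^{k,l}$ both vanish. Combined with the fact (noted in the Remark following Proposition~\ref{repository}) that $\tauTensorF{k}{l}$ satisfies the same kind of initial value problem as $\etendu{\perturbed{\tau}}$, namely that for $x\in\sections{(T_\cM)^{\otimes k}\otimes(T^\vee_\cM)^{\otimes l}}$ and $y\in\sections{\cN;\cF^{\otimes k}\otimes(\cF^\vee)^{\otimes l}}$ one has $\tauTensorF{k}{l}(x)=y$ iff $h_\natural^{k,l}(y)=0$, $\iL_{\fedosov}(y)=0$, and $\sigma_\natural^{k,l}(y)=x$, this reduces each identity to three routine checks on the candidate right-hand side.

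For Equation~\eqref{eq:TauKLTensor}: set $y=\tauTensorF{k}{l}(\cY)\otimes\tauTensorF{k'}{l'}(\cY')$. Since each factor lies in the degree-zero part $\OO^0(\cdots)$, so does their tensor product; hence $\hTensorF{k+k'}{l+l'}(y)=0$. Because $\iL_{\fedosov}$ is a derivation of the tensor product and annihilates each factor (as each factor is in the image of the corresponding $\tauTensorF{\cdot}{\cdot}$), we get $\iL_{\fedosov}(y)=0$. Finally $\sigma_\natural$ respects tensor products by inspection of its defining formula~\eqref{eq:NAP2}, so $\sigma_\natural^{k+k'}{l+l'}(y)=\sigma_\natural^{k}{l}\tauTensorF{k}{l}(\cY)\otimes\sigma_\natural^{k'}{l'}\tauTensorF{k'}{l'}(\cY')=\cY\otimes\cY'$. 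The initial value problem then forces $y=\tauTensorF{k+k'}{l+l'}(\cY\otimes\cY')$.

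For Equation~\eqref{eq:TauTensor_Lie}: set $y=\iL_{\tauTpolyF(X)}\big(\tauTensorF{k}{l}(\cY)\big)$, where $\tauTpolyF(X)=\tauTensorF{1}{0}(X)$ is the image of the vector field $X$ under~\eqref{Pohang}. The key point, to be verified by a short computation in local coordinates using the canonical identifications of Lemma~\ref{lem:dgvector}, is that $\iL$ preserves the degree-zero subspace $\OO^0(\cdots)$ when both arguments lie there --- this is parallel to what was shown in the proof of Proposition~\ref{pro:Paris}~(3) for $\iI$ --- giving $\hTensorF{k}{l}(y)=0$; that $\iL_{\fedosov}$ commutes with $\iL_{\tauTpolyF(X)}$ up to $\iL_{\iL_{\fedosov}\tauTpolyF(X)}=\iL_0=0$ and $\iL_{\tauTpolyF(X)}\iL_{\fedosov}\tauTensorF{k}{l}(\cY)=0$, so $\iL_{\fedosov}(y)=0$; and that $\sigma_\natural$ intertwines $\iL$ on $\cN$ with $\iL$ on $\cM$ (the tensor-field analogue of~\eqref{eq:wuxi5}, again immediate from~\eqref{eq:NAP2}), so $\sigma_\natural^{k}{l}(y)=\iL_{\sigma_\natural\tauTpolyF(X)}\sigma_\natural\tauTensorF{k}{l}(\cY)=\iL_X(\cY)$. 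Conclude by the initial value problem. The main obstacle is the local-coordinate verification that $\iL$ of two degree-zero ($\cF$-longitudinal, $h_\natural$-closed) tensors is again degree-zero; everything else is a direct transcription of the arguments already given for $\sT_{\poly}$ and $\sA_{\poly}$.
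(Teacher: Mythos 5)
Your treatment of Equation~\eqref{eq:TauKLTensor} is correct and is essentially the paper's own argument: the paper proves it exactly as you do, via the initial-value-problem characterization, using that $\sigma_\natural$ respects tensor products (a pointwise operation, so this really is immediate from \eqref{eq:NAP2}).

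For Equation~\eqref{eq:TauTensor_Lie}, your route has a genuine gap at the third check of the initial value problem. You assert that $\sigma_\natural$ intertwines Lie derivatives ``immediately from \eqref{eq:NAP2}'', in analogy with \eqref{eq:wuxi5}. That analogy fails: the interior product is $C^\infty(\cN)$-linear, hence commutes with the projection $\sigma_\natural$, but the Lie derivative is a first-order differential operator and does not. Already for functions, taking $\tilde Z=\tfrac{\partial}{\partial y_1}$ and $g=y_1$ gives $\sigma_\natural\big(\iL_{\tilde Z}(g)\big)=\sigma(1)=1$ while $\iL_{\sigma_\natural(\tilde Z)}\big(\sigma(g)\big)=\tfrac{\partial}{\partial x_1}(0)=0$. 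What you actually need is the weaker statement with the injections inserted, $\sigma_\natural\big(\iL_{\tauTpolyF(X)}\tauTensorF{k}{l}(\cY)\big)=\iL_X(\cY)$, which is the analogue of Lemma~\ref{lem:sigmainitialT}~(2), namely $\sigma_\natural\circ d_\cF\circ\breve{\tau}_\natural=d_{\DR}$ --- and that identity was \emph{not} immediate: its proof required expanding $\breve{\tau}_\natural=\sum_n(h_\natural\liederivative{\varrho})^n\tau_\natural$, isolating the linear-in-$y$ contribution of $h_\natural\liederivative{d^\nabla}\tau_\natural$, and invoking the torsion-freeness relation $\Gamma_{k,l}^j=(-1)^{|y_k||y_l|}\Gamma_{l,k}^j$ to kill the Christoffel term. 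The same phenomenon occurs in your setting: for instance $\sigma_\natural\big(\iL_{\breve{\tau}_\natural(\partial/\partial x_i)}\breve{\tau}_\natural(\partial/\partial x_j)\big)$ depends on the first-order $y$-jet of $\breve{\tau}_\natural\big(\tfrac{\partial}{\partial x_j}\big)=\tfrac{\partial}{\partial y_j}-\sum_{k,j'}\pm\,\Gamma_{k,j}^{j'}\,y_k\tfrac{\partial}{\partial y_{j'}}+O(y^2)$, and equals $0=[\tfrac{\partial}{\partial x_i},\tfrac{\partial}{\partial x_j}]$ only because the connection is torsion-free. So this step cannot be waved through: you must either carry out that computation for $(k,l)$-tensors, or --- as the paper does --- use \eqref{eq:TauKLTensor} together with the derivation property of $\iL_X$ to reduce to the generating cases $(k,l)=(0,0),(1,0),(0,1)$, which are exactly Proposition~\ref{pro:Paris}~(3) (for functions and one-forms) and Lemma~\ref{lem:Paris} (for vector fields, where the bracket compatibility comes from the Hopf-algebroid morphism property of Proposition~\ref{pro:Metz}). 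Your other two checks, $\hTensorF{k}{l}(y)=0$ and $\iL_{\fedosov}(y)=0$, are fine, as is your reliance on the initial-value-problem characterization for general $(k,l)$-tensors: the paper only states it in the remark after Proposition~\ref{Mandalay}, but its proof is routine.
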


\begin{proof}
The proof of Equation~\eqref{eq:TauKLTensor} is virtually identical
to the proof of Proposition~\ref{prop:TauDiffOpTensor}. 

Due to Equation~\eqref{eq:TauKLTensor},
the proof of Equation~\eqref{eq:TauTensor_Lie} can be reduced
to prove it in the three special cases:
(i) $k=l=0$; (ii) $k=1$ and $l=0$; (iii) $k=0$ and $l=1$.
Case~(i) and Case~(iii) follow from Proposition~\ref{pro:Paris}~(3)
specializing to the cases $n=0$ and $n=1$,
and Case~(ii) follows from Lemma~\ref{lem:Paris}.
This completes the proof.
\end{proof}

\section{Fedosov contractions for dg manifolds}
\label{sect:5}

Now we are ready to move to the case of a \emph{dg} manifold $(\cM,Q)$,
which is the main object of study in the present paper.
Let $\cF\to\cN$ be the Fedosov dg Lie algebroid
over the Fedosov dg manifold $(\cN,\fedosov)$
associated with the graded manifold $\cM$
and a torsion free affine connection $\nabla$ on $\cM$.
As an immediate consequence of Corollary~\ref{cor:SFO},
we have the following

\begin{proposition}
$(\cN,\fedosov+\tauTpolyF(Q))$ is a dg manifold.
\end{proposition}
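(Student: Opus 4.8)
The plan is to show that $\fedosov + \tauTpolyF(Q)$ is a homological vector field on the graded manifold $\cN = T_\cM[1] \oplus \Tformal\cM$, i.e.\ a degree $+1$ derivation of $C^\infty(\cN)$ whose graded self-commutator vanishes. First I would note that $\tauTpolyF(Q)$ is a well-defined element of $\sections{\cN;\cF} \subset \XX(\cN)$: indeed $Q$ is a degree $+1$ vector field on $\cM$, hence a degree $+1$ element of $\Tpolym{1}$ (in the convention of Section~\ref{corniche}), and $\tauTpolyF$ from Corollary~\ref{pro:contractionTO} sends $\totTpolyM{\bullet}$ into $\totTpolyF{\bullet}$ preserving degree; restricted to $\Tpolym{1}$ it lands in $\sections{\cN;\cF}$, which is a subspace of $\XX(\cN)$ consisting of genuine derivations of $C^\infty(\cN)$. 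Thus $\fedosov + \tauTpolyF(Q)$ is a degree $+1$ derivation of $C^\infty(\cN)$, and it only remains to check $\schouten{\fedosov + \tauTpolyF(Q)}{\fedosov + \tauTpolyF(Q)} = 0$.

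Expanding the bracket bilinearly, this amounts to three identities: $\schouten{\fedosov}{\fedosov} = 0$, which holds because $(\cN,\fedosov)$ is a Fedosov dg formal manifold (Proposition in Section~\ref{sec:FedosovConnection}); $\schouten{\tauTpolyF(Q)}{\tauTpolyF(Q)} = 0$; and the cross term $2\schouten{\fedosov}{\tauTpolyF(Q)} = 0$, i.e.\ $\tQF\big(\tauTpolyF(Q)\big) = 0$ since $\tQF = \schouten{\fedosov}{\argument}$ is exactly the differential on $\sections{\cN;\cF}$ encoding the dg vector bundle structure. The crucial input is Corollary~\ref{cor:SFO}: the map $\tauTpolyF$ in the contraction \eqref{eq:Tcontraction} is a morphism of differential graded Lie algebras from $\big(\totTpolyM{\bullet},0,\schouten{\argument}{\argument}\big)$ to $\big(\totTpolyF{\bullet},\tQF,\schouten{\argument}{\argument}\big)$. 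Being a chain map for the differentials (with zero differential on the source) immediately gives $\tQF\big(\tauTpolyF(Q)\big) = \tauTpolyF(0) = 0$, settling the cross term. Being a Lie algebra morphism gives $\schouten{\tauTpolyF(Q)}{\tauTpolyF(Q)} = \tauTpolyF\big(\schouten{Q}{Q}\big) = \tauTpolyF(0) = 0$, using that $Q$ is homological on $\cM$.

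Combining these, $\schouten{\fedosov + \tauTpolyF(Q)}{\fedosov + \tauTpolyF(Q)} = \schouten{\fedosov}{\fedosov} + 2\tQF\big(\tauTpolyF(Q)\big) + \schouten{\tauTpolyF(Q)}{\tauTpolyF(Q)} = 0$, so $\fedosov + \tauTpolyF(Q)$ is a homological vector field and $(\cN, \fedosov + \tauTpolyF(Q))$ is a dg manifold. There is essentially no obstacle here: the work was already done in establishing that $\tauTpolyF$ is a morphism of dg Lie algebras (Corollary~\ref{cor:SFO}, which rests on Lemma~\ref{lem:Paris} and Proposition~\ref{pro:Metz}). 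The only point requiring a word of care is the identification of $\schouten{Q}{Q}$ with $\commutator{Q}{Q}$ as used in Definition~\ref{def:dg}, and the sign/degree bookkeeping relating the Schouten bracket on $\totTpolyM{1}$ with the ordinary commutator of vector fields, which is routine given the conventions fixed in Section~\ref{corniche}.
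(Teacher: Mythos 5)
Your proof is correct and follows exactly the route the paper intends: the proposition is stated there as an immediate consequence of Corollary~\ref{cor:SFO}, and your expansion of $\schouten{\fedosov+\tauTpolyF(Q)}{\fedosov+\tauTpolyF(Q)}$ into the three terms, killed respectively by $\fedosov$ being homological, by $\tauTpolyF$ being a chain map from the complex with zero differential, and by $\tauTpolyF$ being a morphism of graded Lie algebras applied to $\schouten{Q}{Q}=0$, is precisely the argument left implicit in the paper. No gaps.
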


By $\cNQ$, we denote the dg manifold $(\cN,\fedosov+\tauTpolyF(Q))$.
We write $\cFQ$ to denote the dg manifold structure on the graded manifold $\cF$
characterized by the following property: $\cFQ\to\cNQ$ is a dg vector space
such that the induced operator on $\sections{\cN;\cF}$ determined
by the homological vector fields of $\cFQ$ and $\cNQ$
is $\iL_{\fedosov+\tauTpolyF(Q)}$.

\begin{lemma}\label{lem:Florence}
The dg vector bundle $\cFQ\to\cNQ$ is a dg Lie subalgebroid
of the tangent dg Lie algebroid $T_{\cNQ}\to\cNQ$.
\end{lemma}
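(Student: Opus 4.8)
The plan is to show that the endomorphism $\iL_{\fedosov+\tauTpolyF(Q)}$ of $\XX(\cN)$, which encodes the dg vector bundle structure on $\cFQ\to\cNQ$, stabilizes the subspace $\sections{\cN;\cF}$, and that $\sections{\cN;\cF}$ is already a graded Lie subalgebroid of $T_\cN\to\cN$ (this is just Lemma~\ref{lem:Rome} at the level of the underlying graded manifolds --- the anchor, bracket, and $C^\infty(\cN)$-module structure are unchanged by the twist). Indeed, being a dg Lie subalgebroid requires exactly two things: (a) $\cF$ is a graded Lie subalgebroid of $T_\cN$; (b) $\sections{\cN;\cF}$ is stable under $\iL_{\fedosov+\tauTpolyF(Q)}$, so that $\cF$ inherits a compatible dg vector bundle structure and the compatibility condition~\eqref{eq:compatibility} is automatic by restriction from $T_{\cNQ}$. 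Point~(a) is already established in Lemma~\ref{lem:Rome}, so the entire content is point~(b).

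First I would decompose $\iL_{\fedosov+\tauTpolyF(Q)} = \iL_{\fedosov} + \iL_{\tauTpolyF(Q)}$ on $\XX(\cN)$. By Lemma~\ref{lem:Rome}, $\iL_{\fedosov}$ already stabilizes $\sections{\cN;\cF}$. So it remains to check that $\iL_{\tauTpolyF(Q)}=\gerstenhaber{\tauTpolyF(Q)}{\argument}$ stabilizes $\sections{\cN;\cF}$. Here the key fact is Corollary~\ref{cor:SFO}: the map $\tauTpolyF$ in the contraction~\eqref{eq:Tcontraction} is a morphism of differential graded Lie algebras $\big(\totTpolyM{\bullet},\schouten{\argument}{\argument}\big)\to\big(\totTpolyF{\bullet},\schouten{\argument}{\argument}\big)$. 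In particular $\tauTpolyF(Q)\in\sections{\cN;\cF}\subset\XX(\cN)$ is an honest vector field tangent to the foliation $\cF$, and for any $\cY\in\sections{\cN;\cF}$ the bracket $\schouten{\tauTpolyF(Q)}{\cY}$ is computed inside the graded Lie algebra $\sections{\cN;\cF}$ (using that $\sections{\cN;\cF}$ is a Lie subalgebra of $\XX(\cN)$, from Lemma~\ref{lem:Rome}). Hence $\iL_{\tauTpolyF(Q)}(\cY)=\schouten{\tauTpolyF(Q)}{\cY}\in\sections{\cN;\cF}$, as needed.

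With both summands handled, $\iL_{\fedosov+\tauTpolyF(Q)}$ stabilizes $\sections{\cN;\cF}$, which is precisely the statement that the dg vector bundle structure $\cFQ\to\cNQ$ restricts from $T_{\cNQ}\to\cNQ$. Since the graded Lie algebroid structure on $\cF$ is the one already present in $T_\cN$ restricted to $\cF$ (Lemma~\ref{lem:Rome}), the Chevalley--Eilenberg differential $d_\cF$ and the internal differential $\cQ$ of $\cFQ$ are both restrictions of the corresponding operators on $T_{\cNQ}$, so the compatibility condition $\schouten{d_\cF}{\cQ}=0$ required in the definition of a dg Lie algebroid follows immediately from the fact that $T_{\cNQ}\to\cNQ$ is itself a dg Lie algebroid (Example~\ref{exp:dgtangent2} applied to the dg manifold $\cNQ=(\cN,\fedosov+\tauTpolyF(Q))$). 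This completes the argument that $\cFQ\to\cNQ$ is a dg Lie subalgebroid of $T_{\cNQ}\to\cNQ$.

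I do not expect a serious obstacle here: the proof is essentially a bookkeeping exercise combining Lemma~\ref{lem:Rome} (stability under $\iL_{\fedosov}$ and the graded Lie algebroid structure) with Corollary~\ref{cor:SFO} (so that $\tauTpolyF(Q)$ lands in $\sections{\cN;\cF}$ and brackets with sections of $\cF$ stay in $\sections{\cN;\cF}$). The only point requiring a small remark is that one should verify $\iL_{\tauTpolyF(Q)}$ genuinely agrees with the Lie-algebroid bracket $\schouten{\argument}{\argument}$ restricted to $\sections{\cN;\cF}$ rather than merely with the ambient bracket on $\XX(\cN)$ --- but since $\cF\subset T_\cN$ is a Lie subalgebroid with the induced bracket, these coincide on $\sections{\cN;\cF}$, so there is nothing to check beyond invoking Lemma~\ref{lem:Rome}. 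If any subtlety arises, it would be in the (continuous / $I$-adic) completeness bookkeeping ensuring $\tauTpolyF(Q)$ is a continuous vector field of the required local form, which is already implicit in the fact that $\tauTpolyF$ lands in the Fedosov poly-spaces of Section~\ref{sec:CalCFedosov}.
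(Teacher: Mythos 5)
Your proof is correct and matches the approach the paper leaves implicit: the paper states Lemma~\ref{lem:Florence} without proof, treating it as immediate from the definition of $\cFQ$, and your argument supplies exactly the missing details --- the decomposition $\iL_{\fedosov+\tauTpolyF(Q)}=\iL_{\fedosov}+\iL_{\tauTpolyF(Q)}$, stability of $\sections{\cN;\cF}$ under the first summand via Lemma~\ref{lem:Rome}, and under the second because $\tauTpolyF(Q)\in\sections{\cN;\cF}$ and $\cF$ is a Lie subalgebroid of $T_\cN$, with the compatibility condition then inherited by restriction from $T_{\cNQ}$. No gaps.
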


In other words, $\cFQ$ is a dg foliation of the dg manifold
$\cNQ=(\cN,\fedosov+\tauTpolyF(Q))$.
Such a dg Lie algebroid $\cFQ\to\cNQ$ is called a \emph{Fedosov dg Lie algebroid}
associated with the dg manifold $(\cM,Q)$. 
Using the same constructions in Section~\ref{sec:CalCFedosov}
and Section~\ref{sec:CalHFedosov}
with replacing the differential $\fedosov$ by $\fedosov+\tauTpolyF(Q)$,
we obtain two calculi $\calculus_C(\cF,\fedosov+\tauTpolyF(Q))$
and $\calculus_H(\cF,\fedosov+\tauTpolyF(Q))$.
They can be considered essentially as the calculi
associated with the dg Lie algebroid $\cFQ\to\cNQ$.

\subsection{Algebraic lemmas}

We need a few algebraic lemmas in order to study
the dg version of Fedosov contractions. 

Let $C^{\bullet,\bullet}$ be a double complex with the differentials 
\[ \dD:C^{p,q}\to C^{p+1,q}
\qquad\text{and}\qquad
\varrho:C^{p,q}\to C^{p,q+1} \]
such that
\begin{equation}\label{eq:BiCxContractionAssumption}
C^{p,q} =0, \qquad \forall \, p<0.
\end{equation}
Here, we require that $(\dD+\varrho)^2 =0$ for the double complex
$(C^{\bullet,\bullet},\dD,\varrho)$. 
Assume that
\begin{equation}\label{eq:ContractionGeneral}
\begin{tikzcd}[cramped]
\big(B^\bullet,d\big) \arrow[r, "\tau", shift left]
& \big(\tot_\oplus^\bullet C, \, \dD\big)
\arrow[l, "\sigma", shift left]
\arrow[loop, "h", out=7, in=-7, looseness=3]
\end{tikzcd}
\end{equation}
is a contraction satisfying the following condition:
\[ h\big(C^{p,q}\big) \subset C^{p-1,q}, \qquad\forall p,q \in\ZZ .\]
Consider
\[ \tot^n_\oplus C = \bigoplus_{\substack{p+q=n \\ p,q\in\ZZ}} C^{p,q} .\]

\begin{lemma}\label{lem:BiCxFiltration}
The filtration 
\[ F_m(\tot^n_\oplus C) = \bigoplus_{\substack{p+q=n, \, p,q\in\ZZ
\\ 0\leq p\leq m}} C^{p,q} \]
is a complete exhaustive filtration of the direct-sum total complex
$(\tot^\bullet_\oplus C,\dD+\varrho)$ such that 
\[ (\varrho h)\big(F_m(\tot^n_\oplus C) \big)\subset F_{m-1}(\tot^n_\oplus C),
\qquad\forall\, m,n \in\ZZ .\]
In particular, $\varrho$ is a small perturbation of the contraction
\eqref{eq:ContractionGeneral} in the sense of Appendix~\ref{Vilnius}. 
\end{lemma}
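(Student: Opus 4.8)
The statement is entirely formal: it asks us to check that the naive ``column'' filtration on the direct-sum total complex is complete and exhaustive, and that the perturbation $\varrho$ lowers the filtration by one when composed with the homotopy $h$. The plan is to verify each clause directly from the definitions, using only the hypothesis \eqref{eq:BiCxContractionAssumption} that $C^{p,q}=0$ for $p<0$ and the hypothesis that $h(C^{p,q})\subset C^{p-1,q}$.

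First I would record that, because of \eqref{eq:BiCxContractionAssumption}, in degree $n$ the total space $\tot^n_\oplus C=\bigoplus_{p\geq 0}C^{p,n-p}$ is a direct sum over $p\geq 0$ only, so the filtration $F_m(\tot^n_\oplus C)=\bigoplus_{0\leq p\leq m}C^{p,n-p}$ is increasing in $m$, starts at $F_{-1}=0$, and satisfies $\bigcup_m F_m(\tot^n_\oplus C)=\tot^n_\oplus C$; this is exhaustiveness. For completeness one observes that $\tot^n_\oplus C/F_m(\tot^n_\oplus C)\cong\bigoplus_{p>m}C^{p,n-p}$ and that the inverse limit $\varprojlim_m \tot^n_\oplus C/F_m$ is reached because an element of the direct sum has only finitely many nonzero components; equivalently $\bigcap_m F_m = 0$ trivially and the filtration is bounded below, so completeness is automatic. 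I would also note that $F_\bullet$ is a filtration by subcomplexes: $\dD$ preserves $p$ and $\varrho$ raises $p$ by zero in the relevant bidegree convention — more precisely $\dD:C^{p,q}\to C^{p+1,q}$ raises $p$ and $\varrho:C^{p,q}\to C^{p,q+1}$ preserves $p$, so $(\dD+\varrho)F_m\subset F_{m+1}$, which is what is needed for $F_\bullet$ to interact correctly with the total differential (it is increasing-compatible, and the perturbation argument of Appendix~\ref{Vilnius} only uses boundedness below).

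Next I would check the key inclusion $(\varrho h)(F_m(\tot^n_\oplus C))\subset F_{m-1}(\tot^n_\oplus C)$. By hypothesis $h(C^{p,q})\subset C^{p-1,q}$, and $\varrho(C^{p-1,q})\subset C^{p-1,q+1}$; hence $(\varrho h)(C^{p,q})\subset C^{p-1,q+1}$, which lowers the $p$-index by exactly one. Summing over $0\leq p\leq m$ and using that the image lands in components with first index $p-1\leq m-1$ (and $\geq -1$, but the $p=0$ term maps into $C^{-1,\cdot}=0$), we get $(\varrho h)(F_m)\subset F_{m-1}$. This is the essential computation, and it is a one-line consequence of the two given degree constraints.

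Finally, to conclude that $\varrho$ is a \emph{small} perturbation of the contraction \eqref{eq:ContractionGeneral} in the sense of Appendix~\ref{Vilnius}, I would invoke the standard criterion (recalled there): a perturbation $\varrho$ of the differential is small with respect to a complete exhaustive filtration if $\varrho h$ (or $h\varrho$) strictly lowers the filtration, so that the series $\sum_{n\geq 0}(\varrho h)^n$ converges. The previous paragraph gives exactly this, and completeness (from boundedness below of $F_\bullet$) guarantees convergence of the geometric series. I do not expect any genuine obstacle here; the only mild subtlety is bookkeeping the two distinct gradings (the ``$p$'' of the double complex versus the total degree ``$n$'') and making sure the filtration is stated on the total complex rather than on $C^{\bullet,\bullet}$ — but this is purely notational, and the hypothesis \eqref{eq:BiCxContractionAssumption} is precisely what makes the direct-sum total complex behave like a bounded-below filtered object so that homological perturbation applies.
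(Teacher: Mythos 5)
Your proof is correct and follows essentially the same route as the paper's: exhaustiveness is immediate, completeness follows from the filtration being bounded below ($F_m=0$ for $m<0$, a consequence of \eqref{eq:BiCxContractionAssumption}), and smallness is the one-line degree count $(\varrho h)(C^{p,q})\subset C^{p-1,q+1}$. The paper dismisses that last computation as ``straightforward''; you have simply written it out.
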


\begin{proof}
It is clear that the filtration is exhaustive:
$\bigcup_{m} F_m (\tot^n_\oplus C) = \tot^n_\oplus C$.
Furthermore, by the assumption \eqref{eq:BiCxContractionAssumption},
we have $F_m(\tot^n_\oplus C)=\{0\}$ for all $m<0$,
and thus the filtration is complete:
$\displaystyle\varprojlim_{m\to -\infty}
\dfrac{\tot^n_\oplus C}{F_m(\tot^n_\oplus C)}$. 
The smallness follows from a straightforward computation.
\end{proof}

As a consequence, we have a perturbed contraction: 
\begin{equation}\label{eq:ContractionPerturbed}
\begin{tikzcd}[cramped]
\big(B^\bullet,d_\varrho\big)
\arrow[r, "\tau_\varrho ", shift left] &
\big(\tot_\oplus^\bullet C, \, \dD +\varrho\big)
\arrow[l, "\sigma_\varrho ", shift left]
\arrow[loop, "h_\varrho ", out=7, in=-7, looseness=3]
\end{tikzcd}
\end{equation}

\begin{lemma}\label{lem:BiCxPerturbation}
If the contraction \eqref{eq:ContractionGeneral} satisfies the extra condition
\[ \tau(B^q)\subset C^{0,q}, \qquad\forall q\in\ZZ ,\]
then
\[ \tau_\varrho=\tau \qquad\text{and}\qquad d_\varrho=d+\sigma\varrho\tau .\]
\end{lemma}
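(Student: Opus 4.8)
The plan is to apply the homological perturbation lemma (Appendix~\ref{Vilnius}) to the contraction~\eqref{eq:ContractionGeneral} with perturbation $\varrho$, using the smallness established in Lemma~\ref{lem:BiCxFiltration}, and then to identify the resulting perturbed data explicitly under the extra hypothesis $\tau(B^q)\subset C^{0,q}$. Recall that the perturbed contraction~\eqref{eq:ContractionPerturbed} has structure maps given by the standard formulas
\[
\tau_\varrho=\sum_{n=0}^\infty (h\varrho)^n\tau,\qquad
\sigma_\varrho=\sigma\sum_{n=0}^\infty(\varrho h)^n,\qquad
h_\varrho=\sum_{n=0}^\infty(h\varrho)^n h,\qquad
d_\varrho=d+\sigma\sum_{n=0}^\infty(\varrho h)^n\varrho\tau,
\]
each sum being finite on any fixed element by completeness of the filtration. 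So the whole statement reduces to showing that all the terms with $n\geq 1$ in $\tau_\varrho$ vanish, and that in $d_\varrho$ only the $n=0$ term $\sigma\varrho\tau$ survives.

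First I would observe that $h\varrho$ kills the bottom filtration piece: since $\tau(B^q)\subset C^{0,q}$ and $h(C^{p,q})\subset C^{p-1,q}$, we have $(h\varrho)(C^{0,q})\subset h(C^{0,q+1})\subset C^{-1,q+1}=0$ by the vanishing assumption~\eqref{eq:BiCxContractionAssumption}. Hence $h\varrho\,\tau=0$, so every term $(h\varrho)^n\tau$ with $n\geq 1$ vanishes and $\tau_\varrho=\tau$. For the differential, the same computation gives $\sigma(\varrho h)^n\varrho\tau = \sigma(\varrho h)^{n-1}\varrho\,(h\varrho\,\tau)=0$ for $n\geq 1$ because the trailing factor is again $h\varrho\,\tau=0$; therefore $d_\varrho=d+\sigma\varrho\tau$, as claimed. (One should also note in passing that $\tau_\varrho$ being a genuine perturbed injection requires $d_\varrho^2=0$, which is automatic from the perturbation lemma; no separate check is needed.)

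The only mildly delicate point is bookkeeping: one must make sure the geometric series defining the perturbed maps actually converge, i.e.\ that $\varrho$ is a \emph{small} perturbation in the precise sense of Appendix~\ref{Vilnius}. But this is exactly the content of Lemma~\ref{lem:BiCxFiltration}, which I may invoke directly, so there is essentially no obstacle here — the argument is a two-line consequence of the two containment relations $\tau(B^q)\subset C^{0,q}$ and $h(C^{0,\bullet})\subset C^{-1,\bullet}=0$ once the perturbation lemma is in place. I would write the proof in precisely that order: recall the perturbed formulas, note $h\varrho\tau=0$, conclude $\tau_\varrho=\tau$ and $d_\varrho=d+\sigma\varrho\tau$.
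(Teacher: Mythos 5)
Your proof is correct and follows the same route as the paper: the paper's own argument is exactly the invocation of the homological perturbation lemma (Lemma~\ref{Riga}) combined with the observation that $(h\varrho)\tau=0$, which you justify by the same degree count $\varrho(C^{0,q})\subset C^{0,q+1}$ and $h(C^{0,q+1})\subset C^{-1,q+1}=0$. Your write-up simply makes explicit the series formulas and the vanishing of the higher-order terms that the paper leaves implicit.
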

\begin{proof}
It follows from Lemma~\ref{Riga} and the observation that $(h\varrho)\tau=0$.
\end{proof}

The situation described in Lemma~\ref{lem:BiCxPerturbation}
can be visualized by the following diagram:
\begin{equation}\label{eq:AlgLemBiCx}
    \begin{tikzcd}
\vdots & & \vdots & \vdots & \vdots & \\
B^{q+1} \ar[u,"d"] \ar[rr,dashed,bend left=20,"\tau" description] & 0 \ar[r]
& \ar[l,dashed,bend left,"h" description] C^{0,q+1} \ar[u,"\varrho"] \ar[r,"\dD"]
& \ar[l,dashed,bend left,"h" description] C^{1,q+1} \ar[u,"\varrho"] \ar[r,"\dD"]
& \ar[l,dashed,bend left,"h" description]
C^{2,q+1} \ar[u,"\varrho"] \ar[r,"\dD"] & \ar[l,dashed,bend left,"h" description] \cdots \\
B^q \ar[u,"d"] \ar[rr,dashed,bend left=20,"\tau" description] & 0 \ar[r]
& \ar[l,dashed,bend left,"h" description] C^{0,q} \ar[u,"\varrho"] \ar[r,"\dD"]
& \ar[l,dashed,bend left,"h" description] C^{1,q} \ar[u,"\varrho"] \ar[r,"\dD"]
& \ar[l,dashed,bend left,"h" description] C^{2,q} \ar[u,"\varrho"] \ar[r,"\dD"]
& \ar[l,dashed,bend left,"h" description] \cdots \\
\vdots \ar[u,"d"] & & \vdots \ar[u,"\varrho"] & \vdots \ar[u,"\varrho"]
& \vdots \ar[u,"\varrho"] & 
\end{tikzcd}
\end{equation}

\subsection{Fedosov contractions for dg manifolds}
\label{sec:FedosovDGmfd}


As in the case of graded manifolds, we consider five types
of Fedosov contractions for a dg manifold: tensor fields,
polyvector fields, differential forms, polydifferential operators and polyjets. 

\subsubsection{Fedosov contractions for tensor fields on a dg manifold}


The space of tensor fields of type $(k,l)$, $k\geq 0$, $l\geq 0$,
on $\cN$ tangent to $\cF$ is equipped with the bigrading:
\[ \sections{\cN;\cF^{\otimes k}\otimes(\cF^\vee)^{\otimes l}}^{r,q}
= \Omega^r(\cM,\hat{S} T_\cM\dual\otimes
T_\cM^{\otimes k}\otimes(T_\cM\dual)^{\otimes l})^{q+r} .\]
Its direct-sum total space is 
\[ \tot_\oplus^n \big(\Gamma(\cN;\cF^{\otimes k}\otimes(\cF^\vee)^{\otimes l})\big)
=\bigoplus_{q+r=n}\Omega^r(\cM,\hat{S}T_\cM\dual\otimes T_\cM^{\otimes k}
\otimes(T_\cM\dual)^{\otimes l})^{q+r} .\]

\begin{proposition}\label{origin}
Given a dg manifold $(\cM,Q)$ and a torsion-free affine connection $\nabla$ on $\cM$,
let $\cF\to\cN$ be the Fedosov dg Lie algebroid corresponding to the graded manifold $\cM$
as in Lemma~\ref{lem:Rome}
and let $\fedosov$ be the corresponding Fedosov homological vector field on $\cN$.
Then there are contractions
at the level of tensor fields of any type $(k,l)$, $k \geq 0$, $l \geq 0$:
\[ \begin{tikzcd}[cramped]
\Big(\big(\sections{\cM;T_\cM^{\otimes k}
\otimes(T_\cM^\vee)^{\otimes l}}\big)^\bullet,\iL_Q\Big)
\arrow[r, "\tauTensorQ{k}{l}", shift left] &
\Big(\tot_\oplus^\bullet\big(\Gamma(\cN;\cF^{\otimes k}\otimes
(\cF^\vee)^{\otimes l})\big), \iL_{\fedosov+\tauTpolyF(Q)}\Big)
\arrow[l, "\sigmaTensorQ{k}{l}", shift left]
\arrow[loop, "\hTensorQ{k}{l}", out=3, in=-3, looseness=3]
\end{tikzcd} \]
\end{proposition}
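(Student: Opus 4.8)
The plan is to obtain this statement by homological perturbation applied to the graded-manifold contraction of Proposition~\ref{repository}, treating $\iL_{\tauTpolyF(Q)}$ as the perturbation. First I would fix $k,l \geq 0$ and consider the contraction \eqref{eq:Tkl} of Proposition~\ref{repository},
\[ \begin{tikzcd}[cramped]
\Big(\sections{\cM;(T_\cM)^{\otimes k}\otimes(T_\cM^\vee)^{\otimes l}},0\Big)
\arrow[r, "\tauTensorF{k}{l}", shift left]
& \Big(\sections{\cN;\cF^{\otimes k}\otimes(\cF^\vee)^{\otimes l}},\iL_{\fedosov}\Big)
\arrow[l, "\sigmaTensorF{k}{l}", shift left]
\arrow[loop, "\hTensorF{k}{l}", out=7, in=-7, looseness=3]
\end{tikzcd} \]
which I regard as a contraction of direct-sum total complexes of a double complex whose $(r,q)$-component is $\sections{\cN;\cF^{\otimes k}\otimes(\cF^\vee)^{\otimes l}}^{r,q}$, with vertical differential (the internal $q$-direction) equal to zero and horizontal differential $\iL_{\fedosov}$ in the $r$-direction. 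This double complex satisfies the vanishing hypothesis \eqref{eq:BiCxContractionAssumption}, and $\hTensorF{k}{l}$ lowers $r$ by one, exactly as in Lemma~\ref{lem:BiCxFiltration}. The target-side base complex $\sections{\cM;(T_\cM)^{\otimes k}\otimes(T_\cM^\vee)^{\otimes l}}$ is concentrated in $r=0$, so the extra condition of Lemma~\ref{lem:BiCxPerturbation} holds for the injection $\tauTensorF{k}{l}$.

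Next I would take the perturbation to be $\varrho := \iL_{\tauTpolyF(Q)}$. The key point is that $\tauTpolyF(Q)$ is a homological vector field on $\cN$ of degree $+1$ and, by Corollary~\ref{cor:SFO} together with Proposition~\ref{pro:Jolly}, it lies in $\OO^0(\cM,\hat S(T_\cM^\vee)\otimes T_\cM)$, i.e.\ it is of form degree $r=0$; hence $\iL_{\tauTpolyF(Q)}$ raises the internal degree $q$ by one while preserving $r$, so it is a vertical differential on the double complex. Because $\fedosov+\tauTpolyF(Q)$ squares to zero (it is the homological vector field of $\cNQ$), the operator $\iL_{\fedosov} + \iL_{\tauTpolyF(Q)}$ squares to zero on $\cF$-longitudinal tensors, so $(\iL_{\fedosov},\iL_{\tauTpolyF(Q)})$ genuinely defines a double complex of the type in Lemma~\ref{lem:BiCxFiltration}. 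Smallness of the perturbation follows from that lemma: $(\iL_{\tauTpolyF(Q)}\circ\hTensorF{k}{l})$ lowers the filtration degree $F_m$ (defined by $0\le r\le m$) by one, so the geometric series converges. Applying Lemma~\ref{Riga} (the Perturbation Lemma recalled in Appendix~\ref{Vilnius}) yields a perturbed contraction whose differential on the big complex is $\iL_{\fedosov}+\iL_{\tauTpolyF(Q)} = \iL_{\fedosov+\tauTpolyF(Q)}$, and whose base complex acquires, by Lemma~\ref{lem:BiCxPerturbation}, the perturbed differential $0 + \sigmaTensorF{k}{l}\circ\iL_{\tauTpolyF(Q)}\circ\tauTensorF{k}{l}$. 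The remaining task is to identify this last operator with $\iL_Q$ on $\sections{\cM;T_\cM^{\otimes k}\otimes(T_\cM^\vee)^{\otimes l}}$.

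That identification is where I expect to spend the real work, although it should be short. By Proposition~\ref{prop:TauTensor}, the injection $\tauTensorF{k}{l}$ commutes with $\iL_X$ for $X\in\sections{T_\cM}$ in the sense of \eqref{eq:TauTensor_Lie}, and $\tauTpolyF$ (on vector fields) is a morphism of Lie algebras by Corollary~\ref{cor:SFO}; combining these, $\iL_{\tauTpolyF(Q)}\circ\tauTensorF{k}{l} = \tauTensorF{k}{l}\circ\iL_Q$, and then applying $\sigmaTensorF{k}{l}$ and using $\sigmaTensorF{k}{l}\circ\tauTensorF{k}{l}=\id$ gives $\sigmaTensorF{k}{l}\circ\iL_{\tauTpolyF(Q)}\circ\tauTensorF{k}{l}=\iL_Q$, as needed. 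One subtlety to be careful about: \eqref{eq:TauTensor_Lie} as stated in Proposition~\ref{prop:TauTensor} is for $X\in\sections{T_\cM}$ honest vector fields, and $Q$ is indeed such a vector field of degree $+1$, so this is directly applicable; I would just double-check the sign bookkeeping when $Q$ has odd degree, since the Koszul signs in the definition of $\iL$ on tensors of type $(k,l)$ interact with $|Q|=1$. Finally, the perturbed homotopy $\hTensorQ{k}{l}$ and injection $\tauTensorQ{k}{l}$ are the ones produced by Lemma~\ref{Riga}, namely $\tauTensorQ{k}{l}=\tauTensorF{k}{l}$ and $\hTensorQ{k}{l}=\sum_{n\ge 0}(\hTensorF{k}{l}\iL_{\tauTpolyF(Q)})^n\hTensorF{k}{l}$, and $\sigmaTensorQ{k}{l}=\sigmaTensorF{k}{l}$; this completes the contraction asserted in the proposition. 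The main obstacle is thus not conceptual but organizational: verifying that $\iL_{\tauTpolyF(Q)}$ really is an $r$-preserving, $q$-raising square-zero-compatible perturbation and that the signs in the reduction $\sigma\varrho\tau=\iL_Q$ come out correctly.
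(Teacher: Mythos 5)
Your proposal is correct and follows essentially the same route as the paper: the paper's proof applies Lemma~\ref{lem:BiCxFiltration} and Lemma~\ref{lem:BiCxPerturbation} to the contraction of Proposition~\ref{repository} with $\varrho=\iL_{\tauTpolyF(Q)}$, and then identifies $d_\varrho=\sigmaTensorF{k}{l}\circ\iL_{\tauTpolyF(Q)}\circ\tauTensorF{k}{l}=\iL_Q$ via Proposition~\ref{prop:TauTensor}, exactly as you do. Your additional checks (that $\tauTpolyF(Q)$ has form degree $r=0$ so the perturbation is vertical, and that $(\iL_{\fedosov},\iL_{\tauTpolyF(Q)})$ forms a double complex) are details the paper defers or handles elsewhere, but they are consistent with its argument.
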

\begin{proof}
Recall that
\[ \Gamma(\cN;\cF^{\otimes k}\otimes(\cF^\vee)^{\otimes l})
=\Omega(\cM,\hat{S}T_\cM\dual\otimes T_\cM^{\otimes k}\otimes (T_\cM\dual)^{\otimes l}) .\]
Applying Lemma~\ref{lem:BiCxFiltration} and Lemma~\ref{lem:BiCxPerturbation} to the case
\begin{gather*}
B^q=\Gamma(\cM; T_\cM^{\otimes k}\otimes
(T_\cM^\vee)^{\otimes l})^q, \qquad C^{r,q}
=\Omega^r(\cM,\hat{S}T_\cM\dual\otimes T_\cM^{\otimes k}
\otimes (T_\cM\dual)^{\otimes l})^{q+r}, \\
D=\iL_{\fedosov}, \quad\varrho=\iL_{\tauTpolyF(Q)},
\quad h=\hTensorF{k}{l}, \quad\tau=\tauTensorF{k}{l},
\quad\sigma=\sigmaTensorF{k}{l}, \quad d=0
\end{gather*}
--- see Proposition~\ref{repository} for the notations --- we obtain a contraction:
\[ \begin{tikzcd}[cramped]
\Big(\big(\sections{\cM;T_\cM^{\otimes k}
\otimes(T_\cM^\vee)^{\otimes l}}\big)^\bullet, d_\varrho \Big)
\arrow[r, "\tauTensorQ{k}{l}", shift left] &
\Big(\tot_\oplus^\bullet\big(\Gamma(\cN;\cF^{\otimes k}\otimes
(\cF^\vee)^{\otimes l})\big), \iL_{\fedosov+\tauTpolyF(Q)}\Big)
\arrow[l, "\sigmaTensorQ{k}{l}", shift left]
\arrow[loop, "\hTensorQ{k}{l}", out=3, in=-3, looseness=3]
\end{tikzcd} \]

It remains to show that $d_\varrho = \iL_Q$. Now we compute: 
\begin{align*}
d_\varrho(\omegaa) & = (d + \sigma \varrho \tau)(\omegaa) \\
& = \sigmaTensorF{k}{l} \iL_{\tauTpolyF(Q)} \tauTensorF{k}{l}(\omegaa) \\
& = \sigmaTensorF{k}{l} \tauTensorF{k}{l}(\iL_{Q} ( \omegaa)) \qquad 
\text{(by Proposition~\ref{prop:TauTensor})} \\
& = \iL_{Q} (\omegaa),
\end{align*}
for any $\omegaa \in \Gamma(\cN;\cF^{\otimes k}\otimes
(\cF^\vee)^{\otimes l})$.
\end{proof}

\begin{remark}
The injections $\tauTensorQ{k}{l}$ in Proposition~\ref{repository}
and $\tauTensorQ{k}{l}$ in Proposition~\ref{origin} are actually identical.
However, the homological perturbation modifies the maps
$\sigmaTensorF{k}{l}$ and $\hTensorF{k}{l}$ in Proposition~\ref{repository}
and returns the new maps $\sigmaTensorQ{k}{l}$ and $\hTensorQ{k}{l}$
appeared in Proposition~\ref{origin}.
\end{remark}


\subsubsection{Fedosov contractions for $\sT_{\poly}$ and $\sA_{\poly}$ of a dg manifold}

By perturbing the contractions in Corollary~\ref{pro:contractionTO}
by $\schouten{\tauTpolyF(Q)}{\argument}$ and $\iL_{\tauTpolyF(Q)}$,
respectively, we obtain the following

\begin{theorem}\label{thm:mainT}
Given a dg manifold $(\cM,Q)$ and a torsion-free affine connection $\nabla$ on
$\cM$, let $\cF\to\cN$ be the Fedosov dg Lie algebroid corresponding to the
graded manifold $\cM$ as in Lemma~\ref{lem:Rome}
and $\fedosov$ the corresponding Fedosov homological vector field on $\cN$.
Then we have contractions
\[ \begin{tikzcd}[cramped]
\Big(\totTpolyM{\bullet},\schouten{Q}{\argument}\Big)
\arrow[r, "\tauTpolyQ", shift left]
& \Big(\totTpolyF{\bullet},\schouten{\fedosov+\tauTpolyF(Q)}{\argument}\Big)
\arrow[l, "\sigmaTpolyQ", shift left]
\arrow[loop, "\hTpolyQ", out=5, in=-5, looseness=3]
\end{tikzcd} \]
and
\[ \begin{tikzcd}[cramped]
\Big(\totApolyM{\bullet},\iL_{Q}\Big)
\arrow[r, "\tauTpolyQ", shift left] &
\Big(\totcApolyF{\bullet},\iL_{\fedosov+\tauTpolyF(Q)}\Big)
\arrow[l, "\sigmaTpolyQ", shift left]
\arrow[loop, "\hTpolyQ", out=5, in=-5, looseness=3]
\end{tikzcd} \]
such that, the pair of injections $\tauTpolyQ$ respect the operations
$\wedge$, $\schouten{\argument}{\argument}$,
$\iI$, $\iL$ and $d$.
\end{theorem}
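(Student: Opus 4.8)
The plan is to derive both contractions by homological perturbation, starting from the Fedosov contractions for the \emph{graded} manifold $\cM$ produced in Corollary~\ref{pro:contractionTO} --- between $\big(\totTpolyM{\bullet},0\big)$ and $\big(\totTpolyF{\bullet},\tQF\big)$, and between $\big(\totApolyM{\bullet},0\big)$ and $\big(\totcApolyF{\bullet},\aQF\big)$ --- and twisting them by $\varrho:=\schouten{\tauTpolyF(Q)}{\argument}$ and $\varrho:=\iL_{\tauTpolyF(Q)}$ respectively. I will spell out the polyvector field case; the differential form case is entirely parallel.

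The first step is to check that we are in the situation governed by the algebraic Lemmas~\ref{lem:BiCxFiltration} and~\ref{lem:BiCxPerturbation}. On $\totTpolyF{\bullet}$ I would use the bigrading $C^{r,s}=\bigoplus_{p+q=s}\prescript{\cF}{}{\sT}^{r,q,p}$, so that $\tot_\oplus^n C=\totTpolyF{n}$, with $\dD:=\tQF=\iL_{\fedosov}$ raising the form degree $r$ and $\hTpolyF$ lowering it, and with $C^{r,s}=0$ for $r<0$ --- this boundedness is exactly what motivated the restricted total spaces of Remark~\ref{rmk:SumTotIsSmaller}. The decisive point is that $\tauTpolyF(Q)$, being annihilated by $\hTpolyF$, has form degree $0$ by Lemma~\ref{rmk:Paris}, and it is a $1$-vector field of internal degree $+1$; hence $\varrho=\schouten{\tauTpolyF(Q)}{\argument}$ preserves $r$ and the polyvector degree $p$, so it raises $s=p+q$ by one, and it stabilizes $\totTpolyF{\bullet}$ because $\cF$ is a Lie subalgebroid. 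Since $\fedosov+\tauTpolyF(Q)$ is a homological vector field, $(\dD+\varrho)^2=0$; the $r$-filtration is complete and exhaustive; $\varrho$ is a small perturbation; and $\tauTpolyF$ lands in $C^{0,\bullet}$. Lemma~\ref{lem:BiCxPerturbation} then yields a perturbed contraction whose injection is unchanged, $\tauTpolyQ=\tauTpolyF$, whose differential on the base is $d_\varrho=\sigmaTpolyF\circ\varrho\circ\tauTpolyF$, and whose differential on the Fedosov side is $\iL_{\fedosov}+\varrho=\schouten{\fedosov+\tauTpolyF(Q)}{\argument}$.

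The second step is to recognize $d_\varrho$. Here the algebraic input of Section~4 is used: by Corollary~\ref{cor:SFO}, $\tauTpolyF$ is a morphism of differential graded Lie algebras, so $\varrho\big(\tauTpolyF(X)\big)=\schouten{\tauTpolyF(Q)}{\tauTpolyF(X)}=\tauTpolyF\big(\schouten{Q}{X}\big)$, and applying $\sigmaTpolyF$ (a one-sided inverse of $\tauTpolyF$) gives $d_\varrho=\schouten{Q}{\argument}$. For the differential form contraction the identical computation with $\varrho=\iL_{\tauTpolyF(Q)}$ uses Equation~\eqref{eq:wuxi3} (equivalently Equation~\eqref{eq:TauTensor_Lie} with $X=Q$), namely $\tauTpolyF(\iL_Q\omega)=\iL_{\tauTpolyF(Q)}\tauTpolyF(\omega)$, and gives $d_\varrho=\iL_Q$; the Fedosov-side differential becomes $\aQF+\varrho=\iL_{\fedosov+\tauTpolyF(Q)}$, which squares to zero since $\fedosov+\tauTpolyF(Q)$ is homological.

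Finally, since $\tauTpolyQ$ equals $\tauTpolyF$ as a map of graded spaces, and the operations $\wedge$, $\schouten{\argument}{\argument}$, $\iI$, $\iL$, $d$ are defined independently of the chosen differential on the poly spaces, the assertion that $\tauTpolyQ$ respects them is immediate from Theorem~\ref{thm:Germain}\,(1); that $\tauTpolyQ$ also intertwines the twisted differentials is built into the perturbation lemma. The only genuinely delicate point is the bookkeeping of the first step --- fixing the bigrading and verifying the completeness and smallness hypotheses that make homological perturbation legitimate on these particular total spaces; everything else rests on results already established for the graded manifold $\cM$.
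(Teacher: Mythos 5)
Your proposal is correct and follows essentially the same route as the paper: perturb the graded-manifold contractions of Corollary~\ref{pro:contractionTO} by $\iL_{\tauTpolyF(Q)}$ using Lemmas~\ref{lem:BiCxFiltration} and~\ref{lem:BiCxPerturbation} on the $(r,q,p)$-trigraded total spaces, identify $d_\varrho$ with $\schouten{Q}{\argument}$ (resp.\ $\iL_Q$) via the compatibility of $\tauTpolyF$ with the Schouten bracket and Lie derivative, and invoke Theorem~\ref{thm:Germain} for the preservation of the operations since the injection is unchanged by the perturbation. The paper cites Proposition~\ref{prop:TauTensor} where you cite Corollary~\ref{cor:SFO} and Equation~\eqref{eq:wuxi3}, but these amount to the same algebraic input.
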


\begin{proof}
We follow the notations in Section~\ref{sec:Subcontraction_TA}.
Apply Lemma~\ref{lem:BiCxFiltration} and Lemma~\ref{lem:BiCxPerturbation}
to the contractions \eqref{eq:Tcontraction} and \eqref{eq:Ocontraction} with 
\begin{gather*}
B^s=\bigoplus_{p+q=s}\sT^{0,q,p}, \qquad C^{r,s}=\bigoplus_{p+q = s}
\prescript{\cF}{}{\sT}^{r,q,p}, \\
D=\tQF, \quad\varrho=\iL_{\tauTpolyF(Q)} =\schouten{\tauTpolyF(Q)}{\argument},
\quad h=\hTpolyF, \quad\tau=\tauTpolyF, \quad\sigma=\sigmaTpolyF, \quad d=0,
\end{gather*}
and 
\begin{gather*}
B^s=\prod_{-p+q=s}\sA^{0,q,-p}, \qquad
C^{r,s}=\prod_{-p+q = s}\prescript{\cF}{}{\sA}^{r,q,-p}, \\
D=\tQF, \quad\varrho=\iL_{\tauTpolyF(Q)}, \quad h=\hTpolyF,
\quad\tau=\tauTpolyF, \quad\sigma=\sigmaTpolyF, \quad d=0,
\end{gather*}
respectively. We have double complexes here since 
\[ [D,\varrho] = [\tQF,\iL_{\tauTpolyF(Q)}] = \iL_{[\fedosov,\tauTpolyF(Q)]} =0 .\]

Similarly to Proposition~\ref{origin}, one can show that
$d_\varrho=\iL_Q$ by Proposition~\ref{prop:TauTensor}.

Since the pair of injection maps $\tauDpolyQ$
remains the same after the perturbation
according to Lemma~\ref{lem:BiCxPerturbation},
it preserves the calculus structures by Theorem~\ref{thm:Germain}.
This completes the proof.
\end{proof}

As an immediate consequence, we have
\begin{corollary}
The pair of maps $\tauTpolyQ$ (and hence $\sigmaTpolyQ$) below:
\[ \begin{tikzcd}[cramped]
\cohomology{} \Big(\totTpolyM{\bullet},\schouten{Q}{\argument}\Big)
\arrow[r, "\tauTpolyQ", shift left]
& \cohomology{} \Big(\totTpolyF{\bullet},\schouten{\fedosov+\tauTpolyF(Q)}{\argument}\Big)
\arrow[l, "\sigmaTpolyQ", shift left]
\end{tikzcd} \]
and
\[ \begin{tikzcd}[cramped]
\cohomology{} \Big(\totApolyM{\bullet},\iL_{Q}\Big)
\arrow[r, "\tauTpolyQ", shift left]
& \cohomology{} \Big(\totcApolyF{\bullet},\iL_{\fedosov+\tauTpolyF(Q)}\Big)
\arrow[l, "\sigmaTpolyQ", shift left]
\end{tikzcd} \]
is an isomorphism of Tamarkin--Tsygan calculi between
$\calculus_C(\cM,Q)$ and $\calculus_C(\cF, \fedosov+\tauTpolyF(Q) )$.
\end{corollary}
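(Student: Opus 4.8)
The plan is to read everything off Theorem~\ref{thm:mainT}, so that this corollary is purely formal. First I would recall that a contraction, i.e.\ a triple $(\tau,\sigma,h)$ with a cochain map $\tau\colon(V,\partial_V)\to(W,\partial_W)$, a cochain map $\sigma$ in the reverse direction, and a degree $-1$ map $h$ on $W$, satisfies $\sigma\circ\tau=\id_V$ and $\tau\circ\sigma=\id_W-(\partial_W\circ h+h\circ\partial_W)$. Passing to cohomology the chain-homotopic term $\partial_W h+h\partial_W$ acts as zero, so the induced maps $\sigma_\ast$ and $\tau_\ast$ are mutually inverse isomorphisms of graded vector spaces. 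Applying this to the two contractions furnished by Theorem~\ref{thm:mainT} --- the one relating $\big(\totTpolyM{\bullet},\schouten{Q}{\argument}\big)$ to $\big(\totTpolyF{\bullet},\schouten{\fedosov+\tauTpolyF(Q)}{\argument}\big)$, and the one relating $\big(\totApolyM{\bullet},\iL_{Q}\big)$ to $\big(\totcApolyF{\bullet},\iL_{\fedosov+\tauTpolyF(Q)}\big)$ --- gives that the displayed maps $\tauTpolyQ$ and $\sigmaTpolyQ$ are mutually inverse isomorphisms of graded vector spaces on cohomology.

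Next I would verify that the induced $\tauTpolyQ$ is a morphism of Tamarkin--Tsygan calculi in the sense of Definition~\ref{def:calc-mor}. By Theorem~\ref{thm:mainT} the cochain map $\tauTpolyQ$ already respects $\wedge$, the Schouten bracket $\schouten{\argument}{\argument}$, the differential $d$ (the de~Rham differential on the $\cM$-side, the Chevalley--Eilenberg differential $d_\cF$ on the Fedosov side), and the action operators $\iI$ and $\iL$. Each of these operations is compatible with the twisted differentials $\schouten{\fedosov+\tauTpolyF(Q)}{\argument}$ and $\iL_{\fedosov+\tauTpolyF(Q)}$ --- this is exactly what makes $\calculus_C(\cF,\fedosov+\tauTpolyF(Q))$ a calculus, via Proposition~\ref{prop:CalC_DGLieAbd} applied to the dg Lie algebroid $\cFQ\to\cNQ$ of Lemma~\ref{lem:Florence}, and likewise with $\schouten{Q}{\argument}$ and $\iL_Q$ on the $\cM$-side --- so the operations descend to the respective cohomologies and the chain-level identities descend with them. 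Hence the induced $\tauTpolyQ$ is a morphism of Gerstenhaber algebras on the $\aTheta$-part, commutes with $d$ on the $\Xi$-part, and intertwines the module action $\iI_X$ (and therefore $\iL_X$, by the Cartan formula~\eqref{brick}); that is, it is a morphism of calculi $\calculus_C(\cM,Q)\to\calculus_C(\cF,\fedosov+\tauTpolyF(Q))$.

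Finally I would combine the two observations: $\tauTpolyQ$ on cohomology is a bijective morphism of calculi, and the inverse of such a morphism is again a morphism of calculi. Concretely, $\sigmaTpolyQ$ is the set-theoretic inverse of a bijective morphism of graded commutative algebras, hence itself such a morphism, and similarly for the graded Lie bracket; it satisfies $\sigmaTpolyQ\circ d=d\circ\sigmaTpolyQ$ by conjugating $d\circ\tauTpolyQ=\tauTpolyQ\circ d$ on both sides by $\tauTpolyQ$ and $\sigmaTpolyQ$; and it satisfies $\sigmaTpolyQ(\iI_X\xi)=\iI_{\sigmaTpolyQ(X)}\sigmaTpolyQ(\xi)$ by applying $\sigmaTpolyQ$ to both sides of $\iI_{\tauTpolyQ(Y)}\tauTpolyQ(\eta)=\tauTpolyQ(\iI_Y\eta)$ with $Y=\sigmaTpolyQ(X)$, $\eta=\sigmaTpolyQ(\xi)$ and using $\sigmaTpolyQ\circ\tauTpolyQ=\id$. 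Therefore the pair $(\tauTpolyQ,\sigmaTpolyQ)$ is an isomorphism of Tamarkin--Tsygan calculi. I do not expect any genuine obstacle here; the only point requiring care is the bookkeeping that each of $\wedge$, $\schouten{\argument}{\argument}$, $d$, $\iI$, $\iL$ really descends to the twisted cohomology, and this is already part of the construction of the two calculi, so the statement is indeed an immediate consequence of Theorem~\ref{thm:mainT}.
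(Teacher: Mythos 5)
Your argument is correct and matches the paper, which states this corollary without proof as an ``immediate consequence'' of Theorem~\ref{thm:mainT}: the contractions give mutually inverse isomorphisms on cohomology, the chain-level compatibilities of $\tauTpolyQ$ with $\wedge$, $\schouten{\argument}{\argument}$, $\iI$, $\iL$, $d$ descend, and the inverse of a bijective morphism of calculi is again one. You have simply made explicit the routine bookkeeping the paper leaves to the reader.
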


\subsubsection{Fedosov contractions for $\sD_{\poly}$ and $\sC_{\poly}$ of a dg manifold}

Let $m\in\Dpolym{2}$ be the shifted multiplication
defined in Equation~\eqref{eq:ShiftedMultiplication}.
According to Lemma~\ref{lem:ProdDiffOp_tau}
and Proposition~\ref{pro:Jolly},
we have
\begin{equation}
\big(\tauDpolyF(m)\big)(f, g) = (-1)^{|f|} fg,
\qquad \forall f,g \in C^\infty(\cN).
\end{equation} 
By $\hochschild$ and $\hochschildb$, respectively,
we denote the Hochschild cohomology differential and
Hochschild homology differential of the Fedosov Lie algebroid
$\cF\to \cN$ as in Section~\ref{sec:CalHdgAbd}.
Thus, $\hochschild=\gerstenhaber{\tauDpolyF(m)}{\argument}$
and $\hochschildb=\iL_{\tauDpolyF(m)}$.
By abuse of notation, we still denote $\tauDpolyF(m)$ by $m$.

We perturb the contractions in Corollary~\ref{cor:repositoryD} by
$\iL_{\tauTpolyF(Q)}+\hochschild
=\gerstenhaber{{\tauTpolyF(Q)}}{\argument}
+\gerstenhaber{m}{\argument}$
and $\iL_{\tauDpolyF(Q)+m}=\iL_{\tauDpolyF(Q)}+\hochschildb$,
respectively, and thus we have the following 

\begin{theorem}\label{thm:mainD}
Given a dg manifold $(\cM,Q)$ and a torsion-free affine connection $\nabla$ on
$\cM$, let $\cF\to\cN$ be the Fedosov dg Lie algebroid corresponding to the
graded manifold $\cM$ as in Lemma~\ref{lem:Rome}
and $\fedosov$ the corresponding Fedosov homological vector field on $\cN$.
Then we have contractions
\[ \begin{tikzcd}[cramped]
\Big( \totDpolyM{\bullet} , \gerstenhaber{Q}{\argument}+\hochschild \Big)
\arrow[r, "\tauDpolyQ", shift left] &
\Big( \totDpolyF{\bullet} , \gerstenhaber{\fedosov+\tauTpolyF(Q)}{\argument}+\hochschild \Big)
\arrow[l, "\sigmaDpolyQ", shift left]
\arrow[loop, "\hDpolyQ", out=5, in=-5, looseness=3]
\end{tikzcd} \]
and
\[ \begin{tikzcd}[cramped]
\Big(\totCpolyM{\bullet},\schoutenc{Q}+\hochschildb \Big)
\arrow[r, "\tauDpolyQ", shift left] &
\Big(\totcCpolyF{\bullet},
\schoutenc{\fedosov+\tauTpolyF(Q)}+\hochschildb \Big)
\arrow[l, "\sigmaTpolyQ", shift left]
\arrow[loop, "\hTpolyQ", out=5, in=-5, looseness=3]
\end{tikzcd} \]
such that, the pair of injections $\tauDpolyQ$
respect the operations $\cup$, $\gerstenhaber{\argument}{\argument}$,
$\iI$, $\iL$ and $B$.
%
\end{theorem}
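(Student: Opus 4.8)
The plan is to deduce both contractions by applying the homological perturbation lemmas of Section~\ref{sect:5} to the graded-manifold contractions \eqref{eq:Dcontraction1} and \eqref{eq:Ccontraction1} of Corollary~\ref{cor:repositoryD}, in exact parallel with the proof of Theorem~\ref{thm:mainT}. For polydifferential operators I would invoke Lemma~\ref{lem:BiCxFiltration} and Lemma~\ref{lem:BiCxPerturbation} with the data
\[ B^s=\bigoplus_{p+q=s}\sD^{0,q,p}, \quad C^{r,s}=\bigoplus_{p+q=s}\prescript{\cF}{}{\sD}^{r,q,p}, \quad \dD=\dQF, \quad \varrho=\iL_{\tauTpolyF(Q)}+\hochschild=\gerstenhaber{\tauTpolyF(Q)+m}{\argument}, \]
together with $h=\hDpolyF$, $\tau=\tauDpolyF$, $\sigma=\sigmaDpolyF$ and $d=0$; for polyjets I would use the dual data with $\varrho=\iL_{\tauTpolyF(Q)}+\hochschildb=\iL_{\tauTpolyF(Q)+m}$. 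Here $\tauTpolyF(Q)=\tauDpolyF(Q)$ by Proposition~\ref{pro:Jolly}(3), and $m$ is the shifted multiplication (and its image $\tauDpolyF(m)$) as fixed above.

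First I would verify that $(C^{\bullet,\bullet},\dD,\varrho)$ is a genuine double complex. The identities $\dD^2=0$ and $\varrho^2=0$ reduce, using the bracket preservation of $\tauDpolyF$ (Corollary~\ref{cor:TauPreserveCalH}) and $\tauDpolyF(m)=m$, to $\gerstenhaber{\fedosov}{\fedosov}=0$, $\gerstenhaber{Q}{Q}=0$, $\gerstenhaber{m}{m}=0$ and $\gerstenhaber{Q}{m}=0$, the last two being associativity and the fact that $Q$ is a derivation. The anticommutation $\dD\varrho+\varrho\dD=0$ amounts to $\gerstenhaber{\fedosov}{\tauTpolyF(Q)+m}=0$: the $m$-part vanishes because $\fedosov$ is a derivation of $C^\infty(\cN)$, while the $\tauTpolyF(Q)$-part vanishes by expanding $\gerstenhaber{\fedosov+\tauTpolyF(Q)}{\fedosov+\tauTpolyF(Q)}=0$ (valid since $\fedosov+\tauTpolyF(Q)$ is a homological vector field) and using $\gerstenhaber{\fedosov}{\fedosov}=0$ and $\gerstenhaber{\tauTpolyF(Q)}{\tauTpolyF(Q)}=\tauTpolyF(\gerstenhaber{Q}{Q})=0$ (Corollary~\ref{cor:SFO}), which leaves precisely $\gerstenhaber{\fedosov}{\tauTpolyF(Q)}=0$. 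Since $\tauDpolyF$, $\dQF$, $h$ and $\varrho$ all preserve the form-degree $r$ while $h$ strictly lowers it, Lemma~\ref{lem:BiCxFiltration} shows $\varrho$ is a small perturbation; and because $\tauDpolyF(\sD^{0,q,p})\subset\prescript{\cF}{}{\sD}^{0,q,p}$, Lemma~\ref{lem:BiCxPerturbation} applies, producing a perturbed contraction whose injection is unchanged, $\tauDpolyQ=\tauDpolyF$, and whose perturbed base differential is $d_\varrho=\sigmaDpolyF\circ\varrho\circ\tauDpolyF$.

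Next I would identify $d_\varrho$. Using bracket preservation of $\tauDpolyF$ (Corollary~\ref{cor:TauPreserveCalH}) and $\sigmaDpolyF\circ\tauDpolyF=\id$, for $D\in\totDpolyM{\bullet}$ one has $\varrho\bigl(\tauDpolyF(D)\bigr)=\gerstenhaber{\tauDpolyF(Q+m)}{\tauDpolyF(D)}=\tauDpolyF\bigl(\gerstenhaber{Q+m}{D}\bigr)$, hence $d_\varrho(D)=\gerstenhaber{Q}{D}+\hochschild(D)$; likewise, using preservation of $\iL$, on polyjets $d_\varrho=\iL_{Q+m}=\schoutenc{Q}+\hochschildb$. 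This produces the two asserted contractions. Finally, the operations $\cup$, $\gerstenhaber{\argument}{\argument}$, $\iI$, $\iL$ and $B$ on $\totDpolyF{\bullet}$ and $\totcCpolyF{\bullet}$ are built from the Hopf algebroid $\enveloping{\cF}$ and the formal groupoid $\jet{\cF}$ and do not involve the choice of homological vector field, so they coincide with those of $\calculus_H(\cF,\fedosov)$; since $\tauDpolyQ=\tauDpolyF$ and the latter respects all of them by Corollary~\ref{cor:TauPreserveCalH} (equivalently Theorem~\ref{thm:Germain}(2)), so does $\tauDpolyQ$. The cup product and Gerstenhaber bracket pass to a Gerstenhaber algebra structure on cohomology, giving the claimed isomorphism of Tamarkin--Tsygan calculi as an immediate corollary.

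The genuinely new content is concentrated in the double-complex verification — where it is essential to use both that $\fedosov+\tauTpolyF(Q)$ is a homological vector field and that $\tauTpolyF$ is a morphism of dg Lie algebras (Corollary~\ref{cor:SFO}) — and in the identification of $d_\varrho$, which rests entirely on the already-established Corollary~\ref{cor:TauPreserveCalH} (itself obtained via Theorem~\ref{thm:paper-zero} from the Hopf-algebroid and formal-groupoid morphism properties of $\tauDpolyF$). Everything else is the formal homological-perturbation bookkeeping of Section~\ref{sect:5}. I expect the main subtlety to be purely notational: keeping the triple grading $(r,q,p)$ straight so that the filtration hypotheses of Lemmas~\ref{lem:BiCxFiltration} and~\ref{lem:BiCxPerturbation} are met on the nose, exactly as in Theorem~\ref{thm:mainT}.
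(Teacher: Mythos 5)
Your proposal follows essentially the same route as the paper's proof: perturbing the graded-manifold contractions \eqref{eq:Dcontraction1} and \eqref{eq:Ccontraction1} via Lemmas~\ref{lem:BiCxFiltration} and~\ref{lem:BiCxPerturbation} with exactly the data you list, checking the double-complex condition, identifying $d_\varrho$ through Corollary~\ref{cor:TauPreserveCalH}, and observing that the unchanged injection $\tauDpolyQ=\tauDpolyF$ inherits all calculus-preserving properties. Your spelled-out verification that $\gerstenhaber{\fedosov}{\tauTpolyF(Q)}=0$ (via expanding the square of the homological vector field $\fedosov+\tauTpolyF(Q)$) is slightly more explicit than the paper's one-line computation, but the argument is the same.
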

\begin{proof}
We follow the notations in Section~\ref{sec:Subcontraction_DC}. 
Apply Lemma~\ref{lem:BiCxFiltration} and Lemma~\ref{lem:BiCxPerturbation}
to the contractions \eqref{eq:Dcontraction1} and \eqref{eq:Ccontraction1} with 
\begin{gather*}
B^s=\bigoplus_{p+q=s}\sD^{0,q,p},
\qquad C^{r,s}=\bigoplus_{p+q = s}
\prescript{\cF}{}{\sD}^{r,q,p}, \\
D=\gerstenhaber{\fedosov}{\argument},
\quad\varrho=\gerstenhaber{\tauTpolyF(Q)}{\argument}+\hochschild,
\quad h=\hDpolyF, \quad\tau=\tauDpolyF,
\quad\sigma=\sigmaDpolyF, \quad d=0,
\end{gather*}
and 
\begin{gather*}
B^s = \prod_{-p+q = s} \sC^{0,q,-p},
\qquad C^{r,s} = \prod_{-p+q = s}
\prescript{\cF}{}{\sC}^{r,q,-p}, \\
D = \schoutenc{\fedosov}, \quad\varrho=\schoutenc{\tauTpolyF(Q)}+\hochschildb,
\quad h=\hDpolyF, \quad\tau=\tauDpolyF, \quad\sigma=\sigmaDpolyF, \quad d=0,
\end{gather*}
respectively. 

Below we verify the following assertions for $\Cpoly{}$:
(i) $(C^{\bullet,\bullet},D,\varrho)$ is a double complex
and (ii) $d_\varrho=\schoutenc{Q}+\hochschildb$.
The case of $\Dpoly{}$ is similar.

In the case of $\Cpoly{}$, since 
\begin{align*}
[D,\varrho] &= [\schoutenc{\fedosov},\schoutenc{\tauTpolyF(Q)}+\schoutenc{m}] \\
&= [\schoutenc{\fedosov},\schoutenc{\tauTpolyF(Q)}]
+[\schoutenc{\fedosov},\schoutenc{m}] \\
&= \schoutenc{\gerstenhaber{\fedosov}{\tauTpolyF(Q)}}
+\schoutenc{\gerstenhaber{\fedosov}{m}} \qquad
(\text{Regard } \fedosov,\tauDpolyF(Q), \text{ and } m\text{ as elements in } \Dpoly{}(\cN).) \\
&= 0,
\end{align*}
the triple $(C^{\bullet,\bullet},D,\varrho)$ is a double complex. 

Furthermore, for any $\omegaa\in\Cpolym{}$,
\begin{align*}
d_\varrho(\omegaa) &= (d+\sigma\varrho\tau)(\omegaa)
\qquad\text{(by Lemma~\ref{lem:BiCxPerturbation})} \\
&= \sigmaDpolyF\schoutenc{\tauTpolyF(Q)}\tauDpolyF(\omegaa)
+\sigmaDpolyF\iL_{\tauDpolyF(m)}\tauDpolyF(\omegaa) \\
&= \sigmaDpolyF\tauDpolyF(\iL_{Q}(\omegaa))
+\sigmaDpolyF\tauDpolyF(\iL_m(\omegaa))
\qquad\text{(by Corollary~\ref{cor:TauPreserveCalH})} \\
&= \iL_{Q}(\omegaa)+\hochschildb(\omegaa).
\end{align*}

Since the pair of injection maps $\tauDpolyQ$ remains the same
after the perturbation according to Lemma~\ref{lem:BiCxPerturbation},
it preserves all the calculus algebraic operations
by Corollary~\ref{cor:TauPreserveCalH}. This completes the proof.
\end{proof}

As an immediate consequence, we have
\begin{corollary}
The pair of maps $\tauTpolyQ$ (and hence $\sigmaTpolyQ$) below:
\[ \begin{tikzcd}[cramped]
\cohomology{} \Big(\totDpolyM{\bullet},
\schouten{Q}{\argument} + \hochschild \Big)
\arrow[r, "\tauTpolyQ", shift left] & \cohomology{}
\Big(\totDpolyF{\bullet},\schouten{\fedosov+\tauTpolyF(Q)}{\argument}+\hochschild\Big)
\arrow[l, "\sigmaTpolyQ", shift left]
\end{tikzcd} \]
and
\[ \begin{tikzcd}[cramped]
\cohomology{} \Big(\totCpolyM{\bullet},\iL_{Q}+ \hochschildb \Big)
\arrow[r, "\tauTpolyQ", shift left] &
\cohomology{} \Big(\totcCpolyF{\bullet},\iL_{\fedosov+\tauTpolyF(Q)} + \hochschildb\Big)
\arrow[l, "\sigmaTpolyQ", shift left]
\end{tikzcd} \]
is an isomorphism of Tamarkin--Tsygan calculi between
$\calculus_H(\cM,Q)$
and $\calculus_H(\cF,\fedosov+\tauTpolyF(Q))$.
\end{corollary}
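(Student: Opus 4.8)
The statement is a direct harvest, at the level of cohomology, of the cochain-level contractions furnished by Theorem~\ref{thm:mainD}, so the plan is short. First I would recall the standard properties of contraction data (see Appendix~\ref{Vilnius}): since the injection $\tau$ and the projection $\sigma$ of a contraction satisfy $\sigma\circ\tau=\id$ and $\id-\tau\circ\sigma$ is null-homotopic, they are quasi-isomorphisms inducing mutually inverse $\KK$-linear isomorphisms in cohomology. Applying this to the two contractions of Theorem~\ref{thm:mainD} gives that $\tauDpolyQ$ and $\sigmaDpolyQ$ descend to mutually inverse isomorphisms $\cohomology{}\big(\totDpolyM{\bullet},\gerstenhaber{Q}{\argument}+\hochschild\big)\cong\cohomology{}\big(\totDpolyF{\bullet},\gerstenhaber{\fedosov+\tauTpolyF(Q)}{\argument}+\hochschild\big)$ and $\cohomology{}\big(\totCpolyM{\bullet},\iL_Q+\hochschildb\big)\cong\cohomology{}\big(\totcCpolyF{\bullet},\iL_{\fedosov+\tauTpolyF(Q)}+\hochschildb\big)$.

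Next I would identify these cohomology pairs with the two Tamarkin--Tsygan calculi named in the statement. On the source side, the Proposition of Section~\ref{sec:HtpCalDGMfd} (equivalently Proposition~\ref{prop:CalH_DGLieAbd} applied to the tangent dg Lie algebroid $T_\cM\to\cM$) asserts precisely that this pair, equipped with $\cup$, $\gerstenhaber{\argument}{\argument}$, $\iI$, $\iL$ and $B$, is the calculus $\calculus_H(\cM,Q)$. On the target side, the corresponding corollary of Section~\ref{sec:CalHFedosov} --- stated there for $\fedosov$, but valid verbatim with $\fedosov$ replaced by $\fedosov+\tauTpolyF(Q)$, since by Lemma~\ref{lem:Florence} $\cFQ\to\cNQ$ is again a Fedosov dg Lie algebroid and the smaller total spaces $\totDpolyF{\bullet}$, $\totcCpolyF{\bullet}$ remain closed under the calculus operations --- identifies the target pair with $\calculus_H(\cF,\fedosov+\tauTpolyF(Q))$. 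I would also record the routine point that the Gerstenhaber-algebra structure on polydifferential operators is a priori only up to homotopy but becomes genuine on cohomology, so transporting $\cup$ and $\gerstenhaber{\argument}{\argument}$ through the cohomology functor raises no issue.

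Finally, Theorem~\ref{thm:mainD} states that the cochain-level map $\tauDpolyQ$ already respects $\cup$, $\gerstenhaber{\argument}{\argument}$, $\iI$, $\iL$ and $B$; hence so does the induced map on (co)homology, and, since respecting $B$ includes the intertwining $\tauDpolyQ\circ B=B\circ\tauDpolyQ$, this map is exactly a morphism of calculi in the sense of Definition~\ref{def:calc-mor} (compatibility with the Lie-module structure being automatic from the Cartan formula~\eqref{brick}, as noted after that definition). A morphism of Tamarkin--Tsygan calculi both of whose component maps are linear isomorphisms is an isomorphism of calculi, and its inverse --- here $\sigmaDpolyQ$ on (co)homology --- is then automatically a morphism of calculi as well. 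This establishes the asserted isomorphism $\calculus_H(\cM,Q)\cong\calculus_H(\cF,\fedosov+\tauTpolyF(Q))$.

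I expect no genuine obstacle: the analytic and algebraic labor --- building the contractions by homological perturbation and verifying their compatibility with the calculus operations, which ultimately rests on Theorem~\ref{thm:Germain}, Corollary~\ref{cor:TauPreserveCalH}, and behind them Theorem~\ref{thm:paper-zero} --- is already packaged into Theorem~\ref{thm:mainD}. The only step deserving a moment's care is the bookkeeping of the second paragraph: checking that twisting by $\tauTpolyF(Q)$ leaves the smaller direct-sum total spaces closed under the operations of Section~\ref{sec:CalHdgAbd}, and that these spaces genuinely compute the noncommutative calculus attached to the dg Lie algebroid $\cFQ\to\cNQ$.
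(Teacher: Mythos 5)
Your proposal is correct and follows essentially the same route as the paper, which states this corollary as an immediate consequence of Theorem~\ref{thm:mainD}: the contraction maps become mutually inverse isomorphisms on cohomology, and since the injection already respects $\cup$, $\gerstenhaber{\argument}{\argument}$, $\iI$, $\iL$ and $B$ at the cochain level, the induced map is an isomorphism of calculi. Your added bookkeeping (the up-to-homotopy Gerstenhaber structure becoming genuine on cohomology, and the identification of the target with $\calculus_H(\cF,\fedosov+\tauTpolyF(Q))$ via Lemma~\ref{lem:Florence}) is consistent with what the paper leaves implicit.
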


\subsection{Compatibility with the dg formal groupoid structures}



Similar to Section~\ref{sec:Hopf}, one can consider the universal enveloping algebra
$\enveloping{\cFQ} := \big(\enveloping{\cF},\iL_{\fedosov+\tauDpolyF(Q)}\big)$
and the jet space $\jet{\cFQ}:=\big(\jet{\cF},\iL_{\fedosov+\tauDpolyF(Q)}\big)$
of the Fedosov dg Lie algebroid $\cFQ\to\cNQ$. 
In this section, we aim to obtain an $\cFQ$-version
of Propositions~\ref{pro:Metz} and~\ref{prop:TauFormalGpd}. 

Recall that we have the following identifications:
\begin{align*}
\enveloping{\cF} & \cong \OO\big(\cM,\hat{S}(T^\vee_\cM)\otimes S( T_\cM)\big), \\
\jet{\cF} & \cong \OO\big(\cM,\hat{S}(T^\vee_\cM)\otimes \hat{S}( T_\cM\dual)\big).
\end{align*}
Both spaces $\enveloping{\cF}$ and $\jet{\cF}$ are equipped with the bigrading:
\begin{align*}
\enveloping{\cF}^{r,s} & \cong \Big(\OO^r\big(\cM,\hat{S}(T^\vee_\cM)
\otimes S( T_\cM)\big)\Big)^{r+s}, \\
\jet{\cF}^{r,s} & \cong \Big(\OO^r\big(\cM,\hat{S}(T^\vee_\cM)
\otimes\hat{S}( T_\cM\dual)\big)\Big)^{r+s}.
\end{align*}
With this bigrading, we have the direct-sum total space
$\tot_\oplus^\bullet\big(\enveloping{\cF}\big)$
and $\tot_\oplus^\bullet\big(\jet{\cF}\big)$.
See Remark~\ref{rmk:SumTotIsSmaller} for the difference
between the universal enveloping algebra $\enveloping{\cF}$
(or respectively, the jet space $\jet{\cF}$)
and its direct-sum total space.


Similar to Section~\ref{sec:Subcontraction_TA},
we consider the subcontractions of~\eqref{eq:Metz} and~\eqref{eq:FcontractionJets}:
\begin{equation}\label{eq:FcontractionU_sum}
\begin{tikzcd}[cramped]
\Big( \big(\cD(\cM) \big)^\bullet, 0 \Big)
\arrow[r, " \tauDpolyF", shift left] &
\Big(\tot_\oplus^\bullet \big(\enveloping{\cF}\big) , \dQF \Big)
\arrow[l, " \sigmaDpolyF", shift left] \arrow[loop, "\hDpolyF",out=7,in=-7,looseness = 3]
\end{tikzcd}
\end{equation}
and
\begin{equation}\label{eq:FcontractionJet_sum}
\begin{tikzcd}[cramped]
\Big( \big(\jm \big)^\bullet ,0\Big)
\arrow[r, "\breve{\tau}_\natural", shift left] &
\Big(\tot_\oplus^\bullet \big(\jet{\cF}\big) ,\iL_{\fedosov}\Big)
\arrow[l, "\sigma_\natural", shift left]
\arrow[loop, "\breve{h}_\natural", out=7, in=-7, looseness=3]
\end{tikzcd}.
\end{equation}

\begin{lemma}
Given a dg manifold $(\cM,\cQ)$ and a torsion-free affine connection 
$\nabla$ on $\cM$, let $\cF \to \cN$ be the Fedosov dg Lie algebroid
associated with $\cM$ and $\fedosov$ the corresponding Fedosov homological
vector field on $\cN$. We have the following contractions: 
\begin{equation}
\label{eq:FcontractionU_dg}
\begin{tikzcd}[cramped]
\Big( \big(\cD(\cM)\big)^\bullet, \iL_Q \Big)
\arrow[r, " \tauDpolyF", shift left] &
\Big(\tot_\oplus^\bullet \big(\enveloping{\cF}\big) , \iL_{\fedosov+\tauTpolyF(Q)} \Big)
\arrow[l, " \sigmaDpolyQ", shift left] \arrow[loop, "\hDpolyQ",out=7,in=-7,looseness = 3]
\end{tikzcd}
\end{equation}
and
\begin{equation}\label{eq:FcontractionJets_dg}
\begin{tikzcd}[cramped]
\Big( \big(\jm \big)^\bullet, \iL_Q \Big)
\arrow[r, "\breve{\tau}_\natural", shift left] &
\Big(\tot_\oplus^\bullet \big(\jet{\cF}\big) ,\iL_{\fedosov+\tauTpolyF(Q)}\Big)
\arrow[l, "\sigmaDpolyQ", shift left]
\arrow[loop, "\hDpolyQ", out=7, in=-7, looseness=3]
\end{tikzcd}.
\end{equation} 
\end{lemma}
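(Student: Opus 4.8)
The plan is to obtain both contractions \eqref{eq:FcontractionU_dg} and \eqref{eq:FcontractionJets_dg} by applying the homological perturbation machinery of Lemmas~\ref{lem:BiCxFiltration} and~\ref{lem:BiCxPerturbation} to the already-established contractions \eqref{eq:FcontractionU_sum} and \eqref{eq:FcontractionJet_sum}, exactly in the pattern already used in the proofs of Proposition~\ref{origin} and Theorems~\ref{thm:mainT} and~\ref{thm:mainD}. First I would record the bigrading on $\enveloping{\cF}\cong\OO\big(\cM,\hat S(T^\vee_\cM)\otimes S(T_\cM)\big)$ and on $\jet{\cF}\cong\OO\big(\cM,\hat S(T^\vee_\cM)\otimes\hat S(T_\cM\dual)\big)$ by the form-degree $r$ together with the complementary internal degree, so that $\tot_\oplus^\bullet(\enveloping{\cF})$ and $\tot_\oplus^\bullet(\jet{\cF})$ are the direct-sum total complexes of double complexes; then I would check that $C^{r,s}=0$ for $r<0$, that the homotopies $\breve h_\natural$ lower $r$ by one, and that the injections $\tauDpolyF$ land in form-degree $r=0$. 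These are the hypotheses \eqref{eq:BiCxContractionAssumption}, the bound on $h$, and the extra condition in Lemma~\ref{lem:BiCxPerturbation}, and each is immediate from the formulas in Sections~\ref{sec:Subcontraction_TA} and~\ref{sec:Hopf} and from Lemma~\ref{lem:IVP_TauD} / Lemma~\ref{lem:IVP_TauC}.

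Next I would take the perturbation to be $\varrho=\iL_{\tauTpolyF(Q)}=\gerstenhaber{\tauTpolyF(Q)}{\argument}$ in the first case and $\varrho=\iL_{\tauDpolyF(Q)}$ (acting on jets via the Lie-derivative $\schoutenc{\tauDpolyF(Q)}$) in the second. Since $\tauTpolyF(Q)\in\OO^1(\cM,\hat S^{\geq 1}(T^\vee_\cM)\otimes T_\cM)$ raises form-degree by exactly one, $\varrho$ maps $C^{r,s}\to C^{r+1,s}$, and because $\gerstenhaber{\fedosov}{\tauTpolyF(Q)}=0$ (this is the content of the statement that $\cNQ=(\cN,\fedosov+\tauTpolyF(Q))$ is a dg manifold, i.e. Corollary~\ref{cor:SFO} and the proposition preceding Lemma~\ref{lem:Florence}), the triple $(C^{\bullet,\bullet},D,\varrho)$ with $D=\iL_{\fedosov}$ is a genuine double complex, so Lemma~\ref{lem:BiCxFiltration} applies and $\varrho$ is a small perturbation. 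Lemma~\ref{lem:BiCxPerturbation} then gives a perturbed contraction in which the injection $\tauDpolyF$ is \emph{unchanged}, the differential on $\enveloping{\cF}$ (resp. $\jet{\cF}$) becomes $\iL_{\fedosov+\tauTpolyF(Q)}$, and the differential on the small side becomes $d_\varrho=0+\sigma_\natural\varrho\,\tauDpolyF$. The identification $d_\varrho=\iL_Q$ is then the one remaining computation: using Lemma~\ref{lem:nablag} (compatibility of $\tauDpolyF$ with the Grothendieck connections) together with Proposition~\ref{pro:Metz} and Lemma~\ref{lem:TauFormalGpd}, one has $\sigma_\natural\circ\iL_{\tauTpolyF(Q)}\circ\tauDpolyF=\sigma_\natural\circ\tauDpolyF\circ\iL_Q=\iL_Q$, precisely as in the $d_\varrho$-computations in Proposition~\ref{origin} and Theorems~\ref{thm:mainT}, \ref{thm:mainD}.

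I expect the only real subtlety — the main obstacle — to be the bookkeeping needed to make sense of $\iL_Q$ on $\cD(\cM)$ and $\jm$ and to confirm that $\tauTpolyF(Q)$ really acts as a form-degree-raising operator of the type required, i.e. that the identity $\iL_{\fedosov}\circ\tauDpolyF=\tauDpolyF\circ 0$ upgrades to $\iL_{\fedosov+\tauTpolyF(Q)}\circ\tauDpolyF=\tauDpolyF\circ\iL_Q$ compatibly with the dg Hopf algebroid and dg formal groupoid structures. This is not deep but requires quoting Corollary~\ref{cor:TauPreserveCalH} in the right place: once one knows $\tauDpolyF$ intertwines $\iL_Q$ on the source with $\iL_{\tauTpolyF(Q)}$ on the target as operators on $\cD(\cM)$ and $\jm$ — which follows because $\tauTpolyF$ is a morphism of dglas (Corollary~\ref{cor:SFO}) and $\tauDpolyF$ is the associated morphism of enveloping algebras and jet spaces — everything reduces to the formal perturbation lemma. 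I would therefore structure the write-up as: (1) set up the two double complexes; (2) invoke Lemma~\ref{lem:BiCxFiltration} to get small perturbations; (3) invoke Lemma~\ref{lem:BiCxPerturbation} to get the perturbed contractions with $\tauDpolyF$ unchanged; (4) compute $d_\varrho=\iL_Q$ via Lemma~\ref{lem:nablag}, Proposition~\ref{pro:Metz}, and Lemma~\ref{lem:TauFormalGpd}; and note that the perturbed surjection and homotopy are $\sigmaDpolyQ=\sigma_\natural\sum_n(\varrho\breve h_\natural)^n$ and $\hDpolyQ=\breve h_\natural\sum_n(\varrho\breve h_\natural)^n$ as in Appendix~\ref{Vilnius}.
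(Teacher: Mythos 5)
Your proposal is correct and follows essentially the same route as the paper: the paper's proof also applies Lemma~\ref{lem:BiCxFiltration} and Lemma~\ref{lem:BiCxPerturbation} to the contractions \eqref{eq:FcontractionU_sum} and \eqref{eq:FcontractionJet_sum} with $\varrho=\iL_{\tauTpolyF(Q)}$ (resp.\ $\schoutenc{\tauTpolyF(Q)}$) and $d=0$, mirroring the proof of Theorem~\ref{thm:mainD} without the Hochschild (co)boundary operators. The only cosmetic difference is that the paper verifies $d_\varrho=\iL_Q$ by citing Corollary~\ref{cor:TauPreserveCalH} directly, whereas you unwind that corollary into its ingredients (Proposition~\ref{pro:Metz}, Lemma~\ref{lem:TauFormalGpd}, Lemma~\ref{lem:nablag}); both amount to the same intertwining identity $\iL_{\tauTpolyF(Q)}\circ\tauDpolyF=\tauDpolyF\circ\iL_Q$.
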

\begin{proof}
This lemma can be demonstrated using the proof of Theorem~\ref{thm:mainD},
disregarding the Hochschild (co)boundary operators. 
Explicitly, we apply Lemma~\ref{lem:BiCxFiltration}
and Lemma~\ref{lem:BiCxPerturbation} to the contractions \eqref{eq:FcontractionU_sum}
and \eqref{eq:FcontractionJet_sum} with 
\begin{gather*}
B^s=\big(\cD(\cM)\big)^s, \qquad C^{r,s}=\enveloping{\cF}^{r,s} ,\\
D=\iL_{\fedosov}, \quad\varrho=\iL_{\tauDpolyF(Q)},
\quad h=\hDpolyF, \quad\tau=\tauDpolyF, \quad\sigma=\sigmaDpolyF, \quad d =0,
\end{gather*}
and 
\begin{gather*}
B^s=\big(\jm\big)^s, \qquad C^{r,s}=\jet{\cF}^{r,s}, \\
D=\schoutenc{\fedosov}, \quad\varrho=\schoutenc{\tauTpolyF(Q)},
\quad h=\hDpolyF, \quad\tau=\tauDpolyF, \quad\sigma=\sigmaDpolyF, \quad d=0.
\end{gather*}
One can verify $d_\varrho=\iL_Q$ by Corollary~\ref{cor:TauPreserveCalH}.
\end{proof}

Since the injections in the contractions \eqref{eq:FcontractionU_dg}
and \eqref{eq:FcontractionJets_dg} are same as the ones
in~\eqref{eq:Metz} and~\eqref{eq:FcontractionJets},
we establish the following theorem by Proposition~\ref{pro:Metz}
and Proposition~\ref{prop:TauFormalGpd},
which we believe that it should be of independent interest.

%


\begin{theorem}
\label{thm:Dijon}
For a dg manifold $(\cM,\cQ)$, let $\cFQ\to\cNQ$ be its Fedosov dg Lie algebroid.
In the contraction \eqref{eq:FcontractionU_dg}, the injection 
\[ \tauDpolyF: \big(\cD(\cM),\iL_Q\big) \to
\big(\enveloping{\cF},\iL_{\fedosov+\tauDpolyF(Q)}\big) \]
is a morphism of dg Hop algebroids.
Likewise, in the contraction \eqref{eq:FcontractionJets_dg}, the injection 
\[ \tauDpolyF: \big(\jm,\iL_Q\big) \to
\big(\jet{\cF},\iL_{\fedosov+\tauDpolyF(Q)}\big) \]
is a morphism of dg formal groupoids.
\end{theorem}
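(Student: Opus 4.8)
\textbf{Proof proposal for Theorem~\ref{thm:Dijon}.}
The plan is to reduce the dg statement to the already-established graded statement by observing that the injection map is literally unchanged under the homological perturbation. Concretely, Proposition~\ref{pro:Metz} asserts that $\tauDpolyF:\big(\cD(\cM),0\big)\to\big(\enveloping{\cF},\iL_{\fedosov}\big)$ is a morphism of dg Hopf algebroids, and Proposition~\ref{prop:TauFormalGpd} asserts that $\tauDpolyF:\big(\jm,0\big)\to\big(\jet{\cF},\iL_{\fedosov}\big)$ is a morphism of dg formal groupoids. The contractions \eqref{eq:FcontractionU_dg} and \eqref{eq:FcontractionJets_dg} are obtained from \eqref{eq:Metz} and \eqref{eq:FcontractionJets} by homological perturbation (via Lemma~\ref{lem:BiCxFiltration} and Lemma~\ref{lem:BiCxPerturbation}), and the key point already noted just before the theorem statement is that under the hypothesis $\tau(B^q)\subset C^{0,q}$ of Lemma~\ref{lem:BiCxPerturbation} we have $\tau_\varrho=\tau$. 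Thus the injection maps in \eqref{eq:FcontractionU_dg} and \eqref{eq:FcontractionJets_dg} coincide, as $\KK$-linear maps, with those in \eqref{eq:Metz} and \eqref{eq:FcontractionJets}.

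First I would record that all the structure maps of the dg Hopf algebroid $\enveloping{\cFQ}$ (source, target, multiplication, comultiplication, counit) and of the dg formal groupoid $\jet{\cFQ}$ (multiplication, unit, comultiplication, counit, antipode) are identical to those of $\enveloping{\cF}$ and $\jet{\cF}$ respectively --- the passage from $\fedosov$ to $\fedosov+\tauDpolyF(Q)$ only changes the differential, not the algebraic operations. The compatibility of the \emph{new} differential $\iL_{\fedosov+\tauDpolyF(Q)}$ with all these structure maps is exactly the content of Section~\ref{sec:CalHdgAbd} applied to the dg Lie algebroid $\cFQ\to\cNQ$ (whose existence is Lemma~\ref{lem:Florence}): since $\cFQ\to\cNQ$ is a genuine dg Lie algebroid, its universal enveloping algebra is a dg Hopf algebroid and its jet space is a dg formal groupoid. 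Analogously, on the source side, $\big(\cD(\cM),\iL_Q\big)$ is the universal enveloping algebra of the tangent dg Lie algebroid $T_\cM\to\cM$ and $\big(\jm,\iL_Q\big)$ is its jet space, so both are dg Hopf algebroid / dg formal groupoid by the same Section~\ref{sec:CalHdgAbd}.

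The remaining step is to check that $\tauDpolyF$ intertwines the differentials, i.e.\ $\tauDpolyF\circ\iL_Q=\iL_{\fedosov+\tauDpolyF(Q)}\circ\tauDpolyF$, and preserves the (unchanged) algebraic operations. Intertwining of differentials is precisely the statement that $\tauDpolyF$ is a cochain map for the perturbed contraction, which holds by construction of the homological perturbation (Lemma~\ref{Riga}); one can also see it directly: $\iL_{\fedosov+\tauDpolyF(Q)}\tauDpolyF(u)=\iL_\fedosov\tauDpolyF(u)+\gerstenhaber{\tauDpolyF(Q)}{\tauDpolyF(u)}$, and since $\tauDpolyF$ is an algebra morphism (Lemma~\ref{lem:ProdDiffOp_tau}) and $\iL_\fedosov\circ\tauDpolyF=0$ on the image of the graded injection by Proposition~\ref{pro:Metz}'s cochain property, this equals $\tauDpolyF\big(\gerstenhaber{Q}{u}\big)=\tauDpolyF(\iL_Q u)$. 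Preservation of source, target, multiplication, comultiplication, counit (for $\enveloping{}$) and of multiplication, unit, comultiplication, counit, antipode and the pairing \eqref{eq:wuxi81} (for $\jet{}$) is then \emph{verbatim} Propositions~\ref{pro:Metz} and~\ref{prop:TauFormalGpd}, because those proofs never used that the differential was $\iL_\fedosov$ rather than $\iL_{\fedosov+\tauDpolyF(Q)}$ --- they used only the algebra/coalgebra identities, Lemma~\ref{pro:Orly} ($\enveloping{\cF}$ is the $C^\infty(\cN)$-span of $\tauDpolyF(\cD(\cM))$, an $\fedosov$-independent fact), and the initial-value-problem characterizations, which are available here in the perturbed form. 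The main (mild) obstacle is bookkeeping: one must be careful that Lemma~\ref{pro:Orly} and the spanning arguments in the proof of Proposition~\ref{prop:TauFormalGpd} go through unchanged for the perturbed contraction, and that the direct-sum total space subtlety of Remark~\ref{rmk:SumTotIsSmaller} does not obstruct the identifications --- but since all the relevant maps ($\tauDpolyF$, $\sigmaDpolyF$, $h$, $\iL_\fedosov$, $\iL_{\tauDpolyF(Q)}$) preserve the triple-grading filtration, this is routine. I would therefore conclude by simply invoking Proposition~\ref{pro:Metz}, Proposition~\ref{prop:TauFormalGpd}, the identity $\tau_\varrho=\tau$ from Lemma~\ref{lem:BiCxPerturbation}, and the cochain-map property of the perturbed injection.
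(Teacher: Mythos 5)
Your proposal is correct and follows essentially the same route as the paper: the paper's proof consists precisely of noting that the injections in \eqref{eq:FcontractionU_dg} and \eqref{eq:FcontractionJets_dg} coincide with those in \eqref{eq:Metz} and \eqref{eq:FcontractionJets} (since $\tau_\varrho=\tau$ by Lemma~\ref{lem:BiCxPerturbation}), that the algebraic structure maps are untouched by the perturbation, and then invoking Propositions~\ref{pro:Metz} and~\ref{prop:TauFormalGpd} together with the cochain-map property of the perturbed contraction. Your extra remarks on the spanning argument and the compatibility of the new differential with the structure maps are sound elaborations of the same argument.
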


\appendix

\section{Formal groupoid of jets}
\label{sec:FormalGpdJet}

Let $\cL \to \cM$ be a graded Lie algebroid, and
$\Delta:\enveloping{\cL}\to\enveloping{\cL}\otimes_\cR\enveloping{\cL}$
be the comultiplication of $\enveloping{\cL}$ which is defined via a formula
parallel to Equation~\eqref{eq:CoprodUF}. As before, we adopt the Sweedler notation
\[ \Delta(u)=\sum_{(u)}u_{(1)}\otimes u_{(2)} \]
to denote the cocommutative comultiplication of $\enveloping{\cL}$. The jet space 
\begin{equation}\label{eq:jet2}
\jet{\cL}=\Hom_\cR(\enveloping{\cL},\cR)
\end{equation}
is equipped with the following graded formal groupoid structure 
\cite{MR2361096,MR2817646,MR2738352,paper-zero}\footnote{The
non-graded situation was considered in~\cite{MR2361096,MR2817646,MR2738352},
but the approach works for graded Lie algebroid as well.}:

(1) \textbf{(multiplication)} It is clear that $\jet{\cL}$ is a graded
commutative associative $\cR$-algebra, with multiplication 
\begin{equation}\label{eq:mjet}
m: \jet{\cL}\times\jet{\cL} \to \jet{\cL},
\qquad m(\xi,\eta)=\xi*\eta,
\end{equation}
being induced by the comultiplication \eqref{eq:SCE}
on $\enveloping{\cL}$:
\[ \duality{\xi*\eta}{u} = \duality{\xi\otimes\eta}{\Delta(u)}
= \sum_{(u)} {(-1)^{|\eta||u_{(1)}|}} \duality{\xi}{u_{(1)}}
\duality{\eta}{u_{(2)}}, \quad\forall\xi,\eta\in\jet{\cL},
u\in\enveloping{\cL} .\]

(2) \textbf{(source and target maps)}
The source map is the algebra morphism
\begin{equation}
\alpha: \cR\to\jet{\cL}, \quad \duality{\alpha(f)}{u}:=f\epsilon(u)
\end{equation}
while the target map is the algebra anti-homomorphism
\begin{equation}
\beta: \cR\to\jet{\cL}, \quad \duality{\beta(f)}{u}:=
{(-1)^{|f||u|}}\, \epsilon(u\cdot f).
\end{equation}
Here $\epsilon:\enveloping{\cL}\to\cR$ is the counit map of $\enveloping{\cL}$.


(3) \textbf{(comultiplication)} 
There are two (different) commuting left $\cR$-module structures on
$\jet{\cL}$ defined by the composition of the multiplication with
the source and target maps $\alpha, \ \beta: \cR\to\jet{\cL}$, respectively:
\begin{align*}
f \cdot_\alpha \xi &:= \alpha(f) \xi, \\
f \cdot_\beta \xi &:= \beta(f) \xi,
\end{align*}
for $f\in\cR$, $\xi\in\jet{\cL}$. 
The default left $\cR$-module structure on $\jet{\cL}$
as the dual $\cR$-module $\enveloping{\cL}$ according
to~\eqref{eq:jet2} coincides with the one by the composition of
the multiplication with the source map $\alpha:\cR\to\jet{\cL}$.

Since $\cR$ is graded commutative,
one has four types of tensor products. 
By default, denote by $\jet{\cL}\cotimes_\cR\jet{\cL}$, 
the quotient of $\jet{\cL}\cotimes_\KK\jet{\cL}$
by the ideal generated by 
$\{\alpha(f)\xi\otimes\eta - {(-1)^{|f||\xi|}} \,
\xi\otimes\alpha(f)\eta \mid \xi,\eta\in\jet{\cL}, \ f\in\cR \}$.
Denote by $\jet{\cL}\cotimes^{\beta \alpha}_\cR\jet{\cL}$,
the quotient of $\jet{\cL}\cotimes_\KK \jet{\cL}$
by the ideal generated by
$\{\beta(f)\xi\otimes\eta - {(-1)^{|f||\xi|}} \,
\xi\otimes\alpha(f)\eta \mid \xi,\eta\in\jet{\cL}, \ f\in\cR \}$. 

Note that, besides being a left $\cR$-module, 
$\enveloping{\cL}$ is also a right $\cR$-module
with the action being the multiplication by $\cR$ from the right. 
We use $\enveloping{\cL}\otimes^{rl}_\cR\enveloping{\cL}$
to denote the quotient of $\enveloping{\cL}\otimes_\KK\enveloping{\cL}$
by the ideal generated by
$\{ u\cdot f\otimes v - u\otimes f\cdot v
\mid u,v\in\enveloping{\cL}, \ f\in\cR \}$.
One can prove that \cite[Lemma~3.16]{MR2817646}
\begin{equation}\label{eq:HKG1}
\jet{\cL}\cotimes^{\beta \alpha}_\cR \jet{\cL}\cong
\Hom_\cR (\enveloping{\cL}\otimes^{rl}_\cR \enveloping{\cL}, \ \cR), 
\end{equation}
where the isomorphism is given by
\begin{equation}\label{eq:HKG}
\xi\otimes\eta\mapsto \Big( u\otimes v\mapsto (-1)^{|u||\eta|}
\duality{\xi}{u\cdot\duality{\eta}{v}} \Big)
,\quad\forall\xi,\eta\in\jet{\cL}; u,v\in\enveloping{\cL}
.\end{equation}
The product $\enveloping{\cL}\otimes_\KK\enveloping{\cL}\to\enveloping{\cL}$
on $\enveloping{\cL}$ descends to a map
$m:\enveloping{\cL}\otimes^{rl}_\cR\enveloping{\cL}\to\enveloping{\cL}$.
The comultiplication on $\jet{\cL}$ is simply
the $\cR$-dual of this map by using the isomorphism \eqref{eq:HKG1}. 
More precisely, the coproduct 
\begin{equation}\label{eq:coproduct1}
\Delta:\jet{\cL}\to\jet{\cL}\cotimes^{\beta \alpha}_\cR\jet{\cL}
\end{equation}
is given by
\begin{equation}\label{eq:coproduct}
\Delta(\xi)(u\otimes^{rl}v):=\duality{\xi}{u\cdot v}
,\qquad\forall\xi\in\jet{\cL}; u,v\in\enveloping{\cL}
.\end{equation}
We often write $\Delta(\xi)=\xi^{(1)}\otimes\xi^{(2)}$.

(4) \textbf{(counit)} The counit for the above comultiplication is given by
\begin{equation}\label{eq:counit}
\varepsilon:\jet{\cL}\to\cR, \quad 
\varepsilon(\xi)=\duality{\xi}{1}
,\qquad\forall\xi\in\jet{\cL}
.\end{equation}

(5) \textbf{(antipode)} Recall that the Grothendieck connection
on $\jet{\cL}$ is the flat $\cL$-connection \cite{MR1913813} defined by
\begin{equation}\label{eq:Grothendieck}
\duality{\nabla_X^G \xi}{u}=\rho_X \duality{\xi}{u}
-(-1)^{\degree{X}\degree{\xi}}\duality{\xi}{X\cdot u}
\end{equation}
for any homogeneous $X\in\XX(\cM)$ and $\xi\in\jet{\cL}$,
and $u\in\enveloping{\cL}$.
By flatness, it induces a left $\enveloping{\cL}$-action on $\jet{\cL}$ 
\[ \nabla^G: \enveloping{\cL} \times \jet{\cL} \to \jet{\cL},
\qquad (u,\xi) \mapsto \nabla^G_u \xi .\]
The morphism $S: \jet{\cL}\to\jet{\cL}$ defined by
\begin{equation}\label{eq:antipode}
\duality{S\xi}{u}={(-1)^{|\xi||u|}} \duality{\nabla^G_u \xi}{1},
\quad\quad\forall\xi\in\jet{\cL},\ u\in\enveloping{\cL} 
\end{equation}
is the antipode.

\begin{theorem}[{\cite{paper-zero}, \cite[Theorem~3.17]{MR2817646},
and \cite[Appendix~A]{MR2738352}}]
For any graded Lie algebroid $\cL$, the jet space $\jet{\cL}$,
equipped with the operations (1) to (5), is a graded formal groupoid.
\end{theorem}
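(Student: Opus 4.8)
\textbf{Proof proposal for Theorem~\ref{thm:Dijon}.}

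The plan is to reduce Theorem~\ref{thm:Dijon} to the graded-manifold statements already established, namely Proposition~\ref{pro:Metz} and Proposition~\ref{prop:TauFormalGpd}, by observing that the injection $\tauDpolyF$ is not altered by the homological perturbation induced by $\tauDpolyF(Q)$. The essential point, which is spelled out in the sentence preceding the theorem, is that the contractions \eqref{eq:FcontractionU_dg} and \eqref{eq:FcontractionJets_dg} are obtained from \eqref{eq:Metz} and \eqref{eq:FcontractionJets} via an application of Lemma~\ref{lem:BiCxFiltration} together with Lemma~\ref{lem:BiCxPerturbation}, and the second of these lemmas guarantees that when $\tau(B^q)\subset C^{0,q}$ the perturbed injection $\tau_\varrho$ equals the unperturbed injection $\tau$. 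Since $\tauDpolyF(\cD(\cM))$ lands in $\OO^0$-degree (this is exactly Lemma~\ref{rmk:Paris} / Lemma~\ref{lem:IVP_TauD}, or more directly the condition $h_\natural\circ\tauDpolyF=0$ appearing throughout Section~5 of the algebraic-properties section), the hypothesis of Lemma~\ref{lem:BiCxPerturbation} is satisfied, and therefore the map $\tauDpolyF$ in the dg contractions is literally the same $\KK$-linear map as in the graded-manifold contractions.

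First I would record precisely that the hypotheses of Lemma~\ref{lem:BiCxPerturbation} hold in the two relevant cases: for $B^s=(\cD(\cM))^s$, $C^{r,s}=\enveloping{\cF}^{r,s}$, $D=\iL_\fedosov$, $\varrho=\iL_{\tauDpolyF(Q)}$, and symmetrically for $B^s=(\jm)^s$, $C^{r,s}=\jet{\cF}^{r,s}$. The bidegree conventions are set up in the displayed formulas for $\enveloping{\cF}^{r,s}$ and $\jet{\cF}^{r,s}$ just above the theorem, and the condition $h(C^{r,s})\subset C^{r-1,s}$ as well as $\tau(B^s)\subset C^{0,s}$ are immediate from the explicit formulas \eqref{eq:htp_UF} and \eqref{eq:tau_UF} for $h_\natural$ and $\tau_\natural$. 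Hence Lemma~\ref{lem:BiCxPerturbation} yields $\tau_\varrho=\tau$. Second, I would invoke Proposition~\ref{pro:Metz}, which asserts that $\tauDpolyF:(\cD(\cM),0)\to(\enveloping{\cF},\iL_\fedosov)$ respects source, target, multiplication, comultiplication and counit, i.e.\ is a morphism of dg Hopf algebroids; since the underlying map is unchanged, it is also a morphism after replacing $\iL_\fedosov$ by $\iL_{\fedosov+\tauDpolyF(Q)}$ on the target, provided one knows it is still a cochain map for the new differentials — but that is exactly the content of the contraction \eqref{eq:FcontractionU_dg}, which has already been produced. Third, the same two-step argument applied to Proposition~\ref{prop:TauFormalGpd} gives that $\tauDpolyF:(\jm,\iL_Q)\to(\jet{\cF},\iL_{\fedosov+\tauDpolyF(Q)})$ respects multiplication, source, target, comultiplication, counit and antipode, i.e.\ is a morphism of dg formal groupoids.

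The only genuine subtlety — and the step I expect to require the most care — is checking compatibility of $\tauDpolyF$ with the \emph{differentials} in the dg setting: the structure maps are preserved as maps of graded objects by Proposition~\ref{pro:Metz} and Proposition~\ref{prop:TauFormalGpd}, but one must verify that $\tauDpolyF$ intertwines $\iL_Q$ with $\iL_{\fedosov+\tauDpolyF(Q)}$ rather than merely $\iL_\fedosov$. This, however, is precisely the assertion that $\tauDpolyF$ is the injection of the contraction \eqref{eq:FcontractionU_dg} (resp.\ \eqref{eq:FcontractionJets_dg}), which is an established contraction, so $\tauDpolyF$ is by construction a cochain map for the perturbed differentials; the verification $d_\varrho=\iL_Q$ in the preceding lemma (via Corollary~\ref{cor:TauPreserveCalH}) closes this gap. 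Combining these observations, $\tauDpolyF$ is simultaneously a morphism of the algebraic structures (Hopf algebroid, formal groupoid) and a cochain map, hence a morphism of dg Hopf algebroids and of dg formal groupoids respectively, which is the claim.
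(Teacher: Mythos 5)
Your proposal does not address the statement at hand. The theorem to be proved is the one in Appendix~\ref{sec:FormalGpdJet}: for any graded Lie algebroid $\cL$, the jet space $\jet{\cL}=\Hom_\cR(\enveloping{\cL},\cR)$, equipped with the multiplication $*$ of \eqref{eq:mjet}, the source and target maps $\alpha,\beta$, the comultiplication $\Delta$ of \eqref{eq:coproduct1}, the counit $\varepsilon$ of \eqref{eq:counit}, and the antipode $S$ of \eqref{eq:antipode}, is a graded formal groupoid. A proof would have to verify the (complete, graded) Hopf algebroid axioms for these five operations: associativity and graded commutativity of $*$ (dual to coassociativity and cocommutativity of the comultiplication on $\enveloping{\cL}$), that $\alpha$ is an algebra morphism and $\beta$ an algebra anti-morphism with commuting images, that $\Delta$ is well defined on the $\beta$--$\alpha$ tensor product via the identification \eqref{eq:HKG1} and is coassociative and multiplicative, the counit identities for $\varepsilon$, the antipode identities for $S$ (which rest on the flatness of the Grothendieck connection \eqref{eq:Grothendieck}), and continuity of all of these with respect to the adic filtration \eqref{Jakarta1} so that the completed tensor product $\cotimes$ makes sense. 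None of these verifications appears in your text. (The paper itself gives no proof --- it quotes the result from \cite{paper-zero}, \cite[Theorem~3.17]{MR2817646} and \cite[Appendix~A]{MR2738352} --- so a blind proof would need to supply exactly these checks.)

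What you have written instead is an argument for Theorem~\ref{thm:Dijon}, namely that the injection $\tauDpolyF$ is a morphism of dg Hopf algebroids and of dg formal groupoids after perturbation by $\tauDpolyF(Q)$. As an argument for \emph{that} theorem your reduction is essentially the paper's own: Lemma~\ref{lem:BiCxPerturbation} shows the injection is unchanged by the perturbation, so Propositions~\ref{pro:Metz} and~\ref{prop:TauFormalGpd} apply verbatim, and the contraction \eqref{eq:FcontractionU_dg} supplies the cochain-map property for the perturbed differentials. But this is a proof of a different statement; the actual claim --- that the five operations on $\jet{\cL}$ satisfy the formal groupoid axioms --- is left entirely unproved.
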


\section{Homological perturbation}\label{Vilnius}

A \emph{contraction} consists of 
a pair of cochain complexes $(W,d_W)$ and $(V,d_V)$,
a pair of chain maps $\sigma:W\to V$ and $\tau:V\to W$,
and an endomorphism $h:W\to W[-1]$ of the graded module $W$
satisfying the following five relations:
\begin{equation}\label{Lasne}
\begin{gathered} \sigma\tau=\id_V, \qquad
\id_W-\tau\sigma=h d_W+d_W h, \\
\sigma h=0, \qquad h\tau=0, \qquad h^2=0
.\end{gathered}
\end{equation}
We symbolize such a contraction by a diagram
\begin{equation}\label{Incourt}
\begin{tikzcd}
(V,d_V) \arrow[r, "\tau", shift left] &
(W,d_W) \arrow[l, "\sigma", shift left]
\arrow["h", loop right]
\end{tikzcd}
.\end{equation}

It follows immediately from the relations \eqref{Lasne} that $hd_W h=h$.
Therefore, the endomorphisms $hd_W$, $d_W h$, and $hd_W+d_W h$ of $W$ are projection operators.
Likewise, $\tau\sigma=\id_W-(hd_W+d_W h)$ is a projection as well.
Since $\id=\tau\sigma+hd_W+d_W h$ and the composition of any two of the projections
$\tau\sigma$, $hd_W$, and $d_W h$ is zero, we have the direct sum decomposition
\begin{equation}\label{Clabecq} W = \im(\tau\sigma)\oplus\im(hd_W)\oplus\im(d_W h) .\end{equation}
The injective chain map $\tau$ identifies the cochain complex $(V,d_V)$ to the subcomplex
$\im(\tau\sigma)$ of $(W,d_W)$. The complementary subcomplex $\im(hd_W)\oplus\im(d_W h)$ of $(W,d_W)$
is precisely the kernel of the surjective chain map $\sigma$.

\begin{remark}
One can easily show that any pair of endomorphisms $d_W:W\to W[1]$ and $h:W\to W[-1]$
of a graded module $W$ satisfying the three relations $d_W^2=0$; $h^2=0$; and $hd_W h=h$
determines a subcomplex $V:=\ker(hd_W+d_W h)$
of the cochain complex $(W,d_W)$
and a contraction
\[ \begin{tikzcd}
(V,d_V) \arrow[r, "\tau", shift left] &
(W,d_W) \arrow[l, "\sigma", shift left]
\arrow["h", loop right]
\end{tikzcd} .\]
Here the injection $\tau$ is the inclusion 
$V\into W$ and the surjection $\sigma$ is 
determined by the relation 
$\tau\sigma=\id_W-(hd_W+d_W h)$. See \cite[Appendix~A]{MR4665716}.
\end{remark}

A filtration
\[ \cdots \subseteq F_{m-1}W \subseteq F_{m}W \subseteq F_{m+1}W \subseteq \cdots \]
on a vector space $W$ is said to be \emph{exhaustive} if $W=\bigcup_m F_m W$
and \emph{complete} if $ W= \displaystyle\varprojlim_{m \to -\infty} \dfrac{W}{F_m W}$.

A \emph{perturbation} of a cochain complex
\[ \begin{tikzcd} \cdots \arrow{r} & W^{n-1} \arrow{r}{d_W}
& W^{n} \arrow{r}{d_W} & W^{n+1} \arrow{r} & \cdots \end{tikzcd} \]
is an endomorphism $\varrho:W\to W[1]$ of the graded module $W$,
which satisfies \[ (d_W-\varrho)^2=0 \]
so that $d_W-\varrho$ is a new differential on $N$.

A \emph{small perturbation} of a contraction \eqref{Incourt}
is a perturbation of the cochain complex $(W,d_W)$ such that
there exists an exhaustive complete filtration $F_\bullet W$ on $W$
with the property \[ (h \varrho )(F_{m} W)\subset F_{m-1} W .\]

If $\varrho$ is a small perturbation of the contraction \eqref{Incourt}, then

\begin{itemize}
\item the series $\sum_{k=0}^\infty(\varrho h)^k$
and $\sum_{k=0}^\infty(h\varrho)^k$ converge
and define a pair of endomorphisms of $W$;
\item the endomorphisms $\id_W-\varrho h$
and $\id_W-h\varrho$ of $W$ are both invertible;
\item we have $(\id_W-\varrho h)^{-1}=\sum_{k=0}^\infty(\varrho h)^k$
and $(\id_W-h\varrho)^{-1}=\sum_{k=0}^\infty(h\varrho)^k$;
\end{itemize}
Consider the endomorphism $\machin{\varrho}:W\to W[1]$ defined by
\[ \machin{\varrho}=(\id_W-\varrho h)^{-1}\varrho
=\sum_{k=0}^\infty(\varrho h)^k\varrho
=\sum_{k=0}^\infty\varrho(h\varrho)^k=\varrho(\id_W-h\varrho)^{-1} .\]
Rewriting the equation above as
\[ (\id_W-\varrho h)\machin{\varrho}=\varrho=\machin{\varrho}(\id_W-h\varrho) \]
and then
\[ \varrho(\id+h\machin{\varrho})=\machin{\varrho}=(\machin{\varrho}h+\id)\varrho ,\]
we see that we could also have described $\breve{\varrho}$
as the fixed point of either of the operators $x\mapsto\varrho(\id+hx)$
and $y\mapsto(yh+\id)\varrho$ acting on $\Hom(W,W[1])$.

We note that \[ (\id_W-\varrho h)^{-1}=\sum_{k=0}^\infty
(\varrho h)^k=\id_W+\machin{\varrho}h \]
and \[ (\id_W-h\varrho)^{-1}=\sum_{k=0}^\infty
(h\varrho)^k=\id_W+h\machin{\varrho} .\]

We refer the reader to~\cite[\S1]{MR1109665} for a brief history of the following lemma.

\begin{lemma}[Homological Perturbation \cite{MR0220273}]
\label{Riga}
Let \begin{equation}\label{Helecine}
\cR=\left[ \begin{tikzcd}
(V,d_V) \arrow[r, "\tau", shift left] &
(W,d_W) \arrow[l, "\sigma", shift left]
\arrow["h", loop right]
\end{tikzcd} \right] \end{equation} be a contraction
and let $\varrho$ be a small perturbation of the contraction $\cR$. 
Then, the operator $\sigma\machin{\varrho}\tau$
is a perturbation of the cochain complex $(V,d_V)$ and the operators
\[ \sigma+\sigma\machin{\varrho}h=\sigma(\id+\breve{\varrho}h),
\qquad \tau+h\machin{\varrho}\tau=(\id+h\breve{\varrho})\tau,
\qquad\text{and}\qquad h+h\machin{\varrho}h \]
determine a contraction 
\begin{equation}\label{Ramilies}
\cR^\varrho=\left[ \begin{tikzcd}[column sep=huge]
(V,d_V-\sigma\machin{\varrho}\tau)
\arrow[r, "(\id+h\breve{\varrho})\tau", shift left] &
(W,d_W-\varrho)
\arrow[l, "\sigma(\id+\breve{\varrho}h)", shift left]
\arrow["h+h\machin{\varrho}h", loop right]
\end{tikzcd} \right] .\end{equation}
\end{lemma}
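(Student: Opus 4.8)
The plan is to establish \eqref{Ramilies} by checking, one at a time, every axiom a contraction must satisfy for the perturbed data: that $\tau'=(\id+h\breve{\varrho})\tau$ and $\sigma'=\sigma(\id+\breve{\varrho}h)$ are cochain maps between $(V,d_V-\sigma\breve{\varrho}\tau)$ and $(W,d_W-\varrho)$, the five relations \eqref{Lasne} for the triple $(\tau',\sigma',h')$ with $h'=h+h\breve{\varrho}h$, and finally $(d_V-\sigma\breve{\varrho}\tau)^2=0$. The algebraic fuel for every cancellation is fourfold: the five original relations \eqref{Lasne} together with their consequence $hd_Wh=h$; the two fixed-point identities $\breve{\varrho}=\varrho+\varrho h\breve{\varrho}$ and $\breve{\varrho}=\varrho+\breve{\varrho}h\varrho$ recorded just above the statement; the two ``inverse'' formulas $(\id_W-\varrho h)^{-1}=\id_W+\breve{\varrho}h$ and $(\id_W-h\varrho)^{-1}=\id_W+h\breve{\varrho}$; and the perturbation hypothesis $(d_W-\varrho)^2=0$, which, since $d_W^2=0$, is equivalent to $d_W\varrho+\varrho d_W=\varrho^2$. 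Convergence, and hence the very existence of $\breve{\varrho}$ as an operator, is guaranteed by smallness, already proven above, so no analytic issue remains.

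First I would dispatch the four purely algebraic relations $\sigma'\tau'=\id_V$, $\sigma'h'=0$, $h'\tau'=0$ and $(h')^2=0$. Writing $A=\id_W+\breve{\varrho}h$ and $B=\id_W+h\breve{\varrho}$, so that $\tau'=B\tau$, $\sigma'=\sigma A$ and $h'=hA=Bh$, each of these expands into a sum in which every term is annihilated by one of $\sigma\tau=\id_V$, $\sigma h=0$, $h\tau=0$, $h^2=0$; for instance $\sigma'\tau'=\sigma(\id_W+\breve{\varrho}h+h\breve{\varrho})\tau=\sigma\tau=\id_V$ because $\sigma h=0$ and $h\tau=0$. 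None of these four uses the perturbation equation or the fixed-point identities. I would also record here the two streamlining identities $Bh\varrho=h\breve{\varrho}$ and $\varrho hA=\breve{\varrho}h$, each an immediate consequence of the fixed-point relations, and use them to verify the side condition $h'(d_W-\varrho)h'=h'$: indeed $h'(d_W-\varrho)h'=B(hd_Wh-h\varrho h)A=B(h-h\varrho h)A=BhA-Bh\varrho hA=Bh'-h\breve{\varrho}h'=h'$, using $hd_Wh=h$ and expanding. Via the Remark following \eqref{Clabecq}, the pair $(d_W-\varrho,h')$ with $(d_W-\varrho)^2=0$, $(h')^2=0$ and $h'(d_W-\varrho)h'=h'$ already determines a contraction; it then remains only to identify its structure maps with $\tau',\sigma'$ and its base differential with $d_V-\sigma\breve{\varrho}\tau$.

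The hard part is the homotopy identity $\id_W-\tau'\sigma'=h'(d_W-\varrho)+(d_W-\varrho)h'$ together with the two cochain-map conditions. Using $h'=Bh$ on the left, $h'=hA$ on the right, and the streamlining identities $Bh\varrho=h\breve{\varrho}$, $\varrho hA=\breve{\varrho}h$, the right-hand side collapses to $(hd_W+d_Wh)+h\breve{\varrho}hd_W+d_Wh\breve{\varrho}h-h\breve{\varrho}-\breve{\varrho}h$, and substituting the original $\id_W-\tau\sigma=hd_W+d_Wh$ reduces the whole identity to the single operator equation $h\,\Xi\,h=0$, where
\[ \Xi=d_W\breve{\varrho}+\breve{\varrho}d_W-\breve{\varrho}^2+\breve{\varrho}hd_W\breve{\varrho}+\breve{\varrho}d_Wh\breve{\varrho}. \]
I expect verifying $h\Xi h=0$ --- equivalently, closing $\Xi$ by repeatedly feeding in the fixed-point identities to trade bare $\varrho$'s for $\breve{\varrho}$'s and then invoking $d_W\varrho+\varrho d_W=\varrho^2$ to annihilate the second-order term $\breve{\varrho}^2$ against the mixed $d_W$--$\breve{\varrho}$ terms --- to be the only genuinely delicate computation. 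The two cochain-map conditions reduce, by the same substitutions (notably $\varrho B=\breve{\varrho}=A\varrho$ and $\tau\sigma=\id_W-hd_W-d_Wh$), to companion $h$-sandwiched identities closed in exactly the same way.

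Finally, $(d_V-\sigma\breve{\varrho}\tau)^2=0$ comes for free once the cochain-map property $d_W'\tau'=\tau'd_V'$ is in hand: then $\tau'(d_V')^2=(d_W')^2\tau'=0$, and $\tau'$ is injective because $\sigma'\tau'=\id_V$, whence $(d_V')^2=0$. This shows in particular that $\sigma\breve{\varrho}\tau$ is a genuine perturbation of $(V,d_V)$, and completes the assembly of the perturbed contraction \eqref{Ramilies}.
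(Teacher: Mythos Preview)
The paper does not prove this lemma; it is stated as the classical Homological Perturbation Lemma with a citation to Brown~\cite{MR0220273} and a pointer to~\cite[\S1]{MR1109665} for history, so there is no ``paper's own proof'' to compare against. Your direct verification is the standard route and is correctly organized: the four easy side conditions and the identity $h'(d_W-\varrho)h'=h'$ are checked exactly as you say, and your reduction of the homotopy identity to $h\Xi h=0$ is correct (indeed, substituting $hd_W+d_Wh=\id-\tau\sigma$ into $\Xi$ collapses it to $d_W\breve{\varrho}+\breve{\varrho}d_W-\breve{\varrho}\tau\sigma\breve{\varrho}$, and then feeding in $\breve{\varrho}=A\varrho=\varrho B$ together with $d_W\varrho+\varrho d_W=\varrho^2$ closes it after a page of bookkeeping). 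Your final step, extracting $(d_V')^2=0$ from the injectivity of $\tau'$ once the chain-map condition is in hand, is the cleanest way to finish.
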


\section*{Acknowledgments}

We gratefully acknowledge the hospitality of the following institutions, which facilitated the completion of this project:
Pennsylvania State University (Liao),
National Center for Theoretical Sciences (Liao, Stiénon, Xu),
National Tsing Hua University (Stiénon),
Institut des Hautes Études Scientifiques (Liao),
Institut Henri Poincaré (Liao, Stiénon, Xu),
and Institut Mittag-Leffler (Stiénon, Xu).

We also thank
Ruggero Bandiera,
Sophie Chemla,
Vasily Dolgushev,
Ezra Getzler,
Niels Kowalzig,
Camille Laurent-Gengoux,
Pavol Ševera,
Boris Tsygan,
Kai Wang,
and
Thomas Willwacher
for insightful discussions and valuable comments.

%
%
%
%
%
%

\printbibliography

\end{document}